\documentclass[11pt,a4paper]{article}

\usepackage{amsfonts}
\usepackage{amsthm}
\usepackage{amsmath}
\usepackage{amssymb}
\usepackage{amscd}
\usepackage{mathrsfs}
\usepackage[mathscr]{eucal}
\usepackage{graphicx}
\usepackage{epstopdf} 
\usepackage{pict2e}
\usepackage{epic}
\usepackage{xcolor}
\usepackage{float}
\usepackage{mathtools}
\usepackage{centernot}

\usepackage{cite}
\usepackage{hyperref}
\hypersetup{colorlinks=true, urlcolor= black, linkcolor=black, citecolor=black}

\usepackage[utf8]{inputenc}
\usepackage[T1]{fontenc}
\usepackage{lmodern}
\usepackage{dsfont}
\usepackage{indentfirst}
\usepackage[margin=3cm]{geometry}

\usepackage{centernot}
\usepackage{bbm}

\numberwithin{equation}{section}
\newtheorem{thm}{Theorem}[section]
\newtheorem*{thm*}{Theorem}
\newtheorem*{prop*}{Proposition}
\newtheorem*{lem*}{Lemma}
\newtheorem{ass}[thm]{Assumption}
\newtheorem{lem}[thm]{Lemma}
\newtheorem{prop}[thm]{Proposition}
\newtheorem{cor}[thm]{Corollary}
\newtheorem{defn}[thm]{Definition}
\newtheorem{rem}[thm]{Remark}
\newtheorem*{op1}{Open Problem 1}
\newtheorem*{op2}{Open Problem 2}
\newtheorem*{op3}{Open Problem 3}

\newcommand{\ZZ}{\mathbb{Z}}
\newcommand{\RR}{\mathbb{R}}
\newcommand{\NN}{\mathbb{N}}
\newcommand{\PP}{\mathbb{P}}
\newcommand{\EE}{\mathbb{E}}
\newcommand{\TT}{\mathbb{T}}
\newcommand{\CC}{\mathbb{C}}

\DeclareFontFamily{OT1}{pzc}{}
\DeclareFontShape{OT1}{pzc}{m}{it}{<-> s * [1.10] pzcmi7t}{}
\DeclareMathAlphabet{\mathpzc}{OT1}{pzc}{m}{it}

\newcommand{\cG}{\mathcal G}
\newcommand{\ct}{\mathpzc{t}}
\newcommand{\cM}{\mathcal M}
\newcommand{\cB}{\mathcal B}
\newcommand{\cT}{\mathcal T}
\newcommand{\cH}{\mathcal H}
\newcommand{\cF}{\mathcal F}
\newcommand{\cP}{\mathcal P}
\newcommand{\cC}{\mathcal C}
\newcommand{\cA}{\mathcal A}
\newcommand{\cE}{\mathcal E}
\newcommand{\cI}{\mathcal I}
\newcommand{\cJ}{\mathcal J}
\newcommand{\cL}{\mathcal L}
\newcommand{\cS}{\mathcal S}

\newcommand{\bfP}{\mathbf{P}}

\def\fg{\mathfrak{g}}
\def\sfP{\mathsf{P}}

\newcommand\n{\mathbf{n}}
\newcommand\m{\mathbf{m}}
\newcommand\sn{\partial\mathbf{n}}

\newcommand\ising{\textup{Ising}}

\renewcommand{\phi}{\varphi}
\newcommand{\PLUS}{ \mathfrak{p}}
\newcommand{\MINUS}{\mathfrak{m}}

\setcounter{tocdepth}{2}

\numberwithin{equation}{section}

\begin{document}

\title{Random tangled currents for $\varphi^4$: translation invariant Gibbs measures and continuity of the phase transition}

\author{Trishen S. Gunaratnam\footnotemark[1]\footnote{TIFR Mumbai, \url{trishen@math.tifr.res.in}} \footnotemark[2]\footnote{ICTS-TIFR Bengaluru, \url{trishen.gunaratnam@icts.res.in}}\:, Christoforos Panagiotis\footnotemark[3]\footnote{University of Bath, \url{cp2324@bath.ac.uk}}\:, \\ Romain Panis\footnotemark[4]\footnote{Institut Camille Jordan (Lyon 1), \url{panis@math.univ-lyon1.fr}, \url{severo@math.univ-lyon1.fr}}\:, Franco Severo\footnotemark[4]}

\maketitle
\begin{abstract}
    We prove that the set of automorphism invariant Gibbs measures for the $\phi^4$ model on graphs of polynomial growth has at most two extremal measures at all values of $\beta$. We also  give a sufficient condition to ensure that the set of all Gibbs measures is a singleton. As an application, we show that the spontaneous magnetisation of the nearest-neighbour $\phi^4$ model on $\ZZ^d$ vanishes at criticality for $d\geq 3$. The analogous results were established for the Ising model in the seminal works of Aizenman, Duminil-Copin, and Sidoravicius (Comm.\ Math.\ Phys., 2015), and Raoufi (Ann.\ Prob., 2020) using the so-called random current representation introduced by Aizenman (Comm.\ Math.\ Phys., 1982).  One of the main contributions of this paper is the development of a corresponding geometric representation for the $\phi^4$ model called the \textit{random tangled current} representation.
\end{abstract}

\maketitle

\section{Introduction}

\subsection{Motivation}

The $\phi^4$ model is a statistical mechanics model of ferromagnetism with real-valued spins attached to each vertex of a graph whose values are confined according to a quartic single-site potential. Mathematically, this is modelled by a probability measure on real-valued spin configurations on a graph $G=(V,E)$ formally given by 
\begin{equation*}
\langle F(\phi) \rangle \,=\frac{1}{Z}
\,  \int F(\phi) \, \exp \Big( \beta\sum_{\lbrace x,y\rbrace \in E}  \phi_x \phi_y - \sum_{x \in V} \left(g\phi_x^4 + a \phi_x^2\right) \Big) \textup{d}\phi
\end{equation*} 
where $g > 0$ and $a \in \RR$ are coupling constants, $\beta > 0$ is the inverse temperature, and $Z$ is a normalisation constant.
This model naturally interpolates between two famous models of statistical physics, namely the Gaussian free field and the Ising model. The Gaussian free field is defined by setting $g=0$ and taking appropriate $a$. Formally, in the limit $g \rightarrow 0$ when $a > 0$, the $\phi^4$ model approximates a Gaussian free field of appropriate covariance. On the other hand, the Ising model is defined by integrating instead with the respect to the Bernoulli measure on $\{\pm 1\}$. In this case, the limit $g \rightarrow \infty$ for $a < 0$ formally corresponds to the Ising model. This can be seen easily if one takes a joint scaling limit when $a=-2g$:
 \begin{equation}\label{eq: ising limit}
\tfrac 12 ( \delta_{-1} +  \delta_1 )
=
\lim_{g \rightarrow \infty} \frac{1}{Z_g} e^{-g(\phi^2-1)^2} \textup{d}\phi. 
 \end{equation}

The model has its origins in Euclidean quantum field theory, where its analogue on $\RR^d$ is amongst the simplest candidates for a non-trivial (i.e.\ non-Gaussian) measure. These measures in dimensions $d\geq 2$ are a priori ill-defined due to the notorious problem of ultraviolet divergences \cite{glimm2012quantum}. Two major breakthroughs in the constructive field theory programme of the 1960s-1970s were the solutions to these problems in dimensions $2$ and $3$ due to Nelson \cite{N66}, and Glimm and Jaffe \cite{GJ73}. In dimensions $d\geq 4$ a non-Gaussian measure cannot be obtained due to the triviality results that we mention below.
 
It was later realised that the $\phi^4$ model on the lattice $\ZZ^d$ has importance in statistical physics. 
 Its viewpoint as a generalised Ising model was first rigorously explored in the landmark paper by Guerra, Rosen, and Simon \cite{GRS75}. 
 The approximation to Ising was further exploited by Glimm, Jaffe, and Spencer \cite{GJS75}, where they adapt the celebrated Peierls' argument \cite{P36} to prove that the phase transition occurs at a finite non-zero temperature for $\phi^4$ in dimensions $d\geq 2$. In particular, there is a critical inverse temperature $\beta_c$, depending on the fixed values of $g,a$, such that: for $\beta < \beta_c$, the two point correlation tends to $0$; and for $\beta > \beta_c$, the two point correlation does not tend to $0$ and there is long-range order.

The relationship between $\phi^4$ and the Ising model is predicted to be very deep: they are a fundamental example of two models that are believed to belong to the same universality class. Indeed, it is predicted from renormalisation group heuristics (see \cite{G70, K93} or the recent book \cite{BBS19}) that at their respective critical points many properties of these two models, such as critical exponents, exactly coincide. A manifestation of this was discovered by Griffiths and Simon in \cite{GS}, where they show that the $\phi^4$ model arises as a certain near-critical scaling limit of mean-field Ising models. Indeed, this allows one to transfer many special properties of the Ising model, such as correlation inequalities, to $\phi^4$. This makes it one of the simplest non-integrable models to study, even in spite of the unbounded spins. 

More direct results towards universality were obtained in dimensions $d\geq 5$ by Aizenman \cite{A82} and Fr\"ohlich \cite{F82}, where triviality of their scaling limits, both in the infinite volume and continuum, were established. The case of $d=4$ was settled in the beautiful paper by Aizenman and Duminil-Copin \cite{ADC}, where stochastic geometric representations for the model, as in this paper, play a central role. Establishing rigorously universality results in dimensions $2$ and $3$ remains a difficult challenge. We refer to the recent review \cite{A21} concerning $\phi^4$ for further discussion.

In this article, we are interested in understanding the structure of Gibbs measures for $\phi^4$, and in particular identifying the behaviour at the critical point. To do this, we build on ideas that have led to recent breakthroughs towards these questions in the case of the Ising model. In particular, the continuity of the magnetisation at the critical point in $d \geq 3$ was established by Aizenman, Duminil-Copin, and Sidoravicious \cite{ADCS} using the so-called random current representation, first introduced by Aizenman in \cite{A82}. These techniques were further exploited by Raoufi in \cite{RAO20} to show that the set of translation invariant Gibbs measures on any amenable graph at any temperature is given by convex combinations of two natural extremal measures. This was previously known for temperatures not equal to the critical point on $\ZZ^d$, see the results of Bodineau \cite{B06}. First, we rigorously define the quantities of interest and state our main result on the structure of Gibbs measures. Then, we state its consequences for continuity of the phase transition (as well as a precise definition of what this means).

\subsection{Definitions and structure theorem} 
Let $G=(V,E)$ be a connected, infinite, and locally finite graph. Let $J=(J_{x,y})_{x,y\in V}$ be a collection of ferromagnetic interactions, i.e.\ such that $J_{x,y}\geq 0$ for all $x,y\in V$. For $g>0$ and $a\in \mathbb R$, let $\rho_{g,a}$ be the single-site measure on $\RR$ defined by
\begin{equation}
    \textup{d}\rho_{g,a}(t)
    =
    \frac{1}{z_{g,a}}e^{-gt^4-at^2}\textup{d}t,
\end{equation}
where $z_{g,a}=\int_\RR e^{-gt^4-at^2}\textup{d}t$.
Let $\Lambda\subset V$ be finite. Let $h=(h_x)_{x\in V}, \eta = (\eta_x)_{x \in V}\in \mathbb R^V$ be the external magnetic field and boundary condition, respectively. We define the Hamiltonian
\begin{equation}
    H^\eta_{\Lambda,h,J}(\varphi)
    :=
    -\sum_{x,y\in \Lambda}J_{x,y}\varphi_x\varphi_y-\sum_{\substack{x\in \Lambda\\y\in V\setminus \Lambda}}J_{x,y}\varphi_x\eta_y-\sum_{x\in \Lambda}h_x\varphi_x,\qquad \forall \varphi\in \RR^\Lambda
\end{equation}
whenever
$
\sum_{\substack{y\in V\setminus \Lambda}}J_{x,y} |\eta_y|
<
\infty ~~\forall x \in \Lambda$.

The $\varphi^4$ model on $\Lambda$ with coupling constants $(J,g,a)$, boundary condition $\eta$, inverse temperature $\beta>0$, and external magnetic field $h$, is the measure  $\langle \cdot\rangle^{\eta}_{\Lambda,\beta,h,J,g,a}$ defined via the expectation values
\begin{equation}\label{eq: def}
    \langle f\rangle^\eta_{\Lambda,\beta,h,J,g,a}
    :=
    \frac{1}{Z^\eta_{\Lambda,\beta,h,J,g,a}}\int_{\varphi\in \RR^\Lambda}f(\varphi)e^{-\beta H^\eta_{\Lambda,h J}(\varphi)} \textup{d}\rho_{\Lambda,g,a}(\phi)
\end{equation}
where
$\textup{d}\rho_{\Lambda,g,a}(\phi)=\prod_{x \in \Lambda} \textup{d}\rho_{g,a}(\phi_x)$,  $f:\mathbb R^\Lambda\rightarrow \RR$ is a bounded measurable function and $Z^\eta_{\Lambda,\beta,h,J,g,a}$ is the partition function, i.e.\ the normalisation constant such that $\langle 1\rangle^\eta_{\Lambda,\beta,h,J,g,a}=1$. When clear from context we drop $(J,g,a)$ from the notation.

In this article, we impose a restriction on the growth of the graph. We fix $o\in G$ a distinguished origin and let $B_k$ denote the ball of radius $k$ centred at $o$ with respect to the graph distance $d_G$. Let $G$ be a \textit{vertex-transitive graph} of \emph{polynomial} growth, i.e.\ a graph for which there exists $d\geq 1$ (called the dimension of $G$) and $c,C>0$ such that, for all $k\geq 1$,
we have $ck^d\leq |B_k|\leq Ck^d$.
As such, $G$ is amenable: 
\begin{equation}
\inf_{A \subset V, |A| < \infty} \frac{|\partial A|}{|A|}
=
0
\end{equation}
where $\partial A = \{ x \in V\setminus A : \exists \, y \in A \text{ with } \lbrace x,y\rbrace \in E\}$ and $|\cdot|$ denotes cardinality. We fix $\Gamma$ a vertex-transitive subgroup of the automorphism group of $G$, denoted Aut$(G)$. 

A set of interactions $J=(J_{x,y})_{x,y \in V}$ is \textit{admissible} with respect to $G$ and $\Gamma$ if:
\begin{enumerate}
    \item[$\textbf{C1}$] \textbf{(Ferromagnetic)} $J_{x,y} \geq 0$ for all $x,y \in V$; 
    \item[$\textbf{C2}$] \textbf{($\Gamma$-invariance)} $J_{\gamma(x) ,\gamma(y)} = J_{x,y}$ for all $\gamma \in \Gamma$;
    \item[$\textbf{C3}$] \textbf{(Irreducibility)} for every $x,y\in V$, there exists a sequence $x_0=x,\ldots,x_k=y$ such that $J_{x_0,x_1}\ldots J_{x_{k-1},x_k}>0$;
    \item[$\textbf{C4}$] \textbf{(Integrability)} $\exists\, C>0$ and $\epsilon > 0$ such that $J_{x,y} \leq Cd_G(x,y)^{-(d+\epsilon)}$ for all $x\neq y \in V$.
\end{enumerate}
We remark that {\bf (C4)} indeed implies that $\sum_{x \in V} J_{o,x} < \infty$, see the calculation in Lemma \ref{lem: psi sum}.

We say that a Borel measure $\nu$ on $\RR^V$ is \textit{tempered} if:
\begin{itemize}
\item for every $\Lambda \subset V$ finite, $\nu|_{\Lambda}$ is absolutely continuous with respect to Lebesgue measure on $\RR^\Lambda$;
\item $\nu$ is supported on the event
\begin{equation}
R
:=
\Big\{ (\varphi_x)_{x\in V} : \exists C\in(0,\infty), \sum_{x \in B_k} \varphi_x^2 \leq C|B_k| \text{ for all } k \in \NN \Big\}.
\end{equation}
\end{itemize}

\begin{defn}[Gibbs measures]
We say that a Borel probability measure $\nu$ on $\mathbb R^{V}$ is a Gibbs measure at $(\beta,h)$ if it is tempered and for every finite $\Lambda\subset V$ and $f\in \RR^{\Lambda}$ bounded and measurable, the DLR equation
\begin{equation}
    \nu(f)
    =
    \int_{\eta\in \RR^V}\langle f\rangle^\eta_{\Lambda,\beta,h}\textup{d}\nu(\eta)
\end{equation}
holds. In particular, we assume that $\nu$ is almost surely supported on $\eta \in \RR^V$ such that $\langle \cdot\rangle^\eta_{\Lambda,\beta,h}$ is well-defined.
\end{defn}

We denote by $\mathcal{G}(\beta,h)$ the set of Gibbs measures at $(\beta,h)$. We say that a measure $\nu\in \cG(\beta,h)$ is \textit{$\Gamma$-invariant} if for all $\gamma\in \Gamma$, the pushforward measure $\gamma_\ast\nu$ is equal to $\nu$. We denote by $\cG_{\Gamma}(\beta,h)\subset \cG(\beta,h)$ the set of $\Gamma$-invariant Gibbs measures. The set $\cG_{\Gamma}(\beta,h)$ has a structure of a Choquet simplex, i.e.\ admits a \textit{unique} extremal ergodic decomposition (see \cite[Theorem 5.8]{R70}). 

Our main result is a structure theorem for $\cG_{\Gamma}(\beta,0)=\cG_{\Gamma}(\beta)$. It states that there are at most two extremal measures $\langle\cdot\rangle_\beta^+$ and $\langle \cdot\rangle_\beta^-$ which we define as follows. First, for any $h\in \RR$, let $\langle\cdot\rangle_{\beta,h}^0=\lim_{\Lambda\uparrow V}\langle \cdot\rangle_{\Lambda,\beta,h}^0$, which exists by tightness (see Proposition \ref{appendix prop: tightness}) and a standard monotonicity argument using Griffiths' inequality (see Proposition \ref{prop: monotonicity in vol}). Define $\langle\cdot\rangle_\beta^+$ and $\langle \cdot\rangle_\beta^-$ via the following monotonic weak limits (again by Griffiths' inequality, see Remark \ref{rem: monotoncity griffiths}):
\begin{equation}
    \langle\cdot\rangle_\beta^+=\lim_{h\downarrow 0}\langle \cdot\rangle_{\beta,h}^0,\qquad \langle\cdot\rangle_\beta^-=\lim_{h\uparrow 0}\langle \cdot\rangle_{\beta,h}^0.
\end{equation}
The measures $\langle\cdot\rangle_\beta^+$ and $\langle \cdot\rangle_\beta^-$ are ergodic (in fact mixing) and extremal--- see Corollary \ref{cor: extremal and ergodic}.

\begin{thm}\label{thm:main}
Let $G$ be a connected, vertex-transitive graph of polynomial growth and $\Gamma\subset \text{Aut}(G)$ a vertex-transitive subgroup. Let $J$ be an admissible set of coupling constants with respect to $G$ and $\Gamma$, and fix $g > 0$ and $a \in \RR$. Then, for any $\beta>0$, 
\begin{equation} \label{eq: convex hull of pm}
    \cG_\Gamma(\beta)
    =
    \left\lbrace \lambda \langle \cdot \rangle^+_\beta +(1-\lambda)\langle \cdot \rangle^-_\beta: \lambda \in [0,1]\right\rbrace.
\end{equation}
Moreover, if
\begin{equation}\label{eq: condition for continuity}
\liminf_{d_G(o,x)\rightarrow\infty} \langle \phi_o \phi_x \rangle_{\beta}^0 = 0,
\end{equation}
then there exists only one Gibbs measure, i.e.\
\begin{equation}
    |\cG(\beta)|=|\cG_\Gamma(\beta)|=1.
\end{equation}
\end{thm}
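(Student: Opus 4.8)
The plan is to first establish the structure theorem \eqref{eq: convex hull of pm} and then deduce the uniqueness criterion from it, using monotonicity and the random tangled current estimates. For \eqref{eq: convex hull of pm}: since $\cG_\Gamma(\beta)$ is a Choquet simplex whose extreme points are exactly its $\Gamma$-ergodic elements (the unique ergodic decomposition recalled above), and since $\langle\cdot\rangle^+_\beta$ and $\langle\cdot\rangle^-_\beta$ are ergodic and extremal by Proposition~\ref{prop: extremal and ergodic}, it suffices to show that any $\Gamma$-ergodic $\nu\in\cG_\Gamma(\beta)$ equals $\langle\cdot\rangle^+_\beta$ or $\langle\cdot\rangle^-_\beta$. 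The spin-flip symmetry $\phi\mapsto-\phi$ (valid at $h=0$, exchanging $\langle\cdot\rangle^+_\beta\leftrightarrow\langle\cdot\rangle^-_\beta$) lets us assume $\nu(\phi_o)\geq 0$, so the goal reduces to proving $\nu=\langle\cdot\rangle^+_\beta$.

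Two ingredients feed into this. First, monotonicity in the boundary condition (Proposition~\ref{prop: monotonicity in vol}, Remark~\ref{rem: monotoncity griffiths}), together with the construction of $\langle\cdot\rangle^\pm_\beta$, shows that these are respectively the largest and smallest Gibbs measures in stochastic order: $\langle F\rangle^-_\beta\leq\nu(F)\leq\langle F\rangle^+_\beta$ for every bounded increasing $F$ and every $\nu\in\cG(\beta)$; in particular $\nu(\phi_o)\leq\langle\phi_o\rangle^+_\beta$. Second, polynomial growth makes $\Gamma$ amenable, so $(B_k)_k$ (or a tempered subsequence) is a F\o lner sequence and $\Gamma$-ergodicity of $\nu$ gives, by the pointwise ergodic theorem, $|B_k|^{-1}\sum_{x\in B_k}\phi_x\to\nu(\phi_o)$ $\nu$-a.s. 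The remaining task — upgrading the one-sided bound $\nu\le\langle\cdot\rangle^+_\beta$ to equality of all moments $\nu(\prod_i\phi_{x_i})=\langle\prod_i\phi_{x_i}\rangle^+_\beta$ when $\nu(\phi_o)\geq0$ — is where the random tangled current representation is essential, and \textbf{I expect this to be the main obstacle}. The mechanism, mirroring Raoufi's treatment of the Ising model, is a switching principle for tangled currents that compares the truncated two-point function of an arbitrary Gibbs state with that of $\langle\cdot\rangle^+_\beta$ (equivalently, expresses $\langle\phi_o\rangle^+_\beta-\nu(\phi_o)$ and $\langle\phi_o\phi_x\rangle^+_\beta-\nu(\phi_o\phi_x)$ through connection events of tangled currents to the ghost vertex encoding $h\downarrow 0$); combining this with the a.s.\ convergence of empirical magnetisations and with $\Gamma$-invariance then rigidifies $\nu$ to $\langle\cdot\rangle^+_\beta$. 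The genuinely new difficulties relative to Ising are the unbounded spins (one needs tail and moment control going beyond temperedness to run these arguments), the presence of ``tangles'' inside the Griffiths--Simon blocks that must be tracked through the switching lemma, and verifying that the relevant infinite-volume tangled current measures are well defined on graphs of polynomial growth with the long-range admissible couplings \textbf{(C1)}--\textbf{(C4)}.

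Granting \eqref{eq: convex hull of pm}, the uniqueness criterion follows cleanly. Assume \eqref{eq: condition for continuity}. The free state $\langle\cdot\rangle^0_\beta=\lim_{\Lambda\uparrow V}\langle\cdot\rangle^0_{\Lambda,\beta}$ lies in $\cG_\Gamma(\beta)$: it is tempered and satisfies the DLR equations by construction, and it is $\Gamma$-invariant because the infinite-volume free state is independent of the exhaustion (same monotonicity argument), so $\gamma_\ast\langle\cdot\rangle^0_\beta=\lim_k\langle\cdot\rangle^0_{\gamma B_k,\beta}=\langle\cdot\rangle^0_\beta$ for all $\gamma\in\Gamma$. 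By \eqref{eq: convex hull of pm} it is a convex combination of $\langle\cdot\rangle^\pm_\beta$, and since these share the same two-point function (spin-flip symmetry), $\langle\phi_o\phi_x\rangle^0_\beta=\langle\phi_o\phi_x\rangle^+_\beta$ for every $x$. By Griffiths' inequality in the state $\langle\cdot\rangle^+_\beta$ one has $\langle\phi_o\phi_x\rangle^+_\beta\geq\langle\phi_o\rangle^+_\beta\langle\phi_x\rangle^+_\beta=(m^*_\beta)^2$ with $m^*_\beta:=\langle\phi_o\rangle^+_\beta$, hence $\liminf_{d_G(o,x)\to\infty}\langle\phi_o\phi_x\rangle^0_\beta\geq(m^*_\beta)^2$; combined with \eqref{eq: condition for continuity} this forces $m^*_\beta=0$. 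Then $\langle\cdot\rangle^-_\beta$, being the spin-flip image of $\langle\cdot\rangle^+_\beta$, is $\Gamma$-ergodic with $\langle\phi_o\rangle^-_\beta=0\geq0$, so the rigidity statement of the previous paragraph gives $\langle\cdot\rangle^-_\beta=\langle\cdot\rangle^+_\beta$; therefore $\cG_\Gamma(\beta)=\{\langle\cdot\rangle^+_\beta\}$ by \eqref{eq: convex hull of pm}. Finally, for an arbitrary $\nu\in\cG(\beta)$ the sandwiching $\langle F\rangle^-_\beta\leq\nu(F)\leq\langle F\rangle^+_\beta$ collapses to $\nu(F)=\langle F\rangle^+_\beta$ for every bounded increasing $F$; since the increasing events form a generating $\pi$-system of the Borel $\sigma$-algebra on $\RR^V$, this yields $\nu=\langle\cdot\rangle^+_\beta$, so $|\cG(\beta)|=|\cG_\Gamma(\beta)|=1$.
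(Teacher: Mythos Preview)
Your deduction of the uniqueness criterion from \eqref{eq: convex hull of pm} is essentially the paper's argument (see the last paragraph of Section~6.1 together with Proposition~\ref{prop: spont mag and gibbs is 0}), modulo one wrinkle: invoking ``the rigidity statement of the previous paragraph'' to conclude $\langle\cdot\rangle^-_\beta=\langle\cdot\rangle^+_\beta$ is circular, since that rigidity is precisely what you could not establish. The clean route is: once $m^*_\beta=0$, Strassen applied to the stochastic sandwich $\langle\cdot\rangle^-_\beta\preceq\langle\cdot\rangle^+_\beta$ together with $\langle\phi_x\rangle^+_\beta=\langle\phi_x\rangle^-_\beta$ for all $x$ forces the two measures to coincide (this is Proposition~\ref{prop: spont mag and gibbs is 0}).

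The substantive gap is in your plan for \eqref{eq: convex hull of pm}. You propose to compare an \emph{arbitrary} ergodic $\nu\in\cG_\Gamma(\beta)$ directly to $\langle\cdot\rangle^+_\beta$ via tangled currents, expressing $\langle\phi_o\phi_x\rangle^+_\beta-\nu(\phi_o\phi_x)$ as connection probabilities. But there is no (tangled) current expansion for a generic Gibbs state: the representation of Section~\ref{sec: switching} is only available for the free measure and for the $\PLUS$-boundary measure (the latter because the maximal boundary condition acts as a nonnegative external field, cf.\ Remark~\ref{rem: b.c. as mag. field}). So the quantity $\langle\phi_o\phi_x\rangle^+_\beta-\nu(\phi_o\phi_x)$ has no current interpretation, and your ``mechanism'' cannot get off the ground. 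Relatedly, the ergodic-theorem convergence of empirical magnetisations plays no role in the actual proof.

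What the paper does instead is a two-step reduction that your outline misses entirely. First, a Lebowitz-type characterisation (Proposition~\ref{Lebowitz}) shows that \eqref{eq: convex hull of pm} is \emph{equivalent} to the purely two-measure statement
\[
\langle\phi_x\phi_y\rangle^+_\beta=\langle\phi_x\phi_y\rangle^0_\beta\quad(\text{for }J_{x,y}>0)\qquad\text{and}\qquad \langle\phi_x^2\rangle^+_\beta=\langle\phi_x^2\rangle^0_\beta.
\]
Establishing this equivalence already requires substantial work specific to unbounded spins: a Ginibre inequality, a coupling of $|\phi|$-fields via stochastic domination (Lemma~\ref{lem: absolute domination}), the sign-field-as-random-coupling-Ising decomposition (Lemma~\ref{coupling lemma}), and the $\Gamma$-invariant comparison of Lemma~\ref{lem: inequality even spin}. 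Second, one argues by contradiction: assuming the displayed equalities fail, the switching principle for tangled currents (now legitimately applied to $\langle\cdot\rangle^+$ and $\langle\cdot\rangle^0$) produces bridges with positive probability (Proposition~\ref{prop:bridges}), which via uniqueness of the infinite cluster and a flow/ends argument (Lemma~\ref{lem:flow_properties}, Lemmas~\ref{lem:p2p_free}--\ref{lem:p2p_plus}) yields a contradiction. None of this structure is visible in your sketch.
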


To the best of our knowledge, Theorem \ref{thm:main} is the first result that characterises the set of translation invariant Gibbs measures for $\phi^4$ at any $\beta$. It has natural consequences for the continuity of the phase transition, i.e.\ condition \eqref{eq: condition for continuity} can be verified at criticality in certain cases; these are explored in the next subsection. We believe that certain ideas in the proof may be useful to study the universality between $\phi^4$ and Ising, and perhaps of use in studying $P(\phi)$ models (i.e.\ when the quartic polynomial in the single-site measure is replaced by an arbitrary even polynomial $P$ of positive leading coefficient). We also remark that this seems to be the first instance where the $\phi^4$ model is studied on general graphs beyond $\ZZ^d$ and the techniques developed may be useful to study universality with respect to the underlying choice of graph, i.e.\ microscopic structure. The restriction to (amenable) graphs of polynomial growth is for technical reasons; our proof of the construction of general Gibbs measures works under certain geometric conditions, but we cannot give an example of such an amenable graph which exhibits superpolynomial growth. We refer to Appendix \ref{app: LP} for further discussion. However, the main techniques--- in particular all the stochastic geometric tools developed--- that we employ should extend to the case of a general amenable graph.

The starting point to prove Theorem \ref{thm:main} is a characterisation of the structure of $\Gamma$-invariant Gibbs measures for $\varphi^4$--- see Proposition \ref{Lebowitz}. This is similar in spirit to the Lebowitz characterisation for the Ising model \cite{L1977}, but there are significant complications that arise due to the presence of unbounded spins. The importance of this characterisation is that it reduces the problem to showing that the plus and free measures, i.e.\ $\langle \cdot \rangle_{\beta}^+$ and $\langle \cdot\rangle_{\beta,h=0}^0$, coincide on two-point correlations. Thus, we need to control differences of two-point correlations. For the case of the Ising model \cite{ADCS,RAO20} this is enabled by the celebrated random current representation, and in particular by the switching lemma \cite[Lemma 3.2]{A82} which allows one to give a probabilistic interpretation of these quantities.

It is well-known that the $\phi^4$ model on a \textit{finite} graph $\Lambda$ is approximated by an Ising model on $\Lambda \times K_N$ with special coupling constants, where $K_N$ is the complete graph on $N$ vertices. This is called the Griffiths--Simon approximation \cite{GS}. One natural question, in light of the above, is why the naive approach of proving the structure theorem for Gibbs measures directly via the Griffiths--Simon approximation of $\phi^4$ fails. This approach would go by proving a corresponding statement for the Ising model on $\Lambda\times K_N$ and then taking the limit $\Lambda\uparrow G$ and $N\to\infty$ appropriately. Since the proof in the case of the Ising model heavily relies on ergodicity, it only gives information about the model in infinite volume without any quantitative control in finite volume. Therefore, this approach would require a proof of the convergence of the Griffiths--Simon model on $G\times K_N$ to the $\phi^4$ model on $G$. However, to the best of our knowledge, such a convergence result in the infinite volume is not known. Indeed, the notion of single-site measure, which is crucial to the argument of \cite{GS}, disappears in the infinite volume limit.

The core of the paper is the development of a new geometric representation of spin correlations for $\phi^4$, based on the random current representation for the Ising model, that we call a \textit{random tangled current representation}. A current $\n$ is a positive integer valued function on the $J$-edges of the graph, meaning the pairs $\lbrace x,y\rbrace\subset V$ such that $J_{x,y}>0$. A tangled current is a pair $(\n,\ct)$ obtained as follows: first, the current $\n$ is lifted to its natural multigraph $\mathcal{N}$, where each $J$-edge $\lbrace x,y\rbrace$ is replaced by $\n_{x,y}$ edges. Then, each vertex $x$ is replaced by an ordered block of $\Delta\n(x)$ vertices, where $\Delta\n(x)$ is the degree of $x$ in $\mathcal{N}$ and each of the $\Delta\n(x)$ edges is incident to a distinct vertex in the block. Finally, a choice of an even partition at each vertex, $\ct$, naturally tangles the currents at each block. The key technical tool is the switching principle of Theorem \ref{thm: switching lemma}, which, in particular, allows us to express ratios and differences of correlation functions in terms of connectivity properties of random tangled currents. With this switching principle, the proof of Theorem \ref{thm:main} broadly follows ideas developed in \cite{ADCS} and \cite{RAO20}. However, there are severe obstacles that require a fine knowledge of the behaviour of the single-site tangles, for example that any partition occurs with positive probability. There are also additional essential difficulties caused by the presence of unbounded spin. 

The construction of the random tangled currents and the proof of the switching principle starts from the Griffiths--Simon approximation. Indeed, since the model in the prelimit of $N \rightarrow \infty$ is an Ising model, we may use the standard switching lemma. Capturing the stable quantities as $N \rightarrow \infty$ requires isolating the geometric structures that persist in the scaling limit, which then leads us to random tangled currents. We stress, however, that random tangled currents and the switching principle are stated on the level of $\phi^4$ and make no reference to the prelimit; it is only their construction that does. It is an interesting question to prove such a switching principle directly for $\phi^4$, without any reference to the Griffiths--Simon approximation.

Our representation complements the well-known Brydges--Fröhlich--Spencer random walk representation (or BFS representation) of the $\varphi^4$ model \cite{brydges1982random}. This path expansion of the model can be used to derive many classical correlation inequalities for the model (see \cite{fernandez2013random} and references therein) and also plays a pivotal role in Fröhlich's proof of triviality \cite{F82}. It would be interesting to see how this object relates to the representation we introduce in this paper. Let us mention that even in the simpler case of the Ising model, the connection between the BFS expansion and the random current representation is not completely obvious. The backbone expansion introduced by Aizenman \cite{A82} is such a random walk representation, built from explorations of the underlying random current model, but is not quite the same object as the BFS representation. Let us also mention that although it is a very powerful object, it is not clear whether the BFS representation is enough to obtain the results of this paper. The main reason being that we rely heavily on percolation type arguments that cannot be generalised to these random walk models.

\subsection{Continuity of the phase transition for $\phi^4$ on $\ZZ^d$}

We discuss now an important consequence of Theorem~\ref{thm:main} to the (continuity of) phase transition for $\phi^4$. We restrict ourselves to the hypercubic lattice $G=\ZZ^d$. We fix $g > 0$ and $a \in \RR$. We also fix a collection of interactions $J$ satisfying $(\textbf{C1}$-$\textbf{C4})$.

We begin by defining the phase transition rigorously. There are several natural ways to do this, which are all a posteriori equivalent. We consider here perhaps the most classical definition in terms of spontaneous magnetisation. Other definitions involve existence of long range order (as mentioned above), uniqueness of Gibbs measure, finiteness of susceptibility, etc. We refer the reader to Appendix~\ref{app: different pt} for these definitions and results relating them.
The spontaneous magnetisation at inverse temperature $\beta$ is defined as
$m^*(\beta)
    :=
     \langle \phi_0 \rangle^+_\beta$.
We can now define
\begin{equation*}
    \beta_c
    :=
    \inf\lbrace \beta>0,\text{ } m^*(\beta)>0\rbrace.
\end{equation*}
For $d\geq 2$ it has been proved \cite{GJS75} (see also \cite{glimm2012quantum}) that $\beta_c(g,a)\in (0,\infty)$ for all $g>0$ and $a\in \mathbb R$. 

In view of the definition above, it is a natural question to ask whether $m^*(\beta_c) = 0$ (i.e.~the spontaneous magnetisation vanishes at criticality). If so, $m^*$ is continuous at $\beta_c$ (see Proposition \ref{prop:rightcontinuity}) and the model is said to undergo a \textit{continuous} phase transition. This is a fundamental assumption on which arguments of universality (e.g.\ via renormalisation group heuristics) are built on. 
It turns out that continuity of the phase transition for lattice $\varphi^4$ models boils down to proving that $|\cG(\beta_c)|=1$, see Proposition \ref{prop: spont mag and gibbs is 0}. Theorem \ref{thm:main} gives a sufficient condition \eqref{eq: condition for continuity} for that to be satisfied.

Very few tools are available to obtain condition \eqref{eq: condition for continuity} at criticality. For the Ising model in dimension 2, this can be verified by either using exact integrability established in the famous work of Onsager \cite{onsager1944} or planar duality of the interaction as realised by Werner \cite{W09}. However, we stress these techniques only work for the nearest-neighbour model, and have not been extended to finite range interactions in general, or the case of $\phi^4$. In higher dimensions, one can use the lace expansion for either the Ising model \cite{S07} or $\phi^4$ \cite{sakai2015application}. In the intermediate dimensions, such as the physically relevant dimension $3$ for nearest-neighbour interactions, the infrared/Gaussian domination bound of Fr\"ohlich, Simon, and Spencer \cite{FSS} (see also \cite{FILS}) for reflection positive models (see \cite[Definition 5.1]{B}) is the main tool we have at our disposal. As already observed in \cite{ADCS}, the infrared bound is a good candidate to ensure continuity.

We now assume that the interaction $J$ is admissible and reflection positive--- see Section~\ref{sec: continuity}. The canonical examples of reflection positive interactions are
\begin{itemize}
    \item (Nearest-neighbor interactions) $J_{x,y}=\delta_{| x-y|_1=1}$;
    \item (Exponential decay / Yukawa potentials) $J_{x,y}=C\exp(-\mu | x-y|_1)$ for $\mu,C>0$;
    \item (Power law decay) $J_{x,y}=C| x-y|_1^{-d-\alpha}$ for $\alpha,C>0$,
\end{itemize}
where $|\cdot|_1$ refers to the $\ell^1$-norm on $\RR^d$.
\noindent For each $J$, we consider the associated random walk $(X_k)_{k\geq 0}$ with step distribution given by
\begin{equation*}
    \mathbb P[X_{k+1}=y\: |\: X_k=x]=\frac{J_{x,y}}{|J|}
\end{equation*}
where $|J|:=\sum_{x\in \mathbb Z^d}J_{0,x}$. The infrared bound relates the two-point function of a reflection positive model to the Green's function of the associated random walk $X$. In the case that the random walk is transient, one concludes that the condition \eqref{eq: condition for continuity} from Theorem~\ref{thm:main} holds. We then obtain the following result--- see Section~\ref{sec: continuity} for the details.

\begin{thm}[Continuity of the phase transition]\label{thm: continuity} Let $d \geq 1$. Let $J$ be admissible, reflection positive, and such that the associated random walk on $\ZZ^d$ is transient. Then, 
\begin{equation*}
    m^*(\beta_c)=0.
\end{equation*}
In particular, $m^*(\beta_c) = 0$ for the nearest-neighbour $\phi^4$ model on $\ZZ^d$ when $d \geq 3$.
\end{thm}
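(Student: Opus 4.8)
The plan is to deduce Theorem~\ref{thm: continuity} from Theorem~\ref{thm:main} together with the infrared bound for reflection positive interactions. By Proposition~\ref{prop: spont mag and gibbs is 0}, continuity of the phase transition, i.e.\ $m^*(\beta_c)=0$, is equivalent to $|\cG(\beta_c)|=1$. By Theorem~\ref{thm:main}, to establish $|\cG(\beta_c)|=1$ it suffices to verify the condition \eqref{eq: condition for continuity} at $\beta=\beta_c$, namely
\begin{equs}
\liminf_{d_G(o,x)\to\infty} \langle \phi_o\phi_x\rangle_{\beta_c}^0 = 0.
\end{equs}
So the whole task reduces to controlling the free two-point function at criticality and showing it does not stay bounded away from zero along some sequence of vertices going to infinity.

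The main step is then to invoke the infrared (Gaussian domination) bound of Fr\"ohlich--Simon--Spencer. For a reflection positive interaction $J$, Gaussian domination gives, in Fourier space, a bound of the form $\widehat{\langle\phi_0\phi_\cdot\rangle_\beta^0}(p) \lesssim \frac{1}{\beta(\widehat J(0)-\widehat J(p))}$ uniformly in $\beta$ (after the usual reduction to finite volume with periodic boundary conditions and passage to the infinite-volume free state, using the monotonicity that defines $\langle\cdot\rangle_{\beta,h=0}^0$). Integrating this bound over the torus/$\RR^d$ and comparing $\widehat J(0)-\widehat J(p)$ with the random walk generator, the quantity $\sum_x \langle\phi_0\phi_x\rangle_{\beta}^0$ is controlled by (a constant times) the Green's function $\sum_{k\geq 0}\mathbb P[X_k=0]$ of the associated random walk $X$. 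Transience of $X$ means precisely that this Green's function is finite, hence $\sum_x \langle\phi_0\phi_x\rangle_{\beta_c}^0 < \infty$; in particular $\langle\phi_o\phi_x\rangle_{\beta_c}^0 \to 0$ as $d_G(o,x)\to\infty$, which is far stronger than \eqref{eq: condition for continuity}. Plugging this into Theorem~\ref{thm:main} yields $|\cG(\beta_c)|=1$ and therefore $m^*(\beta_c)=0$.

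For the concrete assertion about nearest-neighbour $\phi^4$ on $\ZZ^d$ with $d\geq 3$: the nearest-neighbour interaction is reflection positive (one of the canonical examples listed above), admissible (it trivially satisfies \textbf{(C1)}--\textbf{(C4)}), and the associated random walk is simple random walk on $\ZZ^d$, which is transient for $d\geq 3$ by P\'olya's theorem. Hence the hypotheses of the general statement are met and the conclusion follows. It is worth noting that $d=2$ genuinely falls outside this argument since simple random walk is recurrent there, consistent with the fact that continuity in $d=2$ requires the separate planar-duality or integrability inputs mentioned in the text.

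The main obstacle is the careful implementation of the infrared bound in the present generality: one must check that Gaussian domination, classically stated for $\phi^4$-type measures on finite tori with periodic boundary conditions, transfers to the infinite-volume free state $\langle\cdot\rangle_{\beta,h=0}^0$ defined here as a monotone limit of finite-volume free states, and that the single-site measure $\rho_{g,a}$ indeed lies in the class for which reflection positivity holds. This is standard but requires matching conventions (the role of $g>0$, $a\in\RR$, and the normalisation of $J$), and a short argument via Griffiths' inequality and the monotone convergence already invoked in the construction of $\langle\cdot\rangle_\beta^+$ and $\langle\cdot\rangle_{\beta,h=0}^0$ to pass the finite-volume bound to the limit. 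The random-walk comparison $\widehat J(0)-\widehat J(p)\asymp 1-\widehat{P}(p)$, where $P$ is the walk's step kernel, is immediate from the definitions since $J_{x,y}/|J|$ is exactly the step distribution. Beyond this, everything is a direct citation of Theorem~\ref{thm:main} and Proposition~\ref{prop: spont mag and gibbs is 0}.
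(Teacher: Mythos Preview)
Your reduction to verifying \eqref{eq: condition for continuity} at $\beta_c$ via Theorem~\ref{thm:main} and Proposition~\ref{prop: spont mag and gibbs is 0} is exactly right, and the final paragraph on the nearest-neighbour case is fine. But the middle step contains a genuine error.

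You claim that the infrared bound together with transience yields $\sum_x \langle\phi_0\phi_x\rangle_{\beta_c}^0 < \infty$. This is false. The infrared bound controls the Fourier transform of the two-point function, $\widehat S(p)\lesssim \big(\beta(\widehat J(0)-\widehat J(p))\big)^{-1}$, but this upper bound is singular at $p=0$. The susceptibility $\chi(\beta)=\sum_x \langle\phi_0\phi_x\rangle_\beta^0$ equals $\widehat S(0)$, so the infrared bound gives no information on it. What transience (condition~\eqref{eq: condition transience}) \emph{does} give is finiteness of $G(0,0)=\sum_{k\geq 0}\mathbb P[X_k=0]=\int (1-\widehat J(p)/|J|)^{-1}\,dp$, which corresponds instead to a bound on $\langle\phi_0^2\rangle$. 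You have conflated these two quantities. In fact $\chi(\beta_c)=\infty$ is the generic expectation (recall $\beta_c=\beta_c^{\textup{SHARP}}$ in Appendix~\ref{app: different pt}).

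The paper's argument avoids this by working with Ces\`aro averages. The infrared bound in quadratic form (Proposition~\ref{prop: irb}), applied with $v=\mathbbm{1}_{B_n}$, gives
\[
\frac{1}{|B_n|^2}\sum_{x,y\in B_n}\langle\varphi_x\varphi_y\rangle_{\beta_c}^0
\;\leq\;
\frac{1}{2\beta_c|J|}\,\frac{1}{|B_n|^2}\sum_{x,y\in B_n} G(x,y),
\]
after extending to $\beta_c$ by left-continuity of $\beta\mapsto\langle\varphi_x\varphi_y\rangle_\beta^0$. Since $G(x,y)\to 0$ as $|x-y|\to\infty$ by Riemann--Lebesgue, the right-hand side tends to $0$. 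Hence the Ces\`aro average of $\langle\varphi_x\varphi_y\rangle_{\beta_c}^0$ over $B_n\times B_n$ tends to $0$, which forces $\liminf_{|x|\to\infty}\langle\varphi_0\varphi_x\rangle_{\beta_c}^0=0$. That is the correct route to \eqref{eq: condition for continuity}; your stronger pointwise or $\ell^1$ conclusion is not available from the infrared bound alone.
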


\begin{rem}
Theorem \textup{\ref{thm: continuity}} gives that the magnetisation is continuous at $\beta_c$ when $J$ is any admissible and reflection positive interaction in dimension at least $3$. In dimension $2$, it yields the same conclusion when $J$ has power law decay of the form $J_{x,y}=C|x-y|_1^{-d-\alpha}$ with $\alpha\in (0,2)$. Finally, in dimension $1$, provided that the model undergoes a non-trivial phase transition\footnote{We expect this to be true whenever  $J_{x,y}\geq c|x-y|^{-1-\alpha}$ for some $\alpha\in (0,1)$, as in the case of the one-dimensional Ising model \cite{dyson1969}.}, we get that the magnetisation is continuous at $\beta_c$ when $J$ has power law decay with $\alpha\in (0,1)$. 
\end{rem}

\subsection{Open problems}

We now give what, in our opinion, are some beautiful open problems that naturally lead on from Theorems \ref{thm:main} and \ref{thm: continuity}. In what follows, we fix $G = \ZZ^2$ and only consider the nearest-neighbour case.

The most striking omission from Theorem \ref{thm: continuity} is the behaviour at $\beta_c$ for the nearest-neighbour model in $d=2$. Indeed, the infrared bound is not useful in this context and the analogous result for the Ising model relies on exact integrability \cite{onsager1944} or planar duality \cite{W09}, which only works in the nearest-neighbour case; we stress that the case of the finite range Ising model is open. It is widely believed that the phase transition for $\phi^4$, just as for Ising, is continuous in dimension $2$.
\begin{op1}[Continuity in $d=2$]
Let $d=2$ and $J$ be the nearest-neighbour interaction. Prove that 
\begin{equation*}
m^*(\beta_c)
=
0.
\end{equation*}
\end{op1}

The only rigorous evidence that we are aware of universality with respect to changes in the Hamiltonian\footnote{This is in constrast to universality results with respect to changing the underlying \textit{planar} graph for nearest-neighbour models, i.e.\ beyond the square lattice, for which much progress has been made. We refer to the review by Chelkak \cite{C18} for further details.} for the Ising model in $d=2$ are the celebrated works of Pinson and Spencer \cite{S00} and Giuliani, Greenblatt, and Mastropietro \cite{GGM12}, which treat the scaling limit of energy-energy correlations for critical finite range Ising models (with a perturbative condition on the range) via renormalisation group techniques; and the work of Aizenman, Duminil-Copin, Tassion, and Warzel \cite{ADCTW2019}, which proves asymptotic Pfaffian relations for boundary point correlations for any critical finite range model (with no perturbative condition) via stochastic geometric techniques. Both cases are coherent with universality, i.e.\ with the corresponding results established for the critical nearest-neighbour model; see \cite{SML64,GBK78} for the energy-energy and boundary point correlations, respectively. We defer to the articles cited above for precise statements in the case of Ising.
\begin{op2}[Pfaffian relations in $d=2$]
For the nearest-neighbour $\phi^4$ model on $\ZZ^2$, prove that the boundary spin correlations satisfy the Pfaffian relations asymptotically at criticality. 
\end{op2}

Our final problem concerns the set of \textit{all} Gibbs measures in $d=2$. For the Ising model, it is known \cite{A80, H79} that in $d=2$, there are no non-translation invariant Gibbs measures. This is in stark contrast to the existence of (non-translation invariant) Dobrushin measures in $d \geq 3$ as established in \cite{D73}. For $\phi^4$, the existence of Dobrushin measures is known in $d \geq 3$, see \cite{SV07}. The following problem is thus natural.

\begin{op3}[Aizenman--Higuchi result in $d=2$]
Let $d=2$ and $J$ be the nearest-neighbour interaction. Prove that for all $\beta > \beta_c$,
\begin{equation*}
\cG(\beta)
=
\cG_T(\beta),
\end{equation*}
where $\cG_T(\beta)$ is the set of translation invariant Gibbs measures.
\end{op3}


\section{Gibbs measures and correlation functions}\label{sec: gibbs states}

In this section, we prove important properties about Gibbs measures. First, we construct the plus and minus measures, and show that they are mixing and extremal. Then, we provide a Lebowitz-type characterisation on the structure of $\Gamma$-invariant Gibbs measures that reduces the proof of Theorem \ref{thm:main} to checking that even correlations under the plus and free measures coincide. We fix $\beta>0$.

\subsection{Regularity of Gibbs measures}

It is crucial to our techniques that $\langle \cdot \rangle^+_\beta$ and $\langle \cdot \rangle^-_\beta$ are realised as infinite volume limits of measures with appropriate boundary conditions at $h=0$. This is not a priori clear from their definitions via limits as $h \downarrow 0$ and $h \uparrow 0$ of $\langle \cdot\rangle_{\beta,h}^{0}$, respectively. Indeed, it is not even clear that they are even Gibbs measures at $h=0$. As such, we show that any Gibbs measure satisfies a regularity condition that puts a constraint on the growth of the configurations that it is supported on. This allows us to define natural analogues of plus and minus boundary condition measures obtained from finite volume measures that are mixing and extremal, which we shall do in the next subsection.

\begin{defn} \label{def: regular measure}
Let $\nu$ be a measure on $\RR^V$. For $\gamma, \delta > 0$, we say that $\nu$ is $(\gamma,\delta)$-regular if, for every $\Lambda \subset V$ finite, the density of the projection measure satisfies
\begin{equation}\label{eq: regular estimate}
\frac{\textup{d}\nu}{\textup{d}\phi_\Lambda}(\phi_{\Lambda})
\leq
e^{-\sum_{x \in \Lambda} (\gamma \phi_x^4 - \delta)},
\end{equation}
where $\textup{d}\phi_\Lambda$ is the Lebesgue measure on $\RR^\Lambda$.
\end{defn}

The following proposition states that (tempered) Gibbs measures are regular. The proof is technical and relies on estimates developed in \cite{LP76}, adapted to our more general setting on graphs of polynomial growth. We therefore defer the proof to Appendix \ref{app: LP}, c.f.\ Proposition \ref{appendix prop: regular}.
\begin{prop} \label{prop: gibbs is regular}
There exist constants $\gamma,\delta>0$ depending on $G,g,a,J,\beta,h$ such that $\nu$ is  $(\gamma,\delta)$-regular for every $\nu \in \cG(\beta)$. Moreover, the constants $\gamma$ and $\delta$ are continuous in $g,a,J,\beta,h$.
\end{prop}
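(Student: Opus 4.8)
The plan is to reduce the statement to a finite-volume estimate on the density of a single-site marginal, uniform in the boundary condition, and then to propagate that bound through the DLR equation. Specifically, I would first use the DLR equation to write, for a finite $\Lambda \subset V$ and $\nu \in \cG(\beta)$,
\begin{equs}
\frac{\textup{d}\nu}{\textup{d}\phi_\Lambda}(\phi_\Lambda)
=
\int_{\eta \in \RR^V} \frac{\textup{d}\langle \cdot\rangle^\eta_{\Lambda,\beta,h}}{\textup{d}\phi_\Lambda}(\phi_\Lambda)\, \textup{d}\nu(\eta),
\end{equs}
so it suffices to bound the finite-volume density pointwise in $\eta$ on the support of $\nu$. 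Since $\nu$ is tempered, $\nu$-almost every $\eta$ lies in the regularity set $R$, i.e.\ $\sum_{x \in B_k}\eta_x^2 \leq C_\eta |B_k|$ for all $k$. The heart of the matter is then to show that, uniformly over $\eta \in R$ (with a constant that may depend on $C_\eta$, but which integrates to something finite against $\nu$ — in fact one can first prove that $\sup_\eta$ of the relevant constant is finite using {\bf (C4)}), the finite-volume density is dominated by $e^{-\sum_{x\in\Lambda}(\gamma\phi_x^4 - \delta)}$.

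The key finite-volume computation, which is the technical core deferred to Appendix~\ref{app: LP} and adapted from \cite{LP76}, goes as follows. Writing out the density of $\langle \cdot\rangle^\eta_{\Lambda,\beta,h}$ explicitly, it is proportional to
\begin{equs}
\exp\Big( \beta\sum_{x,y\in\Lambda}J_{x,y}\phi_x\phi_y + \beta\sum_{x\in\Lambda, y\notin\Lambda}J_{x,y}\phi_x\eta_y + \sum_{x\in\Lambda}h_x\phi_x - \sum_{x\in\Lambda}(g\phi_x^4 + a\phi_x^2)\Big).
\end{equs}
The plan is to absorb the quadratic self-interaction and the linear boundary/field terms into the quartic confining term using Young's inequality: for each $x$, terms of the form $|\phi_x||\phi_y|$, $|\phi_x||\eta_y|$, and $|\phi_x|$ are bounded by $\epsilon' \phi_x^4 + C(\epsilon')(\ldots)$ where the remainder is a bounded quantity or a quadratic in the \emph{other} variables. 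The interaction sum over $x,y \in \Lambda$ is symmetrised and handled so that each $\phi_x^4$ picks up a controlled coefficient; the crucial point is that $\sum_{y} J_{x,y} < \infty$ (Lemma~\ref{lem: psi sum}, a consequence of {\bf (C4)}), so the total ``mass'' of interactions at each site is uniformly bounded. For the boundary term $\sum_{y\notin\Lambda} J_{x,y}|\phi_x||\eta_y|$, one uses $J_{x,y} \leq C d_G(x,y)^{-(d+\epsilon)}$ together with the growth bound $\eta \in R$ and a dyadic decomposition of $V\setminus\Lambda$ into annuli $B_{2^{k+1}}\setminus B_{2^k}$; Cauchy--Schwarz on each annulus gives $\sum_{y\notin\Lambda}J_{x,y}|\eta_y| \leq C'\sqrt{C_\eta}$, a finite constant, so this term contributes only $\epsilon'\phi_x^4 + $ (bounded). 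After these manipulations one gets a pointwise bound of the form $\textup{density} \leq e^{-\sum_{x\in\Lambda}((g-\epsilon')\phi_x^4 - \delta(C_\eta))}/Z$, and a matching lower bound on the partition function $Z$ (by restricting the integral to $|\phi_x|\leq 1$, say) removes the normalisation at the cost of adjusting $\delta$. Choosing $\epsilon'$ small fixes $\gamma = g - \epsilon' > 0$.

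The main obstacle I anticipate is the uniform control of the boundary contribution: one needs the constant $\delta$ to be genuinely independent of $\Lambda$ and, ideally, of $\eta$ — or at least $\nu$-integrable — and this is exactly where the interplay between the integrability hypothesis {\bf (C4)} (the $d+\epsilon$ decay of $J$) and the polynomial growth of $G$ (the $|B_k| \asymp k^d$ bound built into the temperedness set $R$) is essential; a naive bound would diverge, and the dyadic-annulus summation must be carried out carefully. A secondary technical point is that $\langle\cdot\rangle^\eta_{\Lambda,\beta,h}$ is only defined for $\eta$ with $\sum_{y\notin\Lambda}J_{x,y}|\eta_y| < \infty$, so one must first verify (again using {\bf (C4)} and temperedness) that this holds for $\nu$-a.e.\ $\eta$, which is implicit in the definition of a Gibbs measure but worth tracking. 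Finally, the continuity of $\gamma,\delta$ in the parameters $g,a,J,\beta,h$ follows by inspecting the Young-inequality constants and the partition-function lower bound, all of which depend continuously (indeed explicitly) on these parameters; this is routine once the quantitative estimate is in hand.
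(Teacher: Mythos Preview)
Your approach has a genuine gap at the boundary-control step. The claim that $\sum_{y\notin\Lambda}J_{x,y}|\eta_y| \leq C'\sqrt{C_\eta}$ follows from a dyadic decomposition and Cauchy--Schwarz is correct only when the annuli are centred at $x$ \emph{and} the temperedness condition is available for balls around $x$; but the tempered condition $\sum_{y\in B_k}\eta_y^2 \leq C_\eta|B_k|$ is stated for balls centred at the fixed origin $o$. For $x\in\Lambda$ far from $o$ and close to $\partial\Lambda$, the annulus $\{y:d_G(x,y)<2^{k+1}\}$ is contained only in $B_{2^{k+1}+d_G(o,x)}(o)$, and the Cauchy--Schwarz bound degenerates to $\sum_{y\notin\Lambda}J_{x,y}|\eta_y| \lesssim \sqrt{C_\eta}\,d_G(o,x)^{d/2}$ in the worst case (take $\eta$ tempered but concentrated on a single site just across $\partial\Lambda$ from $x$). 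Thus the per-site constant $\delta_x$ is not uniform, and summing over $x\in\Lambda$ can give a contribution growing strictly faster than $|\Lambda|$. The alternative you propose---letting $\delta$ depend on $C_\eta$ and then integrating against $\nu$---is circular: it requires exponential moment bounds on $C_\eta$ under $\nu$, which follow from regularity but are precisely what you are proving.

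The paper's route (Appendix~\ref{app: LP}, following \cite{LP76} and \cite{R70}) is structurally different. One applies DLR on a domain $\Lambda$ strictly larger than the target $\Delta$ and studies the marginal density on $\Delta$ after integrating out the shielding layer $\Lambda\setminus\Delta$. The key estimate (Proposition~\ref{appendix prop: prob}) is \emph{recursive} rather than direct: it bounds the marginal on $\Delta$ in terms of the marginal on $\Delta\setminus\{x\}$ plus correction terms, by decomposing according to whether the field $\phi$ itself---not the boundary $\eta$---is tame or wild on a F{\o}lner sequence of scales around $x$. On the tame event a single-site Young inequality suffices, while on the wild event at scale $q$ the quartic term already dominates on all of $F_{q+1}(x)$ (Proposition~\ref{largest-q}). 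An induction on $|\Delta|$ in the style of Ruelle then closes the bound uniformly (Corollary~\ref{app cor: finite vol reg}). This multiscale machinery is what replaces your one-shot Young's inequality, and it is exactly what is needed to bypass the circularity.
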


Regular measures concentrate on weakly growing configurations. 
\begin{lem} \label{lem: regular bc}
Let $\nu$ be $(\gamma,\delta)$-regular. For every $M=M(\gamma,\delta)>0$ sufficiently large, we have
\begin{equation*}
\nu [\Xi(M)]
=
1,
\end{equation*}
where 
\begin{equation*}
    \Xi(M)
    :=
    \{ \phi \in \RR^V: \exists  \Lambda_\phi \subset V \text{ finite},~ \phi^4_x \leq M \log d_G(o,x)  ~~\forall x \in V\setminus \Lambda_\phi \}.
\end{equation*}
\end{lem}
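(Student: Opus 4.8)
The plan is to deduce this from the pointwise density bound \eqref{eq: regular estimate} via a straightforward Borel--Cantelli argument over the vertices of $G$, using the polynomial growth of $G$ to control the number of vertices at each distance.

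\textbf{Step 1: A tail bound at a single vertex.} Fix $x \in V$ and apply the $(\gamma,\delta)$-regularity with $\Lambda = \{x\}$: the marginal density of $\phi_x$ is bounded by $e^{-\gamma \phi_x^4 + \delta}$. Hence for any threshold $t > 0$,
\begin{equs}
\nu\big[\phi_x^4 > t\big]
\leq
e^{\delta} \int_{\{s : s^4 > t\}} e^{-\gamma s^4}\,\textup{d}s
\leq
C(\gamma,\delta)\, e^{-\tfrac{\gamma}{2} t}
\end{equs}
for all $t$ large, where the last inequality follows from a routine estimate on the Gaussian-type tail (e.g.\ bounding $\int_{u}^\infty e^{-\gamma s^4}\textup{d}s \lesssim e^{-\gamma u^4}$ for $u\geq 1$ and taking $u = t^{1/4}$, and then absorbing $u^4 = t$). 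The constant $C(\gamma,\delta)$ is uniform in $x$ by translation invariance of the bound.

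\textbf{Step 2: Borel--Cantelli over $V$.} Set $t = t_x := M \log d_G(o,x)$ for $x \neq o$. Then Step 1 gives $\nu[\phi_x^4 > M \log d_G(o,x)] \leq C\, d_G(o,x)^{-\gamma M/2}$. Grouping vertices by distance and using polynomial growth $|B_k| \leq C k^d$ (so the number of vertices at distance exactly $k$ is at most $Ck^{d}$), we get
\begin{equs}
\sum_{x \in V \setminus\{o\}} \nu\big[\phi_x^4 > M \log d_G(o,x)\big]
\leq
C \sum_{k \geq 1} k^{d}\, k^{-\gamma M/2}
<
\infty
\end{equs}
provided $M > 2(d+1)/\gamma$. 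By Borel--Cantelli, $\nu$-almost surely only finitely many vertices $x$ satisfy $\phi_x^4 > M\log d_G(o,x)$; call the (random, finite) exceptional set $\Lambda_\phi$. This is exactly the event $\Xi(M)$, so $\nu[\Xi(M)] = 1$ for every $M = M(\gamma,\delta)$ sufficiently large.

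\textbf{Main obstacle.} The argument is essentially routine; the only point requiring a little care is the single-vertex tail estimate in Step 1 — one must check that the quartic potential genuinely produces a tail strong enough to beat the polynomial growth factor $k^d$ after the logarithmic rescaling $t_x = M\log d_G(o,x)$, which is why the logarithm (rather than a constant) threshold is the natural scale, and why $M$ must be taken large depending on $d$ and $\gamma$. No uniformity in $\nu$ beyond what Proposition \ref{prop: gibbs is regular} already provides is needed, since the constants $\gamma,\delta$ there are common to all $\nu \in \cG(\beta)$.
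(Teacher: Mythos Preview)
Your proof is correct and follows exactly the approach indicated in the paper, which simply states that the lemma is a direct consequence of the regularity bound, a union bound, and Borel--Cantelli. Your write-up just makes the constants and the role of polynomial growth explicit.
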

\begin{proof}
This is a direct consequence of Definition \ref{def: regular measure}, a union bound, and Borel-Cantelli.
\end{proof}

In light of Lemma \ref{lem: regular bc}, there is a natural notion of  ``maximal boundary condition''.

\begin{defn}
For $M > 0$, we define the configurations $\PLUS= \PLUS(M)$ and $\MINUS = \MINUS(M)$ as
\begin{equation*}
\PLUS_x = \sqrt[4]{ M \log \big(d_G(o,x)\vee 1\big) },
\qquad \forall x\in V,
\end{equation*}
and $\MINUS=-\PLUS$. We refer to $\langle\cdot\rangle^{\PLUS}_{\Lambda,\beta}$ and $\langle\cdot\rangle^{\MINUS}_{\Lambda,\beta}$ as the plus and minus (finite volume) measures. 
\end{defn}

\begin{rem}\label{rem: Gibss regular limit}
Proposition \textup{\ref{prop: gibbs is regular}} and Lemma \textup{\ref{lem: regular bc}} imply that any Gibbs measure is a limit of finite volume (regular) $\phi^4$ measures $\langle\cdot\rangle_{\Lambda,\beta}^\eta$ with (possibly random) boundary conditions $\eta$ satisfying $|\eta_x|\leq \PLUS_x(M)~~\forall x\in V\setminus \Lambda$, provided $M$ is taken sufficiently large. This is established in Appendix \textup{\ref{app: LP}}. One of the key results to proving Proposition \textup{\ref{prop: gibbs is regular}} is that finite volume measures with boundary condition satisfying $|\eta| \leq \PLUS$ satisfy an analogous estimate to \eqref{eq: regular estimate} with $(\gamma,\delta)$ uniform over subsets sufficiently far away from the boundary, see Corollary \textup{\ref{app cor: finite vol reg}} for a precise statement. Conversely, any sequence of finite volume measures with boundary conditions satisfying $|\eta| \leq \PLUS$ is tight and any accumulation point is a Gibbs measure, see Propositions \textup{\ref{appendix prop: tightness}} and \textup{\ref{appendix prop: dlr}}. 
\end{rem}

\subsection{Extremality and mixing of the plus and minus measures}

Recall that $\langle \cdot \rangle^+_\beta$ and $\langle \cdot \rangle^-_\beta$ were defined as the respective monotone limits $\lim_{h \downarrow 0}\langle \cdot \rangle^{0}_{\beta,h}$ and $\lim_{h \uparrow 0} \langle \cdot \rangle^{0}_{\beta,h}$. We now show that they can be obtained by taking infinite volume limits of measures with maximal boundary conditions at $(\beta, h=0)$, thus giving a proof that they are in $\cG(\beta)$.

\begin{prop} \label{prop: construction of plus and minus}
For $M$ sufficiently large, one has that $\langle\cdot\rangle_{\Lambda,\beta}^\PLUS\to \langle\cdot\rangle_\beta^+$ and $\langle\cdot\rangle_{\Lambda,\beta}^\MINUS\to \langle\cdot\rangle_\beta^-$ as $\Lambda \uparrow V$. Moreover, the measures are maximal and minimal, respectively, in the sense that, for any $\langle \cdot \rangle \in \cG(\beta)$ and any increasing and integrable function $f$,
\begin{equation}\label{eq: maximality}
\langle f \rangle^-_\beta 
\leq
\langle f \rangle
\leq
\langle f \rangle^+_\beta.
\end{equation}
\end{prop}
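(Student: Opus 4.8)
\textbf{Proof plan for Proposition \ref{prop: construction of plus and minus}.}

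The plan is to establish two facts: (i) the monotone limit $\langle\cdot\rangle_{\Lambda,\beta}^\PLUS \to \langle\cdot\rangle_\beta^+$ as $\Lambda\uparrow V$ exists and coincides with the $h\downarrow 0$ limit of the free-boundary measures, and (ii) the resulting measure dominates every Gibbs measure in FKG sense; the minus case is symmetric (apply the spin flip $\phi\mapsto-\phi$, which sends $\PLUS$ to $\MINUS$ and $h\downarrow 0$ to $h\uparrow 0$). First I would note that Griffiths' (second) inequality gives monotonicity of $\langle\cdot\rangle_{\Lambda,\beta}^\eta$ in the boundary condition $\eta$ for increasing functions, and monotonicity in $\Lambda$ for the $\PLUS$ boundary condition (increasing $\Lambda$ with $\PLUS$ data outside behaves like decreasing an external field pushing the boundary spins up, so correlations of increasing functions decrease; this is the standard argument, cf.\ the free-boundary monotonicity in Proposition \ref{prop: monotonicity in vol} run in the opposite direction). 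Combined with the uniform regularity estimate of Corollary \ref{app cor: finite vol reg} — which gives tightness of the family $\{\langle\cdot\rangle_{\Lambda,\beta}^\PLUS\}_\Lambda$ via Proposition \ref{appendix prop: tightness} — the monotone limit $\langle\cdot\rangle_{\Lambda,\beta}^{\PLUS}\to\mu^+$ exists along $\Lambda\uparrow V$, is independent of the exhaustion, and by Proposition \ref{appendix prop: dlr} is a Gibbs measure at $(\beta,0)$.

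Next I would identify $\mu^+$ with $\langle\cdot\rangle_\beta^+$. For this I would sandwich: for every $h>0$ and every finite $\Lambda$, Griffiths' inequality gives $\langle f\rangle_{\Lambda,\beta,h}^0 \le \langle f\rangle_{\Lambda,\beta}^\PLUS$ for increasing $f$, because adding the positive field $h$ and then further raising the boundary to $\PLUS$ both increase such correlations (using $\PLUS\ge 0$ and $h>0$, both act as nonnegative external fields). Letting $\Lambda\uparrow V$ gives $\langle f\rangle_{\beta,h}^0 \le \langle f\rangle_{\mu^+}$, and then $h\downarrow 0$ gives $\langle f\rangle_\beta^+ \le \langle f\rangle_{\mu^+}$. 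For the reverse inequality I would use that $\mu^+$ is a Gibbs measure at $h=0$ whose boundary data is controlled: by Remark \ref{rem: Gibss regular limit}, $\mu^+$ is supported on configurations $\eta$ with $|\eta_x|\le \PLUS_x(M')$ eventually, for suitable $M'$; but more directly, since $\mu^+$ is obtained from $\PLUS$ boundary conditions which are themselves dominated (in each finite window, for $\Lambda$ large) by the $h$-field measure in the appropriate limit — here I would instead argue that $\langle\cdot\rangle_\beta^+$ is maximal among Gibbs measures (proved below) and $\mu^+\in\cG(\beta)$, forcing $\langle f\rangle_{\mu^+}\le\langle f\rangle_\beta^+$; together with the previous display this yields equality.

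To prove maximality \eqref{eq: maximality}, let $\langle\cdot\rangle\in\cG(\beta)$ be arbitrary. By the DLR equation, for any finite $\Lambda$ and increasing integrable $f$ depending on finitely many coordinates, $\langle f\rangle = \int \langle f\rangle_{\Lambda,\beta}^\eta\,\mathrm{d}\nu(\eta)$. By Lemma \ref{lem: regular bc} (applied with the constants from Proposition \ref{prop: gibbs is regular}), $\nu$ is supported on $\Xi(M)$, so for $\nu$-a.e.\ $\eta$ there is a finite $\Lambda_\eta$ with $\eta_x^4 \le M\log d_G(o,x) = \PLUS_x^4$ for all $x\notin\Lambda_\eta$. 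Taking $\Lambda\uparrow V$ along an exhaustion and using Griffiths' monotonicity, $\langle f\rangle_{\Lambda,\beta}^\eta \le \langle f\rangle_{\Lambda,\beta}^{\PLUS}$ once $\Lambda\supset\Lambda_\eta$ (so that the boundary data satisfies $|\eta|\le\PLUS$ on $V\setminus\Lambda$); passing to the limit gives $\langle f\rangle \le \langle f\rangle_{\mu^+} = \langle f\rangle_\beta^+$ for $\nu$-a.e.\ $\eta$, hence after integration. The lower bound follows by the spin flip. I would then close the identification loop: applying maximality to $\mu^+\in\cG(\beta)$ gives $\langle f\rangle_{\mu^+}\le\langle f\rangle_\beta^+$, and combined with $\langle f\rangle_\beta^+\le\langle f\rangle_{\mu^+}$ from the sandwich, $\mu^+=\langle\cdot\rangle_\beta^+$. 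Since increasing functions determine the measure (they generate, together with constants, a monotone class sufficient to fix a probability measure on $\RR^V$ — standard), this completes the proof.

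\textbf{Main obstacle.} The delicate point is handling the \emph{random} boundary condition $\eta$ in the DLR equation uniformly: the threshold $\Lambda_\eta$ beyond which $|\eta|\le\PLUS$ depends on $\eta$, so one cannot fix $\Lambda$ before integrating. The fix is to take the limit $\Lambda\uparrow V$ inside the integral via monotone/dominated convergence — legitimate because $\langle f\rangle_{\Lambda,\beta}^\eta$ is eventually monotone in $\Lambda$ (for $\Lambda\supset\Lambda_\eta$, by the same Griffiths argument that gives monotonicity of the $\PLUS$ measures) and bounded by $\langle f\rangle_\beta^+$, with the $\nu$-integrability of the latter bound coming from the uniform regularity of Proposition \ref{prop: gibbs is regular}. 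A secondary technical point is justifying the exchange of the $h\downarrow 0$ and $\Lambda\uparrow V$ limits, which is handled by the monotonicity (in both parameters, via Griffiths) making all limits monotone and hence interchangeable.
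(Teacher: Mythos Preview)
Your plan has two genuine gaps, both stemming from treating the growing boundary condition $\PLUS$ as if it were the constant $+$ boundary of the Ising model.

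\textbf{Gap 1: monotonicity in $\Lambda$ of $\langle\cdot\rangle_{\Lambda,\beta}^{\PLUS}$ fails.} You claim that ``increasing $\Lambda$ with $\PLUS$ data outside behaves like decreasing an external field,'' and deduce monotonicity in volume. This is exactly what breaks for unbounded spins with a \emph{growing} boundary condition: when you enlarge $\Lambda$ from $B_n$ to $B_{n+1}$, the boundary values on $\partial B_{n+1}$ are strictly larger than those on $\partial B_n$ (since $\PLUS_x=\sqrt[4]{M\log d_G(o,x)}$ is increasing), so you are simultaneously freeing some spins and raising the external data. There is no Griffiths-type comparison available, and the paper says so explicitly in the remark after Proposition~\ref{prop: free plus ergod}: ``we do not have clear monotonicity in the volume of the quantity $\langle f\rangle_{\Lambda,\beta}^\PLUS$\ldots This is a consequence of the unboundedness of the spins.'' The paper therefore does \emph{not} construct $\mu^+$ as a monotone limit; it uses tightness to extract subsequential limits and then identifies each one with $\langle\cdot\rangle_\beta^+$.

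\textbf{Gap 2: the sandwich inequality is wrong.} Your inequality $\langle f\rangle_{\Lambda,\beta,h}^0 \le \langle f\rangle_{\Lambda,\beta}^{\PLUS}$ cannot hold for all finite $\Lambda$ and all $h>0$. Viewing the $\PLUS$ boundary as an effective field $h_\Lambda(\PLUS)_x=\sum_{y\notin\Lambda}J_{x,y}\PLUS_y$, this field tends to zero for $x$ deep inside $\Lambda$, so for any fixed $h>0$ and $\Lambda$ large the constant field $h$ dominates $h_\Lambda(\PLUS)$ on most of $\Lambda$---the effective fields are simply not pointwise comparable in either direction. This invalidates your derivation of $\langle f\rangle_\beta^+ \le \mu^+[f]$, and with it the closure of your identification loop (your attempt to reverse the inequality via maximality is circular anyway, since maximality of $\langle\cdot\rangle_\beta^+$ among Gibbs measures is not yet established---indeed, that $\langle\cdot\rangle_\beta^+\in\cG(\beta)$ is precisely the content of this proposition).

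\textbf{What the paper actually does.} For any subsequential limit $\langle\cdot\rangle_\beta^{\PLUS}$, the direction $\langle f\rangle_\beta^{\PLUS}\le \langle f\rangle_\beta^+$ comes from monotonicity in $h$ (not in boundary or volume): $\langle f\rangle_{\Lambda_n,\beta}^{\PLUS}\le \langle f\rangle_{\Lambda_n,\beta,h}^{\PLUS}$, and the right side converges to the \emph{unique} Gibbs measure $\nu_{\beta,h}$ at $(\beta,h)$---uniqueness for $h\neq 0$ being the Lee--Yang theorem for $\phi^4$. The reverse direction uses DLR for $\nu_{\beta,h}$ together with regularity \emph{uniform in $h\in[0,1]$} (this is where continuity of the regularity constants matters) to dominate the random boundary by $\PLUS$ up to an error $o(1)$ in $\Lambda$. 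Both of these ideas are absent from your plan.
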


\begin{proof}
We only prove the convergence results; the monotonicity \eqref{eq: maximality} is a direct consequence of the convergence together with Remark \ref{rem: Gibss regular limit}, and the monotonicity on boundary conditions in finite volume (see Proposition~\ref{prop: increasing in increasing bc}). Without loss of generality, we only consider the case of $\langle \cdot \rangle^+_\beta$. 
First, since the measures $\langle\cdot\rangle_{\Lambda,\beta}^{\PLUS}$ are uniformly  regular (see Remark~\ref{rem: Gibss regular limit}), it is straightforward to prove that they form a tight sequence of measures. Now, let $\Lambda_n\uparrow V$ be any sequence such that $\langle \cdot \rangle^\PLUS_{\Lambda_n,\beta}$ converges to a probability measure, which we denote by $\langle \cdot \rangle^\PLUS_\beta$. Note that this is a Gibbs measure. We prove that $\langle \cdot \rangle^\PLUS_\beta=\langle \cdot \rangle^+_\beta$, which in particular implies that the full sequence $\langle\cdot\rangle_{\Lambda,\beta}^{\PLUS}$ converges.

By monotonicity\footnote{This follows by differentiating in $h$ and using the FKG inequality of Proposition \ref{prop: FKG phi4} with the observation that the identity map $\phi_x \mapsto \phi_x$ is increasing.} in $h$, for any $f$ local, increasing and bounded, and $h>0$,
\begin{equation*}
\langle f \rangle^\PLUS_{\beta}
=
\lim_{n \rightarrow \infty}\langle f \rangle^\PLUS_{\Lambda_n, \beta}
\leq
\lim_{n \rightarrow \infty}\langle f \rangle^{\PLUS}_{\Lambda_n,\beta,h}
=
\nu_{\beta,h} [f],
\end{equation*}
where $\nu_{\beta,h}:=\langle\cdot\rangle_{\beta,h}$ is the unique measure in $\cG(\beta,h)$ for $h\neq0$: this is a consequence of the Lee--Yang theorem for the $\phi^4$ model \cite{GS} and \cite[Theorem 5.6]{LP76}. Since by definition $\nu_{\beta,h}\to \langle\cdot\rangle_{\beta}^+$ as $h\to 0$, we conclude that
$
\langle f \rangle^\PLUS_{\beta}
\leq
\langle f \rangle_{\beta}^+.
$

For the reverse inequality, let $f$ be local, increasing, and bounded. For $h > 0$ and $\Lambda \subset V$ finite, let
$
    \varepsilon(\Lambda,\beta,h)
    :=
    \Vert f \Vert_\infty \nu_{\beta,h}\big[\exists x \notin \Lambda: \: |\eta_x|> \PLUS_x(M)\big].
$
Provided $M$ is large enough, by continuity of the regularity coefficients and concentration on regular boundary conditions, $\sup_{h \in [0,1]}\varepsilon(\Lambda,h) =o(1)$ as $\Lambda \uparrow V$. Thus, by the DLR equations and monotonicity in boundary conditions (see Proposition \ref{prop: increasing in increasing bc}), we have for $n$ sufficiently large
\begin{equation*}
\begin{aligned}
    \langle f\rangle^+_\beta
    &:=
    \lim_{h\rightarrow 0^+}\nu_{\beta,h} [f]
    =
    \lim_{h\rightarrow 0^+} \int \langle f \rangle_{\Lambda_n}^{\eta} \, \textup{d} \nu_{\beta,h}(\eta) 
    \\&
    \leq  \lim_{h\rightarrow 0^+}\left(\varepsilon(\Lambda_n,\beta,h)+\int \langle f\rangle_{\Lambda_n,\beta,h}^{\PLUS}\textup{d}\nu_{\beta,h}(\eta)\right)
    \\&
    \leq  \sup_{h \in [0,1]}\varepsilon(\Lambda_n,\beta,h) + \lim_{h\rightarrow 0^+} \langle f\rangle_{\Lambda_n,\beta,h}^{\PLUS}
    \underset{n \rightarrow \infty}{\longrightarrow }
    \langle f \rangle_{\beta}^\PLUS.
\end{aligned}    
\end{equation*}

Therefore, $\langle \cdot \rangle^+_\beta$ and $\langle \cdot \rangle^{\PLUS}_\beta$ agree on all local bounded increasing functions, hence $\langle \cdot \rangle^\PLUS_{\beta} = \langle \cdot \rangle^+_\beta$, as we wanted to prove.
\end{proof}

\begin{rem} 
Unlike the Ising model, we do not have clear monotonicity in the volume of the quantity $\langle f\rangle_{\Lambda,\beta}^\PLUS$ (for $f$ local, increasing and bounded). This is a consequence of the unboundedness of the spins. As a result, we must use regularity to ``create'' a maximal boundary condition $\PLUS$, which comes with an error term, and temperedness to bound the spins inside a large box. As a byproduct of the methods developed above, we see that for $\Lambda_1\subset \Lambda_2$ two finite subsets of $V$, and for $f$ local (with support in $\Lambda_1$), increasing, bounded,
\begin{equation*}
    \langle f\rangle_{\Lambda_2,\beta}^\PLUS\leq \langle f\rangle_{\Lambda_1,\beta}^\PLUS+\varepsilon(\Lambda_1),
\end{equation*}
where $\varepsilon(\Lambda_1)\rightarrow 0$ as $\Lambda_1\uparrow V$.
\end{rem}

In what follows, we fix $M$ sufficiently large so that Lemma \ref{lem: regular bc} and Proposition \ref{prop: construction of plus and minus} apply. 

\begin{cor}[Extremality] \label{cor: extremality}
The measures $\langle \cdot \rangle^+_\beta$ and $\langle \cdot \rangle^-_\beta$ are extremal in the following sense. Let $\# \in \{+,-\}$. If there exists $\nu_1, \nu_2 \in \cG(\beta)$ and $\lambda \in (0,1)$ such that
\begin{equation*}
    \langle \cdot \rangle^\#_\beta
    =
    \lambda \nu_1 + (1-\lambda)\nu_2,
\end{equation*}
then necessarily $\nu_1 = \nu_2 = \langle \cdot \rangle^\#_\beta$.
\end{cor}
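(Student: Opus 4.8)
The plan is to deduce extremality of $\langle \cdot \rangle^+_\beta$ (the case $\# = -$ is symmetric, or follows by the spin-flip $\phi \mapsto -\phi$) directly from the maximality property \eqref{eq: maximality} established in Proposition \ref{prop: construction of plus and minus}. Suppose $\langle \cdot \rangle^+_\beta = \lambda \nu_1 + (1-\lambda)\nu_2$ with $\nu_1,\nu_2 \in \cG(\beta)$ and $\lambda \in (0,1)$. The key observation is that $\langle \cdot \rangle^+_\beta$ \emph{dominates} every Gibbs measure in the sense of stochastic ordering on increasing functions; I will show this forces $\nu_1 = \nu_2 = \langle \cdot \rangle^+_\beta$.

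First I would fix an increasing, bounded, local function $f$. By \eqref{eq: maximality} applied to $\nu_1$ and $\nu_2$ we have $\langle f \rangle_{\nu_i} \leq \langle f \rangle^+_\beta$ for $i = 1,2$. Taking the convex combination gives $\lambda \langle f \rangle_{\nu_1} + (1-\lambda)\langle f \rangle_{\nu_2} = \langle f \rangle^+_\beta$, so both inequalities must in fact be equalities: $\langle f \rangle_{\nu_1} = \langle f \rangle_{\nu_2} = \langle f \rangle^+_\beta$. Thus $\nu_1$, $\nu_2$ and $\langle \cdot \rangle^+_\beta$ agree on all bounded increasing local functions. The next step is to upgrade this to equality of measures: a standard fact is that a probability measure on $\RR^V$ (with the product topology) is determined by its integrals against bounded increasing local functions, since these generate the Borel $\sigma$-algebra in the relevant sense — more concretely, for any finite $\Lambda$ and any real numbers $(t_x)_{x \in \Lambda}$, the indicator $\mathbf 1\{\phi_x > t_x \ \forall x \in \Lambda\}$ is an increasing local function (bounded, and approximable by bounded increasing continuous local functions if one wants continuity), and such events form a $\pi$-system generating the cylinder $\sigma$-algebra. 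Hence the finite-dimensional marginals of $\nu_1$, $\nu_2$, $\langle \cdot \rangle^+_\beta$ coincide, and therefore the measures coincide: $\nu_1 = \nu_2 = \langle \cdot \rangle^+_\beta$.

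The main (minor) obstacle is purely measure-theoretic bookkeeping: one must make sure that the class of test functions for which \eqref{eq: maximality} is invoked — increasing and integrable — is rich enough to separate measures, and that monotone-class / Dynkin arguments apply on the infinite-product space $\RR^V$. Since tempered Gibbs measures are supported on $R$ and are regular (Proposition \ref{prop: gibbs is regular}), integrability of bounded increasing local functions is automatic, so there is no real analytic difficulty; the argument is soft once maximality is in hand. For the case $\# = -$, one uses minimality in \eqref{eq: maximality} in place of maximality, reversing the inequalities, and the identical argument yields $\nu_1 = \nu_2 = \langle \cdot \rangle^-_\beta$.
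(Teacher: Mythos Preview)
Your proposal is correct and follows exactly the approach the paper has in mind: the paper's proof reads in full ``This is a direct consequence of \eqref{eq: maximality},'' and you have simply spelled out that consequence (maximality forces equality on increasing local functions, and such functions separate measures via the $\pi$-system of up-sets).
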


\begin{proof}
This is a direct consequence of \eqref{eq: maximality}.
\end{proof}

As a standard consequence of the extremality of the Gibbs measures proved in Corollary \ref{cor: extremality}, we obtain that the measures $\langle \cdot \rangle^+_\beta$ and $\langle \cdot \rangle^-_\beta$ are mixing. See for example \cite[Theorem 7.7 and Proposition 7.9]{georgii2011gibbs}.
\begin{cor}[Mixing]\label{cor: extremal and ergodic}
Let $\# \in \{+,-\}$. Then, for all local bounded functions $f$ and $g$,
\begin{equation*}
\lim_{d_G(o,x) \rightarrow \infty} \langle f  \cdot \gamma_x g \rangle^\#_\beta
=
\langle f \rangle^\#_\beta \langle g \rangle^\#_\beta
\end{equation*}
for all $\gamma_x \in \Gamma$ such that $\gamma_x o = x$.
\end{cor}

\subsection{Ergodicity of the free measure under even functions}

Recall that the free measure, denoted $\langle \cdot \rangle_{\beta}^0$, is the weak limit of $\langle \cdot \rangle_{\Lambda,\beta}^0$ as $\Lambda \uparrow V$. The free measure is a Gibbs measure. Indeed, by Remark \ref{rem: Gibss regular limit}, we have tightness and that the (unique) limit point is tempered and satisfies the DLR equations.

\begin{prop} \label{prop: free plus ergod}
The measure $\langle\cdot\rangle^0_\beta$ is mixing on even functions in the following sense. For all local even functions $f$ and $g$ such that $f,g$ are square integrable,
\begin{equation*}
\lim_{d_G(o,x) \rightarrow \infty} \langle f  \cdot \gamma_x g \rangle^0_\beta
=
\langle f \rangle^0_\beta \langle g \rangle^0_\beta
\end{equation*}
for all $\gamma_x \in \Gamma$ such that $\gamma_x o = x$.
\end{prop}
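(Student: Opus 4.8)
The plan is to establish the clustering first for even monomials and then to extend by density. Since $\langle\cdot\rangle^0_\beta$ is invariant under $\phi\mapsto-\phi$, odd monomials have zero expectation, and an even local $f$ supported on a finite set $\Lambda_0$ is an $L^2(\langle\cdot\rangle^0_\beta)$-limit of \emph{even} polynomials: by Proposition \ref{prop: gibbs is regular} the projection of $\langle\cdot\rangle^0_\beta$ onto $\RR^{\Lambda_0}$ has density bounded by a constant times $e^{-\gamma\sum_{x\in\Lambda_0}\phi_x^4}$, hence super-Gaussian tails, so polynomials are dense in $L^2$ and one may take the even part of any approximant. As $\langle\cdot\rangle^0_\beta$ is $\Gamma$-invariant, $\Vert\gamma_x g\Vert_{L^2}=\Vert g\Vert_{L^2}$ for every $x$, so Cauchy--Schwarz reduces the statement, \emph{uniformly in $x$}, to the case $f=\phi_A$ and $g=\phi_B$ with $|A|$ and $|B|$ both even.

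For such monomials, the lower bound is immediate from the second Griffiths inequality, which holds in finite volume with free boundary conditions and is preserved under $\Lambda\uparrow V$: $\langle\phi_A\,\gamma_x\phi_B\rangle^0_\beta\ge\langle\phi_A\rangle^0_\beta\langle\gamma_x\phi_B\rangle^0_\beta=\langle\phi_A\rangle^0_\beta\langle\phi_B\rangle^0_\beta$, so $\liminf_{d_G(o,x)\to\infty}\langle\phi_A\,\gamma_x\phi_B\rangle^0_\beta\ge\langle\phi_A\rangle^0_\beta\langle\phi_B\rangle^0_\beta$. (This uses only that $A,B$ are monomials with non-negative powers.)

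The matching upper bound is the heart of the matter, and it is here that the random tangled current representation enters — and where evenness of $A,B$ is essential, since otherwise $\langle\phi_A\rangle^0_\beta=0$ and the relevant sources cannot be switched. The idea is to apply the switching principle of Theorem \ref{thm: switching lemma} (equivalently, the classical switching lemma in the Griffiths--Simon prelimit, passed to the limit) to rewrite $\langle\phi_A\rangle^0_\beta\langle\phi_B\rangle^0_\beta$ through a pair of tangled currents with sources at $A$ and at $\gamma_xB$; rerouting the sources then identifies the truncated correlation $\langle\phi_A\,\gamma_x\phi_B\rangle^0_\beta-\langle\phi_A\rangle^0_\beta\langle\phi_B\rangle^0_\beta$ with $\langle\phi_A\,\gamma_x\phi_B\rangle^0_\beta$ times the probability, under the associated double tangled-current measure, that the tangled cluster of $A$ reaches the tangled cluster of $\gamma_xB$. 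It then remains to prove that this connection probability vanishes as $d_G(o,x)\to\infty$: such a connection forces a current path from a fixed finite neighbourhood of $A$ to a fixed finite neighbourhood of $\gamma_xB$, and a first-moment (BK-type) estimate — using the decay $J_{x,y}\le Cd_G(x,y)^{-(d+\epsilon)}$ of \textbf{(C4)} together with the polynomial growth of $G$ — shows that the expected weight of currents realising such a long-range connection tends to $0$. Making this bound quantitative and uniform, while controlling the unbounded single-site weights and the combinatorics of the single-site tangles (e.g.\ that the admissible partitions can be steered), is the main obstacle; once in place it matches the Griffiths lower bound, giving $\lim_{d_G(o,x)\to\infty}\langle\phi_A\,\gamma_x\phi_B\rangle^0_\beta=\langle\phi_A\rangle^0_\beta\langle\phi_B\rangle^0_\beta$, and the density argument of the first paragraph upgrades this to all square-integrable local even $f,g$.
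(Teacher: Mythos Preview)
Your approach via random tangled currents is genuinely different from the paper's, which avoids stochastic geometric representations entirely. The paper's argument is a short interaction-modification trick: for $\phi_A$ an even monomial and $\tilde J$ a finitely supported perturbation with $-J\le\tilde J\le0$, one writes
\[
\langle\phi_A\rangle^0_{\beta,J+\gamma_x^{-1}\tilde J}
=\frac{\langle\phi_A\, e^{-\beta H_{\tilde J}(\gamma_x\phi)}\rangle^0_{\beta,J}}{\langle e^{-\beta H_{\tilde J}(\gamma_x\phi)}\rangle^0_{\beta,J}}
\]
and sandwiches the left side between $\langle\phi_A\rangle^0_{B_{n/2},\beta,J}$ (which equals $\langle\phi_A\rangle^0_{B_{n/2},\beta,J+\gamma_x^{-1}\tilde J}$ once $d_G(o,x)$ is large enough that the support of $\gamma_x^{-1}\tilde J$ lies outside $B_{n/2}$) and $\langle\phi_A\rangle^0_{\beta,J}$, using only Griffiths' inequality and monotonicity in volume. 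Sending $d_G(o,x)\to\infty$ gives mixing for $f=\phi_A$ and $g=e^{-\beta H_{\tilde J}}$; expansion of the exponential plus Stone--Weierstrass does the rest. This works uniformly in $\beta$ with no probabilistic input beyond correlation inequalities.

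Your proposal has a genuine gap at precisely the step you flag as ``the main obstacle''. The claim that the relevant connection probability vanishes via a ``first-moment (BK-type) estimate'' using the decay $(\textbf{C4})$ of $J$ fails for $\beta>\beta_c$: that condition controls the presence of long \emph{edges} in the current, not of long \emph{paths}. Even for nearest-neighbour $J$, in the supercritical regime the double random tangled current has an infinite cluster with positive probability (see Lemma~\ref{lem:inf} and the surrounding discussion), so the probability that distant regions lie in the same cluster need not decay. A naive first-moment bound on current paths from the support of $A$ to that of $\gamma_x B$ diverges once $\beta$ is large. Rescuing your route would require a much more refined input --- something closer in spirit to the flow and bridge arguments of Section~\ref{sec: gamma inv } --- and it is not clear this can be carried out without circularity, since Proposition~\ref{prop: free plus ergod} is itself used (via Proposition~\ref{prop: inf vol flat}) to set up the infinite-volume tangled current measures. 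The paper's proof sidesteps all of this by never leaving the world of correlation inequalities.
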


\begin{proof}
Let $A:V \rightarrow \NN$ be of finite support and such that $\sum_{x} A_x$ is even. Define $\phi_A := \prod_{x \in V}\phi_x^{A_x}$.  Let $(\tilde J_{x,y})_{x,y \in V}$ be a set of interactions supported on finitely many pairs in $V$ such that $-J_{x,y} \leq \tilde J_{x,y} \leq 0$.
We define 
\begin{equation*}
H_{\tilde J}(\phi)
=
-\sum_{\lbrace x,y\rbrace\in \mathcal{P}_2(V)}\tilde J_{x,y} \phi_x \phi_y,
\end{equation*}
where $\mathcal{P}_2(V) = \big\{ \{x,y\} : x,y \in V \big\}$. Note that, by taking infinite volume limits, the measure $\langle \phi_A \rangle_{\beta,J + \gamma_x^{-1}\tilde J}^0$ is well-defined and satisfies
\begin{equation*}
\langle \phi_A \rangle_{\beta,J + \gamma_x^{-1}\tilde J}^0 
=
\frac{\langle \phi_A e^{-\beta  H_{\tilde J}(\gamma_x \phi)} \rangle_{\beta,J }^0}{\langle e^{-\beta H_{\tilde J}(\gamma_x \phi)} \rangle_{\beta,J}^0}.
\end{equation*}

Now for $x\in V$ such that $n:=d_G(o,x)$ is sufficiently large, by monotonicity in the volume and Griffiths' inequality, we have that
\begin{equation*}
\langle \phi_A \rangle^0_{B_{n/2},\beta, J}
=
\langle \phi_A  \rangle_{B_{n/2}, \beta,J+\gamma_x^{-1}\tilde J}^0
\leq
\langle \phi_A \rangle_{\beta,J + \gamma_x^{-1}\tilde J}^0 
\leq 
\langle \phi_A \rangle_{\beta,J}^0.
\end{equation*}

Hence, by taking $d_G(o,x) \rightarrow \infty$ and using convergence of correlations in the infinite volume limit, we have shown that
\begin{equation*}
\lim_{d_G(o,x) \rightarrow \infty}\langle \phi_A e^{-\beta H_{\tilde{J}}(\gamma_x \phi)} \rangle_{\beta,J}^0
=
\langle \phi_A \rangle_{\beta,J}^0 \langle e^{-\beta H_{\tilde{J}}( \phi)} \rangle_{\beta,J}^0.
\end{equation*}
This establishes Proposition \ref{prop: free plus ergod} for $f(\varphi) = \varphi_A$ and $g(\varphi) =e^{-\beta H_{\tilde{J}}( \phi)}$. By standard approximation arguments via Taylor expanding the exponential, resumming, and using the Stone--Weierstrass theorem, the mixing extends to all even functions. 
\end{proof}


\subsection{A characterisation of $\Gamma$-invariant Gibbs measures via correlations}

The main result of this subsection is a characterisation for the structure of the $\Gamma$-invariant Gibbs measures, see Proposition \ref{Lebowitz}. The analogous result for the Ising model was obtained by Lebowitz \cite{L1977}. In our setting, there are significant complications that come from the unboundedness of the spins. 

A central tool in the proof is the following correlation inequality, which was first established in \cite{L1977}. Recall that if $A:V\rightarrow \NN$ is finitely supported, we denote
$
    \varphi_A
    =
    \prod_{x\in V}\varphi_x^{A_x}.
$
\begin{prop}[Ginibre inequality]\label{Ginibre inequality} Let $A,B: V\rightarrow \NN$ be finitely supported. Then, if $\Lambda$ contains the support of $A$ and $B$, for any $\eta,\eta'\in \Xi(M)$ such that $|\eta_x|\leq \eta'_x$ for all $x\notin \Lambda$,
\begin{equation*}
    \langle \varphi_A\varphi_B\rangle^{\eta'}_{\Lambda,\beta}-\langle \varphi_A\varphi_B\rangle^\eta_{\Lambda,\beta}\geq \left|\langle \varphi_A\rangle^{\eta'}_{\Lambda,\beta}\langle \varphi_B\rangle^\eta_{\Lambda,\beta}-\langle \varphi_A\rangle_{\Lambda,\beta}^\eta\langle\varphi_B\rangle^{\eta'}_{\Lambda,\beta}\right|.
\end{equation*}
Moreover, if $\langle \cdot\rangle_\beta\in \cG(\beta)$,
\begin{equation*}
    \langle \varphi_A\varphi_B\rangle^+_{\beta}-\langle \varphi_A\varphi_B\rangle_\beta\geq \left|\langle \varphi_A\rangle^+_{\beta}\langle \varphi_B\rangle_\beta-\langle \varphi_A\rangle_\beta\langle\varphi_B\rangle^+_{\beta}\right|.
\end{equation*}
\end{prop}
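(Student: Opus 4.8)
The plan is to prove the Ginibre inequality first in finite volume with deterministic boundary conditions and then pass to the infinite-volume statement by a limiting argument, invoking the machinery of Section 2. The finite-volume inequality is a classical consequence of the Griffiths--Simon approximation: the $\varphi^4$ measure on a finite set $\Lambda$ is a weak limit of Ising-type measures on $\Lambda\times K_N$, for which Ginibre's inequality is known in the form stated (with the boundary field absorbed into the interaction, which is still ferromagnetic since $|\eta_x|\leq\eta'_x$ and $J\geq 0$; concretely one compares the measure at boundary condition $\eta'$ with that at $\eta$ by writing the difference of the boundary Hamiltonians and using that the duplicated-system Ginibre inequality is monotone in the couplings). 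So the first step is: cite or recall Ginibre's inequality for the Ising model on $\Lambda\times K_N$, observe the coupling constants coming from the two boundary conditions $\eta,\eta'$ differ by a nonnegative amount on each edge (using $|\eta_x|\le\eta_x'$), apply the inequality, and take $N\to\infty$ along the Griffiths--Simon sequence using that $\varphi_A,\varphi_B$ have moments of all orders under these measures (so the polynomial observables converge). This yields the first displayed inequality for any $\eta,\eta'$ with $|\eta_x|\le\eta_x'$ on $\Lambda^c$; the restriction to $\eta,\eta'\in\Xi(M)$ is only needed so that the infinite-volume objects below are well-defined.

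For the second (infinite-volume) inequality, the plan is: given $\langle\cdot\rangle_\beta\in\cG(\beta)$, use the DLR equations together with Proposition \ref{prop: construction of plus and minus} and Remark \ref{rem: Gibss regular limit} to realise both $\langle\cdot\rangle_\beta$ and $\langle\cdot\rangle_\beta^+$ as limits of finite-volume measures $\langle\cdot\rangle_{\Lambda,\beta}^\eta$ with (random) boundary conditions $\eta$ concentrated on $\Xi(M)$, the $+$ measure corresponding to taking $\eta$ as close as possible to $\PLUS(M)$. Concretely, pick $\Lambda_n\uparrow V$ containing the supports of $A$ and $B$; write $\langle\varphi_A\varphi_B\rangle_\beta = \int\langle\varphi_A\varphi_B\rangle_{\Lambda_n,\beta}^\eta\,\mathrm{d}\nu(\eta)$ for $\nu=\langle\cdot\rangle_\beta$, and similarly for the $+$ measure with boundary data $\eta'$ coupled so that $|\eta_x|\le\eta_x'$ on $\Lambda_n^c$ up to an event of small probability (here the regularity estimates of Proposition \ref{prop: gibbs is regular} and concentration on $\Xi(M)$ give $|\eta_x|\le\PLUS_x$ off a set of vanishing probability, and by maximality one may take $\eta'=\PLUS$). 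Apply the finite-volume Ginibre inequality inside the integrals, then let $n\to\infty$; the left side converges to $\langle\varphi_A\varphi_B\rangle_\beta^+ - \langle\varphi_A\varphi_B\rangle_\beta$ and the right side to $|\langle\varphi_A\rangle_\beta^+\langle\varphi_B\rangle_\beta - \langle\varphi_A\rangle_\beta\langle\varphi_B\rangle_\beta^+|$ by continuity of these polynomial correlations under the convergences established in Section 2, and by dominated convergence (uniform integrability is supplied by the uniform regularity of Remark \ref{rem: Gibss regular limit}).

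**Main obstacle.** The delicate point is not the algebra but the integrability/limiting bookkeeping: $\varphi_A$ and $\varphi_B$ are unbounded, so one must justify exchanging limits with integration against the possibly-random boundary conditions, and one must control the error terms coming from the event $\{\exists x\notin\Lambda_n:|\eta_x|>\PLUS_x\}$ where the comparison $|\eta_x|\le\eta_x'$ may fail. Both are handled by the uniform $(\gamma,\delta)$-regularity of finite-volume measures with boundary data dominated by $\PLUS$ (Corollary \ref{app cor: finite vol reg}), which furnishes uniform-in-$\Lambda$ Gaussian-type tail bounds on the spins and hence uniform integrability of all polynomial observables; this is exactly the same mechanism used in the proofs of Proposition \ref{prop: construction of plus and minus} and Proposition \ref{prop: extremal and ergodic}, and I would follow those arguments closely. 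A secondary technical care is ensuring the Griffiths--Simon approximation is valid in the presence of a nonzero boundary field $\eta$ on $\Lambda^c$ — but since $\Lambda$ is finite and the boundary field enters only linearly, it is absorbed into an external magnetic field on $\Lambda$ and the standard approximation and Ginibre inequality still apply verbatim.
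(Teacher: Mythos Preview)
Your proposal is correct and matches the paper's own proof essentially line for line: the finite-volume inequality is obtained by interpreting the boundary conditions as external fields (so that $|\eta|\le\eta'$ becomes $|h|\le h'$), invoking the classical Ginibre inequality for the Ising model on the Griffiths--Simon graph $\Lambda\times K_N$ applied to the block-spin polynomials $\Phi_{A,N},\Phi_{B,N}$, and passing to the limit $N\to\infty$ via Proposition~\ref{prop: CV griffiths}; the infinite-volume statement then follows exactly as you outline, by fixing $\eta'=\PLUS$, integrating the finite-volume inequality against $\langle\cdot\rangle_\beta$ restricted to $\{|\eta|\le\PLUS \text{ on }\Lambda^c\}$, and sending $\Lambda\uparrow V$ using the regularity estimates of Section~\ref{sec: gibbs states}. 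Your identification of the uniform-integrability issue as the main technical care is also spot on, and the paper handles it in precisely the way you suggest.
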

\begin{proof} We include a proof in  Appendix \ref{appendix: ginibre} for completeness.
\end{proof}

We make heavy use of the fact that the sign field of the $\phi^4$ measure can be viewed as an Ising model with random coupling constants coming from the absolute value field. Given $x\in \RR$, we define the sign of $x$ as $\textup{sgn}(x)=x/|x|$ for $x\neq 0$, and $\textup{sgn}(0)=1$. We have the following coupling result.
\begin{lem}\label{coupling lemma}
Let $\Lambda\subset V$ be a finite set and let $\eta\in \RR^V$ such that $|\eta|\leq \PLUS$ (outside $\Lambda$). There exists a coupling $(\PP_{\Lambda,\beta},\varphi^\PLUS,\varphi^{\eta})$, $\varphi^\PLUS\sim \langle \cdot \rangle^\PLUS_{\Lambda,\beta}$, $\varphi^{\eta}\sim \langle \cdot \rangle^{\eta}_{\Lambda,\beta}$ and a non-decreasing function $g:(0,\infty)^2\mapsto (0,\infty)$, such that $\PP_{\Lambda,\beta}$-almost surely, 
\begin{equation*}
    |\varphi^\PLUS_x|\geq |\varphi^{\eta}_x| \quad \text{for every $x\in \Lambda$},
\end{equation*}
\begin{equation*}
\mathbb{E}_{\Lambda,\beta}\left[\textup{sgn}(\varphi^\PLUS_x) \textup{sgn}(\varphi^\PLUS_y) \mid |\varphi^\PLUS|,|\varphi^{\eta}|\right]\geq g(|\varphi^\PLUS_x|,|\varphi^\PLUS_y|) \quad \text{for every $x,y\in \Lambda$},
\end{equation*}
and
\begin{equation*}
\mathbb{E}_{\Lambda,\beta}\left[\textup{sgn}(\varphi^\PLUS_x) \textup{sgn}(\varphi^\PLUS_y)-\textup{sgn}(\varphi^{\eta}_x) \textup{sgn}(\varphi^{\eta}_y)\mid |\varphi^\PLUS|,|\varphi^{\eta}|\right]
\geq 0 \quad \text{for every $x,y\in \Lambda$}.
\end{equation*}
\end{lem}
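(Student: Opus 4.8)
The plan is to build the coupling by conditioning on the absolute-value field of a single $\phi^4$ configuration and then exploiting the fact that, given the absolute values, the sign field is an Ising model with nonnegative (random) couplings. First I would recall the standard decomposition of the $\phi^4$ measure $\langle\cdot\rangle^\eta_{\Lambda,\beta}$: writing $\varphi_x = \sigma_x |\varphi_x|$ with $\sigma_x = \textup{sgn}(\varphi_x)$, one checks that under $\langle\cdot\rangle^\eta_{\Lambda,\beta}$ the absolute-value field $|\varphi|$ has a density proportional to $\prod_x e^{-g|\varphi_x|^4 - a|\varphi_x|^2}\,\textup{d}|\varphi_x|$ times the Ising partition function $Z^{\text{Is}}_\Lambda(\{\beta J_{x,y}|\varphi_x||\varphi_y|\}, \{\beta J_{x,y}|\varphi_x|\eta_y\})$ with ferromagnetic couplings $\beta J_{x,y}|\varphi_x||\varphi_y|\ge 0$ and external field $\beta\sum_{y\notin\Lambda}J_{x,y}|\varphi_x|\eta_y$ at $x$. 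Conditionally on $|\varphi|$, the signs are distributed as this Ising model.

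The key monotonicity input is that the absolute-value marginal is stochastically monotone in the boundary condition: if $|\eta|\le|\eta'|=\PLUS$ outside $\Lambda$, then the law of $|\varphi^\eta|$ is stochastically dominated by that of $|\varphi^\PLUS|$. This follows from Griffiths'/FKG-type arguments for $\phi^4$ applied to the absolute-value field (equivalently, it is the content of Proposition \ref{prop: increasing in increasing bc} / Proposition \ref{prop: FKG phi4} restricted to increasing functions of $|\varphi|$, together with the fact that increasing the boundary spins only increases the Ising field at each site, hence increases the reweighting of large $|\varphi|$). Given this, Strassen's theorem produces a monotone coupling $(|\varphi^\PLUS|, |\varphi^\eta|)$ with $|\varphi^\PLUS_x|\ge|\varphi^\eta_x|$ pointwise a.s., which is the first claimed inequality. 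Then I would sample the two sign fields conditionally \emph{independently} given $(|\varphi^\PLUS|,|\varphi^\eta|)$, each from its respective conditional Ising law. The second inequality, $\mathbb E[\sigma^\PLUS_x\sigma^\PLUS_y\mid |\varphi^\PLUS|,|\varphi^\eta|]\ge 0$, is then just the first Griffiths inequality for the ferromagnetic Ising model with couplings $\beta J_{x,y}|\varphi^\PLUS_x||\varphi^\PLUS_y|$ and nonnegative external field coming from $\eta'=\PLUS\ge 0$.

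For the third and last inequality I would compare the two conditional Ising expectations. With $\eta'=\PLUS\ge 0$ and $|\eta|\le\PLUS$, the Ising model governing $\sigma^\PLUS$ has couplings $\beta J_{x,y}|\varphi^\PLUS_x||\varphi^\PLUS_y|$ that dominate $\beta J_{x,y}|\varphi^\eta_x||\varphi^\eta_y|$ (by the pointwise domination of absolute values from Strassen), and external fields $\beta\sum_{y\notin\Lambda}J_{x,y}|\varphi^\PLUS_x|\PLUS_y \ge \beta\sum_{y\notin\Lambda}J_{x,y}|\varphi^\eta_x|\eta_y$ in absolute value and of definite (nonnegative) sign, whereas the $\eta$-field can have either sign but smaller magnitude. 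By the second Griffiths inequality (monotonicity of $\langle\sigma_x\sigma_y\rangle$ in ferromagnetic couplings and in a nonnegative external field) applied to $\sigma^\PLUS$, and by the fact that flipping the sign of the external field at sites where $\eta_y<0$ only decreases $\langle\sigma_x\sigma_y\rangle$ for the $\eta$-model (GKS/Griffiths again, since $\sigma_x\sigma_y$ is even), one gets
\begin{equs}
\mathbb E\big[\sigma^\PLUS_x\sigma^\PLUS_y \mid |\varphi^\PLUS|,|\varphi^\eta|\big]
\ \ge\
\mathbb E\big[\sigma^\eta_x\sigma^\eta_y \mid |\varphi^\PLUS|,|\varphi^\eta|\big]
\end{equs}
pointwise in the conditioning variables, which integrates to the third inequality; taking the conditional expectation over the signs first and then over the absolute-value coupling yields the stated form.

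\textbf{Main obstacle.} The delicate point is the stochastic domination of the absolute-value marginals under the boundary condition, i.e.\ $\Law(|\varphi^\eta|)\preceq\Law(|\varphi^\PLUS|)$ when $|\eta|\le\PLUS$. One must be careful that the boundary field enters the Ising reweighting with a \emph{sign} (through $\eta_y$, which need not be nonnegative), so it is not immediately an increasing function of $|\varphi|$; the resolution is that $Z^{\text{Is}}_\Lambda$ with an external field $h$ depends on $h$ only through $|h|$ is false, but $Z^{\text{Is}}_\Lambda$ is increasing in $|\varphi_x|$ once one uses that $\log Z^{\text{Is}}$ is convex and even in each boundary-reweighted field, so that $\partial_{|\varphi_x|}\log Z^{\text{Is}}\ge 0$ regardless of the signs of the $\eta_y$. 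Making this monotonicity rigorous — and compatible with the Ginibre/Griffiths inequalities available for $\phi^4$ on graphs, i.e.\ Proposition \ref{Ginibre inequality} and Proposition \ref{prop: increasing in increasing bc} — is the crux; everything else is an application of the classical Griffiths inequalities for ferromagnetic Ising models in a field, applied conditionally.
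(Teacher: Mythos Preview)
Your skeleton is exactly the paper's: decompose $\varphi=\sigma|\varphi|$, prove stochastic domination of the absolute-value marginals, couple via Strassen, then sample the two sign fields as conditionally independent ferromagnetic Ising models with random couplings. The second displayed inequality is indeed just first Griffiths for the conditional Ising. Two of your justifications, however, do not go through as written.

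\textbf{Stochastic domination of $|\varphi|$.} Showing that each Ising partition function is increasing in $|\varphi_x|$ (your convexity/evenness remark) is not what is needed; both densities contain such a factor, and monotonicity of each separately does not yield $\Law(|\varphi^\eta|)\preceq\Law(|\varphi^\PLUS|)$. What the paper does (Lemma~\ref{lem: absolute domination}) is show that the \emph{ratio}
\[
G(|\varphi|)=\frac{Z^{\ising,+}_{\Lambda,J^\PLUS(|\varphi|),\beta}}{Z^{\ising,\sgn(\eta)}_{\Lambda,J^\eta(|\varphi|),\beta}}
\]
is increasing in $|\varphi|$, by taking the logarithmic derivative and invoking the \emph{Ginibre} inequality for Ising (which compares two ferromagnetic systems with $|J|\le J'$, allowing the $\eta$-field to have either sign). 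One then applies the absolute-value FKG inequality (Proposition~\ref{absolute FKG}), not the ordinary FKG, to conclude. Your references to Proposition~\ref{prop: increasing in increasing bc}/\ref{prop: FKG phi4} do not cover this, since the relevant reweighting is a function of $|\varphi|$, not of $\varphi$.

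\textbf{Third inequality.} Your two-step argument (flip the signs of the $\eta$-field, then increase couplings) is salvageable, but the justification ``GKS/Griffiths again, since $\sigma_x\sigma_y$ is even'' is wrong: evenness only gives invariance under a \emph{global} spin flip $h\mapsto -h$, not under flipping the field at a subset of sites. The correct tool for that step is again the Ginibre inequality for Ising, which yields directly
\[
\langle \sigma_x\sigma_y\rangle^{\ising,+}_{\Lambda,J^\PLUS(\psi^\PLUS),\beta}\ \ge\ \langle \sigma_x\sigma_y\rangle^{\ising,\sgn(\eta)}_{\Lambda,J^\eta(\psi^\eta),\beta}
\]
in one shot, since $|J^\eta(\psi^\eta)_{x,y}|\le J^\PLUS(\psi^\PLUS)_{x,y}$ for all pairs (including the boundary couplings). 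This is precisely how the paper argues.
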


The proof of this result requires the following lemma. 

\newcommand{\sgn}{\textup{sgn}}
\begin{lem}[Stochastic domination of absolute value fields]\label{lem: absolute domination}
Let $\Lambda\subset V$ be a finite set, let $\beta>0$, and let $\eta\in \RR^V$ such that $|\eta|\leq \PLUS$ (outside $\Lambda$). Then $\langle |\cdot| \rangle^{\PLUS}_{\Lambda,\beta}$ stochastically dominates $\langle |\cdot| \rangle^{\eta}_{\Lambda,\beta}$, in the sense that for any local increasing and bounded function $f$,
\begin{equation*}
    \langle f(|\varphi|)\rangle_{\Lambda,\beta}^\PLUS\geq \langle f(|\varphi|)\rangle_{\Lambda,\beta}^\eta.
\end{equation*}
\end{lem}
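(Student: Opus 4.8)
The plan is to view the $\varphi^4$ measure as an annealed model: integrate out the signs $\sigma_x = \mathrm{sgn}(\varphi_x)$ to obtain a measure on the absolute values $|\varphi|$, and show that this marginal satisfies the FKG lattice condition with a density monotone in the boundary condition. Concretely, write $\varphi_x = \sigma_x r_x$ with $r_x = |\varphi_x| \geq 0$ and $\sigma_x \in \{\pm 1\}$. Summing over $\sigma \in \{\pm 1\}^\Lambda$ in \eqref{eq: def} (with $h=0$), the marginal law of $r = |\varphi|$ on $\RR_{\geq 0}^\Lambda$ has density proportional to
\begin{equs}
    \prod_{x \in \Lambda} e^{-g r_x^4 - a r_x^2}
    \;\cdot\;
    \Bigg( \sum_{\sigma \in \{\pm 1\}^\Lambda} \exp\Big( \beta \sum_{x,y \in \Lambda} J_{x,y} \sigma_x \sigma_y r_x r_y + \beta \sum_{\substack{x \in \Lambda \\ y \notin \Lambda}} J_{x,y} \sigma_x r_x \eta_y \Big) \Bigg).
\end{equs}
The inner sum is, up to normalisation, the partition function of an Ising model on $\Lambda$ with ferromagnetic couplings $\beta J_{x,y} r_x r_y \geq 0$ and external field $\beta \sum_{y \notin \Lambda} J_{x,y} r_x \eta_y$ (of sign depending on $\eta$); write it as $Z^{\mathrm{Ising}}_\Lambda(r; \eta)$.

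First I would check that the marginal density of $r$ is log-supermodular, i.e.\ satisfies the FKG lattice condition on the lattice $\RR_{\geq 0}^\Lambda$ with the coordinatewise order. The single-site factors $e^{-g r_x^4 - a r_x^2}$ depend on one coordinate each and hence trivially satisfy the condition, so it suffices to show $(r,\eta) \mapsto \log Z^{\mathrm{Ising}}_\Lambda(r;\eta)$ is supermodular in $r$. This is a known consequence of the Griffiths/GKS inequalities: differentiating $\partial^2_{r_x r_y} \log Z^{\mathrm{Ising}}_\Lambda$ for $x \neq y$ produces a sum of truncated Ising correlations of the form $\langle \sigma_x \sigma_z \rangle^{\mathrm{Ising}} \langle \sigma_y \sigma_w \rangle^{\mathrm{Ising}} - \langle \sigma_x \sigma_z \sigma_y \sigma_w \rangle^{\mathrm{Ising}} + \dots$, which have a sign by GKS-II and the second Griffiths inequality, together with the fact that all couplings and the field contributions are increasing in $r$ and nonnegative; one concludes $\partial^2_{r_x r_y} \log Z^{\mathrm{Ising}}_\Lambda \geq 0$. (Alternatively, this log-supermodularity can be extracted directly from the Griffiths--Simon approximation: $Z^{\mathrm{Ising}}_\Lambda(r;\eta)$ is a limit of Ising partition functions on $\Lambda \times K_N$ with couplings increasing in the block variables, and supermodularity of $\log Z$ for ferromagnetic Ising with couplings varying monotonically is classical.) By the FKG theorem (Holley's criterion / the lattice condition on $\RR_{\geq 0}^\Lambda$), the marginal law of $|\varphi|$ is positively associated, and moreover is stochastically increasing in any parameter entering the density log-supermodularly and monotonically.

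Second, I would compare the two boundary conditions $\PLUS$ and $\eta$ with $|\eta_x| \leq \PLUS_x$ for $x \notin \Lambda$. For fixed $r$, the field term $\beta \sum_{x \in \Lambda, y \notin \Lambda} J_{x,y} r_x \eta_y$ in the Ising partition function is replaced by one with $\eta_y \to \PLUS_y$; since $\PLUS_y \geq |\eta_y| \geq \eta_y$ and $J_{x,y} r_x \geq 0$, the Ising model under $\PLUS$ has pointwise larger external field. By the GKS/Griffiths monotonicity of $Z^{\mathrm{Ising}}$ in the field (equivalently: $\log Z^{\mathrm{Ising}}_\Lambda(r;\cdot)$ has nonnegative cross-derivatives $\partial^2_{r_x \eta_y}$, again a truncated-correlation computation with a sign), the ratio $Z^{\mathrm{Ising}}_\Lambda(r; \PLUS)/Z^{\mathrm{Ising}}_\Lambda(r; \eta)$ is increasing in $r$. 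Therefore the density of $\langle |\cdot| \rangle^{\PLUS}_{\Lambda,\beta}$ relative to $\langle |\cdot| \rangle^{\eta}_{\Lambda,\beta}$ is (proportional to) a monotone function of $r$, which by Holley's theorem gives the stochastic domination $\langle f(|\varphi|)\rangle_{\Lambda,\beta}^\PLUS \geq \langle f(|\varphi|)\rangle_{\Lambda,\beta}^\eta$ for all increasing bounded local $f$, as claimed. I expect the main obstacle to be the rigorous justification of the two sign properties — log-supermodularity of $r \mapsto \log Z^{\mathrm{Ising}}_\Lambda(r;\eta)$ and its monotone dependence on $\eta$ — since these require a careful bookkeeping of truncated Ising correlations and appeals to the second Griffiths inequality (or, more robustly, passing through the Griffiths--Simon Ising representation on $\Lambda \times K_N$ and taking $N \to \infty$, where one must ensure the monotonicity survives the limit and the spins are uniformly controlled). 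Everything else is a direct application of the FKG machinery already invoked elsewhere in the paper (Proposition \ref{prop: FKG phi4}).
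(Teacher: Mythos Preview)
Your strategy coincides with the paper's: express the law of $|\varphi|$ through an Ising partition function $Z^{\ising}_\Lambda(r;\eta)$, show that the Radon--Nikodym derivative $G(r)=Z^{\ising}_\Lambda(r;\PLUS)/Z^{\ising}_\Lambda(r;\eta)$ is increasing in $r$, and conclude by positive association of the absolute value field. The paper shortcuts your first step by invoking the already-established absolute value FKG for $\varphi^4$ (Proposition~\ref{absolute FKG}, proved via convexity properties of the pair potential rather than a truncated-correlation computation), and so never needs to verify log-supermodularity of $r\mapsto\log Z^{\ising}_\Lambda(r;\eta)$.

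There is, however, a genuine gap in your argument for the monotonicity of $G$. You claim that $\partial^2_{r_x\eta_y}\log Z^{\ising}_\Lambda\geq 0$ and appeal to GKS-II, but GKS-II requires a nonnegative external field, while $\eta$ may have mixed signs. In fact the cross-derivative need not be nonnegative: already for a single site $\Lambda=\{x\}$ one has $\log Z=\log 2\cosh h_x$ with $h_x=\beta r_x\sum_y J_{x,y}\eta_y$, and
\[
\partial^2_{r_x\eta_y}\log Z=\beta J_{x,y}\tanh h_x+\beta^2 r_x J_{x,y}\,\mathrm{sech}^2 h_x\sum_{y'}J_{x,y'}\eta_{y'},
\]
which is strictly negative whenever $h_x<0$. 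Hence one cannot deduce that $G$ is increasing by integrating this cross-derivative along a path from $\eta$ to $\PLUS$. The paper instead invokes the \emph{Ginibre inequality} for the Ising model (comparing two ferromagnetic Ising measures whose effective external fields satisfy $|h^\eta_x|\leq h^+_x$): this directly yields $\langle\sigma_z\sigma_y\rangle^+\geq\langle\sigma_z\sigma_y\rangle^\eta$ and $\langle\sigma_z\rangle^+\geq|\langle\sigma_z\rangle^\eta|$, from which $\partial_{r_z}\log G\geq 0$ follows. The same caveat about GKS-II with mixed-sign fields applies to your proposed proof of log-supermodularity, which is another reason the paper's route through Proposition~\ref{absolute FKG} is the cleaner one.
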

\begin{proof}
Let 
\begin{equation*}
    J^{\eta}(|\varphi|)_{x,y}= \begin{cases}
    J_{x,y}|\varphi_x \varphi_y|, & \text{for } x,y\in \Lambda \\
    J_{x,y}|\varphi_x \eta_y|, & \text{for } x\in \Lambda, y\in V\setminus \Lambda.
\end{cases}
\end{equation*}
It follows from a direct calculation that 
\begin{equation*}
    \textup{d}\langle \cdot \rangle^\eta_{\Lambda,\beta}(|\varphi|)=\frac{Z^{\ising,\sgn(\eta)}_{\Lambda,J^{\eta}(|\varphi|),\beta}}{Z^{\eta}_{\Lambda,\beta}}\prod_{x\in \Lambda}\textup{d}\rho(|\varphi_x|),
\end{equation*}
where, for any $\tau \in \{ \pm 1\}^V$,
\begin{equation*}
Z^{\ising,\tau}_{\Lambda,J^{\eta}(|\varphi|),\beta}
:=
\sum_{\sigma\in \{\pm 1\}^{\Lambda}}e^{-\beta H^{\ising,\tau}_{\Lambda,J^{\eta}(|\varphi|)}(\sigma)}
\end{equation*}
and for $\sigma\in \{\pm 1\}^\Lambda$
\begin{equation*}
H^{\ising,\tau}_{\Lambda,J^{\eta}(|\varphi|)}(\sigma)
:=
-\sum_{x,y\in \Lambda}J^{\eta}(|\varphi|)_{x,y}\sigma_x \sigma_y-\sum_{\substack{x\in \Lambda \\ y\in V\setminus \Lambda}}J^{\eta}(|\varphi|)_{x,y}\sigma_x \tau_y.
\end{equation*}
For simplicity, we write $+$ for the configuration $\tau_x = 1$, $\forall x \in V$.

Thus, for any decreasing function $F:[0,\infty)^\Lambda\mapsto \RR$,
\begin{equation*}
    \langle F(|\varphi|) \rangle^{\eta}_{\Lambda,\beta}
    =
    \frac{\big\langle F(|\varphi|) G(|\varphi|) \big\rangle^{\PLUS}_{\Lambda,\beta}}{\big\langle G(|\varphi|) \big\rangle^{\PLUS}_{\Lambda,\beta}},
\end{equation*}
where $G(|\varphi|):=\frac{Z^{\ising,\sgn(\eta)}_{\Lambda,J^\eta(|\varphi|),\beta}}{Z^{\ising,+}_{\Lambda,J^\PLUS(|\varphi|),\beta}}.$
Note that $G(|\varphi|)$ is a decreasing function of $|\varphi|$, which follows from taking the logarithmic derivative and using the Ginibre inequality for the Ising model (see \cite{L1977}). We can now use the FKG inequality for the absolute value field (see Proposition~\ref{absolute FKG}) to conclude that 
\begin{equation*}
   \langle F(|\varphi|) \rangle^{\eta}_{\Lambda,\beta} \geq \langle F(|\varphi|) \rangle^{\PLUS}_{\Lambda,\beta},
\end{equation*}
as desired.
\end{proof}

\begin{proof}[Proof of Lemma~\textup{\ref{coupling lemma}}]
Using Lemma~\ref{lem: absolute domination} and Strassen's theorem \cite{Strassen}, we obtain a coupling $(\PP_{\Lambda,\beta},\psi^\PLUS,\psi^{\eta})$, where $\psi^\PLUS\sim \langle |\cdot |\rangle^\PLUS_{\Lambda}$ and $\psi^{\eta}\sim \langle |\cdot |\rangle^{\eta}_{\Lambda}$, such that $\PP_{\Lambda,\beta}$-almost surely, $\psi^\PLUS_x\geq \psi^{\eta}_x$ for every $x\in \Lambda$. Recall the notation introduced in the proof of Lemma~\ref{lem: absolute domination}.
Enlarging our probability space, we can assume that in the same probability space, there is a family $I=\{\sigma^\PLUS(\textbf{J}) \sim \langle \cdot \rangle^{\ising,+}_{\Lambda,\textbf{J},\beta}, \sigma^{\eta}(\textbf{J}) \sim \langle \cdot \rangle^{\ising,\sgn(\eta)}_{\Lambda,\textbf{J},\beta}  \mid \textbf{J}:\mathcal{P}_2(\Lambda)\mapsto [0,\infty)\}$ of independent Ising models that are also independent from $\psi^\PLUS$ and $\psi^{\eta}$. Now let $\varphi^\PLUS:=\psi^\PLUS \cdot \sigma^\PLUS(J^\PLUS(\psi^\PLUS))$ and $\varphi^\eta:=\psi^\eta \cdot\sigma^\eta(J^{\eta}(\psi^\eta))$. It follows that $\varphi^\PLUS\sim \langle \cdot \rangle^\PLUS_{\Lambda}$, $\varphi^{\eta}\sim \langle \cdot \rangle^{\eta}_{\Lambda}$, and 
$|\varphi^\PLUS_x|\geq |\varphi^{\eta}_x|$ for every $x\in \Lambda$.
Moreover, 
\begin{equation*}
\begin{aligned}
\mathbb{E}_{\Lambda,\beta}&\left[\textup{sgn}(\varphi^\PLUS_x) \textup{sgn}(\varphi^\PLUS_y)-\textup{sgn}(\varphi^\eta_x) \textup{sgn}(\varphi^\eta_y) \mid |\varphi^\PLUS|,|\varphi^\eta|\right]
\\
&=\langle \sigma_x \sigma_y \rangle^{\ising,+}_{\Lambda,J^\PLUS(\psi^\PLUS),\beta}-\langle \sigma_x \sigma_y \rangle^{\ising,\sgn(\eta)}_{\Lambda,J^{\eta}(\psi^\eta),\beta} \geq 0,
\end{aligned}
\end{equation*}
where the equality follows from independence and the inequality from the Ginibre
inequality for the Ising model. Similarly, by the monotonicity properties of the Ising model,
\begin{equation*}
\begin{aligned}
\mathbb{E}_{\Lambda,\beta}\left[\textup{sgn}(\varphi^\PLUS_x) \textup{sgn}(\varphi^\PLUS_y)\mid |\varphi^\PLUS|,|\varphi^\eta|\right]
&=\langle \sigma_x \sigma_y \rangle^{\ising,+}_{\Lambda,J^\PLUS(\psi^\PLUS),\beta}
\\&\geq g(\psi^\PLUS_x,\psi^\PLUS_y)
:=\langle \sigma_x \sigma_y \rangle^{\ising,0}_{\{x,y\},J^0(\psi^\PLUS),\beta}.
\end{aligned}
\end{equation*}
The latter is strictly positive by the random current expansion of the Ising model, and non-decreasing by Griffiths' inequality.
\end{proof}
The following Lemma is the crucial place in this section where $\Gamma$-invariance is required.
\begin{lem}\label{lem: inequality even spin} Let $\beta>0$. Let $\langle\cdot \rangle_\beta\in \cG_{\Gamma}(\beta)$. Then, for any $x,y\in V$ such that $J_{x,y}>0$,
\begin{equation}\label{eq: inequalities J neighbors}
    \langle \varphi_x\varphi_y\rangle_{\beta}^+\geq \langle \varphi_x\varphi_y\rangle_\beta\geq \langle \varphi_x\varphi_y\rangle_{\beta}^0.
\end{equation}
\end{lem}
\begin{proof}

The first inequality in \eqref{eq: inequalities J neighbors} is a direct consequence of Proposition \ref{Ginibre inequality}. The second inequality is more involved and requires $\Gamma$-invariance. We show that a change in temperature is more costly than a change in boundary condition. More precisely, define the function $t\mapsto (\tilde{J}_{u,v}(t))_{u,v\in V}$ as $\tilde{J}_{u,v}(t)=tJ_{u,v}$ if there is $\gamma\in \Gamma$ such that $u=\gamma x$ and $v=\gamma y$, and $\tilde{J}_{u,v}(t)=J_{u,v}$ otherwise. For $t \in (0,1)$, define $\alpha(t):= \langle \varphi_x \varphi_y \rangle^+_{\beta, \tilde{J}(t)}$. In addition, define $ b= \langle \varphi_x \varphi_y \rangle_{\beta}$. Assume that, for any $t\in (0,1)$, 
\begin{equation}\label{eq: ineq a b}
    \alpha(t)\leq b.
\end{equation}
We now argue that the second inequality in \eqref{eq: inequalities J neighbors} is satisfied. Indeed, as a consequence of the Ginibre inequality of Proposition \ref{Ginibre inequality}, we have that $\alpha(t)\geq \langle \varphi_x\varphi_y\rangle_{\beta,\tilde{J}(t)}^0$. The desired inequality then follows from the left-continuity\footnote{This follows from the fact that this function can be realised as a increasing limit of increasing functions.} at $t=1$ of $t\mapsto \langle \varphi_x\varphi_y\rangle_{\beta,\tilde{J}(t)}^0$. It remains to prove \eqref{eq: ineq a b}.

Fix $t \in (0,1)$ and let $\alpha = \alpha(t)$. Let $S_n$ be the set of pairs $u,v\in B_n$ such that there exists $\gamma\in \Gamma$ with $\gamma x=u$ and $\gamma y =v$. Note that there exists $c_1>0$ such that $|S_n|\geq c_1|B_n|$. We define 
$
f(n)
:=
\sum_{\substack{\{u,v\}\in S_n}}\varphi_u \varphi_v.
$
Note that by $\Gamma$-invariance, we have
$
    \langle f(n)\rangle_\beta=b|S_n|, \text{ and } \langle f(n)\rangle_{\beta,\tilde{J}(t)}=\alpha|S_n|.
$
Let $\varepsilon \in (0,\alpha)$. Define the event
$
    C_n
    =
    \{f(n)\leq (\alpha-2\varepsilon_0)|S_n|\}.
$
The key result we need to establish is the following large deviation estimate: there exists $c>0$ such that, for $n$ sufficiently large 
\begin{equation}\label{eq: painful claim}
\big\langle\langle \mathbbm{1}_{C_n}\rangle^\eta_{B_n,\beta,J}\mathbbm{1}_{K_n}\big\rangle_\beta\leq e^{-c|B_n|},
\end{equation}
where $K_n=\{|\eta|\leq \PLUS \text{ in } B_n^c\}$. We assume this for now to establish \eqref{eq: ineq a b}.

On the one hand, by the Cauchy--Schwarz inequality and \eqref{eq: painful claim}, we have that
\begin{equation*}
    \big\langle\langle f(n)\mathbbm{1}_{C_n}\rangle^\eta_{B_n,\beta,J}\mathbbm{1}_{K_n}\big\rangle_\beta
    =
    o(1).
\end{equation*}
On the other hand, by a union bound and regularity,
\begin{equation*}
\begin{aligned}
    \langle \mathbbm{1}_{\{\exists x\notin B_n, \: |\eta_x|>\PLUS_x\}}\rangle_\beta^+&
    \leq \sum_{x\notin B_n}\langle \mathbbm{1}_{\{|\eta_x|
    \geq \PLUS_x\}}\rangle_\beta^+ 
    \leq \sum_{x\notin B_n}\frac{1}{d_G(o,x)^{M}} \lesssim \frac{1}{n^{M-d}}.
\end{aligned}
\end{equation*}
Thus, by the Cauchy--Schwarz inequality, for every $M$ large enough,
\begin{equation*}
    \left|\left\langle \langle f(n)\rangle_{B_n,\beta,J}^\eta\mathbbm{1}_{K_n^c}\right\rangle_{\beta}\right|\leq \sqrt{\langle f(n)^2\rangle_\beta}\sqrt{\langle \mathbbm{1}_{\exists x \notin B_n, \: |\eta_x|>\PLUS_x}\rangle_\beta}
    =
    o(|S_n|).
\end{equation*}
Hence, by the above estimates and the definition of $C_n$,
\begin{equation*}
\begin{aligned}
     b|S_n|
     &=
     \langle f(n)\rangle_{\beta} =
     \langle \langle f(n)\rangle_{B_n,\beta,J}^\eta\mathbbm{1}_{K_n}\rangle_{\beta}+\langle \langle f(n)\rangle_{B_n,\beta,J}^\eta\mathbbm{1}_{K_n^c}\rangle_{\beta}
     \\
     &\geq 
     (\alpha-2\varepsilon_0)|S_n|\big\langle \langle \mathbbm{1}_{C_n^c}\rangle^\eta_{B_n,\beta,J}\mathbbm{1}_{K_n}\big\rangle_\beta - o(|S_n|)
     \\&=
     (\alpha-2\varepsilon_0)|S_n|\left(\langle \mathbbm{1}_{K_n}\rangle_\beta-\big\langle\langle \mathbbm{1}_{C_n}\rangle^\eta_{B_n,\beta,J}\mathbbm{1}_{K_n}\big\rangle_\beta\right) - o(|S_n|).
\end{aligned}     
\end{equation*}
As a result, by applying \eqref{eq: painful claim},
$
     b|S_n|
     \geq 
     (\alpha-2\varepsilon_0)|S_n|-o(|S_n|).
$
Sending $n$ to infinity we obtain $b\geq \alpha-2\varepsilon_0$, which implies that $b\geq \alpha$, as desired.

Finally, we turn to the proof of the large deviation estimate \eqref{eq: painful claim}. By the DLR equations we have 
\begin{equation*}
    \langle f(n)\rangle_{B_n,\beta,\tilde{J}(t)}^{\PLUS}
    \geq
    \langle f(n)\rangle^+_{\beta,\tilde{J}(t)}-\varepsilon(n)
    =
    \alpha|S_n| - \varepsilon(n)
 \end{equation*} 
 where, arguing as above,
\begin{equation*}
    \varepsilon(n)
    :=
    \langle |f(n)| \mathbbm{1}_{\{\exists x\notin B_n, \: |\eta_x|>\PLUS_x\}}\rangle^+_{\beta,\tilde{J}(t)}
    = o(|S_n|).
\end{equation*}
Define 
$
    A_n
    =
    \{f(n)\geq (\alpha-\varepsilon_0)|S_n|\}.
$
For every $n$ large enough, the above considerations imply
\begin{equation*}
    \langle f(n)\mathbbm{1}_{A_n}\rangle^\PLUS_{B_n,\beta,\tilde{J}(t)}= \langle f(n)\rangle_{B_n,\beta,\tilde{J}(t)}^\PLUS-\langle f(n)\mathbbm{1}_{A^c_n}\rangle^\PLUS_{B_n,\beta,\tilde{J}(t)}\geq \varepsilon_0 |S_n|-o(|S_n|)\geq \varepsilon_0|S_n|/2.
\end{equation*}
Observe, furthermore, that
\begin{equation*}
\begin{aligned}
    \langle f(n)\mathbbm{1}_{A_n}\rangle^{\PLUS}_{B_n,\beta,\tilde{J}(t)}
    &= 
    \frac{\langle \mathbbm{1}_{A_n}e^{\beta(t-1)f(n)}f(n)\rangle^\PLUS_{\Lambda,\beta,J}}
    {\langle e^{\beta(t-1)f(n)}\rangle^\PLUS_{B_n,\beta,J}}
    \leq 
    \frac{\langle \mathbbm{1}_{A_n}e^{\beta(t-1)f(n)}|f(n)|\rangle^\PLUS_{\Lambda,\beta,J}}
    {\langle \mathbbm{1}_{C_n}e^{\beta(t-1)f(n)}\rangle^\PLUS_{B_n,\beta,J}}
    \\
    &\leq 
    e^{\beta\varepsilon_0(t-1)|S_n|}\frac{\langle |f(n)|\rangle^{\PLUS}_{B_n,\beta,J}}{\langle \mathbbm{1}_{C_n}\rangle^\PLUS_{B_n,\beta,J}}.
\end{aligned}    
\end{equation*}
Since $\langle \varphi_x\varphi_y \rangle^\PLUS_{B_n,\beta,J}\rightarrow \langle \varphi_x\varphi_y\rangle_{\beta,J}^+ $, we have 
$\langle |f(n)|\rangle^{\PLUS}_{B_n,\beta,J}\leq |S_n|\langle \varphi_x\varphi_y\rangle_{B_n,\beta,J}^\PLUS\leq C_1|S_n|$ for some constant $C_1>0$ independent of $n$. Re-arranging, this yields a bound on $\langle \mathbbm{1}_{C_n} \rangle^\PLUS_{B_n,\beta,J}$. We want to replace the boundary $\PLUS$ by an arbitrary boundary condition in $K_n$. 

Let $\eta \in K_n$. Define
\begin{equation*}
    g_n(\eta)
    :=
    \sum_{\substack{x\in B_n\\y\notin B_n}}J_{x,y}\varphi_x(\PLUS_y-\eta_y).
\end{equation*}
Using the above inequalities, we have that
\begin{equation}\label{eq: proof increasing on two point}
    \frac{\langle \mathbbm{1}_{C_n}e^{\beta g_n(\eta)}\rangle^\eta_{B_n,\beta,J}}{\langle e^{\beta g_n(\eta)}\rangle^\eta_{B_n,\beta,J}}
    =
    \langle \mathbbm{1}_{C_n}\rangle^\PLUS_{B_n,\beta,J}
    \leq 
    \frac{2C_1}{\varepsilon_0}e^{-\frac{\beta\varepsilon_0}{c_1}(1-t)|B_n|}.
\end{equation}
Note that by the Cauchy--Schwarz inequality
\begin{equation*}
\Big(\langle \mathbbm{1}_{C_n}\rangle^\eta_{B_n,\beta,J}\Big)^2\leq \langle \mathbbm{1}_{C_n}e^{\beta g_n(\eta)}\rangle^\eta_{B_n,\beta,J} \langle e^{-\beta g_n(\eta)}\rangle^\eta_{B_n,\beta,J},
\end{equation*}
so that,
\begin{equation}\label{eq: proof increasing on two point 2}
    \langle \mathbbm{1}_{C_n}\rangle^\eta_{B_n,\beta,J}\leq \sqrt{\frac{2C_1}{\varepsilon_0}}e^{-\frac{\beta\varepsilon_0}{2c_1}(1-t)|B_n|}\langle e^{\beta |g_n(\eta)|}\rangle^\eta_{B_n,\beta,J}.
\end{equation}
We seek an estimate on $\langle e^{\beta |g_n(\eta)|}\rangle^\eta_{B_n,\beta,J}$ which is uniform over $\eta \in K$. To this end, observe that, by the definition of $\PLUS$, and by the condition $(\textbf{C4})$ satisfied by $J$,
\begin{equation*}
\begin{aligned}
    |g_n(\eta)|
    &\leq
    2\sum_{\substack{x\in B_n\\y\notin B_n}}J_{x,y}|\varphi_x|\PLUS_y
    &\leq 
    2C\sum_{x\in B_n} |\varphi_x|\sum_{y\notin B_n}\frac{\sqrt[4]{M\log d_G(o,y)}}{d_G(x,y)^{d+\varepsilon}}.
\end{aligned}    
\end{equation*}
Let $\delta>0$ be such that $|B_n\setminus B_{n-1}|=O(n^{-\delta} |B_n|)$ for every $n\geq 1$ (such a $\delta$ exists by \cite{RT}--- see Remark~\ref{rem: delta} in the Appendix).
Note that for every $x\in B_n$ and $y\not\in B_n$, using that $d_G(x,y) \geq 1$ and $d_G(o,x) \leq n$, we have $d_G(o,y)\leq d_G(o,x)+d_G(x,y) \leq (n+1) \, d_G(x,y)$.

Moreover, for every $x\in B_n$ and $y\not\in B_n$, if $d_G(o,x)\leq n-n^{\delta/2}$, then since $d_G(o,y) \geq n$ we have $d_G(x,y)\geq n^{\delta/2}$. Therefore, with the same condition on $d_G(o,x)$, we have
\begin{equation*}
\frac{\sqrt[4]{M\log d_G(o,y)}}{d_G(x,y)^{d+\varepsilon}}\leq \frac{\sqrt[4]{M\log (n+1)+M\log d_G(x,y)}}{d_G(x,y)^{d+\varepsilon}}\leq \frac{1}{d_G(x,y)^{d+\varepsilon/2}}
\end{equation*}
provided that $n$ is large enough. By Lemma~\ref{lem: summable}, the sum
$
\sum_{y\in V\setminus\{x\}}d_G(x,y)^{-d-\varepsilon/2}
$ is finite.
Thus, combining the above estimates and summing over $y$, we find
\begin{equation}\label{eq: o(1)}
    |g_n(\eta)|\leq C'\sum_{x\in B_n} |\varphi_x|\left(\sqrt[4]{M\log n}\mathbbm{1}_{x\in B_n \setminus B_{c_n}}+o(1)\right)
\end{equation}
for some constant $C'>0$, where $c_n=\lfloor n-n^{\delta/2}\rfloor$ and $o(1)$ does not depend $\eta \in K_n$. Plugging this inequality into \eqref{eq: proof increasing on two point 2} and integrating over $K_n$ yields
\begin{equation*}
    \big\langle\langle \mathbbm{1}_{C_n}\rangle^\eta_{B_n,\beta,J}\mathbbm{1}_{K_n}\big\rangle_\beta\leq \sqrt{\frac{2C_1}{\varepsilon_0}}e^{-\frac{\beta\varepsilon_0}{2c_1}(1-t)|B_n|}\left\langle e^{\beta C' \sqrt[4]{M\log n}\sum_{x\in B_n\setminus B_{c_n}}|\varphi_x|+o(1)\sum_{x\in B_n}|\varphi_x|}\right\rangle_{\beta}.
\end{equation*}
By regularity of $\langle \cdot\rangle_\beta$,
\begin{equation*}
    \left\langle e^{\beta C' \sqrt[4]{M\log n}\sum_{x\in B_n\setminus B_{c_n}}|\varphi_x|+o(1)\sum_{x\in B_n}|\varphi_x|}\right\rangle_{\beta}=e^{o(|B_n|)},
\end{equation*}
thus establishing \eqref{eq: painful claim} to complete the proof of \eqref{eq: inequalities J neighbors}.
\end{proof}

We now turn to the main result of this section.
\begin{prop}[Lebowitz-type characterisation]\label{Lebowitz} Let $\beta>0$.
The following are equivalent.
\begin{enumerate}
\item For any $\Gamma$-invariant Gibbs measure $\langle \cdot \rangle_{\beta}\in \cG_\Gamma(\beta)$, there exists $\lambda\in [0,1]$ such that
$\langle \cdot \rangle_{\beta} = \lambda \langle \cdot \rangle^+_\beta + (1-\lambda) \langle \cdot \rangle^-_\beta$.
\item For any $\Gamma$-invariant Gibbs measure $\langle \cdot \rangle_{\beta}\in \cG_\Gamma(\beta)$, and for any bounded and measurable even function $f$, we have
	$\langle f  \rangle^+_\beta
	=
	\langle f  \rangle_{\beta}$.
\item For any $\Gamma$-invariant Gibbs measure $\langle \cdot \rangle_{\beta}\in \cG_\Gamma(\beta)$, and for any $x,y\in V$ such that $J_{x,y}>0$, any vertices $z_1,z_2,\ldots,z_k\in V$, $k\geq 0$, and any positive integers $a_1,a_2,\ldots,a_k$, we have
\begin{equation*}
	\left\langle \varphi_x \varphi_y\prod_{i=1}^k \varphi_{z_i}^{2a_i}  \right\rangle^+_\beta
	=
	\left\langle \varphi_x \varphi_y\prod_{i=1}^k \varphi_{z_i}^{2a_i}  \right\rangle_\beta.
\end{equation*}

\item 
For any $x,y\in V$ such that $J_{x,y}>0$,
$
    \langle \varphi_x \varphi_y  \rangle^+_\beta
	=
	\langle \varphi_x \varphi_y  \rangle^0_\beta.
$
\end{enumerate}
\end{prop}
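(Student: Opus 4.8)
The plan is to establish the cycle $(1)\Rightarrow(2)\Rightarrow(3)\Rightarrow(4)\Rightarrow(1)$, the last implication being the substantial one.

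\medskip
\noindent\textit{The easy implications.} The spin flip $\varphi\mapsto-\varphi$ preserves $\rho_{g,a}$ and sends $H^0_{\Lambda,h}$ to $H^0_{\Lambda,-h}$, hence exchanges $\langle\cdot\rangle^+_\beta$ and $\langle\cdot\rangle^-_\beta$; in particular $\langle f\rangle^-_\beta=\langle f\rangle^+_\beta$ for every even $f$, which gives $(1)\Rightarrow(2)$ at once. For $(2)\Rightarrow(3)$ I would observe that $\varphi_x\varphi_y\prod_i\varphi_{z_i}^{2a_i}$ has even total degree, hence is an even function, and is a polynomial; by the regularity of Gibbs measures (Proposition~\ref{prop: gibbs is regular}) it is integrable against every Gibbs measure, so one concludes by truncating it at height $N$ --- the truncations being even and bounded --- and letting $N\to\infty$ via dominated convergence.

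\medskip
\noindent\textit{$(3)\Rightarrow(4)$.} Taking $k=0$ and the (\,$\Gamma$-invariant\,) measure $\langle\cdot\rangle^0_\beta$ in (3) gives $\langle\varphi_x\varphi_y\rangle^+_\beta=\langle\varphi_x\varphi_y\rangle^0_\beta$ whenever $J_{x,y}>0$. For the single-site identity I would exploit the sign/absolute-value splitting rather than argue directly: by the infinite-volume form of Lemma~\ref{lem: absolute domination}, $\langle|\cdot|\rangle^+_\beta$ stochastically dominates $\langle|\cdot|\rangle^0_\beta$, so along a monotone (Strassen) coupling $|\varphi^+_x|\ge|\varphi^0_x|$ for all $x$. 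Conditioning on the absolute values, the signs form ferromagnetic Ising models with quenched couplings $\beta J_{x,y}|\varphi_x\varphi_y|$ and $\PLUS$- resp.\ free boundary (Lemma~\ref{coupling lemma}); writing each $\langle\varphi_x\varphi_y\rangle$ in this form and using $\langle\sigma_x\sigma_y\rangle^{\ising,+}\ge\langle\sigma_x\sigma_y\rangle^{\ising,\mathrm{free}}\ge 0$, the equality $\langle\varphi_x\varphi_y\rangle^+_\beta=\langle\varphi_x\varphi_y\rangle^0_\beta$ forces the coupled integrand to vanish a.s., hence $|\varphi^+_x||\varphi^+_y|=|\varphi^0_x||\varphi^0_y|$ a.s.\ on every $J$-edge (the single-site density being everywhere positive and the $J$-edge Ising two-point functions being strictly positive). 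Combined with $|\varphi^+|\ge|\varphi^0|$ and irreducibility of $J$, this forces $|\varphi^+|=|\varphi^0|$ a.s., so $\langle|\cdot|\rangle^+_\beta=\langle|\cdot|\rangle^0_\beta$ and in particular $\langle\varphi^2_x\rangle^+_\beta=\langle\varphi^2_x\rangle^0_\beta$. (In fact this shows the single-site condition in (4) follows from its two-point condition.)

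\medskip
\noindent\textit{$(4)\Rightarrow(1)$.} Let $\langle\cdot\rangle_\beta\in\cG_\Gamma(\beta)$. By the monotonicity of Lemma~\ref{lem: inequality even spin}, (4) propagates: $\langle\varphi_x\varphi_y\rangle_\beta=\langle\varphi_x\varphi_y\rangle^+_\beta$ for $J_{x,y}>0$ and $\langle\varphi^2_x\rangle_\beta=\langle\varphi^2_x\rangle^+_\beta$. The core claim is that then $\langle f\rangle_\beta=\langle f\rangle^+_\beta$ for every bounded even $f$. I would prove this by repeating the sign/absolute-value analysis above (using Lemmas~\ref{lem: inequality even spin} and \ref{coupling lemma}): first, that the absolute-value fields of $\langle\cdot\rangle_\beta$, $\langle\cdot\rangle^+_\beta$ and $\langle\cdot\rangle^0_\beta$ all coincide (common law $Q$); and second, that conditionally on the absolute values the associated quenched $+$ and free Ising two-point functions agree on every $J$-edge, $Q$-a.s. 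It then remains to run the \emph{Ising} input --- equality of nearest-neighbour two-point functions of the $+$ and free states propagates to equality of all even correlations (the switching lemma / Lebowitz's characterisation for the Ising model, which does not require translation invariance) --- for the quenched couplings, and to integrate over $Q$. Granting the even-function equality, (1) follows: if $\langle\varphi_o\rangle^+_\beta>0$ pick $\lambda$ with $(2\lambda-1)\langle\varphi_o\rangle^+_\beta=\langle\varphi_o\rangle_\beta$ (admissible by \eqref{eq: maximality}); for any odd monomial $\varphi_A$ the product $\varphi_A\varphi_o$ is even, so $\langle\varphi_A\varphi_o\rangle^+_\beta=\langle\varphi_A\varphi_o\rangle_\beta$, whence the Ginibre inequality (Proposition~\ref{Ginibre inequality}) forces $\langle\varphi_A\rangle^+_\beta\langle\varphi_o\rangle_\beta=\langle\varphi_A\rangle_\beta\langle\varphi_o\rangle^+_\beta$, i.e.\ $\langle\varphi_A\rangle_\beta=(2\lambda-1)\langle\varphi_A\rangle^+_\beta=\lambda\langle\varphi_A\rangle^+_\beta+(1-\lambda)\langle\varphi_A\rangle^-_\beta$; together with the even case this identifies $\langle\cdot\rangle_\beta$ with $\lambda\langle\cdot\rangle^+_\beta+(1-\lambda)\langle\cdot\rangle^-_\beta$ on all polynomials, hence (by regularity) as measures. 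When $\langle\varphi_o\rangle^+_\beta=0$ the same analysis gives $\langle\cdot\rangle_\beta=\langle\cdot\rangle^+_\beta=\langle\cdot\rangle^-_\beta$.

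\medskip
\noindent\textit{Main obstacle.} The delicate point is the core claim in $(4)\Rightarrow(1)$: making the conditional Ising picture precise in the infinite volume (passing through finite volume with the coupling of Lemma~\ref{coupling lemma} and controlling the limit), and running the Ising-level argument for the \emph{quenched}, non-translation-invariant, a priori unbounded couplings $\beta J_{x,y}|\varphi_x\varphi_y|$ --- namely deducing from equality of $J$-edge Ising two-point functions of the $+$ and free quenched states that their even correlations coincide. I expect to need here the mixing/ergodicity of $\langle\cdot\rangle^+_\beta$ and $\langle\cdot\rangle^0_\beta$ (Propositions~\ref{prop: extremal and ergodic} and \ref{prop: free plus ergod}) together with a reduction to ergodic $\langle\cdot\rangle_\beta$ via the ergodic decomposition; the unboundedness of the spins, which has no Ising analogue, is what makes every one of these steps non-routine.
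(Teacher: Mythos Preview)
Your cycle $(1)\Rightarrow(2)\Rightarrow(3)\Rightarrow(4)\Rightarrow(1)$ is the reverse of the paper's $(1)\Rightarrow(4)\Rightarrow(3)\Rightarrow(2)\Rightarrow(1)$, and this is not cosmetic: it moves all the difficulty into your step $(4)\Rightarrow(1)$, where your plan has a genuine gap.

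The problematic point is exactly the one you flag as the ``Ising input''. Once you have matched the absolute-value fields and the $J$-edge two-point functions of the quenched sign model, you need: for a ferromagnetic Ising model with \emph{arbitrary} (non-translation-invariant) couplings $\beta J_{x,y}|\varphi_x\varphi_y|$, equality of the nearest-neighbour two-point functions of the $+$ and free states forces equality of all even correlations. You assert this ``does not require translation invariance'', but that is not correct: both Lebowitz's original argument and the random-current route (as in ADCS/Raoufi) use translation invariance in an essential way --- precisely to propagate edge-equality to longer-range equality. The quenched couplings you produce are genuinely inhomogeneous (random, and a.s.\ not invariant under any $\gamma\in\Gamma$), so there is no off-the-shelf Ising statement to invoke, and the annealed $\Gamma$-invariance of $Q$ does not help the quenched sign model. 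In effect, the step you black-box is equivalent in difficulty to the proposition you are trying to prove.

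The paper avoids this reduction entirely. Its hard step is $4\Rightarrow 3$, and it is done \emph{directly} with the coupling of Lemma~\ref{coupling lemma}, but instead of first identifying the absolute-value laws and then appealing to an Ising theorem, it bounds, for each specific observable $\Phi=\varphi_x\varphi_y\prod_i\varphi_{z_i}^{2a_i}$, the difference $\langle\Phi\rangle^\PLUS_{\Lambda}-\langle\Phi\rangle^\eta_{\Lambda}$ by splitting on the event $B=\{(\varphi_w^\PLUS)^2\le(\varphi_w^\eta)^2+\sqrt\varepsilon\ \forall w\}$. On $B$, one replaces $|\Phi^\PLUS|$ by $|\Phi^\eta|$ up to $O(\varepsilon^{1/4})$ and then uses Cauchy--Schwarz together with $0\le f^\PLUS(x,y)-f^\eta(x,y)\le 2$ to reduce to a bound on $\EE[f^\PLUS(x,y)-f^\eta(x,y)]$, which is controlled by the single hypothesis $\langle\varphi_x\varphi_y\rangle^\PLUS_\Lambda-\langle\varphi_x\varphi_y\rangle^\eta_\Lambda\le\varepsilon$ after a further split on $\{|\varphi^\eta_x\varphi^\eta_y|\ge\sqrt\varepsilon\}$. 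On $B^c$, Cauchy--Schwarz and Markov with the hypothesis $\langle\varphi_w^2\rangle^\PLUS_\Lambda-\langle\varphi_w^2\rangle^\eta_\Lambda\le\varepsilon$ suffice. Then $3\Rightarrow 2$ is obtained by iterating the Ginibre inequality (first to pass from $J$-neighbours to arbitrary pairs via irreducibility, then to general even $\varphi_A$ by induction on the number of odd-degree sites), and $2\Rightarrow 1$ is the short symmetrisation argument: for extremal $\langle\cdot\rangle_\beta$, the symmetrised measure equals $\langle\cdot\rangle^0_\beta=\tfrac12\langle\cdot\rangle^+_\beta+\tfrac12\langle\cdot\rangle^-_\beta$, and uniqueness of the extremal decomposition forces $\langle\cdot\rangle_\beta\in\{\langle\cdot\rangle^+_\beta,\langle\cdot\rangle^-_\beta\}$.

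Your easy implications are fine, and your $(3)\Rightarrow(4)$ coupling argument for the single-site identity is a nice observation --- but the paper does not need it, since it goes $1\Rightarrow 4$ trivially. If you want to salvage your route, you would have to reprove the paper's $4\Rightarrow 3$ anyway (that is the content of your ``core claim''), so you might as well adopt the paper's cycle and carry out the direct $\varepsilon$-analysis above rather than passing through the quenched Ising model.
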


\begin{proof}
Since $\langle \cdot \rangle_\beta^+$ and $\langle \cdot \rangle_\beta^-$ coincide on even functions, it is straightforward that $1\Rightarrow 4$.

Let us prove that $4\Rightarrow 3$. To this end, by Lemma~\ref{lem: inequality even spin} we have that for any $\langle \cdot \rangle_{\beta}\in \cG_\Gamma(\beta)$,
$
\langle \varphi_x \varphi_y  \rangle^+_{\beta} 
=
\langle \varphi_x \varphi_y  \rangle_{\beta}.
$
Consider some $\varepsilon\in (0,1)$, and a finite set $\Lambda$ that contains $x,y,z_1,\ldots,z_k$.  Let 
\begin{equation*}
\begin{aligned}
    A_{\Lambda}
    &:=
    \{\eta : |\eta_z|\leq \PLUS_z \: \forall z\in \Lambda^c, \:  \langle \varphi_x \varphi_y  \rangle^\PLUS_{\Lambda,\beta} 
    \leq 
    \langle \varphi_x \varphi_y  \rangle^{\eta}_{\Lambda,\beta} +\varepsilon 
    \\
    &\qquad \text{ and } 
    \langle |\varphi_w|  \rangle^\PLUS_{\Lambda,\beta}
    \leq
    \langle |\varphi_w|  \rangle^\eta_{\Lambda,\beta}+\varepsilon, \:\forall \, w=x,y,z_1,\ldots,z_k\}.
\end{aligned}    
\end{equation*}
We first claim that $\langle \mathbbm{1}_{A_{\Lambda}} \rangle_{\beta} \rightarrow 1$ as $\Lambda \uparrow V$. Indeed, it suffices to show that $\langle |\varphi_x|  \rangle^\PLUS_{\Lambda,\beta}
    \leq
    \langle |\varphi_x|  \rangle^\eta_{\Lambda,\beta}+\varepsilon$
with probability tending to $1$ as $\Lambda \uparrow V$. To this end, let 
\begin{equation*}
S_{\Lambda}=\{\eta :|\eta_z|\leq \PLUS_z \: \forall z\in \Lambda^c, \; \langle \varphi_x \varphi_y  \rangle^\PLUS_{\Lambda,\beta} 
\leq \langle \varphi_x \varphi_y \rangle^{\eta}_{\Lambda,\beta} +\delta_1\},
\end{equation*}
where $\delta_1>0$ is a constant to be determined, and note that $\langle \mathbbm{1}_{S_{\Lambda}} \rangle\rightarrow 1$ as $\Lambda \uparrow V$. Fix some $\eta\in S_{\Lambda}$. Now recall the coupling $(\PP_{\Lambda,\beta},\varphi^\PLUS,\varphi^{\eta})$ of Lemma \ref{coupling lemma}. Then,
\begin{equation*}
\langle \varphi_x \varphi_y  \rangle^\PLUS_{\Lambda,\beta}-
\langle \varphi_x \varphi_y  \rangle^\eta_{\Lambda,\beta}\geq 
\mathbb{E}_{\Lambda,\beta}\left[\left(|\varphi^\PLUS_x|-|\varphi^\eta_x|\right)\, |\varphi^\PLUS_y| \, g(|\varphi^\PLUS_x|,|\varphi^\PLUS_y|)\right].
\end{equation*}
Letting $E=\{|\varphi^\PLUS_x|-|\varphi^\eta_x|\geq \delta_2, \, |\varphi^\PLUS_x|\geq \delta_2, \, |\varphi^\PLUS_y|\geq \delta_2\}$, where $\delta_2>0$ is a constant to be determined, we obtain that
$
\delta_1\geq \delta_2^2 \, g(\delta_2,\delta_2) \, \PP_{\Lambda,\beta}[E].
$
By decomposing and applying the Cauchy--Schwarz inequality we obtain 
\begin{equation*}
\langle |\varphi_x| \rangle^\PLUS_{\Lambda,\beta}\leq \sqrt{\langle \varphi^2_x \rangle^\PLUS_{\Lambda,\beta}} \sqrt{\PP_{\Lambda,\beta}\left[E\cup \{|\varphi^\PLUS_x|\leq \delta_2\}\cup \{|\varphi^\PLUS_y|\leq \delta_2\}\right]}+\langle |\varphi_x| \rangle^\eta_{\Lambda,\beta}+\delta_2.    
\end{equation*}
Since $\PP_{\Lambda,\beta}\left[\{|\varphi^\PLUS_x|\leq \delta_2\}\cup \{|\varphi^\PLUS_y|\leq \delta_2\}\right]$ tends to $0$ uniformly in $\Lambda$ as $\delta_2$ goes to $0$, and $\langle \cdot \rangle^\PLUS_{\Lambda,\beta}$ is regular, we can tune the values of $\delta_1$ and $\delta_2$ so that 
$\langle |\varphi_x|  \rangle^\PLUS_{\Lambda,\beta}
\leq \langle |\varphi_x| \rangle^\eta_{\Lambda,\beta}+\varepsilon$, as desired. This completes the proof of the claim.

Since we have
\begin{equation*}
	\mathrm{C}:=\left\langle \varphi_x \varphi_y\prod_{i=1}^k \varphi_{z_i}^{2a_i}  \right\rangle_{\Lambda,\beta}^\PLUS
	-
	\left\langle \varphi_x \varphi_y\prod_{i=1}^k \varphi_{z_i}^{2a_i}  \right\rangle_{\Lambda,\beta}^{\eta}\geq 0
\end{equation*} 
for every $\eta$ such that $|\eta|\leq \PLUS$ outside $\Lambda$ by the Ginibre inequality,
it suffices to obtain an upper bound for $\mathrm{C}$
which converges to $0$ uniformly in $\eta\in A_{\Lambda}$ as $\Lambda\to V$ and $\varepsilon\to 0$. 

Now let 
$
\Phi^\PLUS=\varphi^\PLUS_x \varphi^\PLUS_y \prod_{i=1}^k (\varphi^\PLUS_{z_i})^{2a_i}$ and
$
\Phi^\eta=\varphi^\eta_x \varphi^\eta_y \prod_{i=1}^k (\varphi^\eta_{z_i})^{2a_i}.
$
Then,
\begin{equation}\label{romain use equs} 
\begin{aligned}
     \mathrm{C}
	 =
	 \mathbb{E}_{\Lambda,\beta}\left[\Phi^\PLUS -\Phi^\eta\right]
	 =
	\mathbb{E}_{\Lambda,\beta}\left[\left(\Phi^\PLUS -\Phi^\eta\right)\mathbbm{1}_B\right]+\mathbb{E}_{\Lambda,\beta}\left[\left(\Phi^\PLUS -\Phi^\eta \right)\mathbbm{1}_{B^c}\right],
\end{aligned}    
\end{equation}
where $B$ is the event $\left\{|\varphi^\PLUS_w|\leq |\varphi^\eta_w|+\sqrt{\varepsilon} \text{ for every } w\in \{x,y,z_1,\ldots,z_k\}\right\}$. Letting
\begin{equation*}
f^\PLUS(x,y):=\mathbb{E}_{\Lambda,\beta}\left[\text{sgn}(\varphi^\PLUS_x) \text{sgn}(\varphi^\PLUS_y)\mid |\varphi^\PLUS|,|\varphi^\eta|\right],
\end{equation*}
and
\begin{equation*}
f^\eta(x,y):=\mathbb{E}_{\Lambda,\beta}\left[\text{sgn}(\varphi^\eta_x) \text{sgn}(\varphi^\eta_y)\mid |\varphi^\PLUS|,|\varphi^\eta|\right],
\end{equation*}
we have
\begin{eqnarray*}
\mathbb{E}_{\Lambda,\beta}\left[\left(\Phi^\PLUS -\Phi^\eta \right)\mathbbm{1}_B\right]
&\leq& 
\mathbb{E}_{\Lambda,\beta}\left[|\Phi^\eta| \left(f^\PLUS(x,y)-f^\eta(x,y)\right)\right]+O(\varepsilon^{1/4}) 
\\ &\leq& 
\sqrt{\mathbb{E}_{\Lambda,\beta}\left[(\Phi^\eta)^2\right]}
\sqrt{\mathbb{E}_{\Lambda,\beta}\left[\left(f^\PLUS(x,y)-f^\eta(x,y)\right)^2\right]}
+O(\varepsilon^{1/4}) 
\\ &\leq& 
\sqrt{\mathbb{E}_{\Lambda,\beta}\left[(\Phi^\eta)^2\right]}
\sqrt{2\mathbb{E}_{\Lambda,\beta}\left[f^\PLUS(x,y)-f^\eta(x,y)\right]}
+O(\varepsilon^{1/4}),
\end{eqnarray*}
where the constant in the $O(\varepsilon^{1/4})$ term is uniform in $\Lambda$. In the first line, we used the definition of the event $B$, and the fact that $f^\PLUS(y,z)\geq f^\eta(y,z)$ to remove $\mathbbm{1}_B$. In the second line, we used the Cauchy--Schwarz inequality, and in the third line we used that $0\leq f^\PLUS(y,z)-f^\eta(y,z)\leq 2$. 

Now, define $T$ to be the event $\left\{|\varphi^\eta_x \varphi^\eta_y|\geq \sqrt{\varepsilon}\right\}$, and note that
\begin{eqnarray*}
\varepsilon&\geq& \langle \varphi_x \varphi_y \rangle_{\Lambda,\beta}^\PLUS 
     -
     \langle \varphi_x \varphi_y   \rangle_{\Lambda,\beta}^\eta
	 \geq
	 \mathbb{E}_{\Lambda,\beta}\left[|\varphi^\eta_x \varphi^\eta_y|\left(f^\PLUS(x,y)-f^\eta(x,y)\right)\right]
	 \\ &\geq& \mathbb{E}_{\Lambda,\beta}\left[|\varphi^\eta_x \varphi^\eta_y|\left(f^\PLUS(x,y)-f^\eta(x,y)\right) \mathbbm{1}_T\right]\geq \sqrt{\varepsilon}\mathbb{E}_{\Lambda,\beta}\left[\left(f^\PLUS(x,y)-f^\eta(x,y)\right)\mathbbm{1}_T\right].
\end{eqnarray*}
In the second line, we used that $|\varphi_x^\PLUS \varphi_y^\PLUS|\geq |\varphi_x^\eta \varphi_y^\eta|$ and that $f^\PLUS(x,y)\geq 0$, and in the third line, we used that $f^\PLUS(x,y)\geq f^\eta(x,y)$. It follows that 
$
\mathbb{E}_{\Lambda,\beta}\left[\left(f^\PLUS(x,y)-f^\eta(x,y)\right)\mathbbm{1}_T\right]
\leq \sqrt{\varepsilon}.
$
On the other hand, 
$
\mathbb{E}_{\Lambda,\beta}\left[\left(f^\PLUS(x,y)-f^{\eta}(x,y)\right)\mathbbm{1}_{T^c}\right]
\leq 
2\PP_{\Lambda,\beta}[T^c],
$
and the right-hand side converges to $0$ uniformly in $\Lambda$ and $\eta$ as $\varepsilon$ goes to $0$. Since $\mathbb{E}_{\Lambda,\beta}\left[(\Phi^\eta)^2\right]$ remains bounded uniformly in $\eta$ by regularity of $\langle \cdot \rangle^{\eta}_{\Lambda,\beta}$, we can first send $\Lambda$ to $V$ and then $\varepsilon$ to $0$ to obtain that $\mathbb{E}_{\Lambda,\beta}\left[\left(\Phi^\PLUS -\Phi^\eta \right)\mathbbm{1}_B\right]$ converges to $0$ uniformly in $\eta$.

To handle $\mathbb{E}_{\Lambda,\beta}\left[\left(\Phi^\PLUS -\Phi^\eta \right)\mathbbm{1}_{B^c}\right]$, note that 
\begin{equation*}
\mathbb{E}_{\Lambda,\beta}\left[\left(\Phi^\PLUS -\Phi^\eta \right)\mathbbm{1}_{B^c}\right]
\leq 
\sqrt{\mathbb{E}_{\Lambda,\beta}\left[\left(\Phi^\PLUS -\Phi^\eta \right)^2\right]}\sqrt{\PP_{\Lambda,\beta}[B^c]}
\end{equation*}
by the Cauchy--Schwarz inequality. The first term in the right-hand side is bounded uniformly in $\Lambda$. For the second term, using a union bound we get,
\begin{eqnarray*}
\PP_{\Lambda,\beta}[B^c]
\leq \sum_{w\in\{x,y,z_1,\ldots,z_k\}}\PP_{\Lambda,\beta}\left[|\varphi^\PLUS_w|>|\varphi^\eta_w|+\sqrt{\varepsilon}\right]
\leq (k+2)\sqrt{\varepsilon},
\end{eqnarray*}
where we used Markov's inequality on the second line, together with the inequalities $\langle |\varphi_w|\rangle^\PLUS_{\Lambda,\beta}\leq\langle |\varphi_w| \rangle^\eta_{\Lambda,\beta}+\varepsilon$ and $|\varphi^\PLUS_w|\geq |\varphi^\eta_w|$. Taking first the limit as $\Lambda \uparrow V$ and then $\varepsilon\rightarrow 0$, we can now conclude that  $\mathbb{E}_{\Lambda,\beta}\left[\left(\Phi^\PLUS -\Phi^\eta \right)\mathbbm{1}_{B^c}\right]$ converges to $0$, which implies that 
\begin{equation*}
	\left\langle \varphi_x \varphi_y\prod_{i=1}^k \varphi_{z_i}^{2a_i}  \right\rangle_\beta^+
	=
	\left\langle \varphi_x \varphi_y\prod_{i=1}^k \varphi_{z_i}^{2a_i}  \right\rangle_\beta,
\end{equation*}
as desired.

We now prove $3\Rightarrow 2$. Notice that it is sufficient\footnote{Indeed, if this holds, going to Laplace transforms, one finds that any finite (even) dimensional vector $(\varphi_{x_1},\ldots,\varphi_{x_{2k}})$ has the same law under the two measures which in turns yields that the measures $\langle \cdot \rangle_\beta^+$ and $\langle \cdot \rangle_\beta$ coincide on the set of even measurable functions.} to prove that for any $A:V\rightarrow \NN$ finitely supported with $\sum_{x\in V}A_x$ even, one has
$
    \langle \varphi_A\rangle_\beta^+=\langle\varphi_A\rangle_\beta.
$
First, we prove that for any $x,y\in V$, any $z_1,\ldots,z_k$ with $k\geq 0$, one has,
\begin{equation}\label{equ charact lebo}
    \left\langle \varphi_x \varphi_y\prod_{i=1}^k \varphi_{z_i}^{2a_i}  \right\rangle_\beta^+
	=
	\left\langle \varphi_x \varphi_y\prod_{i=1}^k \varphi_{z_i}^{2a_i}  \right\rangle_\beta.
\end{equation}
Let us start with the case where $x$ and $y$ are such that there exists $z \in V$ such that $J_{x,z}J_{z,y}>0$. Apply Ginibre's inequality to $A=\mathbbm{1}_{x}+\mathbbm{1}_{y}+\sum_{i=1}^k 2a_i \mathbbm{1}_{z_i}$ and $B=\mathbbm{1}_{z}+\mathbbm{1}_y$, and use the hypothesis to get
\begin{eqnarray*}
    0&=&\left\langle \varphi_x \varphi_z\varphi_y^2\prod_{i=1}^k \varphi_{z_i}^{2a_i}  \right\rangle_\beta^+-\left\langle \varphi_x\varphi_z\varphi_y^2\prod_{i=1}^k \varphi_{z_i}^{2a_i}  \right\rangle_\beta
    \\&\geq& \left|\left\langle \varphi_x \varphi_y\prod_{i=1}^k \varphi_{z_i}^{2a_i}\right\rangle^+_{\beta}\langle \varphi_{z}\varphi_{y}\rangle_{\beta}
    -\left\langle \varphi_x \varphi_y\prod_{i=1}^k \varphi_{z_i}^{2a_i}\right\rangle_{\beta}\langle\varphi_{z}\varphi_{y}\rangle^+_{\beta}\right|,
\end{eqnarray*}
so that $\left\langle \varphi_x \varphi_y\prod_{i=1}^k \varphi_{z_i}^{2a_i}  \right\rangle_\beta^+=\left\langle \varphi_x \varphi_y\prod_{i=1}^k \varphi_{z_i}^{2a_i}  \right\rangle_\beta$. 
To get the result in the general case, we use the irreducibility $(\textbf{C3})$ of $J$ and proceed inductively on the number $p$ such that there exists $(x_i)_{0\leq i \leq p}$ with $x_0=x$, $x_p=y$, and $J_{x_0,x_1}\ldots J_{x_{p-1},x_p}>0$. Notice that (\ref{equ charact lebo}) also holds with $x=y$.

To conclude, it is sufficient to prove that for any $x_1,\ldots,x_{2p}, z_1,\ldots,z_k\in V$ with $p\geq 1$ and $k\geq 0$, and any $a_1,\ldots, a_k\in \mathbb N$, one has,
\begin{equation*}
    \left\langle \varphi_{x_1}\ldots \varphi_{x_{2p}}\prod_{i=1}^k \varphi_{z_i}^{2a_i}  \right\rangle_\beta^+
	=
	\left\langle \varphi_{x_1}\ldots \varphi_{x_{2p}}\prod_{i=1}^k \varphi_{z_i}^{2a_i}  \right\rangle_\beta.
\end{equation*}
Once again, let us proceed inductively on $p$. The case $p=1$ was proved above. Assume the result holds for any collection of $2p-2$ points with $p\geq 1$ and consider $x_1,\ldots,x_{2p}\in V$ with $z_1,\ldots,z_k\in V$ and $k\geq 0$. Apply Ginibre's inequality to $A=\sum_{i=1}^{2p}\mathbbm{1}_{x_i}+\sum_{i=1}^k 2a_i\mathbbm{1}_{z_i}$ and $B=\mathbbm{1}_{x_{2p-1}}+\mathbbm{1}_{x_{2p}}$ to get the result.

We now prove $2\Rightarrow 1$. Let $\langle \cdot\rangle_\beta \in \cG_{\Gamma}(\beta)$. We can assume that $\langle \cdot \rangle_\beta$ is extremal. Define the measure $\langle \cdot \rangle^{\textup{sym}}_\beta$ as follows: for any bounded measurable fonction $X: \mathbb R^V\rightarrow \mathbb R$,
\begin{equation*}
    \langle X(\varphi) \rangle^{\textup{sym}}_\beta=\frac{1}{2}\langle X(\varphi) \rangle_\beta+\frac{1}{2}\langle X(-\varphi) \rangle_\beta.
\end{equation*}
Then, notice that for any odd function $X$, one has $\langle X(\varphi) \rangle^{\textup{sym}}_{\beta}=\langle X(\varphi) \rangle^{0}_{\beta}=0$. If $X$ is even, then $\langle X(\varphi) \rangle^{\textup{sym}}_\beta=\langle X(\varphi) \rangle^0_\beta$ by the hypothesis, hence $\langle \cdot \rangle^{\textup{sym}}_\beta=\langle \cdot \rangle^0_\beta$. Now, the hypothesis applied to $\langle \cdot\rangle_\beta^0$ yields the relation $\langle \cdot \rangle^0_\beta=\frac{1}{2}\langle \cdot \rangle^+_\beta+\frac{1}{2}\langle \cdot \rangle^-_\beta$. The uniqueness of the decomposition into extremal states then allows to conclude that $\langle  \cdot \rangle_{\beta}\in \lbrace \langle \cdot \rangle^+_\beta,\langle \cdot \rangle^-_\beta\rbrace$.
\end{proof}

\section{Switching principle for random tangled currents} \label{sec: switching}

In this section, we develop a random tangled current representation for $\phi^4$ which is analogous to the classical random current representation for Ising models. In comparison to the latter representation, an additional complexity appears through the notion of tanglings which we define below. When clear from context, we sometimes write $\langle \cdot\rangle_{\Lambda,\beta,h}=\langle \cdot\rangle_{\Lambda,\beta,h}^0$ to denote the free measure at $(\beta,h)$.



\subsection{Current expansion}
Let $G=(V,E)$ be a countably infinite, locally finite graph. Let $\Lambda$ be a finite subset of $V$.
To include boundary conditions, we introduce a ghost vertex $\fg$ with the condition $\phi_{\fg}\equiv 1$ and extend the model on the augmented set $\Lambda_{\fg} = \Lambda \cup \{ \fg \}$. 	
Let $\Omega_{\Lambda_\fg}:= \NN^{\mathcal{P}_2(\Lambda_\fg)}$ be the space of currents on $\Lambda_{\fg}$, where $\mathbb N=\lbrace 0,1,2,\ldots\rbrace$. For $\n \in \Omega_{\Lambda_\fg}$ and $\lbrace x,y\rbrace \subset \Lambda_\fg$, we use the following notations to denote the value of $\n$ on the pair $\lbrace x,y\rbrace$: $\n(\lbrace x,y\rbrace)=\n_{x,y}=\n(x,y)$.

Given $\n \in \Omega_{\Lambda_\fg}$, let 
$
\partial \n
:=
\lbrace x \in \Lambda : \Delta\n(x) \text{ is odd}\rbrace
$ be the set of sources of $\n$, where
$
\Delta\n(x)
:=
\sum_{y\in \Lambda_\fg} \n_{x,y}	
$
is the $\n$-degree of $x$. Additionally, let $|\n|$ denote the $L^1$ norm of $\n$, i.e.
\begin{equation*}
|\n| 
=
\sum_{\lbrace x,y\rbrace \in \mathcal{P}_2(\Lambda_\fg)} \n_{x,y}.
\end{equation*}
Let
\begin{equation*}
\cM(\Lambda_\fg)
:=
\Big\lbrace A \in \mathbb N^{\Lambda_\fg}: \: A_\fg\leq 1, \: \sum_{x\in \Lambda_\fg}A_x \text{ is even}\Big\rbrace
\end{equation*}
be the set of admissible moments on $\Lambda_{\fg}$. Given $A,B\in \cM_\fg$, define $A+B \in \cM(\Lambda_\fg)$ by
$
(A+B)_x
:=
A_x+B_x
$
for all $x \in \Lambda$, and
\begin{equation*}
(A+B)_\fg
:=
A_\fg+B_\fg \mod 2.
\end{equation*}
Given $A \in \cM(\Lambda_\fg)$, denote by $\partial A:=\lbrace x \in \Lambda
:\: A_x \text{ is odd}\rbrace$ its set of sources (note $\fg$ is not included by convention). We denote by $A=\emptyset$ the element of $\mathcal{M}(\Lambda_\fg)$ that satisfies $A_x=0$ for all $x\in \Lambda_\fg$.
The above definitions also apply naturally to $\cM(\Lambda)$, the set of admissible moments on $\Lambda$.

In the following proposition, we obtain an expression for the spin correlations in terms of currents, and as a corollary we obtain the first Griffiths' inequality. We recall that for $A\in \mathcal{M}(\Lambda_\fg)$, $\varphi_A=\prod_{x\in \Lambda} \varphi_x^{A_x}$.

\begin{prop}[Current expansion for $\phi^4$] Let $\beta>0$ and $h=(h_x)_{x\in \Lambda}\in \mathbb R^\Lambda$. For any $A \in \cM(\Lambda_\fg)$, 
\begin{equation}\label{eq:current_expansion}
\left\langle \varphi_A\right\rangle_{\Lambda,\beta,h}
=
\dfrac{\sum_{\sn=\partial A}w_{\beta,h}^A(\n)}{\sum_{\sn=\emptyset}w_{\beta,h}^\emptyset(\n)},	
\end{equation}
where
\begin{equation}\label{eq:weights_def}
w_{\beta,h}^A(\n)
:=
\prod_{\lbrace x,y\rbrace\subset \Lambda}\dfrac{(\beta J_{x,y})^{\n_{x,y}}}{\n_{x,y}!}\prod_{x\in \Lambda}\dfrac{(\beta h_x)^{\n_{x,\fg}}}{\n_{x,\fg}!}\left\langle \varphi^{\Delta\n(x)+A_x}\right\rangle_{0},
\end{equation}
and we recall that $\langle \cdot \rangle_{0}$ is integration with respect to the single-site measure $\rho_{g,a}$. 

In particular, Griffiths' first inequality holds: if $h_x\geq 0$ for every $x\in \Lambda$, then 
\begin{equation*}
    \left\langle \varphi_A\right\rangle_{\Lambda,\beta,h}\geq 0.
\end{equation*}
\end{prop}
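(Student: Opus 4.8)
The plan is a direct power-series expansion of the Boltzmann weight, combined with Fubini's theorem and the symmetry of the single-site measure. I would start from the definition \eqref{eq: def} with free boundary condition $\eta\equiv 0$, and use the ghost vertex to absorb the magnetic field: since $\varphi_\fg\equiv 1$, the factor $e^{\beta h_x\varphi_x}$ is nothing but a bond factor with coupling $h_x$ attached to $\fg$. Expanding each exponential factor into its power series, $e^{\beta J_{x,y}\varphi_x\varphi_y}=\sum_{k\geq 0}\frac{(\beta J_{x,y})^{k}}{k!}(\varphi_x\varphi_y)^{k}$ for $\{x,y\}\subset\Lambda$ and likewise for the ghost bonds, and multiplying everything out, the integrand becomes a sum over currents $\n\in\Omega_{\Lambda_\fg}$ of $\prod_{\{x,y\}\subset\Lambda}\frac{(\beta J_{x,y})^{\n_{x,y}}}{\n_{x,y}!}\prod_{x\in\Lambda}\frac{(\beta h_x)^{\n_{x,\fg}}}{\n_{x,\fg}!}\,\varphi_A\prod_{x\in\Lambda}\varphi_x^{\Delta\n(x)}$.

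Next I would justify interchanging this (absolutely convergent) sum with the integral against $\rho_{\Lambda,g,a}$. The point is that $\rho_{g,a}$ has quartic tails, hence finite exponential moments of all orders, so the finite-dimensional integral $\int_{\RR^\Lambda}|\varphi_A|\exp\big(\beta\sum_{\{x,y\}}J_{x,y}|\varphi_x\varphi_y|+\beta\sum_x|h_x||\varphi_x|\big)\,\textup{d}\rho_{\Lambda,g,a}$ is finite; this dominates the series termwise, and Tonelli/Fubini applies. After the exchange the integral factorises over sites, $\int_{\RR^\Lambda}\varphi_A\prod_{x\in\Lambda}\varphi_x^{\Delta\n(x)}\,\textup{d}\rho_{\Lambda,g,a}=\prod_{x\in\Lambda}\langle\varphi^{\Delta\n(x)+A_x}\rangle_0$, which is exactly the single-site product in \eqref{eq:weights_def}. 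Thus $\langle\varphi_A\rangle_{\Lambda,\beta,h}=\big(Z^0_{\Lambda,\beta,h}\big)^{-1}\sum_{\n\in\Omega_{\Lambda_\fg}}w_{A,\beta,h}(\n)$.

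To finish I would invoke parity: since $\rho_{g,a}$ is symmetric, $\langle\varphi^{m}\rangle_0=0$ for odd $m$, so $w_{A,\beta,h}(\n)$ can be nonzero only if $\Delta\n(x)+A_x$ is even for every $x\in\Lambda$, i.e.\ $\partial\n=\partial A$; this collapses the numerator to $\sum_{\partial\n=\partial A}w_{A,\beta,h}(\n)$. Running the same expansion on the partition function, which is the case $A\equiv 0$ (and $\partial\, 0=\emptyset$), identifies the denominator as $\sum_{\partial\n=\emptyset}w_{0,\beta,h}(\n)$, giving \eqref{eq:current_expansion}. Griffiths' first inequality is then immediate: for $h_x\geq 0$ every factor of $w_{A,\beta,h}(\n)$ is nonnegative — the couplings $\beta J_{x,y}\geq 0$ by {\bf (C1)}, $\beta h_x\geq 0$, and $\langle\varphi^{\Delta\n(x)+A_x}\rangle_0\geq 0$ as the $\rho_{g,a}$-integral of an even power of $\varphi$ (or $0$) — so the numerator is $\geq 0$ while the denominator is $\geq 1>0$ (from the term $\n\equiv 0$).

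The argument is essentially routine; the only step needing genuine care is the Fubini justification, and beyond that it is a matter of bookkeeping to make sure the combinatorial conventions for the pair bonds and the ghost bonds reproduce the exact powers of $\beta,J,h$ and the factorials appearing in \eqref{eq:weights_def}.
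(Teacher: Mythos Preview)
Your proof is correct and follows exactly the same route as the paper: Taylor-expand each exponential bond factor, swap sum and integral, factorise over sites, and use the evenness of $\rho_{g,a}$ to kill all terms with $\partial\n\neq\partial A$. The only difference is cosmetic—you spell out the Fubini justification via the quartic tails of $\rho_{g,a}$, which the paper leaves implicit.
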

\begin{proof}

Expanding the partition function of the $\phi^4$ model in the same fashion as for the Ising model, we obtain the current representation of the $\phi^4$ model. More precisely, 
\begin{eqnarray*}
Z_{\Lambda,\beta,h}&=& \int_{\mathbb R^\Lambda}\exp\left(\beta\sum_{\lbrace x,y\rbrace \subset \Lambda  }J_{x,y}\varphi_x\varphi_y+\beta\sum_{x\in \Lambda}h_x\varphi_x\right)\prod_{x\in \Lambda}\rho_{g,a}(\text{d}\varphi_x) 
\\ &=& \int_{\mathbb R^\Lambda}\sum_{\n \in \Omega_{\Lambda_\fg}} \prod_{\lbrace x,y\rbrace \subset \Lambda}\dfrac{(\beta J_{x,y})^{\n_{x,y}}}{\n_{x,y}!}\prod_{x\in \Lambda}\dfrac{(\beta h_x)^{\n_{x,\fg}}}{\n_{x,\fg}!}\prod_{x\in \Lambda}\varphi_x^{\Delta\n(x)}\prod_{x\in \Lambda}\rho_{g,a}(\text{d}\varphi_x)
\\ &=&\sum_{\n \in \Omega_{\Lambda_\fg}}\prod_{\lbrace x,y\rbrace\subset \Lambda}\dfrac{(\beta J_{x,y})^{\n_{x,y}}}{\n_{x,y}!}\prod_{x\in \Lambda}\dfrac{(\beta h_x)^{\n_{x,\fg}}}{\n_{x,\fg}!}\prod_{x\in \Lambda}\int_{\mathbb R}\varphi^{\Delta\n(x)}\rho_{g,a}(\text{d}\varphi).
\end{eqnarray*}
Now, note that for $z \in \sn$, 
$
\int_{\mathbb R} \varphi^{\Delta\n(x)}\rho_{g,a}(\text{d}\varphi)=0,
$
since $\text{d}\rho_{g,a}$ is an even measure.
Then,
\begin{equation*}
Z_{\Lambda,\beta,h}=z_{g,a}^{|\Lambda|}\sum_{\substack{\n \in \Omega_{\Lambda_\fg}\\ \sn=\emptyset}}\prod_{\lbrace x,y\rbrace\subset \Lambda}\dfrac{(\beta J_{x,y})^{\n_{x,y}}}{\n_{x,y}!}\prod_{x\in \Lambda}\dfrac{(\beta h_x)^{\n_{x,\fg}}}{\n_{x,\fg}!}\left\langle \varphi^{\Delta\n(x)}\right\rangle_{0}.
\end{equation*}
By the same argument as above, we see that for $A\in \mathcal{M}_\fg$, 
\begin{equation*}
\int_{\mathbb R^\Lambda}\varphi_A \exp\left(\beta\sum_{\lbrace x,y\rbrace \subset \Lambda}J_{x,y}\varphi_x\varphi_y+ \beta\sum_{x\in \Lambda}h_x\varphi_x\right)\prod_{x\in \Lambda}\rho_{g,a}(\text{d}\varphi_x)=z_{g,a}^{|\Lambda|}\sum_{\substack{\n \in \Omega_{\Lambda_\fg}\\ \sn=\partial A}}w_{\beta,h}^A(\n).
\end{equation*}
\end{proof} 

\begin{rem}\label{rem: b.c. as mag. field}
Note that we can also obtain a current expansion for $\langle \varphi_A\rangle_{\Lambda,\beta,h}^\eta$ by viewing the boundary condition as an external field $h'_x:=\sum_{y\notin \Lambda}J_{x,y}\eta_y$ and then writing $\langle \varphi_A\rangle_{\Lambda,\beta,h}^\eta=\langle \varphi_A\rangle_{\Lambda,\beta,h+h'}^0$.
\end{rem}

\begin{rem}
Due to presence of the moments $\langle \phi^{\Delta\n(x)} \rangle_0$, all known proofs of the classical switching lemma for the Ising model do not apply to the above current weights. In order to recover a switching principle, we need the notion of tangled currents, which is introduced in the next section.
\end{rem}

\begin{rem} When $h\equiv 0$, we let $w^A_{\beta}:=w^A_{\beta,0}$.
We may drop the $h$ or $\beta$ dependency on $w_{\beta,h}^A$ when clear from context.
\end{rem}

\subsection{Tangled currents} \label{subsec: tangled currents}

In this section, we introduce the notion of tanglings for single and double currents. Let us start with the former case.

\paragraph{Single tangled current.} We fix $A \in \cM(\Lambda_\fg)$, and let $\n \in \Omega_{\Lambda_\fg}$ be such that $\partial \n = \partial A$. For each  $z\in \Lambda$, we define the block $\cB_z^A(\n)$ as follows : for each $y\in \Lambda_\fg$, it contains $\n_{z,y}$ points labelled $(zy(k))_{1\leq k \leq \n_{z,y}}$, and $A_z$ points labelled $(za(k))_{1\leq k \leq A_z}$. Note that $\cB_z^A(\n)$ has cardinality $\Delta \n(z)+A_z$, which is an even number. We also write $\cB_\fg^A(\n)=\lbrace \fg\rbrace$, and when $A_\fg=1$ we may also write $\fg a(1)=\fg$. For each $z \in \Lambda$, let $\cT_{\n}^A(z)$ be the set of {\it even partitions} of the block $\cB_z^A(\n)$, i.e.\ for every $P=\{P_1,P_2,\ldots,P_k\}\in \cT_{\n}^A(z)$, each $|P_i|$ is even.
Define
\begin{equation*}
\cT_{\n}^A
:=
\underset{z \in \Lambda}{\bigotimes} \,\cT_{\n}^A(z).
\end{equation*}
An element $\ct \in \cT_{\n}^A$ is called a \textit{tangling}. We sometimes refer to an element $\ct_z$ of $\cT_{\n}^A(z)$ as a tangling (of $\cB_z^A(\n)$). This will be clear from the context. We call the pair $(\n, \ct)$ a \textit{tangled current}.

Let $\cH^A(\n)$ be the graph consisting of vertex set 
$
\bigcup_{z \in \Lambda_\fg} \cB_z^A(\n)	
$
and edge set 
\begin{equation*}
\left(\bigcup_{\lbrace x,y\rbrace\subset \Lambda}\left\lbrace \lbrace xy(k),yx(k) \rbrace: \: 1\leq k \leq \n_{x,y}\right\rbrace\right)\cup\left(\bigcup_{x\in \Lambda}\lbrace \lbrace x\fg(k),\fg\rbrace: \: 1\leq k\leq \n_{x,\fg}\rbrace\right).	
\end{equation*}
Given any tangling $\ct \in \cT_{\n}^A$, the graph $\cH^A(\n)$  naturally induces a multigraph $\cH^A(\n,\ct)$ defined as follows. For each block $\cB_z^A(\n)$, if $\ct_z=\{P_1,\ldots,P_k\}\in \cT_\n^A(z)$, for every $1\leq i \leq k$, we add $\binom{|P_i|}{2}$ edges connecting pairwise the vertices lying in $P_i$. In words, we add to $\mathcal H^A(\n)$ the complete graph on the elements of each partition induced by $\ct$.
 This gives a canonical notion of connectivity in a tangled current $(\n,\ct)$. See Figure \ref{fig:my_label} for an illustration.
\begin{rem}
 In the case $A=\emptyset$, we write $\cH(\n,\ct)$ instead of $\cH^\emptyset(\n,\ct)$.
\end{rem}
\begin{rem}\label{extension without ghost tanglings}
We can also extend the definitions for $\n\in \Omega_{\Lambda}$ and $A\in \mathcal{M}(\Lambda)$ by simply noticing that $\n$ can be seen as an element of $\Omega_{\Lambda_\fg}$ and $A$ can be seen as an element of $\mathcal{M}(\Lambda_\fg)$. For a tangling $\ct\in \cT^A_\n$, the graph $\cH^A(\n,\ct)$ is obtained via the same procedure as above, removing $\cB_\fg^A(\n)$.
\end{rem}
\begin{rem} Above, we connected all the elements in a given partition $P_i$ by adding to $\mathcal{H}^A(\n)$ a complete graph on $P_i$. This choice is completely arbitrary and we could have chosen any connected spanning subgraph of $P_i$. However, we find that our choice is the most convenient solution as it preserves a fundamental property: in the graph $\mathcal{H}^A(\n,\ct)$, the only vertices of odd degree are the ones labelled by $A$. As a consequence, $\mathcal{H}^A(\n,\ct)$ must connect these elements pairwise.
    
\end{rem}
In the perspective of constructing a switching principle for tangled currents, it is natural to extend the above definitions to the case where $\n$ is replaced by the sum of two currents.

\begin{figure}
    \centering
    \includegraphics[width=\textwidth]{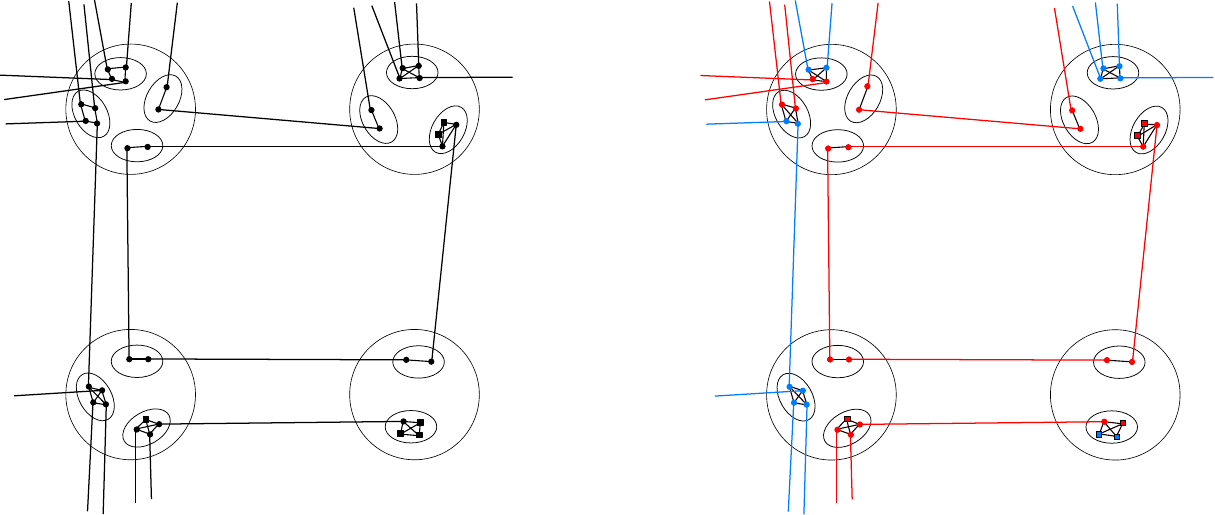}
    \put(-58,10){$\mathcal{B}_{x_4}$}
    \put(-150,10){$\mathcal{B}_{x_3}$}
    \put(-58,95){$\mathcal{B}_{x_2}$}
    \put(-150,95){$\mathcal{B}_{x_1}$}
    \put(-258,10){$\mathcal{B}_{x_4}$}
    \put(-352,10){$\mathcal{B}_{x_3}$}
    \put(-258,95){$\mathcal{B}_{x_2}$}
    \put(-352,95){$\mathcal{B}_{x_1}$}
    \caption{On the left: An example of a single tangled current multigraph $\mathcal{H}^A(\n,\ct)$ with $A_{x_1}=0$, $A_{x_2}=2$, $A_{x_3}=1$ and $A_{x_4}=3$. The circles represent the blocks $\cB^A(\n)$; the small ellipses inside represent the elements of the partitions $\ct$. The vertices $\{za(k):k\geq 1, \: z\in \Lambda\}$ are represented with squares. Notice that $x_3,x_4$ are sources of $\n$. On the right: An example of a double tangled current multigraph $\cH^{A,B}(\n_1,\n_2,\ct)$. The vertices in blue are labelled by $(\n_1,A)$ and the ones in red by $(\n_2,B)$. In this example, $A_{x_1}=A_{x_2}=A_{x_3}=0$ and $A_{x_4}=2$; and $B_{x_1}=0$, $B_{x_2}=2$, and $B_{x_3}=B_{x_4}=1$.}
    \label{fig:my_label}
\end{figure} 

\paragraph{Double tangled current.} Let $\Lambda'\subset \Lambda$ be two finite subsets of $V$.
Fix $A\in \cM(\Lambda_\fg)$ and $B\in \cM(\Lambda'_\fg)$. Let $\n_1 \in \Omega_{\Lambda_\fg}$ and $\n_2 \in \Omega_{\Lambda'_\fg}$ satisfy $\partial \n_1 = \partial A$ and $\partial \n_2 = \partial B$. We trivially view $\n_2$ as an element of $\Omega_{\Lambda_\fg}$ by setting $\n_2(x,y) = 0$ for all pairs $\lbrace x,y\rbrace$ with $x\in \Lambda$ or $y\in \Lambda$. Additionally, we view $B$ as an element of $\cM(\Lambda_\fg)$. To define a notion of tangled currents for the pair $(\n_1,\n_2)$, we use a similar procedure as above. We define $\mathcal{B}_z^{A,B}(\n_1,\n_2)$ as the disjoint union of $\mathcal{B}_z^{A}(\n_1)$ and $\mathcal{B}_z^{B}(\n_2)$.
We write $\cT_{\n_1,\n_2}^{A,B}(z)$ for the set of even partitions of the set $\cB_z^{A,B}(\n_1,\n_2)$ whose restriction to $\cB_z^{A}(\n_1)$ and $\cB_z^{B}(\n_2)$ is also an even partition, i.e.\ for every $P=\{P_1,P_2,\ldots, P_k\}\in \cT_{\n_1,\n_2}^{A,B}(z)$, both $|P_i\cap \cB_z^{A}(\n_1)|$ and $|P_i \cap \cB_z^B(\n_2)|$ are even numbers. The elements of $\cT_{\n_1,\n_2}^{A,B}(z)$ are called {\it admissible} partitions.
As above, we define
\begin{equation*}
\cT_{\n_1,\n_2}^{A,B}
:=
\underset{z \in \Lambda}{\bigotimes} \,\cT_{\n_1,\n_2}^{A,B}(z).
\end{equation*}
As above, we may define a graph $\mathcal H^{A,B}(\n_1,\n_2)$. 
For any tangling $\ct$ in $\cT_{\n_1,\n_2}^{A,B}$, we obtain a multigraph $\cH^{A,B}(\n_1,\n_2,\ct)$ by the following procedure: as for the single tangled current, for each block $\mathcal B_z^{A,B}(\n_1,\n_2)$, if $\ct_z=\{P_1,\ldots,P_k\}\in \cT_{\n_1,\n_2}^{A,B}(z)$, for every $1\leq i \leq k$, we add $\binom{|P_i|}{2}$ edges connecting pairwise the vertices lying in $P_i$. This provides a natural notion of connectivity in $\mathcal{H}^{A,B}(\n_1,\n_2)$. We let $\mathcal{H}^{A,B}_{\Lambda'}(\n_1,\n_2,\ct)$ be the induced subgraph of $\cH^{A,B}(\n_1,\n_2,\ct)$ with vertex set all the vertices in blocks labelled by elements of $\Lambda'$.

Finally, we introduce some useful terminology. Recall that each block $\mathcal B_z^{A,B}(\n_1,\n_2)$ contains four types of vertices: the \emph{internal vertices} which are either labelled by $A$ or by $B$, and the \emph{external vertices} which are labelled either by $\n_1$ or $\n_2$. For the switching principle stated in the next section, the relevant geometric object to consider is the graph $\overline{\mathcal{H}}^{A,B}(\n_1,\n_2,\ct)$ (resp. $\overline{\mathcal{H}}_{\Lambda'}^{A,B}(\n_1,\n_2,\ct)$)  obtained from $\mathcal{H}^{A,B}(\n_1,\n_2,\ct)$ (resp. $\mathcal{H}^{A,B}_{\Lambda'}(\n_1,\n_2,\ct)$) by removing the labels of all the external vertices. 

\begin{rem}
As in Remark \textup{\ref{extension without ghost tanglings}}, we can obtain a notion of tangled currents for pairs of currents $(\n_1,\n_2)$ such that $\n_1\in \Omega_{\Lambda_\fg}$ and $\n_2\in \Omega_{\Lambda}$, or $\n_1\in \Omega_\Lambda$ and $\n_2\in \Omega_{\Lambda'}$. Moreover, if $A=B=\emptyset$, we write $B_z(\n_1,\n_2)=B_z^{\emptyset,\emptyset}(\n_1,\n_2)$ and  $\mathcal{H}(\n_1,\n_2,\ct)=\mathcal{H}^{\emptyset,\emptyset}(\n_1,\n_2,\ct)$ (resp. $\overline{\mathcal{H}}(\n_1,\n_2,\ct)=\overline{\mathcal{H}}^{\emptyset,\emptyset}(\n_1,\n_2,\ct)$).
\end{rem}

\paragraph{The space of (unlabelled) double tangled currents.}
For $A\in \mathcal M(\Lambda)$ and $B\in \mathcal M(\Lambda')$, we define $ \Omega_{\Lambda,\Lambda'}^{\cT,A,B}$ to be the set of double tangled currents on $\Lambda$ with source functions $A,B$, i.e the set of triples $(\n_1,\n_2,\ct)$ where: $(\sn_1,\sn_2)=(\partial A,\partial B)$, and $\ct \in \cT^{A,B}_{\n_1,\n_2}$.  We sometimes identify such a triple with its associated multigraph $\mathcal H^{A,B}(\n_1,\n_2,\ct)$. We set
 \begin{equation}
 	\Omega^\cT_{\Lambda,\Lambda'}:=\bigcup_{(A,B)\in \mathcal M(\Lambda)\times \mathcal M(\Lambda')}\Omega_{\Lambda,\Lambda'}^{\cT,A,B}.
 \end{equation}
Moreover, we define $\overline{\Omega}_{\Lambda,\Lambda'}^{\mathcal{T},A,B}$ to be the image of $\Omega_{\Lambda,\Lambda'}^{\mathcal{T},A,B}$ under the map $\mathcal{H}^{A,B}(\n_1,\n_2,\ct)\mapsto \overline{\mathcal{H}}^{A,B}(\n_1,\n_2,\ct)$.
\begin{rem}\label{rem: bijection (A,B) (A+B,0)}
Note that there exists a (natural) bijection between $\overline{\Omega}_{\Lambda,\Lambda'}^{\mathcal{T},A,B}$ and $\overline{\Omega}_{\Lambda,\Lambda'}^{\mathcal{T},A+B,\emptyset}$ which can be described as follows.
If $\mathcal{H}\in \overline{\Omega}_{\Lambda,\Lambda'}^{\mathcal{T},A+B,\emptyset}$, each block $\mathcal{B}_z$ of $\mathcal{H}$ contains $A_z+B_z$ internal vertices, declare that the first $A_z$ internal vertices are labelled by $A$ and that the last $B_z$ internal vertices are labelled by $B$. This defines uniquely an element of $\overline{\Omega}_{\Lambda,\Lambda'}^{\mathcal{T},A,B}$ that we denote by $\iota(\mathcal{H})$. It is not hard to construct the inverse of $\iota$.
If $F:\overline{\Omega}^{\mathcal{T},A,B}_{\Lambda,\Lambda'}\rightarrow \mathbb R$, we will often abuse notation and write $F=F\circ \iota$, which allows us to view $F$ as a function on $\overline{\Omega}^{\mathcal{T},A+B,\emptyset}_{\Lambda,\Lambda'}$ as well.
\end{rem}

\subsection{A switching principle for tangled currents}
 Let $\Lambda'\subset\Lambda$ be finite subsets of $V\cup \lbrace \fg\rbrace$. Fix $\beta>0$ and $h=(h_x)_{x\in V}\in (\mathbb R^+)^{V}$. Before stating the main result of the section, we need a definition.

\begin{defn}[Pairing event]\label{def: pairing event} Let $C\in \mathcal M(\Lambda)$ and  $B\in \cM(\Lambda')$. Assume that $B\leq C$, i.e.\ $B_x\leq C_x$ for every $x\in \Lambda'$. Define $\cF_{\Lambda'}^{C}(B)$ to be the subset of $\Omega_{\Lambda,\Lambda'}^{\cT,C,\emptyset}$ consisting of all double tangled currents $(\n_1,\n_2,\ct)$ satisfying the following condition: each connected component of $\mathcal H_{\Lambda'}^{C,\emptyset}(\n_1,\n_2,\ct)$ intersects $\{zc(k): z\in \Lambda', \: C_z-B_z+1\leq k \leq C_z\}$ an even number of times.
Notice that $\mathcal{F}^C_{\Lambda'}(B)$ only depends on $\overline{\mathcal H}_{\Lambda'}^{C,\emptyset}(\n_1,\n_2,\ct)$.
When $\Lambda'=\Lambda$, we write $\mathcal F^C(B)=\mathcal F^C_{\Lambda}(B)$. 
\end{defn}
\begin{rem}
The above definition is to be compared with the pairing event which naturally appears in the switching lemma of the Ising model.
\end{rem}



\begin{thm} \label{thm: switching lemma} For every $z\in \Lambda$, every $(C,D)\in \mathcal M(\Lambda)\times \mathcal M(\Lambda')$, every $(\n_1,\n_2)\in \Omega_\Lambda\times \Omega_{\Lambda'}$ with $(\sn_1,\sn_2)=(\partial C,\partial D)$, there exists a probability measure $\rho_{z,\n_1,\n_2}^{C,D}$ on $\cT_{\n_1,\n_2}^{C,D}(z)$ such that the following holds.
Let $A \in \cM(\Lambda)$ and $B \in \cM(\Lambda')$.  
For every bounded $F:\overline{\Omega}_{\Lambda,\Lambda'}^{\cT,A,B} \rightarrow \RR$, 
\begin{equation*}
\begin{aligned}
\sum_{\substack{\partial \n_1 = \partial A \\ \partial \n_2 = \partial B}} 
&w_{\beta,h}^A(\n_1) w_{\beta,h}^B(\n_2) \rho_{\n_1,\n_2}^{A,B}\big[ F[\overline{\mathcal H}^{A,B}(\n_1,\n_2,\ct)] \big]
\\
&=
\sum_{\substack{\partial \n_1 = \partial(A+B) \\ \partial \n_2 = \emptyset}} 
w_{\beta,h}^{A+B}(\n_1)w_{\beta,h}^\emptyset(\n_2)  \rho_{\n_1,\n_2}^{A+B,\emptyset} \big[ F[\overline{\mathcal H}^{A+B,\emptyset}(\n_1,\n_2, \ct)] \mathbbm{1}_{\cF_{\Lambda'}^{A+B}(B)} \big],
\end{aligned}
\end{equation*}
where
\begin{equation*}
\rho_{\n_1,\n_2}^{A,B}
:=
\bigotimes_{z \in \Lambda} \rho_{z,\n_1, \n_2}^{A,B},\quad \text{ and } \quad \rho_{\n_1,\n_2}^{A+B,\emptyset}:=\bigotimes_{z \in \Lambda} \rho_{z,\n_1, \n_2}^{A+B,\emptyset},
\end{equation*}
are measures on $\cT_{\n_1,\n_2}^{A,B}$ and $\cT_{\n_1,\n_2}^{A+B,\emptyset}$ respectively, and $\cF_{\Lambda'}^{A+B}(B)$ is the event defined above.
\end{thm}

\begin{proof}
See Section \ref{section: proof of switching lemma}.	
\end{proof}

\begin{rem}
As we will see in the proof, for any $z$, the measures $\rho_{z,\n_1,\n_2}^{A,B}$ only depend on $(g,a,\Delta\n_1(z),\Delta\n_2(z),A_z,B_z)$, and not on $\Lambda$, $\beta$, $h$ or the rest of $\n_1,\n_2$.
\end{rem}

We can also state the switching lemma in a probabilistic way. We start by introducing the random tangled current measures of interest. We define $\rho_{z,\n}^{A}:=\rho^{A,\emptyset}_{z,\n,0}$ and
\begin{equation*}
    \rho_\n^{A}:=\bigotimes_{z\in \Lambda}\rho_{z,\n}^{A}.
\end{equation*}
The measure $\rho_\n^{A}$ can be viewed as a measure on single tangled currents.

With a small abuse of notation, we view the set of single tangled currents on $\Lambda$ as a subset of $\Omega^\cT_{\Lambda,\Lambda'}$ that we denote by $\Omega_\Lambda^{\cT}$. Similarly, we define for $A\in \mathcal M(\Lambda)$ the set $\Omega_\Lambda^{\cT,A}\subset \Omega_{\Lambda,\Lambda'}^{\cT,A,\emptyset}$.

\begin{defn}
Let $\Lambda'\subset \Lambda$ be finite subsets of $V\cup \lbrace \fg\rbrace$. Let $A\in \cM(\Lambda)$ and $B\in \cM(\Lambda')$. We define a measure $\bfP^{A}_{\Lambda,\beta,h}$ on the $\Omega_{\Lambda}^{\cT,A}$: for any $(\n,\ct)\in \Omega_{\Lambda}^{\cT,A}$,
\begin{equation*}
    \bfP^{A}_{\Lambda,\beta,h}[(\n,\ct)]=\frac{w_{\beta,h}^A(\n)\rho_\n^{A}[\ct]}{\sum_{\partial \m=\partial A}w_{\beta,h}^A(\m)}.
\end{equation*}
We also define a measure $\bfP_{\Lambda,\Lambda',\beta,h}^{A,B}$ on $\Omega^{\cT,A,B}_{\Lambda,\Lambda'}$: for $(\n_1,\n_2,\ct)\in \Omega^{\cT,A,B}_{\Lambda,\Lambda'}$,
\begin{equation*}
    \bfP^{A,B}_{\Lambda,\Lambda',\beta,h}[(\n_1,\n_2,\ct)]=\frac{w_{\beta,h}^A(\n_1)w_{\beta,h}^B(\n_2)\rho^{A,B}_{\n_1,\n_2}[\ct]}{\sum_{\substack{\partial \m_1=\partial A\\ \partial \m_2=\partial B}}w_{\beta,h}^A(\m_1)w_{\beta,h}^B(\m_2)}.
\end{equation*}
We write $\mathbf{E}^A_{\Lambda,\beta,h}$ (resp. $\mathbf{E}^{A,B}_{\Lambda,\Lambda',\beta,h}$) the expectation with respect to the measure $\mathbf{P}^A_{\Lambda,\beta,h}$ (resp. $\mathbf{P}^{A,B}_{\Lambda,\Lambda',\beta,h}$).
\end{defn}
Using these definitions, we can reformulate the switching lemma in a more probabilistic way.
\begin{cor}[Probabilistic version of the switching lemma]\label{thm: probabilistic switching}
 Let $A \in \cM(\Lambda)$, $B \in \cM(\Lambda')$. For any bounded $F:\overline{\Omega}_{\Lambda,\Lambda'}^{\cT,A,B}\rightarrow \mathbb R$,
\begin{equation*}
    \frac{\langle \varphi_A\rangle_{\Lambda,\beta,h}\langle \varphi_B\rangle_{\Lambda',\beta,h}}{\langle \varphi_{A+B}\rangle_{\Lambda,\beta,h}}\mathbf{E}^{A,B}_{\Lambda,\Lambda',\beta,h}[F[\overline{\mathcal H}^{A,B}(\n_1,\n_2,\ct)]]=\mathbf{E}^{A+B,\emptyset}_{\Lambda,\Lambda',\beta,h}[F[\overline{\mathcal H}^{A+B,\emptyset}(\n_1,\n_2,\ct)]\mathbbm{1}_{\cF_{\Lambda'}^{A+B}(B)}].
\end{equation*}
\end{cor}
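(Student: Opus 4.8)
The plan is to derive Corollary~\ref{thm: probabilistic switching} as a direct translation of Theorem~\ref{thm: switching lemma} into the language of the probability measures $\P^{A,B}_{\Lambda,\Lambda',\beta}$ and $\P^{A+B,\emptyset}_{\Lambda,\Lambda',\beta}$, simply by bookkeeping the normalising partition functions. First I would unwind the definitions: by the current expansion \eqref{eq:current_expansion}, the products of two-point-type correlations appearing on the left can all be written with a common denominator $\sum_{\partial\n=\emptyset}w_{0,\beta,h}(\n)$, so that
\begin{equs}
\langle\varphi_A\rangle_{\Lambda,\beta,h}\langle\varphi_B\rangle_{\Lambda',\beta,h}
=
\frac{\big(\sum_{\partial\n_1=\partial A}w_{A,\beta}(\n_1)\big)\big(\sum_{\partial\n_2=\partial B}w_{B,\beta}(\n_2)\big)}{\big(\sum_{\partial\n=\emptyset}w_{0,\beta}(\n)\big)^2},
\end{equs}
and similarly $\langle\varphi_{A+B}\rangle_{\Lambda,\beta,h}=\big(\sum_{\partial\n_1=\partial(A+B)}w_{A+B,\beta}(\n_1)\big)/\big(\sum_{\partial\n=\emptyset}w_{0,\beta}(\n)\big)$. (Here $B$ is viewed inside $\cM(\Lambda_\fg)$ as in Section~\ref{subsec: tangled currents}, and $h\geq 0$ guarantees all sums are nonnegative.) Taking the ratio, the factor $\sum_{\partial\n=\emptyset}w_{0,\beta}(\n)=\sum_{\partial\n_2=\emptyset}w_{\emptyset,\beta}(\n_2)$ cancels one copy in numerator against the denominator, leaving
\begin{equs}
\frac{\langle\varphi_A\rangle_{\Lambda,\beta,h}\langle\varphi_B\rangle_{\Lambda',\beta,h}}{\langle\varphi_{A+B}\rangle_{\Lambda,\beta,h}}
=
\frac{\big(\sum_{\partial\n_1=\partial A}w_{A,\beta}(\n_1)\big)\big(\sum_{\partial\n_2=\partial B}w_{B,\beta}(\n_2)\big)}{\big(\sum_{\partial\n_1=\partial(A+B)}w_{A+B,\beta}(\n_1)\big)\big(\sum_{\partial\n_2=\emptyset}w_{\emptyset,\beta}(\n_2)\big)}.
\end{equs}

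Next I would multiply both sides of the claimed identity by the denominator on the left-hand side and recognise each side as the left- and right-hand sides of Theorem~\ref{thm: switching lemma} applied to the given bounded measurable $F$ (extended to $\Omega_\Lambda^\cT$ in the obvious way, constant on tanglings outside what it depends on). Concretely, by the definition of $\P^{A,B}_{\Lambda,\Lambda',\beta}$ and $\mathbf{E}^{A,B}_{\Lambda,\Lambda',\beta}$,
\begin{equs}
\Big(\sum_{\substack{\partial\n_1=\partial A\\\partial\n_2=\partial B}}w_{A,\beta}(\n_1)w_{B,\beta}(\n_2)\Big)\,\mathbf{E}^{A,B}_{\Lambda,\Lambda',\beta}[F(\n,\ct)]
=
\sum_{\substack{\partial\n_1=\partial A\\\partial\n_2=\partial B}}w_{A,\beta}(\n_1)w_{B,\beta}(\n_2)\,\rho^{\n_1,\n_2}_{A,B}\big[F(\n_1+\n_2,\ct)\big],
\end{equs}
where I use that $\rho^{\n_1,\n_2}_{A,B}=\bigotimes_z\rho^{\n_1,\n_2}_{z,A,B}$ depends on $(\n_1,\n_2)$ only through the degrees, so summing over all $(\n_1,\n_2)$ with $\n_1+\n_2=\n$ reproduces exactly the numerator in the definition of $\P^{A,B}$. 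Likewise, since $w_{\emptyset,\beta}(\n_2)$ for $\partial\n_2=\emptyset$ is supported on configurations whose contribution to the $\P^{A+B,\emptyset}$ numerator is that of the $\n_2\equiv 0$ sector together with the rest, one gets
\begin{equs}
\Big(\sum_{\substack{\partial\n_1=\partial(A+B)\\\partial\n_2=\emptyset}}w_{A+B,\beta}(\n_1)w_{\emptyset,\beta}(\n_2)\Big)\,\mathbf{E}^{A+B,\emptyset}_{\Lambda,\Lambda',\beta}\big[F(\n,\ct)\mathbbm{1}_{\cF^{\Lambda'}_B}\big]
=
\sum_{\substack{\partial\n_1=\partial(A+B)\\\partial\n_2=\emptyset}}w_{A+B,\beta}(\n_1)w_{\emptyset,\beta}(\n_2)\,\rho^{\n_1,\n_2}_{A+B,\emptyset}\big[F(\n_1+\n_2,\ct)\mathbbm{1}_{\cF^{\Lambda'}_B}\big].
\end{equs}
Theorem~\ref{thm: switching lemma} equates the two right-hand sides; dividing through by the appropriate partition-function products and invoking the simplified ratio above yields the corollary. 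One should also observe that $\sum_{\partial\n_1=\partial(A+B)}w_{A+B,\beta}(\n_1)>0$ (so the left-hand fraction is well-defined) exactly when $\langle\varphi_{A+B}\rangle\neq 0$; in the degenerate case both sides vanish by Griffiths and the identity is trivial.

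I do not expect a genuine obstacle here, since all the analytic content is in Theorem~\ref{thm: switching lemma}; the only thing that needs care is the combinatorial bookkeeping when passing from sums over pairs $(\n_1,\n_2)$ to sums over the single current $\n=\n_1+\n_2$ while keeping track of the tangling measures. The subtle point is that $\P^{A,B}_{\Lambda,\Lambda',\beta}$ is, by definition, the law of $(\n,\ct)$ where $\n$ is a sum of two currents sampled with weights $w_{A,\beta}w_{B,\beta}$ and $\ct$ then drawn from $\rho^{\n_1,\n_2}_{A,B}$ \emph{conditionally on the decomposition}, so one must check that the marginal over the decomposition is precisely what appears when Theorem~\ref{thm: switching lemma} is summed; this is immediate from the product structure $\rho^{\n_1,\n_2}_{A,B}=\bigotimes_z\rho^{\n_1,\n_2}_{z,A,B}$ and the fact (noted in the remark after the theorem) that each $\rho^{\n_1,\n_2}_{z,A,B}$ depends only on $(\Delta\n_1(z),\Delta\n_2(z))$. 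With that dependence in hand, the identity reduces to cancelling the normalising constants, which completes the proof.
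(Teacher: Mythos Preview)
Your approach is correct and is exactly what the paper has in mind: the corollary is obtained from Theorem~\ref{thm: switching lemma} by dividing both sides by the appropriate partition-function product and recognising the resulting quotients as the expectations $\mathbf{E}^{A,B}_{\Lambda,\Lambda',\beta}$ and $\mathbf{E}^{A+B,\emptyset}_{\Lambda,\Lambda',\beta}$. One small slip: in your first display the denominator should be the product $\big(\sum_{\partial\n=\emptyset,\,\n\in\Omega_\Lambda}w_{0,\beta}(\n)\big)\big(\sum_{\partial\n=\emptyset,\,\n\in\Omega_{\Lambda'}}w_{0,\beta}(\n)\big)$ rather than a square, since $\Lambda'\subsetneq\Lambda$ in general; your final ratio is nonetheless correct because it is the $\Lambda$-partition function that cancels, leaving $Z^{\emptyset}_{\Lambda'}$ in the denominator as you write.
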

In what follows, $\beta,h$ are often fixed and omitted to simplify the notation. In particular, we omit them in the notations of the tangling measures.

\begin{rem}
Taking $a>0$ and letting $g \rightarrow 0$, the $\varphi^4$ model on a finite graph converges to a (massive) Gaussian Free Field. It is therefore natural to expect the switching principle to pass through the limit. This is indeed the case, and one may additionally show that the tangling measures converge to the uniform measure on pairings, which is directly related to Wick's formula.
Similarly, one may take the Ising limit as in \eqref{eq: ising limit} and ask a similar question. In this case, one can prove that the tangling measures converge to the Dirac measure supported on the trivial partition consisting of one element, thus recovering the standard switching lemma of the Ising model. We refer to \textup{\cite[Section~4]{krachun2023scaling}} for more details.
\end{rem}

\subsection{Application: correlation inequalities}

We fix $\beta>0$ and drop it from the notation. The first result gives a new exact probabilistic representation for the ratio of spin correlations.
\begin{prop}\label{prop: first consequence sl} Let $\Lambda$ be a finite subset of $V$. Let $h=(h_x)_{x\in \Lambda}\in (\mathbb R_+)^\Lambda$. For $A,B\in \mathcal{M}(\Lambda_\fg)$, 
\begin{equation*}
    \dfrac{\langle \varphi_A\rangle_{\Lambda,h} \langle \varphi_B\rangle_{\Lambda,h}}{\langle \varphi_{A+B}\rangle_{\Lambda,h}}=\mathbf P_{\Lambda_g,\Lambda_\fg}^{A+B,\emptyset}[(\n_1,\n_2,\ct) \in \mathcal{F}^{A+B}(B)]
\end{equation*}
\end{prop}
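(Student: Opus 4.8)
The plan is to apply the probabilistic switching lemma (Corollary~\ref{thm: probabilistic switching}) with a careful choice of parameters, exploiting the fact that on a finite graph $\Lambda$ the ghost vertex $\fg$ plays the role of an external field. First I would take $F\equiv 1$ in Corollary~\ref{thm: probabilistic switching}, which gives, for $A,B\in\cM(\Lambda_\fg)$,
\begin{equs}
\frac{\langle\varphi_A\rangle_{\Lambda,\beta,h}\langle\varphi_B\rangle_{\Lambda,\beta,h}}{\langle\varphi_{A+B}\rangle_{\Lambda,\beta,h}}
=
\mathbf E^{A+B,\emptyset}_{\Lambda,\Lambda,\beta}\big[\mathbbm 1_{\cF^{\Lambda}_B}\big]
=
\mathbf P^{A+B,\emptyset}_{\Lambda,\Lambda,\beta}\big[(\n_1+\n_2,\ct)\in\cF^{\Lambda}_B\big].
\end{equs}
Here I have used $\Lambda'=\Lambda$. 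The subtlety is purely notational: Proposition~\ref{prop: first consequence sl} is stated with $A,B\in\cM(\Lambda_\fg)$ and with the measure written as $\mathbf P^{A+B,\emptyset}_{\Lambda_\fg,\Lambda_\fg}$, treating $\Lambda_\fg$ itself as the ``box''. So the first step is to observe that the definitions of Section~\ref{subsec: tangled currents} and Corollary~\ref{thm: probabilistic switching} apply verbatim with $\Lambda$ replaced by $\Lambda_\fg$ (equivalently, with an external field $h$ absorbed into the ghost-edge weights $(\beta h_x)^{\n_{x,\fg}}/\n_{x,\fg}!$ via Remark~\ref{rem: b.c. as mag. field}); the ghost vertex is just one more vertex of the graph on which currents live, and the event $\cF_B$ (pairing the $B$-legs in the associated multigraph) is exactly $\cF^{\Lambda}_B$ in that notation.

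Second, I would unwind the definition of $\mathbf P^{A+B,\emptyset}_{\Lambda_\fg,\Lambda_\fg,\beta}$: since the second source set is $\emptyset$, this measure is supported on tangled currents $(\n,\ct)=(\n_1+\n_2,\ct)$ with $\partial\n=\partial(A+B)$, distributed according to
\begin{equs}
\mathbf P^{A+B,\emptyset}_{\Lambda_\fg,\Lambda_\fg,\beta}[(\n,\ct)]
=
\mathbbm 1_{\partial\n=\partial(A+B)}\,
\frac{\sum_{\n_1+\n_2=\n,\,\partial\n_1=\partial(A+B),\,\partial\n_2=\emptyset}w_{A+B,\beta}(\n_1)w_{\emptyset,\beta}(\n_2)\rho^{\n_1,\n_2}_{A+B,\emptyset}[(\n,\ct)]}{\sum_{\partial\n_1=\partial(A+B),\,\partial\n_2=\emptyset}w_{A+B,\beta}(\n_1)w_{\emptyset,\beta}(\n_2)}.
\end{equs}
The event $\{(\n_1+\n_2,\ct)\in\cF_B\}$ is precisely the event appearing on the right-hand side of Proposition~\ref{prop: first consequence sl}, so the two expressions coincide once one matches notation. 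Thus the proof is essentially a one-line consequence of Corollary~\ref{thm: probabilistic switching}, together with the remark immediately following Theorem~\ref{thm: switching lemma} that the local tangling measures $\rho^{\n_1,\n_2}_{z,A,B}$ do not depend on $\beta$, $h$, $\Lambda$, which lets one drop $\beta$ from the notation as announced at the end of Section~\ref{sec: switching}.

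The only genuine point requiring care — and the step I expect to be the ``main obstacle,'' though it is minor — is checking that applying everything with $\Lambda_\fg$ in the role of the ambient finite vertex set is legitimate: one must verify that the ghost vertex carries no moment factor (indeed $\langle\varphi^{\Delta\n(\fg)}\rangle_0$ does not appear; instead $\phi_\fg\equiv1$), that the constraint $A_\fg\le 1$ in the definition of $\cM(\Lambda_\fg)$ is preserved under the addition $A+B$ used in the switching lemma (it is, by the $\mathrm{mod}\ 2$ convention), and that $\cF_B^{\Lambda_\fg}$ as defined makes sense, i.e.\ $B$ injects into the ordered degree function of $\n$ at every vertex including $\fg$ (guaranteed since $\partial\n=\partial(A+B)\supseteq$ the relevant legs and $B_\fg\le 1$). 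Once these bookkeeping checks are in place, the identity follows immediately by specialising Corollary~\ref{thm: probabilistic switching} to $F\equiv 1$ and $\Lambda'=\Lambda_\fg=\Lambda$, and rewriting $\mathbf E^{A+B,\emptyset}_{\Lambda,\Lambda,\beta}[\mathbbm 1_{\cF_B^{\Lambda}}]$ as the stated probability. I would therefore write the proof as: ``Apply Corollary~\ref{thm: probabilistic switching} with $\Lambda'=\Lambda$ (working on $\Lambda_\fg$) and $F\equiv1$, and observe that $\cF^{\Lambda}_B=\cF_B$ in the present notation.''
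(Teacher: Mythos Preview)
Your proposal is correct and matches the paper's own proof, which is the one-line ``direct consequence of Corollary~\ref{thm: probabilistic switching} applied to $F=1$.'' The additional bookkeeping you spell out about the ghost vertex and the $A_\fg\le 1$ convention is accurate but more than the paper itself bothers to record.
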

\begin{proof}
This is a direct consequence of Corollary \ref{thm: probabilistic switching} applied to $F=1$.
\end{proof}

We will now apply Proposition \ref{prop: first consequence sl} to give new proofs of standard correlation inequalities for the $\varphi^4$ model. Alternative proofs can be found in \cite[Chapter 4]{glimm2012quantum}.

\begin{prop}[Second Griffiths' inequality]\label{prop: second consequence sl} Let $\Lambda$ be a finite subset of  $V$, and $h=(h_x)_{x\in \Lambda}\in (\mathbb R_+)^\Lambda$. Let $A,B\in \cM(\Lambda_\fg)$. Then
\begin{equation*}
    \langle \varphi_A\varphi_B\rangle_{\Lambda,h}\geq \langle \varphi_A\rangle_{\Lambda,h}\langle \varphi_B\rangle_{\Lambda,h}.
\end{equation*}
\end{prop}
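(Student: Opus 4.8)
The plan is to obtain this as an immediate consequence of the switching principle of Theorem \ref{thm: switching lemma} applied to the constant test function $F\equiv 1$, mirroring exactly how the second Griffiths' inequality for the Ising model is deduced from the classical switching lemma.

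First I would reduce the statement to an inequality for a single correlation. Since $\phi_\fg\equiv 1$ and, for every $x\in\Lambda$, $(A+B)_x=A_x+B_x$, one has $\varphi_A\varphi_B=\prod_{x\in\Lambda}\varphi_x^{A_x+B_x}=\varphi_{A+B}$ as functions on $\RR^\Lambda$ — the $\!\!\mod 2$ convention on the ghost coordinate of $A+B$ is irrelevant, since $\varphi_{A+B}$ does not involve $\fg$. Hence $\langle \varphi_A\varphi_B\rangle_{\Lambda,h}=\langle\varphi_{A+B}\rangle_{\Lambda,h}$, and it suffices to prove
\begin{equs}
\langle\varphi_A\rangle_{\Lambda,h}\,\langle\varphi_B\rangle_{\Lambda,h}\leq \langle\varphi_{A+B}\rangle_{\Lambda,h}.
\end{equs}

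Next I would apply Theorem \ref{thm: switching lemma} with $\Lambda'=\Lambda$ (both viewed inside $V\cup\{\fg\}$, i.e.\ with the ghost vertex included) and with $F\equiv 1$. Since each $\rho^{\n_1,\n_2}_{A,B}$ and $\rho^{\n_1,\n_2}_{A+B,\emptyset}$ is a probability measure, the left-hand side of the switching identity equals $\sum_{\partial\n_1=\partial A,\ \partial\n_2=\partial B}w_{A}(\n_1)w_{B}(\n_2)$, while its right-hand side is
\begin{equs}
\sum_{\substack{\partial\n_1=\partial(A+B)\\ \partial\n_2=\emptyset}}w_{A+B}(\n_1)\,w_{\emptyset}(\n_2)\,\rho^{\n_1,\n_2}_{A+B,\emptyset}\big[\mathbbm{1}_{\cF_B^{\Lambda}}\big]
\ \leq\
\sum_{\substack{\partial\n_1=\partial(A+B)\\ \partial\n_2=\emptyset}}w_{A+B}(\n_1)\,w_{\emptyset}(\n_2),
\end{equs}
where I use that all the weights are non-negative (each $w$ is a product of non-negative factors, the single-site moments $\langle\varphi^{2k}\rangle_0$ being positive and the odd ones zero) and that $\rho^{\n_1,\n_2}_{A+B,\emptyset}\big[\mathbbm{1}_{\cF_B^{\Lambda}}\big]\le 1$. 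Factoring the source constraints in the last sum and dividing through by $\big(\sum_{\partial\n=\emptyset}w_{\emptyset}(\n)\big)^2>0$ (the empty current alone contributes weight $1$), the current expansion \eqref{eq:current_expansion} rewrites the resulting inequality precisely as $\langle\varphi_A\rangle_{\Lambda,h}\langle\varphi_B\rangle_{\Lambda,h}\le\langle\varphi_{A+B}\rangle_{\Lambda,h}$, as wanted. Equivalently, one may quote Proposition \ref{prop: first consequence sl}: when $\langle\varphi_{A+B}\rangle_{\Lambda,h}>0$ the ratio $\langle\varphi_A\rangle_{\Lambda,h}\langle\varphi_B\rangle_{\Lambda,h}/\langle\varphi_{A+B}\rangle_{\Lambda,h}$ is a probability, hence $\le 1$, and the degenerate case $\langle\varphi_{A+B}\rangle_{\Lambda,h}=0$ forces $\langle\varphi_A\rangle_{\Lambda,h}\langle\varphi_B\rangle_{\Lambda,h}=0$ by the same identity together with the first Griffiths' inequality.

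I do not expect any genuine obstacle here: the whole content is carried by Theorem \ref{thm: switching lemma} (already established) and by the non-negativity of correlations coming from the first Griffiths' inequality. The only points requiring a moment of care are the bookkeeping identity $\varphi_A\varphi_B=\varphi_{A+B}$, where the ghost-vertex convention enters, and checking that the normalising sums appearing in the division are strictly positive so that the step is legitimate.
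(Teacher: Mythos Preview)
Your proof is correct and follows exactly the same approach as the paper, which simply states that the inequality is a direct consequence of Proposition \ref{prop: first consequence sl}. You have supplied more detail than the paper (the identity $\varphi_A\varphi_B=\varphi_{A+B}$, the positivity of the normalising sums, and the degenerate case $\langle\varphi_{A+B}\rangle_{\Lambda,h}=0$), but the underlying argument is identical.
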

\begin{proof}
This is a direct consequence of Proposition \ref{prop: first consequence sl}
\end{proof}
\begin{prop}[Monotonicity in volume] \label{prop: monotonicity in vol} Let $\Lambda'\subset \Lambda$ be finite subsets of $V$, and $h=(h_x)_{x\in \Lambda}\in (\mathbb R_+)^\Lambda$. Let $B\in \cM(\Lambda_\fg)$. Then,
\begin{equation*}
    \langle \varphi_B\rangle_{\Lambda,h}\geq \langle \varphi_B\rangle_{\Lambda',h}.
\end{equation*}
\end{prop}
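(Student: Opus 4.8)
The plan is to reduce the claim to the second Griffiths' inequality (Proposition \ref{prop: second consequence sl}) by interpolating the coupling constants, after first realising $\langle\varphi_B\rangle_{\Lambda',h}$ as a degenerate $\phi^4$ measure on the larger box $\Lambda$. I assume, as is implicit in the statement, that $B$ is supported on $\Lambda'\cup\{\fg\}$ so that $\langle\varphi_B\rangle_{\Lambda',h}$ makes sense, and I use the ghost-vertex convention of this section: the external field is encoded as an extra edge $\{x,\fg\}$ of strength $h_x$, so that field and interactions together form a single ferromagnetic family $(J_{x,y})_{\{x,y\}\subset\Lambda_\fg}$.

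First I would set $\tilde J_{x,y}:=J_{x,y}\,\mathbf 1_{\{x,y\}\subset\Lambda'\cup\{\fg\}}$, so that in the $\phi^4$ measure on $\Lambda$ with interactions $\tilde J$ the spins $(\varphi_x)_{x\in\Lambda\setminus\Lambda'}$ carry neither interaction nor field. Since $\rho_{g,a}$ is a probability measure and $\varphi_B$ depends only on coordinates in $\Lambda'$, both numerator and denominator of $\langle\varphi_B\rangle_{\Lambda,\beta,\tilde J}$ factorise, the $\Lambda\setminus\Lambda'$ part contributing a factor $1$ to each, so that $\langle\varphi_B\rangle_{\Lambda,\beta,\tilde J}=\langle\varphi_B\rangle_{\Lambda',\beta,h}$.

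Next, for $t\in[0,1]$ put $J^{(t)}:=\tilde J+t(J-\tilde J)$; since $J-\tilde J\geq 0$ componentwise, each $J^{(t)}$ is ferromagnetic with nonnegative ghost-edge weights, so $g(t):=\langle\varphi_B\rangle_{\Lambda,\beta,J^{(t)}}$ is well-defined. It is smooth in $t$, being a ratio of absolutely convergent integrals of polynomials against $e^{-\beta H_{J^{(t)}}}\prod_x\textup{d}\rho_{g,a}(\varphi_x)$ (the Gaussian-type tails of $\rho_{g,a}$ justify differentiation under the integral sign). Using $\partial_t H_{J^{(t)}}=-\sum_{\{x,y\}\subset\Lambda_\fg}(J_{x,y}-\tilde J_{x,y})\varphi_x\varphi_y$ one gets
\begin{equs}
g'(t)=\beta\sum_{\{x,y\}\subset\Lambda_\fg}(J_{x,y}-\tilde J_{x,y})\Big(\langle\varphi_B\,\varphi_x\varphi_y\rangle^{(t)}-\langle\varphi_B\rangle^{(t)}\langle\varphi_x\varphi_y\rangle^{(t)}\Big),
\end{equs}
where $\langle\cdot\rangle^{(t)}$ denotes the $\phi^4$ measure on $\Lambda$ with interactions $J^{(t)}$. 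Each prefactor is nonnegative by construction, and each bracket is nonnegative by Proposition \ref{prop: second consequence sl} applied to the moments $B$ and $\mathbf 1_x+\mathbf 1_y\in\cM(\Lambda_\fg)$ (using $\varphi_\fg\equiv 1$ when $y=\fg$). Hence $g$ is nondecreasing and $\langle\varphi_B\rangle_{\Lambda,\beta,h}=g(1)\geq g(0)=\langle\varphi_B\rangle_{\Lambda',\beta,h}$.

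I do not expect a genuine obstacle here: the argument is routine. The only points needing a line of justification are the factorisation identity of the second paragraph, which rests on $\rho_{g,a}$ being normalised and on the support assumption on $B$, and the smoothness of $g$ together with the interchange of $\partial_t$ and the integral, which follow from the super-Gaussian decay of $\rho_{g,a}$ and dominated convergence.
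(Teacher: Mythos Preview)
Your proof is correct but takes a different route from the paper's. The paper applies the switching principle (Corollary~\ref{thm: probabilistic switching}) directly with $A=0$ and $F=1$ to obtain the identity
\begin{equs}
\langle \varphi_B\rangle_{\Lambda',h}=\langle \varphi_B\rangle_{\Lambda,h}\,\P_{\Lambda_\fg,\Lambda'_\fg}^{B,\emptyset}\Big[\cF^{\Lambda'_\fg}_B\Big],
\end{equs}
and the inequality follows since the right-hand factor is a probability. Your argument instead goes through Griffiths~II (Proposition~\ref{prop: second consequence sl}) and an interpolation in the coupling constants: you embed the $\Lambda'$-measure as a degenerate $\Lambda$-measure with the external couplings switched off, and then turn them on while tracking the sign of the derivative. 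This is the classical route from Griffiths~II to monotonicity in volume, and is essentially what the paper alludes to in Remark~\ref{rem: monotoncity griffiths} (monotonicity in $J$) and carries out for $\beta,g,a$ in Proposition~\ref{prop: applications monotonicity switching}. The paper's one-line proof has the advantage of giving a probabilistic interpretation of the \emph{ratio} $\langle\varphi_B\rangle_{\Lambda',h}/\langle\varphi_B\rangle_{\Lambda,h}$ as a connection probability, which is in the spirit of the random-tangled-current machinery; your approach is more elementary and self-contained once Griffiths~II is in hand, and does not need the two-volume version of the switching lemma.
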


\begin{proof}
Apply Corollary \ref{thm: probabilistic switching} to $A=0$ and $F=1$ to obtain that
\begin{equation*}
    \langle \varphi_B\rangle_{\Lambda',h}=\langle \varphi_B\rangle_{\Lambda,h}\bfP_{\Lambda_\fg,\Lambda'_\fg}^{B,\emptyset}[\cF_{\Lambda'_\fg}^{B}(B)].
\end{equation*}
\end{proof}

\begin{prop} \label{prop: applications monotonicity switching}
Let $\Lambda$ be a finite subset of $V$, $h=(h_x)_{x\in \Lambda}\in (\mathbb R_+)^\Lambda$, and $A\in \cM(\Lambda_\fg)$. Then,
\begin{enumerate}
    \item[$(i$)] $\beta\mapsto \langle \varphi_A\rangle_{\Lambda,\beta,h,g,a}$ is non-decreasing on $\RR_+$,
    \item[$(ii)$] $g\mapsto \langle \varphi_A\rangle_{\Lambda,\beta,h,g,a}$ is non-increasing on $\RR$,
    \item[$(iii)$] $a\mapsto \langle \varphi_A\rangle_{\Lambda,\beta,h,g,a}$ is non-increasing on $\RR$.
\end{enumerate}
\end{prop}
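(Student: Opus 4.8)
The idea is to differentiate each correlation function with respect to the relevant parameter and recognize the derivative, via the current expansion of Proposition~\ref{eq:current_expansion}, as a covariance-type quantity that can be controlled by the second Griffiths' inequality (Proposition~\ref{prop: second consequence sl}). Since the single-site measure $\rho_{g,a}$ enters the weights $w_{A,\beta,h}(\n)$ only through the moments $\langle \varphi^{\Delta\n(x)+A_x}\rangle_0$, all three monotonicity statements reduce to understanding how these moments and the edge weights $(\beta J_{x,y})^{\n_{x,y}}/\n_{x,y}!$ respond to $\beta$, $g$, and $a$.

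For part $(i)$, I would write $\partial_\beta \langle \varphi_A\rangle_{\Lambda,\beta,h}$ by differentiating the Hamiltonian directly in the defining integral \eqref{eq: def}: one gets
\begin{equs}
\partial_\beta \langle \varphi_A\rangle_{\Lambda,\beta,h}
=
\sum_{\{x,y\}\subset\Lambda} J_{x,y}\big(\langle \varphi_A\varphi_x\varphi_y\rangle_{\Lambda,\beta,h}-\langle \varphi_A\rangle_{\Lambda,\beta,h}\langle\varphi_x\varphi_y\rangle_{\Lambda,\beta,h}\big)+\sum_{x\in\Lambda}h_x\big(\langle\varphi_A\varphi_x\rangle_{\Lambda,\beta,h}-\langle\varphi_A\rangle_{\Lambda,\beta,h}\langle\varphi_x\rangle_{\Lambda,\beta,h}\big).
\end{equs}
Writing $A$ and $\{x,y\}$ (resp.\ $\{x\}$, using the ghost) as admissible moments, each bracket is of the form $\langle\varphi_A\varphi_B\rangle_{\Lambda,\beta,h}-\langle\varphi_A\rangle_{\Lambda,\beta,h}\langle\varphi_B\rangle_{\Lambda,\beta,h}$, which is nonnegative by Proposition~\ref{prop: second consequence sl}. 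Since $J_{x,y}\geq 0$ and $h_x\geq 0$, the derivative is nonnegative, giving $(i)$. (Alternatively, one can note $\partial_\beta$ of the numerator/denominator ratio in \eqref{eq:current_expansion} produces the same covariance structure directly on currents.)

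For parts $(ii)$ and $(iii)$ the cleanest route is the same GHS/Griffiths-covariance mechanism but now differentiating the single-site measure. Fix a vertex $w\in\Lambda$ and consider $\partial_g$ (resp.\ $\partial_a$) acting on $\rho_{g,a}$: since $\mathrm{d}\rho_{g,a}(t)\propto e^{-gt^4-at^2}\mathrm{d}t$, differentiating in $g$ brings down a factor $-\varphi_w^4$ (and a normalisation term $\langle\varphi_w^4\rangle_0$), and differentiating in $a$ brings down $-\varphi_w^2$. Summing over $w$, one obtains
\begin{equs}
\partial_g \langle \varphi_A\rangle_{\Lambda,\beta,h}
=
-\sum_{w\in\Lambda}\big(\langle\varphi_A\varphi_w^4\rangle_{\Lambda,\beta,h}-\langle\varphi_A\rangle_{\Lambda,\beta,h}\langle\varphi_w^4\rangle_{\Lambda,\beta,h}\big),
\end{equs}
and similarly with $\varphi_w^2$ in place of $\varphi_w^4$ for $\partial_a$. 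Now $\varphi_w^4=\varphi_B$ with $B=4\,\mathbbm{1}_w\in\cM(\Lambda_\fg)$ (and $\varphi_w^2$ with $B=2\,\mathbbm{1}_w$), so Proposition~\ref{prop: second consequence sl} applies verbatim and each bracket is $\geq 0$; the overall minus sign then yields that $g\mapsto\langle\varphi_A\rangle$ and $a\mapsto\langle\varphi_A\rangle$ are non-increasing, which is $(ii)$ and $(iii)$.

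The main point requiring care — the only real obstacle — is justifying the differentiation under the integral/infinite sum and the interchange of $\partial_g,\partial_a$ with the (finite-volume) Gibbs expectation: one needs dominated convergence with a bound uniform in the parameter over a neighbourhood, which follows from the super-quartic decay of $e^{-gt^4-at^2}$ (for $g>0$ bounded below and $a$ in a bounded interval) guaranteeing that all moments $\langle\varphi_w^k\rangle_0$ are finite and locally bounded, and that $\langle\varphi_A\varphi_w^4\rangle_{\Lambda,\beta,h}<\infty$. For $(ii)$ with $g$ ranging over all of $\R$ including $g\leq 0$: the statement as quoted says $g\in\R$, but $\rho_{g,a}$ is only a finite measure when $g>0$ (or $g=0$, $a>0$); I would interpret the claim on the natural domain where the model is defined, i.e.\ restrict $g$ to $\R_{>0}$ (and $a\in\R$), which is consistent with the standing assumptions $g>0$ in the rest of the paper, and note that the argument is insensitive to this. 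Once the differentiation is justified, the proof is just an application of Proposition~\ref{prop: second consequence sl} term by term.
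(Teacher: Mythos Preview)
Your proposal is correct and follows exactly the same approach as the paper: differentiate $\langle \varphi_A\rangle_{\Lambda,\beta,h,g,a}$ in each parameter, identify the result as a sum of truncated correlations $\langle \varphi_A;\varphi_B\rangle$, and conclude via the second Griffiths inequality (Proposition~\ref{prop: second consequence sl}). In fact your computation of $\partial_\beta$ is slightly more complete than the paper's, which omits the $h_x$ terms (harmless, since those covariances are also nonnegative); your remarks on justifying differentiation and on the natural domain $g>0$ are reasonable cautions that the paper leaves implicit.
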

\begin{proof}
Notice that
\begin{equation*}
    \frac{\textup{d}}{\textup{d}\beta} \langle \varphi_A\rangle_{\Lambda,\beta,h,g,a}=\sum_{\lbrace x,y\rbrace \subset\Lambda}J_{x,y}\langle \varphi_A;\varphi_x\varphi_y\rangle_{\Lambda,\beta,h},
\end{equation*}
\begin{equation*}
    \frac{\textup{d}}{\textup{d}g} \langle \varphi_A\rangle_{\Lambda,\beta,h,g,a}=-\sum_{x\in \Lambda}\langle \varphi_A;\varphi_x^4\rangle_{\Lambda,\beta,h,g,a},
\end{equation*}
and
\begin{equation*}
    \frac{\textup{d}}{\textup{d}a} \langle \varphi_A\rangle_{\Lambda,\beta,h,g,a}=-\sum_{x\in \Lambda}\langle \varphi_A;\varphi_x^2\rangle_{\Lambda,\beta,h,g,a},
\end{equation*}
where we used the notation $\langle \varphi_A;\varphi_B\rangle_{\Lambda,\beta,h,g,a}:=\langle \varphi_A\varphi_B\rangle_{\Lambda,\beta,h}-\langle \varphi_A\rangle_{\Lambda,\beta,h}\langle \varphi_B\rangle_{\Lambda,\beta,h}$.
\end{proof}

\begin{rem} \label{rem: monotoncity griffiths}
Note that there is also monotonicity in $h$ and $J$ analogous to the statements in Proposition \textup{\ref{prop: applications monotonicity switching}}. They follow by similar considerations.
\end{rem}

\subsection{Proof of switching principle}\label{section: proof of switching lemma}

In this section, we use the Griffiths--Simon approximation \cite{GS} to construct a random tangled current representation of $\phi^4$ on a finite subset $\Lambda$ of $V$ which satisfies a switching principle. The prelimit is an Ising model on the product of $\Lambda$ with the complete graph on $N$ vertices. We use the switching principle of the random current representation of the Ising model to obtain the switching principle for $\phi^4$ by taking limits, as appropriate. Along the way, we identify the underlying geometric structures that persist in the scaling limit.

\subsubsection{Griffiths--Simon approximation}\label{section: gs approx}

Let $N \in \mathbb N$. Define $\Lambda_N:=\Lambda\times \lbrace 1,\ldots,N\rbrace$. For $x \in \Lambda$, define
$
B_{x,N}
:=
\lbrace (x,i)\rbrace_{1\leq i \leq N},
$
and set $B_{\fg,N} := \lbrace \fg\rbrace$.

Let $\cG_N$ be the graph with vertex set $V(\cG_N):=\Lambda_N\cup \{\fg\}$ and edge set 
\begin{equation*}    E(\cG_N)
:=
E^{\textup{int}}(\cG_N) \cup E^{\textup{ext}}(\cG_N),
\end{equation*}
where
\begin{equation*}
\begin{aligned}
E^{\textup{int}}(\cG_N)
&:=
\bigcup_{x\in \Lambda}\mathcal{P}_2(B_{x,N}),
\\
E^{\textup{ext}}(\cG_N)
&:=
\Big\{ \{ u,v\}  \text{ such that there exists }  \lbrace x,y\rbrace \in E(\Lambda_\fg) \text{ with } u \in B_{x,N} \text{ and }v \in B_{y,N} \Big\},
\end{aligned}
\end{equation*}
and, above, $\cP_2(B_{x,N})$ denotes the set of pairs in $B_{x,N}$. 

\begin{rem}
An edge of the form $\lbrace (x,j),(x,k)\rbrace \in E^{\textup{int}}$ is called \textit{internal}, whereas an edge of the form $\lbrace (x,j),(y,k)\rbrace \in E^{\textup{ext}}$  is called \textit{external}.
\end{rem}

We now introduce the Griffiths--Simon model. Let $\tilde g = (12g)^{1/4}$ and $\tilde a = 2a {\tilde g}^{-2}$. Define the coupling constants  
\begin{equation}\label{eq: def c_N and d_N}
c_N:=\tilde{g}^{-1}N^{-3/4},
\qquad 
d_N:=\frac{1}{N}\left(1-\frac{\tilde{a}}{\sqrt{N}}\right).
\end{equation}
For $\beta>0$, $h\in (\RR_+)^\Lambda$ and $\sigma$ in $\lbrace \pm 1\rbrace^{\Lambda_N}$, define the Hamiltonian 

\begin{equation*}
\begin{aligned}
\mathbf{H}^h_{\Lambda,\beta,N}(\sigma)
&:=
-\beta c_N^2\sum_{\substack{\lbrace x,y\rbrace \subset \Lambda\\j,k\in \lbrace   1,\ldots,N\rbrace}}J_{x,y}\sigma_{(x,j)}\sigma_{(y,k)}
\\
&\qquad\quad\quad
-\beta c_N \sum_{(x,j) \in \Lambda_N} h_x \sigma_{(x,j)}-d_N\sum_{\substack{x \in \Lambda\\  j,k \in \lbrace 1,\ldots,N\rbrace}}\sigma_{(x,j)}\sigma_{(x,k)}.
\end{aligned}
\end{equation*}

\begin{defn}\label{def: measure gs graph}
Let $\beta > 0$. Let $\mu^h_{\Lambda,N,\beta}$ be the probability measure on $\lbrace \pm 1\rbrace^{\Lambda_N}$ defined for $\sigma \in \lbrace \pm 1\rbrace^{\Lambda_N}$ by
\begin{equation*}
\mu_{\Lambda,N,\beta}^h(\sigma)=\dfrac{\exp(- \mathbf{H}^h_{\Lambda,\beta,N}(\sigma))}{\mathbf{Z}(\Lambda,N,\beta,h)}
\end{equation*}
where $\mathbf{Z}(\Lambda,N,\beta,h)$ is the partition function. In the specific case where $\Lambda$ is reduced to a point $x$, we denote this measure $\mu_{x,N,\beta}^h$. Note that this measure does not depend on $x$.
\end{defn} 

Observe that $\mu^h_{\Lambda,N,\beta}$ is an Ising model on $\Lambda_N$. The coupling constants are such that the model has a scaling limit as $N \rightarrow \infty$, which is the $\phi^4$ model on $\Lambda$ with parameters $(g,a)$ at $(\beta,h)$. The natural observable to state the convergence is the block-spin average, defined for $x \in \Lambda$ by
$
\Phi_{x,N}
:=
c_N\sum_{j=1}^N\sigma_{(x,j)}.
$
The following result was first derived in \cite{GS} to obtain, amongst other things, the Lee--Yang theorem for the $\varphi^4$ model on $\mathbb Z^d$. It was later reproved for a larger class of models in \cite{dunlopnewman1975}.

\begin{prop}\label{prop: CV griffiths} Let $\beta > 0$ and $h \in (\RR_+)^\Lambda$. Then, for any $p \in \NN^\Lambda$, 
\begin{equation*}
\lim_{N \rightarrow \infty} \mu^h_{\Lambda,N,\beta}\left[\prod_{x \in \Lambda}\Phi_{x,N}^{p_x}\right]
=
\Big\langle \prod_{x \in \Lambda} \varphi_x^{p_x}
\Big \rangle_{\Lambda,\beta,h}.
 \end{equation*}
\end{prop}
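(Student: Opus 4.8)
The plan is to recall the Griffiths–Simon construction of the single-site measure and to reduce the claim to a one-site statement combined with the convergence of the interaction structure. First I would fix a single vertex $x$ and consider the measure $\mu^0_{x,N,\beta}$ on $\{\pm1\}^{B_{x,N}}$, which by Definition \ref{def: measure gs graph} depends only on $d_N$ through the single-site Hamiltonian $-d_N\sum_{j,k}\sigma_{(x,j)}\sigma_{(x,k)} = -d_N\big(\sum_j \sigma_{(x,j)}\big)^2$. Writing $S_N := \sum_{j=1}^N \sigma_{(x,j)}$, the law of $S_N$ under the product of $N$ fair $\pm1$ coins reweighted by $e^{d_N S_N^2}$ is, by Stirling/Laplace asymptotics, such that $c_N S_N = \Phi_{x,N}$ converges in distribution, with all moments, to the single-site measure $\rho_{g,a}$; this is precisely the computation in \cite{GS} (see also \cite{dunlopnewman1975}) and follows from the choice $\tilde g = (12/g)^{1/4}$, $\tilde a = a\tilde g^2$, $c_N = \tilde g N^{-3/4}$, $d_N = \tfrac1N(1-\tfrac{\tilde a}{\sqrt N})$. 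The key analytic input is that $\EE[e^{t c_N S_N}] \to \int e^{ts}\,\dd\rho_{g,a}(s)$ for all $t\in\RR$, together with a uniform-in-$N$ bound on exponential moments $\sup_N \EE[e^{t|c_N S_N|}] < \infty$ for every $t$, which gives uniform integrability of all polynomials in $\Phi_{x,N}$.

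Next I would treat the full graph $\Lambda$. The Hamiltonian $\mathbf H^h_{\Lambda,\beta,N}$ splits into the internal part $-d_N\sum_x\sum_{j,k}\sigma_{(x,j)}\sigma_{(x,k)}$ and the external part $-\beta c_N^2\sum_{\{x,y\}}J_{x,y}\big(\sum_j\sigma_{(x,j)}\big)\big(\sum_k\sigma_{(y,k)}\big) - \beta c_N\sum_{(x,j)}h_x\sigma_{(x,j)}$, which can be rewritten entirely in terms of the block variables $\Phi_{x,N}$ as $\beta\sum_{\{x,y\}}J_{x,y}\Phi_{x,N}\Phi_{y,N} + \beta\sum_x h_x\Phi_{x,N}$. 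Hence
\begin{equs}
\mu^h_{\Lambda,N,\beta}\Big[\prod_{x\in\Lambda}\Phi_{x,N}^{p_x}\Big]
=
\frac{\displaystyle\bigotimes_{x\in\Lambda}\mu^0_{x,N,\beta}\Big[\prod_{x}\Phi_{x,N}^{p_x}\,e^{\beta\sum_{\{x,y\}}J_{x,y}\Phi_{x,N}\Phi_{y,N}+\beta\sum_x h_x\Phi_{x,N}}\Big]}{\displaystyle\bigotimes_{x\in\Lambda}\mu^0_{x,N,\beta}\Big[e^{\beta\sum_{\{x,y\}}J_{x,y}\Phi_{x,N}\Phi_{y,N}+\beta\sum_x h_x\Phi_{x,N}}\Big]}.
\end{equs}
Since $\Lambda$ is finite, the product measures $\bigotimes_{x\in\Lambda}\mu^0_{x,N,\beta}$ converge weakly to $\bigotimes_{x\in\Lambda}\rho_{g,a}$ (the single-site limit above applied coordinatewise, with independence preserved), and the right-hand side is exactly the ratio defining $\langle \prod_x\varphi_x^{p_x}\rangle_{\Lambda,\beta,h}$ against the limiting measure. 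It then remains to pass the limit through both the numerator and denominator.

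The main obstacle is exactly this interchange of limit and expectation: the integrand contains $e^{\beta\sum J_{x,y}\Phi_{x,N}\Phi_{y,N}}$, which grows exponentially in the (unbounded in the limit) block variables and is multiplied by the polynomial $\prod_x\Phi_{x,N}^{p_x}$, so weak convergence alone is insufficient. I would resolve this with a uniform-integrability argument: using the uniform exponential-moment bound $\sup_N \bigotimes_x\mu^0_{x,N,\beta}[e^{t\sum_x|\Phi_{x,N}|}]<\infty$ for all $t>0$ (which, by the single-site analysis, reflects the quartic decay $e^{-gt^4}$ of $\rho_{g,a}$ dominating any quadratic growth), one shows that the family $\{\prod_x\Phi_{x,N}^{p_x}e^{\beta\sum J_{x,y}\Phi_{x,N}\Phi_{y,N}+\beta\sum h_x\Phi_{x,N}}\}_N$ is bounded in $L^q$ of the product measures for some $q>1$, hence uniformly integrable; combined with weak convergence this yields convergence of the expectations. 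The denominator is handled identically and is bounded away from $0$, so the ratio converges. This completes the proof.
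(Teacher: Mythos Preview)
The paper does not give its own proof of this proposition; it simply attributes the result to \cite{GS} (and \cite{dunlopnewman1975}) and moves on. Your outline is precisely the standard Griffiths--Simon argument: factor the Hamiltonian into the internal Curie--Weiss part and an external part that depends only on the block spins $\Phi_{x,N}$, establish the single-site limit $c_N S_N \Rightarrow \rho_{g,a}$, and then pass to the limit in the tilted ratio. So structurally there is nothing to compare against and your plan is correct.

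One imprecision worth flagging: the uniform-integrability step requires more than the linear exponential moment bound $\sup_N \mu^0_{x,N}[e^{t|\Phi_{x,N}|}]<\infty$ that you wrote. The interaction term is quadratic in the block spins, so after Young's inequality the integrand in the $L^q$ bound is dominated by $\prod_x e^{C\Phi_{x,N}^2}$, and control of $\sup_N \mu^0_{x,N}[e^{t|\Phi_{x,N}|}]$ for all $t$ does \emph{not} imply control of $\sup_N \mu^0_{x,N}[e^{C\Phi_{x,N}^2}]$ (a Gaussian is a counterexample). What is actually needed---and what your parenthetical remark about the quartic tail suggests you have in mind---is the stronger uniform bound $\sup_N \mu^0_{x,N}[e^{C\Phi_{x,N}^2}]<\infty$ for every $C>0$, which follows from a Laplace/large-deviation analysis of the near-critical Curie--Weiss magnetisation (the rate function behaves like $g\phi^4$ at the relevant scale). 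With that correction, the argument goes through.
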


\begin{rem}
Proposition \textup{\ref{prop: CV griffiths}} can be seen as a generalisation of the central limit theorem to a class of dependent random variables. The different rescaling factor $c_N \sim N^{-3/4}$ from the usual $N^{-1/2}$ scaling for independent random variables is intrinsically linked to the non-Gaussian limiting random variable. A more sophisticated viewpoint is that the coupling constants are chosen such that the $N \rightarrow \infty$ limit corresponds to a near-critical scaling limit of the Curie--Weiss model. Thus, the convergence of moments of the block-spin magnetisations to the $\phi^4$ moments can be seen as a statement about the model lying in a $\phi^4$ universality class. Indeed, generalisations of these results leading to densities with respect to $\phi^{2k}$ measures, i.e.\ distinct universality classes, can be found in \textup{\cite{ellisnewman1978}}.
\end{rem}

We strengthen the above convergence result for the specific case of a single block-spin. More precisely, we show that in this instance we can neglect diagonal terms. 
\begin{lem}\label{lem: GS} For any $p\in \mathbb N$ and $x\in \Lambda$, 
\begin{equation*}
    (c_N N)^p\mu^0_{x,N,\beta}\left[\prod_{j=1}^p\sigma_{(x,j)}\right]\underset{N\rightarrow \infty}\longrightarrow \left\langle\varphi^p\right\rangle_0.
\end{equation*}
\end{lem}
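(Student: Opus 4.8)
The plan is to deduce the convergence from the block-spin moment convergence of Proposition~\ref{prop: CV griffiths} by expanding the $p$-th power of $\Phi_{x,N}$ and showing that the off-diagonal contributions are negligible. Write $\mu_N:=\mu^0_{x,N,\beta}$ and $g_N(m):=\mu_N\big[\sigma_{(x,1)}\cdots\sigma_{(x,m)}\big]$. The first observation is that for $\Lambda=\{x\}$ and $h=0$ the Hamiltonian reduces to $-d_N\sum_{j,k}\sigma_{(x,j)}\sigma_{(x,k)}$, which is invariant under permutations of $\{1,\dots,N\}$; hence $\mu_N$ is exchangeable, and since $\sigma^2\equiv 1$ the correlation $\mu_N\big[\sigma_{(x,j_1)}\cdots\sigma_{(x,j_q)}\big]$ depends only on the number $q'$ of indices among $j_1,\dots,j_q$ of odd multiplicity, and equals $g_N(q')$. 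In particular $(c_NN)^p\,\mu_N\big[\prod_{j=1}^p\sigma_{(x,j)}\big]=(c_NN)^p\,g_N(p)$, so it suffices to identify the limit of this quantity.

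Next I would expand $\Phi_{x,N}^p=c_N^p\big(\sum_{j=1}^N\sigma_{(x,j)}\big)^p=c_N^p\sum_{j_1,\dots,j_p}\sigma_{(x,j_1)}\cdots\sigma_{(x,j_p)}$ and sort the $N^p$ tuples by the size $q$ of their odd-multiplicity support, noting $q\equiv p\pmod 2$ and that $q=p$ precisely when the $j_i$ are pairwise distinct. This yields
\[
\mu_N\big[\Phi_{x,N}^p\big]=c_N^p\,N(N-1)\cdots(N-p+1)\,g_N(p)+c_N^p\!\!\sum_{\substack{0\le q\le p-2\\ q\equiv p\,(2)}}\!\! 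M_{p,q,N}\,g_N(q),
\]
where $M_{p,q,N}$ counts tuples in $\{1,\dots,N\}^p$ with odd-multiplicity support of size exactly $q$. A tuple with odd-multiplicity support of size $q$ has at most $(p+q)/2$ distinct coordinates, so $M_{p,q,N}=O(N^{(p+q)/2})$; moreover $c_N^p=\tilde g^{\,p}N^{-3p/4}$ and $c_N^p\,N(N-1)\cdots(N-p+1)\sim(c_NN)^p=\tilde g^{\,p}N^{p/4}$. The crux is to bound $g_N(q)$ for $q\le p-2$, and here the trivial bound $|g_N(q)|\le 1$ is insufficient (it would only control the terms with $q<p/2$). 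Instead I would use that $d_N>0$ for $N$ large, so $\mu_N$ is a ferromagnetic Ising model; Griffiths' first inequality then gives $g_N(q)\ge 0$ together with $\mu_N\big[\Phi_{x,N}^q\big]\ge c_N^q\,N(N-1)\cdots(N-q+1)\,g_N(q)$, and since $\mu_N\big[\Phi_{x,N}^q\big]\to\langle\varphi^q\rangle_0$ stays bounded in $N$ by Proposition~\ref{prop: CV griffiths} applied with $\Lambda=\{x\}$, this forces $g_N(q)=O\big((c_NN)^{-q}\big)=O(N^{-q/4})$.

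Plugging these estimates into the displayed identity, each off-diagonal term is $O\big(N^{-3p/4+(p+q)/2-q/4}\big)=O\big(N^{(q-p)/4}\big)=O(N^{-1/2})$ since $q\le p-2$, so the whole sum is $o(1)$. Therefore
\[
(c_NN)^p\,g_N(p)=\frac{(c_NN)^p}{c_N^p\,N(N-1)\cdots(N-p+1)}\Big(\mu_N\big[\Phi_{x,N}^p\big]+o(1)\Big)\xrightarrow[N\to\infty]{}\langle\varphi^p\rangle_0,
\]
using that the prefactor tends to $1$ and $\mu_N\big[\Phi_{x,N}^p\big]\to\langle\varphi^p\rangle_0$ by Proposition~\ref{prop: CV griffiths}; since $g_N(p)=\mu^0_{x,N,\beta}\big[\prod_{j=1}^p\sigma_{(x,j)}\big]$, this is the claim (the cases $p\le 1$ being trivial, with both sides equal to $\langle\varphi^p\rangle_0$ by spin-flip symmetry). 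The main obstacle is exactly securing the polynomial decay $g_N(q)=O((c_NN)^{-q})$ of the lower-order single-site correlations; the Griffiths-based route above is clean, but one may alternatively run the entire argument by induction on $p$, the inductive hypothesis supplying $g_N(q)\sim\langle\varphi^q\rangle_0(c_NN)^{-q}$ directly, with the combinatorial bookkeeping of $M_{p,q,N}$ being routine in either case.
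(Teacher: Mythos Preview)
Your proof is correct and follows essentially the same approach as the paper: expand $\Phi_{x,N}^p$, isolate the distinct-index term, and use Griffiths' first inequality together with Proposition~\ref{prop: CV griffiths} to kill the remainder. The only difference is bookkeeping: where you sort the off-diagonal tuples by their odd-multiplicity support size $q$ and bound each $g_N(q)$ separately, the paper collapses the entire repeated-index sum in one step via the union bound $\sum_{\exists k<\ell:\,i_k=i_\ell}\mu_N[\,\cdot\,]\le \binom{p}{2}N\,c_N^{-(p-2)}\mu_N[\Phi_{x,N}^{p-2}]$ (using $\sigma^2=1$ to drop two factors), which gives directly $c_N^2\binom{p}{2}N\,\mu_N[\Phi_{x,N}^{p-2}]=O(N^{-1/2})$ without tracking $M_{p,q,N}$.
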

\begin{proof}
For $p=1$, there is nothing to show because both the sequence and $\left\langle\varphi\right\rangle_0$ are equal to $0$, so let us assume that $p\geq 2$. Let $N\geq p$. Expanding $\Phi_{x,N}^{p}$ and taking cases according to whether some index repeats or not we obtain
\begin{equation*}
\begin{aligned}
\mu^0_{x,N,\beta}\big[\Phi_{x,N}^{p}\big]=c_N^p\sum_{\substack{(i_1,\ldots,i_p)\in \lbrace 1,\ldots,N\rbrace^p\\ \forall k\neq \ell, \: i_k\neq i_\ell}}\mu^0_{x,N,\beta}\left[\prod_{j=1}^p\sigma_{(x,i_j)}\right] \\+c_N^p\sum_{\substack{(i_1,\ldots,i_p)\in \lbrace 1,\ldots,N\rbrace^p\\ \exists k<\ell, \: i_k=i_\ell}}\mu^0_{x,N,\beta}\left[\prod_{j=1}^p\sigma_{(x,i_j)}\right].
\end{aligned}
\end{equation*}
Note that
\begin{equation*}
\begin{aligned}
c_N^p\sum_{\substack{(i_1,\ldots,i_p)\in \lbrace 1,\ldots,N\rbrace^p\\ \forall k\neq \ell, \: i_k\neq i_\ell}}\mu^0_{x,N,\beta}\left[\prod_{j=1}^p\sigma_{(x,i_j)}\right] &=c^p_N p!\binom{N}{p}\mu^0_{x,N,\beta}\left[\prod_{j=1}^p\sigma_{(x,j)}\right] \\
&= (1+o(1)) (c_N N)^p\mu^0_{x,N,\beta}\left[\prod_{j=1}^p\sigma_{(x,j)}\right]
\end{aligned}
\end{equation*}
by the symmetries of the complete graph, and
\begin{equation*}
0\leq c_N^p\sum_{\substack{(i_1,\ldots,i_p)\in \lbrace 1,\ldots,N\rbrace^p\\ \exists k<\ell, \: i_k=i_\ell}}\mu^0_{x,N,\beta}\left[\prod_{j=1}^p\sigma_{(x,i_j)}\right]\leq c_N^2 {p \choose 2}  N \mu^0_{x,N,\beta}\big[\Phi_{x,N}^{p-2}\big]\underset{N\rightarrow \infty}{\longrightarrow} 0
\end{equation*}
where we have used that each summand is non-negative by Griffiths' inequality. The desired assertion then follows from Proposition \ref{prop: CV griffiths}.
\end{proof}

\subsubsection{A conditional proof}

We now introduce the random current representation for $\mu^h_{\Lambda,N,\beta}$. For ease of notation, we adopt the following conventions. First, we write $\Omega_{\Lambda_\fg}^N:=\Omega_{\cG_N}$ to denote the space of currents on $\cG_N$. Second, for $\tilde\n \in \Omega^N_{\Lambda_\fg}$, $x,y\in \Lambda$ and $j,k\in \lbrace 1,\ldots,N\rbrace$, we set 
\begin{equation*}
\tilde\n(x,j,y,k)
:=
\tilde\n\big(( x,j),( y,k)\big),\qquad \tilde\n(x,j,\fg):=\n\big(( x,j),\fg\big).
\end{equation*}
The random current expansion for $\mu^h_{\Lambda,\beta,N}$ takes the following form: for any $S\subset \Lambda_N$,
\begin{equation*}
    \mu_{\Lambda,N,\beta}^h[\sigma_S]=\frac{\sum_{\partial\tilde{\n}=S}w_{N,\beta}(\tilde\n)}{\sum_{\partial\tilde{\n}=\emptyset}w_{N,\beta}(\tilde\n)},
\end{equation*}
where
\begin{equation*}
\begin{aligned}
    w_{N,\beta}(\tilde\n)
    &:=
    \prod_{\substack{\lbrace x,y\rbrace\subset\Lambda\\j,k\in \lbrace 1,\ldots,N\rbrace}}\dfrac{(\beta c_N^2 J_{x,y})^{\tilde\n(x,j,y,k)}}{\tilde\n(x,j,y,k)!}
    \prod_{\substack{x\in \Lambda\\ j,k \in \lbrace 1,\ldots,N\rbrace}}\dfrac{( d_N)^{\tilde\n(x,j,x,k)}}{\tilde\n(x,j,x,k)!}
    \\
    &\qquad\qquad\quad \times 
    \prod_{(x,j)\in \Lambda_N}\dfrac{(\beta c_N h_x)^{\tilde\n(x,j,\fg)}}{\tilde\n(x,j,\fg)!}.
\end{aligned}    
\end{equation*}
Let $\Theta_N : \Omega_{\Lambda_\fg}^N\rightarrow \Omega_{\Lambda_\fg}$ be the projection map defined,
for all $\tilde{\n} \in \Omega_{\Lambda_\fg}^N$ and $\lbrace x,y\rbrace\subset\Lambda_\fg$, by
\begin{equation*}
\Theta_N(\tilde{\n})_{\lbrace x,y\rbrace}
:=
\sum_{j,k\in \lbrace 1,\ldots,N\rbrace}\tilde{\n}(x,j,y,k).
\end{equation*}
We then have that $\partial \Theta_N(\tilde\n)=\partial A$, where $A$ is defined for $x\in\Lambda$ by $A_x=|\partial \tilde\n\cap B_{x,N}|$.

Conversely, we can lift any $\n \in \Omega_{\Lambda_\fg}$ to a set of currents in $\Omega_{\Lambda_\fg}^N$ once we have fixed an injection of admissible moments $A\in \cM(\Lambda_\fg)$ to subsets of $\Lambda_N$. For convenience, we fix the natural injection: for $A\in \mathcal{M}(\Lambda_\fg)$, define
\begin{equation*}
    \tilde{A}
    :=
    \bigcup_{x\in \Lambda}\lbrace (x,j): \: 1\leq j \leq A_x\rbrace.
\end{equation*}
Then, the lift of $\n \in \Omega_{\Lambda_\fg}$ with source set $\partial A$ is the set of $\tilde \n \in \Omega_{\Lambda_\fg}^N$ such that $\Theta_N(\tilde \n) = \n$ and $\partial \tilde \n = \tilde A$. 

Motivated by this observation and Lemma \ref{lem: GS}, we define the renormalised weight for $\n \in \Omega_{\Lambda_\fg}$ by
\begin{equation*}
    \overline{w}_{N,A,\beta}(\n)
    :=
    \dfrac{(c_N N)^{|A|}}{\mathbf{Z}_N^{|\Lambda|}}\sum_{\substack{\tilde{\n}\in \Omega^N_{\Lambda_\fg}\\ \Theta_N(\tilde{\n})=\n \\ \partial \tilde{\n}=\tilde{A}}}w_{N,\beta}(\tilde{\n}),
\end{equation*}
where $|A|=\sum_{x\in \Lambda}A_x$ and
\begin{equation*}
\mathbf{Z}_N
:=
\sum_{\substack{\tilde{\n} \in \Omega_{B_{o,N}}\\ \sn=\emptyset}}w_{N,\beta}(\tilde{\n}).
\end{equation*}
We emphasise that the renormalisation factor $c_NN \sim N^{1/4}$ is directly linked to Proposition \ref{prop: CV griffiths}.

An important step in the proof of the switching principle is that the renormalised weights converge to those of the $\phi^4$ current expansion. To prove this, we need to understand the geometric structure of the currents that persists in the limit. It consists of currents which belong to the set $W_N$ defined below.

\begin{defn}\label{def: def w_N}
Let $W_N(\Lambda_\fg)=W_N:=W_{1,N}\cap W_{2,N}$, where
\begin{equation*}
\begin{aligned}
    W_{1,N}
    &:=
    \left\lbrace \tilde{\n}\in \Omega^N_{\Lambda_\fg}: \: \forall (x,j)\in \partial \tilde{\n}\setminus \lbrace \fg \rbrace, \: \forall e\in E^{\textup{ext}} \text{ with }(x,j)\in e, \: \tilde{\n}_e=0\right\rbrace,
    \\
    W_{2,N}
    &:=
    \Big\{ \tilde{\n}\in \Omega^N_{\Lambda_\fg}: \: \forall (x,j)\in \Lambda_N, \: \sum_{e \in E^{\textup{ext}}:\: (x,j)\in e}\tilde{\n}_e\leq 1\Big\}.
\end{aligned}    
\end{equation*}
\end{defn}
In words, the currents in $W_{1,N}$ consists of those whose sources are separated from exterior block-spins by other internal vertices; whereas $W_{2,N}$ consists of currents whose weight on external edges does not exceed $1$. We have the following result.
\begin{prop}\label{prop: stronger weight convergence}
For all $A\in \cM(\Lambda_\fg)$ and $\n\in \Omega_{\Lambda_\fg}$,
\begin{equation*}
    \dfrac{(c_N N)^{|A|}}{\mathbf{Z}_N^{|\Lambda|}}\sum_{\substack{\tilde{\n}\in W_N\\ \Theta_N(\tilde{\n})=\n \\ \partial \tilde{\n}=\tilde{A}}}w_{N,\beta}(\tilde{\n})\underset{N\rightarrow \infty}{\longrightarrow}w_{\beta}^A(\n),
\end{equation*}
and
\begin{equation}\label{eq: second half}
    \dfrac{(c_N N)^{|A|}}{\mathbf{Z}_N^{|\Lambda|}}\sum_{\substack{\tilde{\n}\in W_N^c\\ \Theta_N(\tilde{\n})=\n \\ \partial \tilde{\n}=\tilde{A}}}w_{N,\beta}(\tilde{\n})\underset{N\rightarrow \infty}{\longrightarrow}0.
\end{equation}
\end{prop}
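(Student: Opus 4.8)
The plan is to reduce everything to a single-site computation using the factorisation of the Griffiths--Simon weights over blocks, combined with the combinatorial bookkeeping of lifts. First I would observe that, once we fix the requirement $\Theta_N(\tilde\n)=\n$ and $\partial\tilde\n=\tilde A$, the weight $w_{N,\beta}(\tilde\n)$ factorises as a product of an \emph{external} part — depending only on how the $\n_{x,y}$ external edges between $B_{x,N}$ and $B_{y,N}$ are distributed among the $N^2$ available slots, together with the $\beta c_N^2 J_{x,y}$ and $\beta c_N h_x$ prefactors — and an \emph{internal} part, which for each $x$ is a sum over internal currents on $B_{x,N}$ with a prescribed odd/even degree profile dictated by the external edges and by $\tilde A_x$. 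The key identity is that, after summing the internal currents on $B_{x,N}$ with prescribed total parity at each of the $N$ vertices, one recovers precisely the Ising $N$-block partition function weighted appropriately, and Lemma~\ref{lem: GS} tells us that $(c_NN)^{\Delta\n(x)+A_x}$ times this block sum, divided by $\mathbf{Z}_N$, converges to $\langle\varphi^{\Delta\n(x)+A_x}\rangle_0$ — which is exactly the single-site moment appearing in $w_{A,\beta}(\n)$ in \eqref{eq:weights_def}.

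The second step is to handle the external edges on $W_N$. On $W_{2,N}$ each external edge of the lift carries weight $0$ or $1$, and on $W_{1,N}$ the $\n_{x,y}$ external edges emanating from $B_{x,N}$ all land on \emph{distinct} vertices of $B_{x,N}$ that are \emph{not} sources, and likewise distinct non-source vertices of $B_{y,N}$. Counting the number of ways to choose these endpoints gives a factor that is asymptotically $\big(\tfrac{N(N-1)\cdots}{\cdots}\big)\sim N^{\n_{x,y}}$ per pair $\{x,y\}$ and $N^{\n_{x,\fg}}$ per ghost edge; multiplied by $(\beta c_N^2 J_{x,y})^{\n_{x,y}}=(\beta J_{x,y})^{\n_{x,y}}c_N^{2\n_{x,y}}$ and using $c_N^2 N^2\to$ (the right power after combining with the $c_NN$ factors absorbed into the single-site moments) one sees the binomial prefactors $\tfrac{(\beta J_{x,y})^{\n_{x,y}}}{\n_{x,y}!}$ and $\tfrac{(\beta h_x)^{\n_{x,\fg}}}{\n_{x,\fg}!}$ of \eqref{eq:weights_def} emerge. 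I would track the powers of $N$ carefully: each external edge contributes $c_N^2\sim N^{-3/2}$ (or $c_N\sim N^{-3/4}$ for a ghost edge), and this is compensated by the combinatorial choice of endpoints ($\sim N$ per endpoint pair) together with the redistribution of the $(c_NN)^{|A|}$ and $\mathbf Z_N^{-|\Lambda|}$ factors across blocks via Lemma~\ref{lem: GS}; the net exponent of $N$ must be $0$, and checking this is the bookkeeping heart of the argument.

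For the second assertion \eqref{eq: second half}, I would show that the contribution of $W_N^c = W_{1,N}^c\cup W_{2,N}^c$ is lower order. If $\tilde\n\in W_{2,N}^c$, some external edge carries weight $\geq 2$; pay the extra $c_N^2\sim N^{-3/2}$ without the corresponding full $N$-worth of endpoint choices, which produces at least one surplus negative power of $N$ that is not compensated — so summing over such $\tilde\n$ (using crude upper bounds on the remaining internal sums, e.g.\ via Griffiths' inequality to bound block sums by $\langle\varphi^k\rangle_0$-type quantities times the right $N$-power) gives $o(1)$. If $\tilde\n\in W_{1,N}^c$, an external edge shares an endpoint $(x,j)$ with a source; since $\partial\tilde\n=\tilde A$ and $|\tilde A|$ is fixed, there are only $O(1)$ source vertices, so the number of endpoint choices for that edge is $O(1)$ rather than $\sim N$, again leaving a surplus negative power of $N$. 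The main obstacle I anticipate is making the upper bound on $W_N^c$ \emph{uniform} enough to sum over the (infinitely many, but exponentially suppressed in $|\n|$) values of $\n$ compatible with a given source set, and in particular controlling the internal block sums cleanly — here I would lean on the nonnegativity from Griffiths' inequality to dominate each internal configuration by the unconstrained block partition function, reducing the estimate to the single-block asymptotics of Lemma~\ref{lem: GS} and convergence of $\mathbf Z_N$, and I would organise the surplus-$N$-power argument so that each defect (an external multiplicity $\geq 2$, or a source-adjacent external edge) is charged a definite gain $N^{-\epsilon_0}$ for some fixed $\epsilon_0>0$.
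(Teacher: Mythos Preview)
Your treatment of the first assertion matches the paper's: decompose into internal and external parts on $W_N$, sum the internal currents to recover block-Ising moments, apply Lemma~\ref{lem: GS}, and count external lifts (this is the content of Lemma~\ref{lem: lemma convergence on w_N} and essentially Lemma~\ref{lem:card}).

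For~\eqref{eq: second half} your plan departs from the paper in a way that leaves a genuine gap. First a definitional point: you characterise $W_{2,N}^c$ as ``some external edge carries weight $\geq 2$'', but $W_{2,N}$ is the set where every \emph{vertex} has total external degree $\leq 1$; its complement therefore also contains currents in which two \emph{distinct} weight-$1$ external edges share an endpoint $(x,j)$. (The paper treats these two sub-cases, called $X_{1,N}$ and $X_{2,N}$, separately; in the second the source set changes by two when the defective edges are peeled off.) More substantively, after you identify a defective external edge and extract the surplus $c_N$-power, the ``remaining sum'' is not merely a product of internal block sums --- it still contains all the non-defective external edges and their endpoint choices --- so your proposal to ``dominate each internal configuration by the unconstrained block partition function'' does not bound it. What you actually need at that point is $\overline{w}_{N,A',\beta}(\n')=O(1)$ for some $|\n'|<|\n|$ and modified source function $A'$, which is exactly the statement being proved, only for a smaller current. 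The paper closes this loop by \emph{induction on $|\n|$}: the base case $|\n|=0$ is purely internal and follows from Lemma~\ref{lem: GS}; for the inductive step one subtracts $1$ (or $2$) from the weight of a single defective external edge, tracks how the sources change, and invokes the induction hypothesis for $\n^-$ (or $\n^{2-}$, or $\n^{-,-}$). This induction is the technical idea missing from your outline. (Also, the proposition is stated for a \emph{fixed} $\n$, so the uniformity-in-$\n$ worry you raise at the end does not arise here.)
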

\begin{proof}
See Section \ref{proof of stronger weight prop}.
\end{proof}
\begin{cor}
For all $A\in \mathcal{M}(\Lambda_\fg)$ and $\n \in \Omega_{\Lambda_\fg}$,
\begin{equation*}
\overline{w}_{N,A,\beta}(\n)
\underset{n\rightarrow \infty}\longrightarrow
w_{\beta}^A(\n).
\end{equation*}
\end{cor}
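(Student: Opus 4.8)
The plan is to decompose the sum defining $\overline{w}_{N,A,\beta}(\n)$ according to whether the lifted Ising current lies in $W_N$ or in its complement $W_N^c$, and then apply Proposition \ref{prop: stronger weight convergence} to each piece separately. Concretely, the index set $\{\tilde{\n}\in \Omega^N_{\Lambda_\fg} : \Theta_N(\tilde{\n})=\n,\ \partial\tilde{\n}=\tilde{A}\}$ is the disjoint union of its intersection with $W_N$ and its intersection with $W_N^c$. Since every weight $w_{N,\beta}(\tilde{\n})$ is non-negative, the sum defining the renormalised weight splits accordingly:
\begin{equs}
\overline{w}_{N,A,\beta}(\n)
=
\frac{(c_N N)^{|A|}}{\mathbf{Z}_N^{|\Lambda|}}\sum_{\substack{\tilde{\n}\in W_N\\ \Theta_N(\tilde{\n})=\n \\ \partial \tilde{\n}=\tilde{A}}}w_{N,\beta}(\tilde{\n})
+
\frac{(c_N N)^{|A|}}{\mathbf{Z}_N^{|\Lambda|}}\sum_{\substack{\tilde{\n}\in W_N^c\\ \Theta_N(\tilde{\n})=\n \\ \partial \tilde{\n}=\tilde{A}}}w_{N,\beta}(\tilde{\n}).
\end{equs}
The first assertion of Proposition \ref{prop: stronger weight convergence} gives that the first summand converges to $w_{A,\beta}(\n)$ as $N\to\infty$, while the second assertion, namely \eqref{eq: second half}, gives that the second summand converges to $0$. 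Since a sum of two convergent sequences converges to the sum of the limits, we conclude $\overline{w}_{N,A,\beta}(\n)\to w_{A,\beta}(\n)$, which is precisely the claim.

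There is no genuine obstacle at this stage: all the analytic content — the precise bookkeeping of the scaling factors $c_N N\sim N^{1/4}$ coming from Proposition \ref{prop: CV griffiths} and Lemma \ref{lem: GS}, the combinatorics of lifting a current $\n\in\Omega_{\Lambda_\fg}$ to Ising currents on $\cG_N$, and the estimate showing that configurations outside $W_N$ (those with sources adjacent to external edges, or with external multiplicity exceeding one) have asymptotically negligible total weight — is already packaged into Proposition \ref{prop: stronger weight convergence}, whose proof is deferred to Section \ref{proof of stronger weight prop}. The Corollary is then merely the statement that a finite sum of convergent numerical sequences converges to the sum of its limits. This convergence of renormalised weights is exactly what is needed to pass the switching principle for the Ising model on $\cG_N$ to the $\phi^4$ current expansion \eqref{eq:current_expansion} in the limit $N\to\infty$.
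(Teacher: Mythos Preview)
Your proof is correct and matches the paper's approach exactly: the Corollary is stated immediately after Proposition \ref{prop: stronger weight convergence} as a direct consequence, obtained by splitting the sum over $W_N$ and $W_N^c$ and adding the two limits.
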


Guided by this intuition and Proposition \ref{prop: stronger weight convergence}, we isolate how these geometric structures intertwine in the statement of the usual switching lemma for $\mu^h_{\Lambda,\beta,N}$, which takes the following form. 

In what follows we fix $\Lambda'\subset\Lambda$ two finite subsets of $V\cup \lbrace \fg\rbrace$. This is slightly different from the convention we had above in which $\Lambda$ was a subset of $V$.
\begin{lem}
Let $S\subset \Lambda_N$, $T\subset \Lambda'_N$, and let $F:\Omega^N_{\Lambda_{\fg}} \rightarrow \RR$ be a bounded measurable function. Then,
\begin{equation}\label{eq: switching for GS}
\begin{aligned}
& \sum_{\substack{\tilde \n_1\in \Omega^N_{\Lambda}, \tilde\n_2 \in \Omega^N_{\Lambda'}  \\ \partial \tilde\n_1 = S, \partial \tilde\n_2 = T}}
    w_N(\tilde\n_1) w_N(\tilde\n_2) F(\tilde \n_1 + \tilde \n_2) 
    \\
    &\quad\quad\quad=
    \sum_{\substack{\tilde \n_1\in \Omega^N_{\Lambda}, \tilde\n_2 \in \Omega^N_{\Lambda'} \\ \partial \tilde\n_1 = S \Delta T, \partial \tilde\n_2 =\emptyset}}	
    w_N(\tilde\n_1) w_N(\tilde\n_2) F(\tilde \n_1 + \tilde \n_2)\mathbbm{1}[\mathcal{F}_T^{\Lambda'_N}],
\end{aligned}    
\end{equation}
where $\mathcal{F}_{T}^{\Lambda'}$ is the event that every cluster of the multigraph induced by $(\tilde \n_1+ \tilde \n_2)_{\Lambda'_N}$ intersects $T$ an even number of times. 
\end{lem}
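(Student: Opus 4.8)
\emph{Proof strategy.} The identity follows from the classical source-switching argument of Aizenman for the Ising random current representation; the only point of novelty is that $\tilde\n_2$ is constrained to live on the induced subgraph of $\cG_N$ on $\Lambda'_N$ rather than on all of $\cG_N|_{\Lambda_N}$. Throughout, view a current $\m$ as a multigraph in which an edge $e$ carries $\m_e$ parallel copies, and a subcurrent $\tilde\n\le\m$ as a choice of a sub-(multi)set of those copies, with $\m\setminus\tilde\n$ the complement. The first step is the elementary multinomial identity for the product weights $w_N(\tilde\n)=\prod_e J_e^{\tilde\n_e}/\tilde\n_e!$, where $J_e$ equals $\beta c_N^2 J_{x,y}$, $d_N$ or $\beta c_N h_x$ according to the type of $e$: for every splitting $\m=\tilde\n_1+\tilde\n_2$,
\begin{equs}
w_N(\tilde\n_1)\,w_N(\tilde\n_2)=w_N(\m)\prod_e\binom{\m_e}{(\tilde\n_1)_e}.
\end{equs}
Hence summing over splittings of a fixed $\m$ is the same as summing over sub-edge-sets $\tilde\n_1\subseteq\m$ (with $\tilde\n_2=\m\setminus\tilde\n_1$), and the plan is to rewrite both sides of the claim in the form $\sum_{\m}w_N(\m)F(\m)\,c(\m)$ and match the coefficients $c(\m)$.

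\emph{Reduction to a combinatorial count.} Write $\m=\m^{\mathrm{in}}+\m^{\mathrm{out}}$, where $\m^{\mathrm{in}}$ collects the copies of edges with both endpoints in $\Lambda'_N$ and $\m^{\mathrm{out}}$ the rest; then $\partial\m=\partial\m^{\mathrm{in}}\,\Delta\,\partial\m^{\mathrm{out}}$. Since $\tilde\n_2\in\Omega^N_{\Lambda'}$ forces $\tilde\n_1\supseteq\m^{\mathrm{out}}$, a splitting of $\m$ is the same as a choice of $\cK\subseteq\m^{\mathrm{in}}$ with $\tilde\n_1=\m^{\mathrm{out}}\cup\cK$. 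Reading off the source constraints, and using that $\partial$ is additive mod $2$, shows they are consistent only when $\partial\m=S\,\Delta\,T$: on the left they then amount to $\partial\cK=S\,\Delta\,\partial\m^{\mathrm{out}}$ and on the right to $\partial\cK=(S\,\Delta\,T)\,\Delta\,\partial\m^{\mathrm{out}}=\partial\m^{\mathrm{in}}$, the condition on $\tilde\n_2$ being automatic in both cases. Thus, for each $\m$ with $\partial\m=S\,\Delta\,T$, the coefficient of $w_N(\m)F(\m)$ is $\#\{\cK\subseteq\m^{\mathrm{in}}:\partial\cK=S\,\Delta\,\partial\m^{\mathrm{out}}\}$ on the left, and $\mathbbm{1}[\cF_T^{\Lambda'_N}(\m)]\cdot\#\{\cK\subseteq\m^{\mathrm{in}}:\partial\cK=\partial\m^{\mathrm{in}}\}$ on the right; all terms with $\partial\m\neq S\,\Delta\,T$ vanish on both sides.

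\emph{The combinatorial lemma and conclusion.} It remains to use the standard fact that, for a finite multigraph $\cM$ and $U\subseteq V(\cM)$, $\#\{\cK\subseteq\cM:\partial\cK=U\}=0$ unless $|U\cap C|$ is even for every connected component $C$ of $\cM$, in which case it equals $\#\{\cK\subseteq\cM:\partial\cK=\emptyset\}$ independently of $U$: when $U$ meets each component evenly one builds a reference $\cK_0$ with $\partial\cK_0=U$ by pairing up the vertices of $U$ within components along paths, and $\cK\mapsto\cK\,\Delta\,\cK_0$ is an involutive bijection from $\{\cK\subseteq\cM:\partial\cK=\emptyset\}$ onto $\{\cK\subseteq\cM:\partial\cK=U\}$. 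Applying this with $\cM=\m^{\mathrm{in}}$ and $U=S\,\Delta\,\partial\m^{\mathrm{out}}=T\,\Delta\,\partial\m^{\mathrm{in}}$, and noting that $|\partial\m^{\mathrm{in}}\cap C|$ is even for every component $C$ by the handshake lemma, we see that $U$ meets every component of $\m^{\mathrm{in}}$ evenly if and only if $T$ does, i.e.\ if and only if $\cF_T^{\Lambda'_N}(\m)$ holds. Hence the left coefficient equals $\mathbbm{1}[\cF_T^{\Lambda'_N}(\m)]\cdot\#\{\cK\subseteq\m^{\mathrm{in}}:\partial\cK=\emptyset\}$, which, since $\#\{\cK\subseteq\m^{\mathrm{in}}:\partial\cK=\partial\m^{\mathrm{in}}\}=\#\{\cK\subseteq\m^{\mathrm{in}}:\partial\cK=\emptyset\}$ (take $\cK_0=\m^{\mathrm{in}}$), equals the right coefficient. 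Summing over $\m$ gives the identity.

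\emph{Main obstacle.} There is no analytic difficulty here; the delicate point is the bookkeeping of conventions. One must check that the restriction $(\tilde\n_1+\tilde\n_2)_{\Lambda'_N}$ in the definition of $\cF_T^{\Lambda'_N}$ is precisely the multigraph $\m^{\mathrm{in}}$ driving the count above — in particular that edges incident to the ghost $\fg$ fall into $\m^{\mathrm{out}}$ exactly when $\fg\in\Lambda\setminus\Lambda'$ — and that any sources of $\m$ located in $\Lambda_N\setminus\Lambda'_N$ are absorbed into $\partial\m^{\mathrm{out}}$ rather than appearing as a constraint on $\cK$. Once the $\m^{\mathrm{in}}/\m^{\mathrm{out}}$ splitting is pinned down, everything else is the textbook switching argument.
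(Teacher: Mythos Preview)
Your proof is correct and complete. The paper does not actually give a proof of this lemma: it is stated as the classical switching lemma for the Ising random current representation (going back to Aizenman), applied verbatim to the Griffiths--Simon Ising model on $\Lambda_N$. Your argument---grouping by $\m=\tilde\n_1+\tilde\n_2$, using the multinomial identity for the weights, splitting $\m=\m^{\mathrm{in}}+\m^{\mathrm{out}}$ according to whether edges lie in $\Lambda'_N$, and then invoking the standard combinatorial fact about the number of subgraphs of a multigraph with prescribed sources---is precisely the textbook proof of that classical lemma, so there is nothing to compare against.
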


If we proceed naively in expressing the left-hand side and the right-hand side in terms of projected currents and renormalised weights, we encounter a homogeneity issue. Indeed, let $\n_1\in \Omega_{\Lambda}$, and $\n_2 \in \Omega_{\Lambda'}$ with source sets $\partial A, \partial B$ associated to $A\in \cM(\Lambda)$, and $B\in \cM(\Lambda')$ respectively. Suppose we fix injections such that $\tilde A$ and $\tilde B$ may intersect. Then, the left-hand side of the switching lemma has a scaling factor $(c_N N)^{|A|+|B|}$, whereas the right-hand side has a scaling factor $(c_NN)^{|\tilde A \Delta \tilde B|}$, which is strictly smaller. 

To resolve this issue, we require a consistent convention (note that this choice is consistent with Remark \ref{rem: bijection (A,B) (A+B,0)}) for injecting the sources of fixed admissible moments, $A$ and $B$, in such a way that their lifts $\tilde A$ and $\tilde B$ satisfy $\tilde A \cap \tilde B = \emptyset$. The natural such injection of the ordered pair $(A,B)$ is given by
\begin{equation*}
\begin{aligned}
\tilde A
&:=
\bigcup_{x\in \Lambda}\lbrace (x,j):  1\leq j \leq A_x\rbrace,
\\
\tilde B
&:=
\bigcup_{x\in \Lambda}\lbrace (x,j):  A_x + 1\leq j \leq A_x + B_x\rbrace.
\end{aligned}
\end{equation*}
Consider the lefthand side of \eqref{eq: switching for GS} for $S=\widetilde{A}$ and $T=\widetilde{B}$. By conditioning on the underlying projected current in $\Omega_{\Lambda}$ or $\Omega_{\Lambda'}$, and then splitting the sum over $W_N$ and $W_N^c$, we obtain
\begin{equation*}
\begin{aligned}
\text{(LHS)} 
&:=
\sum_{\substack{\tilde \n_1\in \Omega^N_{\Lambda}, \tilde\n_2 \in \Omega^N_{\Lambda'} \\ \partial \tilde\n_1 = \widetilde{A}, \partial \tilde\n_2 = \widetilde{B}}}	
w_N(\tilde\n_1) w_N(\tilde\n_2) F(\tilde \n_1 + \tilde \n_2)
\\
&=
\sum_{\substack{\n_1\in \Omega_\Lambda, \:\partial \n_1 = \partial A\\\n_2\in \Omega_{\Lambda'},\: \partial\n_2 = \partial B}} 
\sum_{\substack{\Theta_N(\tilde \n_1) = \n_1, \Theta_N(\tilde\n_2) = \n_2 \\ \partial \tilde\n_1 = \widetilde{A}, \partial \tilde\n_2 = \widetilde{B}}}
w_N(\tilde\n_1) w_N(\tilde\n_2) F(\tilde \n_1 + \tilde \n_2) 
\\
&=
\sum_{\substack{\n_1\in \Omega_\Lambda, \:\partial \n_1 = \partial A\\\n_2\in \Omega_{\Lambda'},\: \partial\n_2 = \partial B}} (\Sigma_{1,N}(\n_1,\n_2) + \Sigma_{2,N}(\n_1,\n_2)),
\end{aligned}
\end{equation*}
where
\begin{equation*}
\begin{aligned}
\Sigma_{1,N}(\n_1,\n_2)
&:=
\sum_{\substack{\tilde\n_1\in W_N(\Lambda), \tilde\n_2 \in W_N(\Lambda') \\ \Theta_N(\tilde\n_1) = \n_1, \Theta_N(\tilde\n_2) = \n_2 \\ \partial\tilde\n_1 = \widetilde{A}, \partial\tilde\n_2 = \widetilde{B}}}
w_N(\tilde\n_1)w_N(\tilde\n_2) F(\tilde\n_1 + \tilde\n_2),	
\\
\Sigma_{2,N}(\n_1,\n_2)
&:=
\sum_{\substack{\tilde\n_1\notin W_N(\Lambda) \textup{ or } \tilde\n_2 \notin W_N(\Lambda') \\ \Theta_N( \tilde\n_1) = \n_1, \Theta_N(\tilde\n_2) = \n_2 \\ \partial\tilde\n_1 = \widetilde{A}, \partial\tilde\n_2 = \widetilde{B}}}
w_N(\tilde\n_1)w_N(\tilde\n_2) F(\tilde\n_1 + \tilde\n_2).
\end{aligned}
\end{equation*}

As a consequence of Proposition \ref{prop: stronger weight convergence} , we find that (with the proper scaling) $\Sigma_{2,N}$ converges to $0$ as $N \rightarrow \infty$.
\begin{lem}
Let $F:\Omega_{\Lambda}^N \rightarrow \RR$ be bounded. Then, for any $A \in \cM(\Lambda)$, $B\in \cM(\Lambda')$ and $(\n_1,\n_2)\in \Omega_{\Lambda}\times \Omega_{\Lambda'}$ such that $(\sn_1,\sn_2)=(\partial A,\partial B)$, 
\begin{equation*}
\lim_{N \rightarrow \infty}\frac{(c_NN)^{|A|+|B|}}{\mathbf{Z}_N^{|\Lambda|+|\Lambda'|}} \Sigma_{2,N}(\n_1,\n_2)
=
0.
\end{equation*}
\end{lem}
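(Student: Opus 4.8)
The plan is to deduce this from Proposition~\ref{prop: stronger weight convergence} by a simple factorisation, exploiting that the ``bad'' constraint $\{\tilde\n_1\notin W_N(\Lambda)\text{ or }\tilde\n_2\notin W_N(\Lambda')\}$ splits into a constraint on $\tilde\n_1$ alone and a constraint on $\tilde\n_2$ alone. First I would use that $F$ is bounded and that the weights factor through a union bound on the bad event to write
\begin{equs}
\big|\Sigma_{2,N}(\n_1,\n_2)\big|
\leq
\|F\|_\infty\big(\Sigma_{2,N}^{(1)}+\Sigma_{2,N}^{(2)}\big),
\end{equs}
where
\begin{equs}
\Sigma_{2,N}^{(1)}
:=
\bigg(\sum_{\substack{\tilde\n_1\notin W_N(\Lambda)\\ \Theta_N(\tilde\n_1)=\n_1,\ \partial\tilde\n_1=\widetilde A}}w_N(\tilde\n_1)\bigg)
\bigg(\sum_{\substack{\Theta_N(\tilde\n_2)=\n_2\\ \partial\tilde\n_2=\widetilde B}}w_N(\tilde\n_2)\bigg),
\end{equs}
and $\Sigma_{2,N}^{(2)}$ is the symmetric product in which one imposes $\tilde\n_2\notin W_N(\Lambda')$ while $\tilde\n_1$ ranges over all lifts of $\n_1$ with source $\widetilde A$. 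This is legitimate because $\tilde\n_1$ (a current on the Griffiths--Simon graph built from $\Lambda$) and $\tilde\n_2$ (a current on the one built from $\Lambda'$) are summed independently, and each of the constraints (prescribed projection under $\Theta_N$, prescribed source set, membership in $W_N$) involves only one of the two currents.

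Next I would insert the prefactor and split it via $(c_NN)^{|A|+|B|}=(c_NN)^{|A|}(c_NN)^{|B|}$ and $\mathbf Z_N^{|\Lambda|+|\Lambda'|}=\mathbf Z_N^{|\Lambda|}\mathbf Z_N^{|\Lambda'|}$, obtaining
\begin{equs}
\frac{(c_NN)^{|A|+|B|}}{\mathbf Z_N^{|\Lambda|+|\Lambda'|}}\,\Sigma_{2,N}^{(1)}
=
\bigg(\frac{(c_NN)^{|A|}}{\mathbf Z_N^{|\Lambda|}}\sum_{\substack{\tilde\n_1\notin W_N(\Lambda)\\ \Theta_N(\tilde\n_1)=\n_1,\ \partial\tilde\n_1=\widetilde A}}w_N(\tilde\n_1)\bigg)
\bigg(\frac{(c_NN)^{|B|}}{\mathbf Z_N^{|\Lambda'|}}\sum_{\substack{\Theta_N(\tilde\n_2)=\n_2\\ \partial\tilde\n_2=\widetilde B}}w_N(\tilde\n_2)\bigg).
\end{equs}
The first factor tends to $0$ by \eqref{eq: second half} of Proposition~\ref{prop: stronger weight convergence}. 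For the second factor I would observe that $w_N$ and $\Theta_N$ are invariant under permutations of the coordinates $\{1,\dots,N\}$ within each block $B_{x,N}$, and that this action is transitive on subsets of $B_{x,N}$ of a fixed cardinality; hence $\sum_{\partial\tilde\n_2=S,\ \Theta_N(\tilde\n_2)=\n_2}w_N(\tilde\n_2)$ is the same for every $S$ with $|S\cap B_{x,N}|=B_x$, in particular it equals the value obtained with the natural injection of $B$. Therefore the second factor is exactly $\overline w_{N,B,\beta}(\n_2)$ (in the Griffiths--Simon graph built from $\Lambda'$), which converges to $w_{B,\beta}(\n_2)<\infty$ by the Corollary to Proposition~\ref{prop: stronger weight convergence}. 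Thus $(c_NN)^{|A|+|B|}\mathbf Z_N^{-|\Lambda|-|\Lambda'|}\Sigma_{2,N}^{(1)}\to0$, and the same argument with the roles of the two currents exchanged — now using $\overline w_{N,A,\beta}(\n_1)\to w_{A,\beta}(\n_1)<\infty$ — gives $(c_NN)^{|A|+|B|}\mathbf Z_N^{-|\Lambda|-|\Lambda'|}\Sigma_{2,N}^{(2)}\to0$. Combining, the claimed limit follows.

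I do not expect a genuine obstacle here: essentially all the content lives in Proposition~\ref{prop: stronger weight convergence}, and this lemma is its bookkeeping consequence once one notices that the bad event decouples into a condition on $\tilde\n_1$ and a condition on $\tilde\n_2$. The only mild point to be careful about is the mismatch between the shifted injection $\widetilde B=\bigcup_{x}\{(x,j):A_x+1\le j\le A_x+B_x\}$ used in the definition of $\Sigma_{2,N}$ (chosen so that $\widetilde A\cap\widetilde B=\emptyset$) and the natural injection appearing in the definition of $\overline w_{N,B,\beta}$; this discrepancy is precisely what the block-permutation symmetry of the complete-graph interaction neutralises.
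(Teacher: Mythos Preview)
Your argument is correct and essentially identical to the paper's: both bound $|F|$ by $\|F\|_\infty$, use the union bound $\{\tilde\n_1\notin W_N(\Lambda)\}\cup\{\tilde\n_2\notin W_N(\Lambda')\}$ to split $\Sigma_{2,N}$ into two factorised terms, and then apply the two halves of Proposition~\ref{prop: stronger weight convergence} to each factor. Your explicit treatment of the shifted injection $\widetilde B$ via the block-permutation symmetry is a detail the paper leaves implicit (it simply writes ``rewriting the weights in terms of renormalised weights''), but the underlying reasoning is the same.
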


\begin{proof}
Since $F$ is bounded,
\begin{equation*}
    \left|\frac{(c_NN)^{|A|+|B|}}{\mathbf{Z}_N^{2|\Lambda|+|\Lambda'|}} \Sigma_{2,N}(\n_1,\n_2)\right| 
    \leq 
    \| F \|_\infty \left( (1)+(2)\right),
\end{equation*}
where
\begin{equation*}
\begin{aligned}
(1)
&=
\frac{(c_NN)^{|A|}}{\mathbf{Z}_N^{|\Lambda|}} \sum_{\substack{\tilde\n_1 \in W_N^c(\Lambda) \\ \Theta_N(\tilde\n_1) = \n_1 \\ \partial \tilde \n_1 = \widetilde{A} }} w_N(\tilde \n_1)\frac{(c_NN)^{|B|}}{\mathbf{Z}_N^{|\Lambda'|}} \sum_{\substack{\tilde\n_2\in \Omega^N_{\Lambda'}\\ \Theta_N(\tilde\n_2) = \n_2 \\ \partial \tilde \n_2 = \widetilde{B} }} w_N(\tilde \n_2),
\\
(2)
&=
\frac{(c_NN)^{|A|}}{\mathbf{Z}_N^{|\Lambda|}} \sum_{\substack{\tilde\n_1 \in \Omega^N_{\Lambda} \\ \Theta_N(\tilde\n_1) = \n_1 \\ \partial \tilde \n_1 = \widetilde{A} }} w_N(\tilde \n_1)\frac{(c_NN)^{|B|}}{\mathbf{Z}_N^{|\Lambda'|}} \sum_{\substack{\tilde\n_2\in W_N^c(\Lambda')\\ \Theta_N(\tilde\n_2) = \n_2 \\ \partial \tilde \n_2 = \widetilde{B} }} w_N(\tilde \n_2).
\end{aligned}
\end{equation*}
The result then follows from rewriting the weights in terms of renormalised weights and applying Proposition \ref{prop: stronger weight convergence}.
\end{proof}

The above lemma implies that it is sufficient to restrict our attention to pairs of currents that belong to $W_N(\Lambda)\times W_N(\Lambda')$ in the switching lemma of \eqref{eq: switching for GS}. In order to understand the geometry of the sum of two such currents, we further decompose into internal and external components. 

\begin{defn}\label{def: def w^int}
Let
$
W^{\textup{ext}}_N(\Lambda)
=
\{ \tilde\n \in W_{2,N} : \tilde\n(x,i,x,j) = 0 \text{ for all $x \in \Lambda$} \}$, and
$
W^{\textup{int}}_N(\Lambda)
=
\{ \tilde\n \in \Omega_{\Lambda}^N : \tilde\n(x,i,y,j) = 0 \text{ if } x\neq y \}$.
\end{defn}
Note that any $\tilde\n \in W_N(\Lambda)$ admits the decomposition $\tilde\n = \tilde\n^{\textup{ext}} + \tilde\n^{\textup{int}}$ with $\tilde\n^{\textup{ext}} \in W^{\textup{ext}}_N(\Lambda)$ and $\tilde\n^{\textup{int}} \in W^{\textup{int}}_N(\Lambda)$ such that $\partial\tilde\n^{\textup{ext}} \cap \partial\tilde\n = \emptyset$ and $\partial\tilde\n^{\textup{int}} = \partial\tilde\n^{\textup{ext}} \cup \partial\tilde\n$. Furthermore, we can write $\tilde{\n}^{\textup{int}}=\sum_{x\in \Lambda} \tilde{\n}^{\textup{int}}(x)$, where $\tilde{\n}^{\textup{int}}(x)\in W^{\textup{int}}_{x,N}(\Lambda)$ for 
\begin{equation*}
W^{\textup{int}}_{x,N}(\Lambda)
=
\{ \tilde\n \in W^{\textup{int}}_N(\Lambda) : \tilde\n(y,i,z,j) = 0 \text{ for all $(y,z)\neq (x,x)$}\}.
\end{equation*}
These definitions extend to $\Lambda'$ and since a current on $\Omega_{\Lambda'}$ can naturally be extended to a current on $\Omega_{\Lambda}$, one has that  $W_{x,N}^{\textup{int}}(\Lambda')=W_{x,N}^{\textup{int}}(\Lambda)$ for all $x\in \Lambda'$.

In order to identify the scaling limit of $\Sigma_{1,N}$, we have to encode how the connectivity properties inside the block-spins intertwine when summing over the internal components. We formalise this as follows.

\begin{defn}\label{def: block current measures}
Let $N \in \NN$. For $S_x,T_x \in \NN$, denote by $\tilde S_x:= \{ (x,i) : 1 \leq i \leq S_x \}\subset B_{x,N}$ and $\tilde T_x:= \{ (x,i) : S_x+1 \leq i \leq S_x+T_x \}$. Let $\pi_{x,N}^{\tilde S_x}$ be the measure on $\Omega_{B_{x,N}}$ defined by
\begin{equation*}
    \pi_{x,N}^{\tilde S_x}(\tilde{\n}^x)
    :=
    (c_NN)^{S_x} \,\frac{w_N(\tilde\n^x)\mathbbm{1}_{\partial \tilde\n^x= \tilde S_x}}{\underset{\tilde\n^x \in W^{\textup{int}}_{x,N}, \partial\tilde\n^x = \emptyset}{\sum} w_N(\tilde\n^x)}.	
\end{equation*}
Furthermore, denote by $\pi_{x,N}^{\tilde S_x,  \tilde T_x}$ the pushforward of the product measure $\pi_{x,N}^{\tilde S_x} \otimes \pi_{x,N}^{\tilde T_x}$ on $\Omega_{B_{x,N}} \times \Omega_{B_{x,N}}$ by the mapping $(\n^x, \m^x) \mapsto \n^x + \m^x$.
\end{defn}
\begin{rem}\label{rem : total weight of pi measures}
Note that
\begin{equation*}
    \pi^{\tilde{S}_x}_{x,N}(\Omega_{B_{x,N}})=(c_NN)^{S_x}\mu^0_{x,N,\beta}\left[\prod_{i=1}^{S_x}\sigma_{(x,i)}\right].
\end{equation*}
\end{rem}
Let $A_x \in \NN$. Consider the following equivalence relation on $\{ \tilde\n^x \in \Omega_{B_{x,N}} : \partial\tilde\n_x = \tilde A_x\}$. We say that $\tilde{\n}^x$ and $\tilde{\m}^x$ are equivalent, and write $\tilde\n^x \simeq \tilde\m^x$, if for all $1\leq  i,j\leq A_x$,
\begin{equation*}
\{(x,i) \overset{\tilde{\n}^x}{\longleftrightarrow} (x,j)	\}
\Leftrightarrow
\{(x,i) \overset{\tilde{\m}^x}{\longleftrightarrow} (x,j)	\}.
\end{equation*}
Following the notations of Definition \ref{def: block current measures}, denote by $\tilde\rho_{x,N}^{\tilde{S}_x}$ and $\tilde\rho_{x,N}^{\tilde{S}_x,\tilde{T}_x}$ the induced measures corresponding to $\pi_{x,N}^{\tilde{S}_x}$ and $\pi_{x,N}^{\tilde{S}_x,\tilde{T}_x}$, respectively, on the quotient $\sigma$-algebra. Note that these measures are finite due to the finiteness of the underlying space. Let $\rho_{x,N}^{\tilde S_x}$ and $\rho_{x,N}^{\tilde S_x, \tilde T_x}$ denote the corresponding probability measures, which can be identified as probability distribution on even partitions of $K_{S_x}$ and $K_{S_x+T_x}$, respectively. 

\begin{lem}\label{lem: tightness of the single-site tanglings}
Let $S_x,T_x \in \NN$. The two sequences of probability measures $(\rho_{x,N}^{\tilde S_x})_{N \in \NN}$ and $(\rho_{x,N}^{\tilde S_x,\tilde T_x})$ are tight. In both cases, the convergence along subsequences is pointwise on events. We denote their respective limits by $\rho_x^{\tilde S_x}$ and $\rho_x^{\tilde S_x, \tilde T_x}$.
\end{lem}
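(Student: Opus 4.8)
The plan is to reduce the statement to the compactness of a finite--dimensional simplex, by observing that for fixed $S_x$ and $T_x$ both sequences of measures are supported on a finite set of (even) partitions that does \emph{not} depend on $N$. The only substantive work is identifying this common state space; everything else is a soft compactness argument.

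\textbf{Step 1 (the state space is a fixed finite set).} Following Definition~\ref{def: block current measures} and the equivalence relation defined just before the lemma, $\rho_{x,N}^{\tilde S_x}$ is the law of the partition of $\{1,\dots,S_x\}$ obtained by declaring $i\sim j$ whenever $(x,i)$ and $(x,j)$ lie in the same connected component of the multigraph of a current $\tilde\n^x\in\Omega_{B_{x,N}}$ sampled from the normalisation of $\pi_{x,N}^{\tilde S_x}$. Since every vertex of $B_{x,N}$ outside $\tilde S_x$ has even $\tilde\n^x$--degree, each connected component contains an even number of the sources $(x,1),\dots,(x,S_x)$, so the partition is an even partition of $K_{S_x}$; hence $\rho_{x,N}^{\tilde S_x}$ is supported on the finite set $\mathcal E_{S_x}$ of even partitions of $K_{S_x}$, which is independent of $N$. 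The same parity argument, now using that the odd--degree vertices of $\tilde\n^x+\tilde\m^x$ are exactly $\tilde S_x\cup\tilde T_x$ (the two sets being disjoint by the labelling in Definition~\ref{def: block current measures}), shows that $\rho_{x,N}^{\tilde S_x,\tilde T_x}$ is supported on the finite set $\mathcal E_{S_x+T_x}$ of even partitions of $K_{S_x+T_x}$. If $S_x$ (resp.\ $S_x+T_x$) is odd there are no even partitions and the assertion is vacuous, so one may assume these integers are even; then by Remark~\ref{rem : total weight of pi measures} and Lemma~\ref{lem: GS} the total masses of the unnormalised measures converge to $\langle\varphi^{S_x}\rangle_0>0$ (resp.\ $\langle\varphi^{S_x}\rangle_0\,\langle\varphi^{T_x}\rangle_0>0$), so $\rho_{x,N}^{\tilde S_x}$ and $\rho_{x,N}^{\tilde S_x,\tilde T_x}$ are well defined probability measures for all $N$ large.

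\textbf{Step 2 (compactness and the mode of convergence).} Since $\mathcal E_{S_x}$ is finite, the set of probability measures on it is the compact simplex $\{\,p\in[0,1]^{\mathcal E_{S_x}}:\sum_{P}p(P)=1\,\}\subset\mathbb R^{|\mathcal E_{S_x}|}$. By Bolzano--Weierstrass the sequence $(\rho_{x,N}^{\tilde S_x})_N$ has a subsequence converging in this simplex to a probability measure $\rho_x^{\tilde S_x}$; convergence in the simplex means convergence of $\rho_{x,N}^{\tilde S_x}(\{P\})$ for every $P\in\mathcal E_{S_x}$, and because there are only finitely many events $E\subseteq\mathcal E_{S_x}$ this is the same as convergence of $\rho_{x,N}^{\tilde S_x}(E)$ for every such $E$. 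The identical argument, applied to $(\rho_{x,N}^{\tilde S_x,\tilde T_x})_N$ on $\mathcal E_{S_x+T_x}$, produces the limit $\rho_x^{\tilde S_x,\tilde T_x}$ with pointwise convergence on events.

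I do not expect a genuine obstacle here: all of the content is in Step~1's observation that the natural index set is the $N$--independent finite collection of even partitions, after which tightness is automatic. The delicate properties of the limits $\rho_x^{\tilde S_x}$ and $\rho_x^{\tilde S_x,\tilde T_x}$ that make the switching principle of Theorem~\ref{thm: switching lemma} usable — e.g.\ that every admissible partition receives positive mass — are separate statements proved later and are not needed for this lemma.
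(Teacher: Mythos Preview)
Your proof is correct and follows the same approach as the paper: both arguments rest on the observation that the measures live on a fixed finite set of even partitions, so tightness and pointwise subsequential convergence are immediate from compactness of the simplex, with Remark~\ref{rem : total weight of pi measures} and Lemma~\ref{lem: GS} invoked to guarantee that the normalising constants are nonzero for large $N$. Your write-up is simply more explicit than the paper's two-sentence version.
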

\begin{proof}
Since the measures considered here are defined on a finite space, it suffices to prove that the measure of the entire space converges in each case. But this is a direct consequence of Remark \ref{rem : total weight of pi measures} and Lemma \ref{lem: GS}.
\end{proof}


\begin{defn}
If $P,Q$ are two partitions of a set $\cS$, we say that $P$ is coarser than $Q$, and write $P\succ Q$, if any element of $P$ can be written as a union of elements of $Q$.
\end{defn}
The following proposition is a direct consequence of the constructions above.
\begin{prop}[The measure $\rho^{\tilde{S}_x,\tilde{T}_x}_x$ stochastically dominates $\rho^{\tilde{S}_x}_x\sqcup\rho^{\tilde{T}_x}_x$] 
Let $S_x,T_x\in 2\NN$. 
There exists a coupling of $\rho^{\tilde{S}_x,\tilde{T}_x}_x$, $\rho^{\tilde{S}_x}_x$ and $\rho^{\tilde{T}_x}_x$ such that if $(X,Y,Z)\sim (\rho^{\tilde{S}_x,\tilde{T}_x}_x,\rho^{\tilde{S}_x}_x,\rho^{\tilde{T}_x}_x)$ and if $Y\sqcup Z$ is the partition whose partition classes are the partition classes of $Y$ and $Z$,
then 
$
    X \succ Y \sqcup Z, \text{almost surely.}
$
We write, 
\begin{equation}\label{eq: domination}
    \rho^{\tilde{S}_x,\tilde{T}_x}_x 
    \succ
    \rho^{\tilde{S}_x}_x\sqcup\rho^{\tilde{T}_x}_x.
\end{equation}
\end{prop}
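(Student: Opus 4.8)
The plan is to realise the coupling at the level of currents on a single block $B_{x,N}$ for finite $N$, where it comes essentially for free from the definition of $\pi_{x,N}^{\tilde{S}_x,\tilde{T}_x}$ as a pushforward under current addition, to observe that adding two currents can only merge connected components, and then to pass to the limit $N\to\infty$ along the subsequence of Lemma \ref{lem: tightness of the single-site tanglings}, using that all the relevant laws live on the finite set of partitions of finite sets.

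Concretely, first I would set up the finite-$N$ coupling. Recall that $\pi_{x,N}^{\tilde{S}_x,\tilde{T}_x}$ is the pushforward of $\pi_{x,N}^{\tilde{S}_x}\otimes\pi_{x,N}^{\tilde{T}_x}$ under $(\n^x,\m^x)\mapsto\n^x+\m^x$. Since pushing forward preserves total mass and is bilinear, normalising $\pi_{x,N}^{\tilde{S}_x,\tilde{T}_x}$ yields exactly the pushforward of the product of the normalised versions of $\pi_{x,N}^{\tilde{S}_x}$ and $\pi_{x,N}^{\tilde{T}_x}$; this total-mass bookkeeping is precisely Remark \ref{rem : total weight of pi measures}. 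Thus, letting $\n^x,\m^x$ be independent with laws the normalised $\pi_{x,N}^{\tilde{S}_x},\pi_{x,N}^{\tilde{T}_x}$ and using the injection that makes $\tilde S_x$ and $\tilde T_x$ \emph{disjoint} subsets of $B_{x,N}$, if $Y_N$ (resp.\ $Z_N$) is the connectivity partition that $\n^x$ (resp.\ $\m^x$) induces on $\tilde S_x$ (resp.\ $\tilde T_x$) and $X_N$ is the connectivity partition that the multigraph of $\n^x+\m^x$ induces on $\tilde S_x\cup\tilde T_x$, then $(X_N,Y_N,Z_N)$ is a coupling of $\rho_{x,N}^{\tilde{S}_x,\tilde{T}_x}$, $\rho_{x,N}^{\tilde{S}_x}$ and $\rho_{x,N}^{\tilde{T}_x}$.

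Second, the monotonicity is deterministic: the multigraph of $\n^x+\m^x$ contains all edges of the multigraphs of $\n^x$ and of $\m^x$, so two points of $\tilde S_x$ that are $\n^x$-connected are $(\n^x+\m^x)$-connected, and similarly for $\tilde T_x$ and $\m^x$. Hence the restriction of $X_N$ to $\tilde S_x$ (resp.\ to $\tilde T_x$) is coarser than $Y_N$ (resp.\ $Z_N$); since $\tilde S_x\cap\tilde T_x=\emptyset$, each class of $Y_N\sqcup Z_N$ sits inside $\tilde S_x$ or inside $\tilde T_x$, and therefore each class of $X_N$ is a union of classes of $Y_N\sqcup Z_N$, i.e.\ $X_N\succ Y_N\sqcup Z_N$ surely. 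Third, I would pass to the limit: along the subsequence of Lemma \ref{lem: tightness of the single-site tanglings} the three marginal laws converge to $\rho_x^{\tilde{S}_x,\tilde{T}_x}$, $\rho_x^{\tilde{S}_x}$, $\rho_x^{\tilde{T}_x}$; the couplings $(X_N,Y_N,Z_N)$ are probability measures on the finite set of triples of partitions of $K_{S_x+T_x}$, $K_{S_x}$, $K_{T_x}$, hence tight, and any subsequential limit $\mu$ has the prescribed marginals and is supported on $\{X\succ Y\sqcup Z\}$, since that event is clopen in this finite space and carries every $(X_N,Y_N,Z_N)$. This $\mu$ is the desired coupling.

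I do not expect a serious difficulty here. The only points that require care are the marginal identification in the first step — i.e.\ checking via Remark \ref{rem : total weight of pi measures} that normalising the pushforward $\pi_{x,N}^{\tilde{S}_x,\tilde{T}_x}$ returns the law of $\n^x+\m^x$ for $\n^x,\m^x$ independent — and the (harmless but necessary) insistence on an injection keeping $\tilde S_x$ and $\tilde T_x$ disjoint, so that $Y_N\sqcup Z_N$ really is a partition of $\tilde S_x\cup\tilde T_x$ and the statement $X_N\succ Y_N\sqcup Z_N$ makes sense; the hypothesis $S_x,T_x\in 2\NN$ ensures all these single-block measures are well defined and have positive mass in the limit.
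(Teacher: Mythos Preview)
Your proposal is correct and is precisely the argument the paper intends: in the paper the proposition is stated with the sentence ``The following proposition is a direct consequence of the constructions above'' and no further proof, and the constructions in question are exactly the definition of $\pi_{x,N}^{\tilde S_x,\tilde T_x}$ as the pushforward of $\pi_{x,N}^{\tilde S_x}\otimes\pi_{x,N}^{\tilde T_x}$ under current addition, the induced partition measures $\rho_{x,N}$, and the subsequential convergence of Lemma~\ref{lem: tightness of the single-site tanglings}. You have simply spelled out the three steps (finite-$N$ coupling via independent currents, deterministic coarsening under addition of edges, passage to the limit on a finite space) that make ``direct consequence'' into an argument.
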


\begin{defn}
Let $A \in \cM(\Lambda)$, $B\in \cM(\Lambda')$ and let $(\n_1, \n_2) \in \Omega_{\Lambda}\times \Omega_{\Lambda'}$ with $(\sn_1,\sn_2)=(\partial A,\partial B)$. Let $F:\Omega_{\Lambda} \rightarrow \RR$ be bounded and measurable with respect to the quotient $\sigma$-algebra induced by the equivalence relation $\simeq$. We define 
\begin{equation*} 
\tilde\rho^{A,B}_{\n_1,\n_2, N}[F]
:=
\frac {1}{|W_N^{\textup{ext}}(\n_1,\n_2; A,  B)|}
\Big(\sum_{(\tilde\n_1^{\textup{ext}}, \tilde\n_2^{\textup{ext}}) \in W_N^{\textup{ext}}(\n_1,\n_2;  A,  B)}
\bigotimes_{z \in \Lambda}
\tilde\rho_{z,N}^{\partial\tilde\n_1^{\textup{ext}}(z)\sqcup \widetilde{A_z}, \partial\tilde\n_2^{\textup{ext}}(x)\sqcup \widetilde{B_z} } 
\Big)
[ F ],
\end{equation*}
where
\begin{equation*}
W_N^{\textup{ext}}(\n_1,\n_2; A, B)
:=
\Bigg\{
(\tilde\n_1^{\textup{ext}}, \tilde\n_2^{\textup{ext}}) \in W^{\textup{ext}}(\Lambda)\times W^{\textup{ext}}(\Lambda') :
\begin{array}{c}
\Theta_N (\tilde\n_1^{\textup{ext}}) = \n_1, 
\, \Theta_N (\tilde\n_2^{\textup{ext}}) = \n_2 \\
\partial\tilde\n_1^{\textup{ext}} \cap \widetilde{A} = \emptyset, \,
\partial\tilde\n_2^{\textup{ext}} \cap \widetilde{B} = \emptyset
\end{array}
\Bigg\},
\end{equation*}
and we use the convention that $\tilde A_z,\tilde B_z = \emptyset$ if $A_z,B_z = 0$. We denote by $\rho_{\n_1,\n_2,N}^{A,B}$ the associated probability measures.
\end{defn}

\begin{lem}
Let $A \in \cM(\Lambda)$, $B\in \cM(\Lambda')$ and let $(\n_1, \n_2) \in \Omega_{\Lambda}\times \Omega_{\Lambda'}$ with $(\sn_1,\sn_2)=(\partial A,\partial B)$. Then, the sequence of probability measures $(\rho_{\n_1,\n_2,N}^{A,B})_{N \in \NN}$ is tight. 
\end{lem}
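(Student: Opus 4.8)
The plan is to unwind the definition of $\rho^{\n_1,\n_2}_{A,B,N}$, reduce it to a tensor product of the single-site measures $\rho_{z,N}^{\bullet,\bullet}$, and then invoke Lemma~\ref{lem: tightness of the single-site tanglings}, using crucially that the underlying sample space is finite and does not depend on $N$.

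\textbf{Step 1 (the sample space is finite and $N$-independent).} Fix $(\n_1,\n_2)$ with $(\partial\n_1,\partial\n_2)=(\partial A,\partial B)$, and take $N$ large enough that $W_N^{\textup{ext}}(\n_1,\n_2;A,B)\neq\emptyset$. For any $(\tilde\n_1^{\textup{ext}},\tilde\n_2^{\textup{ext}})\in W_N^{\textup{ext}}(\n_1,\n_2;A,B)$ and $z\in\Lambda$: since $\tilde\n_1^{\textup{ext}}\in W^{\textup{ext}}_N(\Lambda)\subset W_{2,N}$ carries no internal edge and at most one external half-edge at each vertex of $B_{z,N}$, the set $\partial\tilde\n_1^{\textup{ext}}(z)$ has cardinality exactly $\Delta\n_1(z)$, and by the defining property of the decomposition $\tilde\n_1^{\textup{ext}}+\tilde\n_1^{\textup{int}}$ it is disjoint from $\widetilde{A_z}$ (and likewise $|\partial\tilde\n_2^{\textup{ext}}(z)|=\Delta\n_2(z)$, disjoint from $\widetilde{B_z}$). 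Set $S_z:=\Delta\n_1(z)+A_z$ and $T_z:=\Delta\n_2(z)+B_z$; both are even (each is the number of odd-degree vertices of $\tilde\n_1^{\textup{int}}(z)$, resp.\ $\tilde\n_2^{\textup{int}}(z)$) and independent of $N$ and of the chosen external pair. The $S_N$-symmetry of $B_{z,N}$ — all internal edges of a block carry the same weight $d_N$ — shows that the single-site factor $\tilde\rho_{z,N}^{\partial\tilde\n_1^{\textup{ext}}(z)\sqcup\widetilde{A_z},\,\partial\tilde\n_2^{\textup{ext}}(z)\sqcup\widetilde{B_z}}$ coincides, under the canonical identification with admissible even partitions of $K_{S_z+T_z}$, with a measure $\tilde\rho_{z,N}^{S_z,T_z}$ depending only on $S_z,T_z$; in particular the average over $W_N^{\textup{ext}}(\n_1,\n_2;A,B)$ is trivial,
\[
\tilde\rho^{\n_1,\n_2}_{A,B,N}
=
\bigotimes_{z\in\Lambda}\tilde\rho_{z,N}^{S_z,T_z},
\]
and all of these (unnormalised) measures live on one and the same finite set $\bigotimes_{z\in\Lambda}\{\text{admissible even partitions of }K_{S_z+T_z}\}$, independent of $N$.

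\textbf{Step 2 (normalisation and conclusion).} By Remark~\ref{rem : total weight of pi measures}, $\tilde\rho_{z,N}^{S_z,T_z}$ has total mass $(c_NN)^{S_z+T_z}\mu^0_{z,N,\beta}\!\big[\prod_{i=1}^{S_z}\sigma_{(z,i)}\big]\mu^0_{z,N,\beta}\!\big[\prod_{i=S_z+1}^{S_z+T_z}\sigma_{(z,i)}\big]$, which by Lemma~\ref{lem: GS} converges to $\langle\varphi^{S_z}\rangle_0\langle\varphi^{T_z}\rangle_0>0$, the limit being strictly positive because $S_z,T_z$ are even. Hence the normalising constant of $\tilde\rho^{\n_1,\n_2}_{A,B,N}$ (the product of these masses over $z\in\Lambda$) converges to a strictly positive limit, so for $N$ large $\rho^{\n_1,\n_2}_{A,B,N}=\bigotimes_{z\in\Lambda}\rho_{z,N}^{S_z,T_z}$ is a well-defined probability measure on the fixed finite space above. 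Since the space of probability measures on a finite set is sequentially compact, $(\rho^{\n_1,\n_2}_{A,B,N})_N$ is tight and every subsequence has a further subsequence converging pointwise on events; by Lemma~\ref{lem: tightness of the single-site tanglings} each factor $(\rho_{z,N}^{S_z,T_z})_N$ converges along subsequences to the probability measure $\rho_z^{S_z,T_z}$, so the subsequential limits of $\rho^{\n_1,\n_2}_{A,B,N}$ are precisely $\bigotimes_{z\in\Lambda}\rho_z^{S_z,T_z}$.

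The only point requiring care is Step~1: one must check directly from the definitions that, once the block degrees $\Delta\n_1(z),\Delta\n_2(z)$ and the moments $A,B$ are fixed, the single-site factors entering $\tilde\rho^{\n_1,\n_2}_{A,B,N}$ depend neither on $N$ nor on the external configuration — so that the whole sequence is carried by a single finite space. Everything after that is bookkeeping together with Lemmas~\ref{lem: tightness of the single-site tanglings} and~\ref{lem: GS}.
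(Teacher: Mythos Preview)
Your core argument for tightness --- finite $N$-independent sample space plus convergence of the total mass $\tilde\rho^{\n_1,\n_2}_{A,B,N}[1]$ to a strictly positive limit --- is correct and is exactly the paper's proof. The paper simply states that tightness follows from the finiteness of the underlying space together with
\[
\tilde{\rho}^{\n_1,\n_2}_{A,B,N}[1]\ \longrightarrow\ \prod_{z\in\Lambda}\langle\varphi^{\Delta\n_1(z)+A_z}\rangle_0\,\langle\varphi^{\Delta\n_2(z)+B_z}\rangle_0,
\]
and refers back to Lemma~\ref{lem: tightness of the single-site tanglings}.

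However, your Step~1 contains a genuine error: the claim that each single-site factor $\tilde\rho_{z,N}^{\partial\tilde\n_1^{\textup{ext}}(z)\sqcup\widetilde{A_z},\,\partial\tilde\n_2^{\textup{ext}}(z)\sqcup\widetilde{B_z}}$ depends only on $(S_z,T_z)$, so that the average over $W_N^{\textup{ext}}$ is trivial, is false. The pair $(\tilde\n_1^{\textup{ext}},\tilde\n_2^{\textup{ext}})$ is chosen independently, so the two source sets $\partial\tilde\n_1^{\textup{ext}}(z)\sqcup\widetilde{A_z}$ and $\partial\tilde\n_2^{\textup{ext}}(z)\sqcup\widetilde{B_z}$ may overlap inside $B_{z,N}$; when they do, the corresponding points are forced into the same partition class, and the resulting measure on $\cT_{\n,A,B}(z)$ is \emph{not} the same as in the disjoint case. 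What \emph{is} true --- and suffices for tightness --- is that the \emph{total mass} of each single-site factor depends only on $(S_z,T_z)$ by the $S_N$-symmetry, so $\tilde\rho^{\n_1,\n_2}_{A,B,N}[1]$ is still the product you wrote down and converges as stated.

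Consequently, your final sentence --- that the subsequential limits of $\rho^{\n_1,\n_2}_{A,B,N}$ are $\bigotimes_{z\in\Lambda}\rho_z^{S_z,T_z}$ --- overreaches: this product structure is not automatic and is proved separately in the paper (at the end of the proof of Theorem~\ref{thm: switching lemma}) by splitting the sum over $W_N^{\textup{ext}}$ into a dominant part $C_1$ with disjoint sources and a negligible part $C_2$ where they overlap, using Lemma~\ref{lem:card} to show the overlap contribution is $o(1)$. For the tightness lemma itself you do not need this; just drop the triviality claim in Step~1 and keep the mass computation.
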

\begin{proof}
As in Lemma \ref{lem: tightness of the single-site tanglings}, this result is a consequence of the finiteness of the underlying probability space together with the convergence 
\begin{equation*}
    \tilde{\rho}_{\n_1,\n_2,N}^{A,B}[1] \underset{N\rightarrow \infty}\longrightarrow \prod_{z \in \Lambda}\left\langle \varphi^{\Delta\n_1(z) + A_z} \right\rangle_{0} \left\langle \varphi^{\Delta\n_2(z) + B_z} \right\rangle_{0}.
\end{equation*}
\end{proof}
\begin{cor}\label{cor:subseq}
There exists an increasing sequence $(N_k)_{k\geq 1}$ such that for any $A\in \mathcal{M}(\Lambda)$, $B\in \cM(\Lambda')$, and for any $(\n_1,\n_2)\in \Omega_{\Lambda}\times \Omega_{\Lambda'}$ satisfying $\sn_1=\partial A$ and $\sn_2=\partial B$, the sequence of measures $(\rho_{\n_1,\n_2,N_k}^{A,B})_{k\geq 1}$ weakly converges to a measure $\rho^{A,B}_{\n_1,\n_2}$.
\end{cor}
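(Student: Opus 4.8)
The plan is to produce $(N_k)_{k\geq 1}$ by a diagonal extraction, the only substantive input being the tightness established in the lemma immediately preceding the corollary. First I would record that, since $\Lambda$ and $\Lambda'$ are finite, each of the sets $\cM(\Lambda)$, $\cM(\Lambda')$, $\Omega_\Lambda=\NN^{\cP_2(\Lambda)}$ and $\Omega_{\Lambda'}=\NN^{\cP_2(\Lambda')}$ is countable, and therefore so is the index set
\begin{equs}
\cI
:=
\big\{(A,B,\n_1,\n_2)\ :\ A\in\cM(\Lambda),\ B\in\cM(\Lambda'),\ \n_1\in\Omega_\Lambda,\ \n_2\in\Omega_{\Lambda'},\ \partial\n_1=\partial A,\ \partial\n_2=\partial B\big\}.
\end{equs}
I would then fix an enumeration $\cI=\{I_1,I_2,\dots\}$.

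Next, for a fixed datum $I=(A,B,\n_1,\n_2)\in\cI$, I would note that for all $N$ large enough the probability measure $\rho^{\n_1,\n_2}_{A,B,N}$ is supported on one and the same finite set, namely the relevant product over $z\in\Lambda$ of sets of even partitions of complete graphs whose orders $\Delta\n_1(z)+A_z$ and $\Delta\n_2(z)+B_z$ do not depend on $N$; cf.\ Definition~\ref{def: block current measures} and the discussion following it. On such a finite space, the tightness furnished by the preceding lemma upgrades — exactly as in Lemma~\ref{lem: tightness of the single-site tanglings} — to the statement that from any infinite subset of $\NN$ one can extract a further infinite subset along which the mass of every atom (equivalently, every event) converges, the limit being again a probability measure since total mass is preserved.

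Finally I would carry out the diagonalisation. Set $\cN_0:=\NN$ and, inductively, choose an infinite $\cN_j\subseteq\cN_{j-1}$ such that $(\rho_{I_j,N})_{N\in\cN_j}$ converges weakly (possible by the previous paragraph applied to the subsequence $\cN_{j-1}$); note that along $\cN_j$ the measures indexed by $I_1,\dots,I_{j-1}$ still converge. Letting $N_k$ be the $k$-th smallest element of $\cN_k$, the sequence $(N_k)_{k\geq 1}$ is strictly increasing and, for each fixed $j$, the tail $(N_k)_{k\geq j}$ is a subsequence of $\cN_j$; hence $(\rho^{\n_1,\n_2}_{A,B,N_k})_{k\geq 1}$ converges weakly for every $(A,B,\n_1,\n_2)\in\cI$, and I would denote the limit by $\rho^{\n_1,\n_2}_{A,B}$. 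I do not anticipate a genuine obstacle here: the substance is the tightness lemma, while the remaining points — countability of $\cI$, and the fact that for fixed data the measures live on a common finite space so that weak convergence is just atom-wise convergence preserving total mass — are immediate consequences of the finiteness of $\Lambda$.
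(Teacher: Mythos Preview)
Your proposal is correct and is exactly the standard diagonal extraction the paper has in mind; the paper states the corollary without proof, relying implicitly on the tightness lemma just before it together with the countability of the index set and the finiteness of the underlying partition spaces, which is precisely what you spell out.
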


\begin{rem}
Although for our purposes subsequential limits suffice, identification of the full limit has recently been established \textup{\cite{krachun2023scaling}}.
\end{rem}

The following estimate on the cardinality of $W_N^{\textup{ext}}(\n_1,\n_2; A, B)$ will be useful throughout this section.

\begin{lem}\label{lem:card}
Let $A \in \cM(\Lambda)$, $B\in \cM(\Lambda')$ and let $(\n_1, \n_2) \in \Omega_{\Lambda}\times \Omega_{\Lambda'}$ with $(\sn_1,\sn_2)=(\partial A,\partial B)$. Then we have
\begin{equation*}
 |W_N^{\textup{ext}}(\n_1,\n_2; A, B)| =(1+o(1))\prod_{\lbrace x,y\rbrace\subset\Lambda}\frac{N^{2\n_1(x,y)+2\n_2(x,y)}}{\n_1(x,y)!\n_2(x,y)!}\prod_{x\in \Lambda}\frac{N^{\n_1(x,\fg)+\n_2(x,\fg)}}{\n_1(x,\fg)!\n_2(x,\fg)!}.
\end{equation*}
\end{lem}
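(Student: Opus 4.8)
The plan is to reduce the statement to a routine asymptotic enumeration of partial matchings. First I would note that $W_N^{\textup{ext}}(\n_1,\n_2;A,B)$ is a Cartesian product: the three constraints placed on $\tilde\n_1^{\textup{ext}}$ (lying in $W^{\textup{ext}}(\Lambda)$, projecting to $\n_1$ under $\Theta_N$, and satisfying $\partial\tilde\n_1^{\textup{ext}}\cap\widetilde A=\emptyset$) make no reference to $\tilde\n_2^{\textup{ext}}$, and symmetrically. Hence
\[
\big|W_N^{\textup{ext}}(\n_1,\n_2;A,B)\big| = M_N(\n_1,A)\cdot M_N(\n_2,B),
\]
where $M_N(\n,A):=\big|\{\tilde\m\in W^{\textup{ext}}(\Lambda):\Theta_N(\tilde\m)=\n,\ \partial\tilde\m\cap\widetilde A=\emptyset\}\big|$ and the second factor is the analogous count with $\Lambda'$ in place of $\Lambda$. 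It then suffices to prove
\[
M_N(\n,A)=(1+o(1))\prod_{\{x,y\}}\frac{N^{2\n(x,y)}}{\n(x,y)!}\ \prod_{x}\frac{N^{\n(x,\fg)}}{\n(x,\fg)!},
\]
the products running over the pairs $\{x,y\}\subset\Lambda$ of genuine (non-ghost) vertices and over the genuine $x\in\Lambda$, respectively.

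Next I would make explicit the combinatorial content of $W^{\textup{ext}}(\Lambda)$. Since a current in $W^{\textup{ext}}(\Lambda)\subseteq W_{2,N}$ carries no internal edges and puts total weight at most $1$ on the external edges incident to any fixed $(x,j)\in\Lambda_N$, every external edge carries weight $0$ or $1$ and every vertex of a genuine block $B_{x,N}$ touched by $\tilde\m$ has $\tilde\m$-degree exactly $1$; in particular such a vertex belongs to $\partial\tilde\m$. Consequently an element $\tilde\m$ counted by $M_N(\n,A)$ is exactly the following data: for every pair $\{x,y\}$ of genuine vertices, a partial matching of size $\n(x,y)$ between $B_{x,N}$ and $B_{y,N}$; and for every genuine $x$, a set of $\n(x,\fg)$ vertices of $B_{x,N}$ joined to $\fg$; subject to the global requirement that, within each block $B_{x,N}$, all the vertices used in this way are distinct and avoid $\widetilde A_x$ (the latter because a used vertex has odd degree and hence lies in $\partial\tilde\m$).

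Then I would count this data greedily. Fix an arbitrary order on the finitely many pairs $\{x,y\}$ with $\n(x,y)>0$ and on the genuine $x$ with $\n(x,\fg)>0$, and build the configuration step by step. When we reach a pair $\{x,y\}$, the number $a_x$ of vertices of $B_{x,N}$ already forbidden (those in $\widetilde A_x$ together with those used at earlier steps) is a constant with $0\le a_x\le A_x+\Delta\n(x)$ independent of $N$ and of the earlier choices; we choose the $\n(x,y)$ endpoints in $B_{x,N}$ among the $N-a_x$ available ones in $\binom{N-a_x}{\n(x,y)}$ ways, likewise $\binom{N-a_y}{\n(x,y)}$ ways in $B_{y,N}$, and then one of the $\n(x,y)!$ bijections between the two chosen sets; the ghost steps contribute one factor $\binom{N-a'_x}{\n(x,\fg)}$ each. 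Multiplying over all steps gives $M_N(\n,A)$ exactly. Since every forbidden count is bounded by a constant and there are only finitely many factors, $\binom{N-a}{m}=\tfrac{1}{m!}N^m(1+O(1/N))$ yields
\[
M_N(\n,A)=(1+o(1))\prod_{\{x,y\}}\frac{N^{\n(x,y)}}{\n(x,y)!}\cdot\frac{N^{\n(x,y)}}{\n(x,y)!}\cdot\n(x,y)!\ \prod_{x}\frac{N^{\n(x,\fg)}}{\n(x,\fg)!},
\]
which is the asserted expression (pairs with $\n(x,y)=0$ and vertices with $\n(x,\fg)=0$ contribute the factor $1$ and may be included at will); combining with the product decomposition above concludes the proof.

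I do not expect any conceptual difficulty here; the work is bookkeeping. The two points that need care are: (i) verifying that the greedy procedure enumerates each external current once and only once and that the number of forbidden vertices met at every step is bounded uniformly in $N$ — it is, being at most $|A|+|\n_1|$, resp.\ $|B|+|\n_2|$, which are fixed — and (ii) keeping straight that an external edge incident to the ghost contributes a single power of $N$ rather than two, since $B_{\fg,N}=\{\fg\}$; this is precisely the origin of the asymmetry between the two products in the statement.
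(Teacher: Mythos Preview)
Your proof is correct and follows essentially the same approach as the paper: both reduce to the single-current count via the Cartesian product structure, then enumerate the lifts by processing the pairs $\{x,y\}$ in some fixed order, choosing endpoints in each block while avoiding the (boundedly many) previously used and source vertices. The only cosmetic difference is that the paper packages the greedy count as a sandwich between explicit upper and lower bounds $\prod\binom{N-\Delta\n(x)-A_x}{\n(x,y)}\cdots \le |W_N^{\textup{ext}}(\n;A)| \le \prod\binom{N}{\n(x,y)}\cdots$, whereas you observe directly that the forbidden count $a_x$ at each step is a fixed integer independent of $N$ and of earlier choices, and pass to the asymptotic from there.
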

\begin{proof}
It suffices to estimate the cardinality of $W_N^{\textup{ext}}(\n; A):=W_N^{\textup{ext}}(\n,0; A, 0)$. To this end, we introduce the following procedure. We first enumerate the elements of $\mathcal{P}_2(\Lambda)$. We will count inductively the number of lifts $\tilde{\n}$ of $\n$ which satisfy $\tilde{\n}\in W^{\textup{ext}}_N$ and $\partial \tilde{\n}\cap \tilde{A}=\emptyset$.

Starting from the first element $\{x,y\}$ according to this ordering and taking into account that we cannot use any point in $\tilde{A}$, we see that we have 
$
    \binom{N-A_x}{\n(x,y)}\binom{N-A_y}{\n(x,y)}\n(x,y)!
$
choices for the piece of current between the blocks $B_{x,N}$ and $B_{y,N}$ when $\{x,y\}\subset \Lambda$, and 
$
\binom{N-A_x}{\n(x,\fg)}
$
choices if $x\in \Lambda$ and $y=\fg$. We then proceed to the second element $\{z,w\}$. In that case, the only additional condition we need to take into account is that we have attributed $\n(x,y)$ vertices of $B_{x,N}$ and $B_{y,N}$ in the preceding step that we cannot re-use (because the lifts we construct belong to $W_{2,N}$). If $z,w \in \Lambda$, then we have
\begin{equation*}
    \binom{N-A_z-\mathbbm{1}_{z\in\{x,y\}}\n(x,y)}{\n(z,w)}\binom{N-A_w-\mathbbm{1}_{w\in\{x,y\}}\n(x,y)}{\n(z,w)}\n(z,w)!
\end{equation*}
choices for the piece of lift between $x$ and $z$. We can iterate this procedure to compute the number of suitable lifts of $\n$.

It follows from the above procedure that 
\begin{multline*}
    \prod_{\lbrace x,y\rbrace\subset\Lambda}\binom{N-\Delta\n(x)-A_x}{\n(x,y)}\binom{N-\Delta\n(y)-A_y}{\n(x,y)}\n(x,y)!\prod_{x\in \Lambda}\binom{N-\Delta\n(x)-A_x}{\n(x,\fg)}\leq \\|W_N^{\textup{ext}}(\n; A)|\leq \prod_{ \lbrace x,y\rbrace\in \Lambda}\binom{N}{\n(x,y)}^2\n(x,y)!\prod_{x\in \Lambda}\binom{N}{\n(x,\fg)},
\end{multline*}
hence 
\begin{equation}\label{eq: card}
|W_N^{\textup{ext}}(\n; A)|=(1+o(1))\prod_{\lbrace x,y\rbrace\subset\Lambda}\frac{N^{2\n(x,y)}}{\n(x,y)!}\prod_{x\in \Lambda}\frac{N^{\n(x,\fg)}}{\n(x,\fg)!},    
\end{equation}
as desired.
\end{proof}

In what follows, we fix a sequence $(N_k)_{k\geq 1}$ as in Corollary~\ref{cor:subseq}. 

\begin{lem} Let $A \in \cM(\Lambda)$, $B\in \cM(\Lambda')$ and let $(\n_1, \n_2) \in \Omega_{\Lambda}\times \Omega_{\Lambda'}$ with $(\sn_1,\sn_2)=(\partial A,\partial B)$. Then, along the subsequence $(N_k)_{k\geq 1}$,
\begin{equation*}
\lim_{k\to\infty}\frac{(c_{N_k}N_k)^{|A|+|B|}}{\mathbf{Z}_{N_k}^{|\Lambda|+|\Lambda'|}} \Sigma_{1,N_k}(\n_1,\n_2)
=
w_{\beta}^A(\n_1) w_{\beta}^B(\n_2) \rho^{A,B}_{\n_1,\n_2}[F].
\end{equation*}	
\end{lem}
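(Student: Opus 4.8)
The plan is to reorganise the sum defining $\Sigma_{1,N}(\n_1,\n_2)$ by splitting each lift into an \emph{external} and an \emph{internal} part and then recognising the internal contributions as the block measures of Definition~\ref{def: block current measures}. Recall that every $\tilde\n\in W_N(\Lambda)$ decomposes uniquely as $\tilde\n=\tilde\n^{\textup{ext}}+\tilde\n^{\textup{int}}$ with $\tilde\n^{\textup{ext}}\in W^{\textup{ext}}_N(\Lambda)$ and $\tilde\n^{\textup{int}}=\sum_{z\in\Lambda}\tilde\n^{\textup{int}}(z)$, $\tilde\n^{\textup{int}}(z)\in W^{\textup{int}}_{z,N}$; moreover, since $\tilde\n\in W_{1,N}$ (Definition~\ref{def: def w_N}) the block vertices carrying an external edge are disjoint from the sources, so that $\partial\tilde\n^{\textup{int}}(z)=\partial\tilde\n^{\textup{ext}}(z)\sqcup(\partial\tilde\n\cap B_{z,N})$. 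As $w_N$ is multiplicative over edges, $w_N(\tilde\n)=w_N(\tilde\n^{\textup{ext}})\prod_z w_N(\tilde\n^{\textup{int}}(z))$, and for $\tilde\n^{\textup{ext}}$ with $\Theta_N(\tilde\n^{\textup{ext}})=\n_1$ the external factor does not depend on the lift (on $W_N^{\textup{ext}}$ every external edge carries value at most one), being a product of factors $\beta c_N^2 J_{x,y}$ and $\beta c_N h_x$ with exponents read off from $\n_1$. Performing this decomposition for both $\tilde\n_1,\tilde\n_2$ and conditioning on $(\tilde\n_1^{\textup{ext}},\tilde\n_2^{\textup{ext}})\in W_N^{\textup{ext}}(\n_1,\n_2;A,B)$, the sum over internal parts factorises over the blocks, current $1$ at block $z$ having source set $\widetilde{A_z}\sqcup\partial\tilde\n_1^{\textup{ext}}(z)$ of cardinality $S_z:=\Delta\n_1(z)+A_z$ and current $2$ the source set $\widetilde{B_z}\sqcup\partial\tilde\n_2^{\textup{ext}}(z)$ of cardinality $T_z:=\Delta\n_2(z)+B_z$; crucially $S_z,T_z$ do not depend on the chosen external lift.

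The next step is to evaluate the internal sums. Since $F$ is measurable with respect to the quotient $\sigma$-algebra induced by $\simeq$, the value $F(\tilde\n_1+\tilde\n_2)$ depends only on the partitions that $\tilde\n_1^{\textup{int}}(z)+\tilde\n_2^{\textup{int}}(z)$ induces inside each block, i.e.\ on the tangling. By the definition of $\pi_{z,N}^{\tilde S_z}$, the internal sum over block $z$ for current $1$ equals $\mathbf{Z}_N(c_NN)^{-S_z}$ times $\pi_{z,N}^{\tilde S_z}$ applied to the relevant functional, and likewise for current $2$; pushing $\bigotimes_z(\pi_{z,N}^{\tilde S_z}\otimes\pi_{z,N}^{\tilde T_z})$ to the quotient produces $\bigotimes_z\tilde\rho_{z,N}^{\tilde S_z,\tilde T_z}$. (A structural point to record: the partition induced by a sum of two currents with disjoint, even-sized source sets is automatically admissible for $\cT_{\n,A,B}(z)$ — each cluster meets an even number of each current's sources — by the handshake lemma applied componentwise.) This already generates the factor $\mathbf{Z}_N^{|\Lambda|+|\Lambda'|}$ cancelling the denominator. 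One genuine subtlety arises: for external lifts with $\partial\tilde\n_1^{\textup{ext}}(z)\cap\partial\tilde\n_2^{\textup{ext}}(z)\neq\emptyset$ for some $z$, the joint source set is strictly smaller and the block measure differs from the canonical $\tilde\rho_{z,N}^{\tilde S_z,\tilde T_z}$; but the counting of Lemma~\ref{lem:card} shows such lifts are an $O(1/N)$ fraction of $W_N^{\textup{ext}}(\n_1,\n_2;A,B)$, so (with $F$ and the block masses bounded) their contribution is $o(1)$ after the rescaling. For the remaining lifts the block measures coincide with $\tilde\rho_{z,N}^{\tilde S_z,\tilde T_z}$ by the symmetries of $K_N$, so the sum over $W_N^{\textup{ext}}$ merely contributes the cardinality $|W_N^{\textup{ext}}(\n_1,\n_2;A,B)|$.

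Collecting the scalar factors, one arrives at
\begin{equs}
\frac{(c_{N}N)^{|A|+|B|}}{\mathbf{Z}_{N}^{|\Lambda|+|\Lambda'|}}\,\Sigma_{1,N}(\n_1,\n_2)
=(1+o(1))\,\Xi_N\cdot\bigotimes_{z}\tilde\rho_{z,N}^{\tilde S_z,\tilde T_z}[F]+o(1),
\end{equs}
where $\Xi_N$ is the product of $(c_NN)^{|A|+|B|-\sum_z(S_z+T_z)}$, the two external weights, and $|W_N^{\textup{ext}}(\n_1,\n_2;A,B)|$. Using $\sum_{z\in\Lambda}\Delta\n_1(z)=2\sum_{\{x,y\}\subset\Lambda}\n_1(x,y)+\sum_x\n_1(x,\fg)$ (and its analogue for $\n_2$), the power of $c_NN$ together with the $c_N^2$ and $c_N$ factors turns each $(\beta J_{x,y})^{\n_i(x,y)}$ into $(\beta J_{x,y}/N^2)^{\n_i(x,y)}$ and each $(\beta h_x)^{\n_i(x,\fg)}$ into $(\beta h_x/N)^{\n_i(x,\fg)}$; multiplying by the estimate of Lemma~\ref{lem:card} for $|W_N^{\textup{ext}}|$, every power of $N$ cancels and $\Xi_N\to w_{A,\beta}(\n_1)w_{B,\beta}(\n_2)\big/\prod_z\langle\varphi^{S_z}\rangle_0\langle\varphi^{T_z}\rangle_0$ (the $S_z,T_z$ being even, these moments are positive). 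On the other hand, by Remark~\ref{rem : total weight of pi measures} and Lemma~\ref{lem: GS} the total mass of $\tilde\rho_{z,N}^{\tilde S_z,\tilde T_z}$ converges to $\langle\varphi^{S_z}\rangle_0\langle\varphi^{T_z}\rangle_0$, while along the subsequence $(N_k)$ fixed in Corollary~\ref{cor:subseq} the normalised measures $\bigotimes_z\rho_{z,N_k}^{\tilde S_z,\tilde T_z}$ converge to $\rho_{A,B}^{\n_1,\n_2}$; hence $\bigotimes_z\tilde\rho_{z,N_k}^{\tilde S_z,\tilde T_z}[F]\to\big(\prod_z\langle\varphi^{S_z}\rangle_0\langle\varphi^{T_z}\rangle_0\big)\rho_{A,B}^{\n_1,\n_2}[F]$. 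Multiplying the two limits along $(N_k)$ yields $w_{A,\beta}(\n_1)w_{B,\beta}(\n_2)\rho_{A,B}^{\n_1,\n_2}[F]$, as claimed.

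The main obstacle is not analytic but bookkeeping: one must keep precise track of which block vertices are sources, which carry external edges, and which are inert, so that the external/internal decomposition and the identities $S_z=\Delta\n_1(z)+A_z$, $T_z=\Delta\n_2(z)+B_z$ are correct; and one must handle the overlap term $\partial\tilde\n_1^{\textup{ext}}(z)\cap\partial\tilde\n_2^{\textup{ext}}(z)\neq\emptyset$, the one place where the naive identification of the prelimit sum with a product of block measures fails and has to be absorbed into an error term. Everything else is driven by Lemma~\ref{lem:card} for the combinatorial prefactor, and by Lemma~\ref{lem: GS} together with Corollary~\ref{cor:subseq} for the single-block moments and tanglings.
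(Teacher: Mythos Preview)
Your proposal is correct and takes essentially the same approach as the paper: external/internal decomposition of each lift, identification of the internal sums with the block measures of Definition~\ref{def: block current measures}, Lemma~\ref{lem:card} for the cardinality of $W_N^{\textup{ext}}$, and passage to the limit along $(N_k)$. The one organisational difference is that you treat the overlap issue ($\partial\tilde\n_1^{\textup{ext}}(z)\cap\partial\tilde\n_2^{\textup{ext}}(z)\neq\emptyset$) already here, in order to work directly with the product $\bigotimes_z\tilde\rho_{z,N}^{\tilde S_z,\tilde T_z}$, whereas the paper absorbs \emph{all} external lifts (overlapping or not) into its averaged measure $\tilde\rho^{\n_1,\n_2}_{A,B,N}$ and only proves the product-over-blocks structure later, in the proof of Theorem~\ref{thm: switching lemma}. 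Your route makes the product structure visible earlier; the paper's route keeps this particular lemma shorter by deferring that computation.
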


\begin{proof}
By definition
 \begin{equation*}
     \frac{(c_N N)^{|A|+|B|}}{\mathbf{Z}_N^{|\Lambda|+|\Lambda'|}} \Sigma_{1,N}(\n_1,\n_2)=\frac{(c_N N)^{|A|+|B|}}{\mathbf{Z}_N^{|\Lambda|+|\Lambda'|}}\sum_{\substack{\tilde \n_1\in W_N(\Lambda), \tilde\n_2 \in W_N(\Lambda') \\ \Theta_N(\tilde\n_1) = \n_1, \Theta_N(\tilde\n_2) = \n_2 \\ \partial \tilde\n_1 = \widetilde{A}, \partial \tilde\n_2 = \widetilde{B}}}	w_N(\tilde\n_1) w_N(\tilde\n_2) F(\tilde \n_1 + \tilde \n_2).
 \end{equation*}
For $i=1,2$, write $\tilde{\n}_i=\tilde{\n}_i^{\textup{ext}}+\tilde{\n_i}^{\textup{int}}$ with $\tilde{\n}_i^{\textup{ext}}\in W^\textup{ext}_N$ satisfying $\partial\tilde{\n}_i^{\textup{ext}}\cap \partial\tilde{\n}_i=\emptyset$, and $\tilde{\n_i}^{\textup{int}}\in W^\textup{int}_N$. Then, if $\n_1^{\textup ext}, \tilde \n_2^{\textup ext} \in W^{\textup ext}_N$ are such that $\Theta_N( \tilde\n_1^{\textup ext}) = \n_1, \Theta_N( \tilde\n_2^{\textup ext})=\n_2$, one has
\begin{equation*}
     w_N(\tilde\n_1^{\textup{ext}})w_N(\tilde\n_2^{\textup{ext}})=\prod_{\lbrace x,y\rbrace\subset\Lambda}(\beta c_N^2J_{x,y})^{\n_1(x,y)+\n_2(x,y)}\prod_{x\in \Lambda}(\beta c_N h_x)^{\n_1(x,\fg)+\n_2(x,\fg)}.
\end{equation*}
As a consequence,
\begin{equation*}
\begin{aligned}
     \frac{(c_N N)^{|A|+|B|}}{\mathbf{Z}_N^{|\Lambda|+|\Lambda'|}} \Sigma_{1,N}(\n_1,\n_2)=\frac{|W_N^{\textup{ext}}(\n_1,\n_2; A, B)|}{(c_NN)^{|\n_1|+|\n_2|}}\prod_{\lbrace x,y\rbrace\subset\Lambda}(\beta c_N^2J_{x,y})^{\n_1(x,y)+\n_2(x,y)}\\ \cdot\prod_{x\in \Lambda}(\beta c_N h_x)^{\n_1(x,\fg)+\n_2(x,\fg)}\tilde{\rho}_{\n_1,\n_2,N}^{A,B}[F].
\end{aligned}     
\end{equation*}
Recall now Lemma~\ref{lem:card} and that
\begin{equation*}
    \tilde{\rho}^{A,B}_{\n_1,\n_2}[1]=\prod_{x\in \Lambda}\left\langle \varphi_x^{\Delta\n_1(x)+A_x}\right\rangle_0\left\langle \varphi_x^{\Delta\n_2(x)+B_x}\right\rangle_0.
\end{equation*}
The desired convergence along the subsequence $(N_k)_{k\geq 1}$ follows readily.
\end{proof}

Using the same method as above, we also obtain the following.

\begin{lem} Let $A \in \cM(\Lambda)$, $B\in \cM(\Lambda')$ and let $(\n_1, \n_2) \in \Omega_{\Lambda}\times \Omega_{\Lambda'}$ with $(\sn_1,\sn_2)=(\partial A,\partial B)$. Then, along the subsequence $(N_k)_{k\geq 1}$,
\begin{multline}
    \frac{(c_N N)^{|A|+|B|}}{\mathbf{Z}_N^{|\Lambda|+|\Lambda'|}}\sum_{\substack{\tilde\n_1\in W_N(\Lambda), \tilde\n_2 \in W_N(\Lambda') \\ \Theta_N( \tilde\n_1) = \n_1, \Theta_N(\tilde\n_2) = \n_2 \\ \partial\tilde\n_1 = \widetilde{A}\sqcup \widetilde{B}, \partial\tilde\n_2 = \emptyset}}w_N(\tilde\n_1)w_N(\tilde\n_2) F(\tilde\n_1 + \tilde\n_2)\mathbbm{1}_{\mathcal{F}_{\tilde{B}}^{\Lambda'_N}}\\\underset{}{\longrightarrow}	 w_{\beta}^{A+B}(\n_1)w_{\beta}^\emptyset(\n_2) \rho^{A+B,\emptyset}_{\n_1,\n_2} [ F[\overline{\mathcal{H}}^{A+B,\emptyset}(\n_1, \n_2, \ct)] \mathbbm{1}_{\cF_{\Lambda'}^{A+B}(B)} ],
\end{multline}
where $\mathcal{F}_{\Lambda'}^{A+B}(B)$ is the event defined in Theorem \textup{\ref{thm: switching lemma}}.
\end{lem}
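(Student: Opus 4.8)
The plan is to rerun, with the obvious modifications, the argument of the preceding lemma that identifies the limit of $\Sigma_{1,N}$, the only genuinely new point being the indicator $\mathbbm{1}_{\mathcal{F}_{\tilde B}^{\Lambda'_N}}$. As there, for $\tilde\n_1\in W_N(\Lambda)$ and $\tilde\n_2\in W_N(\Lambda')$ one writes $\tilde\n_i=\tilde\n_i^{\textup{ext}}+\tilde\n_i^{\textup{int}}$ with $\tilde\n_i^{\textup{ext}}\in W_N^{\textup{ext}}$, $\tilde\n_i^{\textup{int}}\in W_N^{\textup{int}}$; here the whole source set sits on $\tilde\n_1$ and is written $\widetilde{A+B}=\widetilde A\sqcup\widetilde B$ via the fixed ordered injection of Section~\ref{section: proof of switching lemma}. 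Peeling off the external weights, counting the external lifts via Lemma~\ref{lem:card}, and repackaging the sum over internal parts as a blockwise tangling average through Definition~\ref{def: block current measures}, the left-hand side of the claimed identity becomes
\[
\frac{|W_N^{\textup{ext}}(\n_1,\n_2;A+B,\emptyset)|}{(c_NN)^{|\n_1|+|\n_2|}}\Big(\prod_{\{x,y\}\subset\Lambda}(\beta c_N^2J_{x,y})^{\n_1(x,y)+\n_2(x,y)}\Big)\Big(\prod_{x\in\Lambda}(\beta c_N h_x)^{\n_1(x,\fg)+\n_2(x,\fg)}\Big)\tilde\rho^{\n_1,\n_2}_{A+B,\emptyset,N}\big[F\cdot\mathbbm{1}_{\mathcal{F}_{\tilde B}^{\Lambda'_N}}\big],
\]
which is exactly the expression appearing in the $\Sigma_{1,N}$ computation but with $A,B$ replaced by $A+B,\emptyset$ and $F$ replaced by $F\cdot\mathbbm{1}_{\mathcal{F}_{\tilde B}^{\Lambda'_N}}$.

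The heart of the matter is to check that, restricted to $\{\tilde\n_1,\tilde\n_2\in W_N\}$, the random variable $\mathbbm{1}_{\mathcal{F}_{\tilde B}^{\Lambda'_N}}(\tilde\n_1+\tilde\n_2)$ is measurable with respect to the blockwise connectivity data (the quotient by $\simeq$ on each $B_{x,N}$, $x\in\Lambda'$) together with the projected current $\n=\n_1+\n_2$, and that under the canonical identification of this data with tangled currents $(\n,\ct)$ it coincides with $\mathbbm{1}_{\mathcal{F}_B^{\Lambda'}}(\n,\ct)$, with $\mathcal{F}_B^{\Lambda'}$ as in Theorem~\ref{thm: switching lemma}. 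Indeed, when $\tilde\n_i\in W_N$ every source — in particular every vertex of $\widetilde B$ — is incident only to internal edges, and every external edge is used at most once and shares no endpoint with another external edge; hence any path in the multigraph of $(\tilde\n_1+\tilde\n_2)_{\Lambda'_N}$ joining two vertices of $\widetilde B$ alternates between internal stretches inside a single block $B_{x,N}$ — whose only effect on connectivity is the partition class of $\ct_x$ — and single external edges, which are precisely the edges $\{zy(k),yz(k)\}$ of $\mathcal{H}(\n,\ct,A+B,\emptyset)$. Consequently the clusters of $(\tilde\n_1+\tilde\n_2)_{\Lambda'_N}$ correspond bijectively to the clusters of $(\n,\ct)_{\Lambda'}$, matching the number of $\widetilde B$-points with the number of points $\{zb(k):z\in\Lambda',\,1\le k\le B_z\}$, so the two parity conditions on the number of intersections agree. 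In particular $F\cdot\mathbbm{1}_{\mathcal{F}_{\tilde B}^{\Lambda'_N}}$ is an admissible test function for $\tilde\rho^{\n_1,\n_2}_{A+B,\emptyset,N}$ and $\tilde\rho^{\n_1,\n_2}_{A+B,\emptyset,N}[F\cdot\mathbbm{1}_{\mathcal{F}_{\tilde B}^{\Lambda'_N}}]=\tilde\rho^{\n_1,\n_2}_{A+B,\emptyset,N}[F\cdot\mathbbm{1}_{\mathcal{F}_B^{\Lambda'}}]$.

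To conclude, one passes to the limit along $(N_k)$ exactly as in the $\Sigma_{1,N}$ lemma: using $\widetilde{A+B}=\widetilde A\sqcup\widetilde B$ to rewrite $\tilde\rho^{\n_1,\n_2}_{A+B,\emptyset,N}$ through the probability measures $\rho^{\n_1,\n_2}_{A+B,\emptyset,N}$, and combining the normalisation $(c_NN)^{|A|+|B|}\mathbf{Z}_N^{-|\Lambda|-|\Lambda'|}$, the weight factors above, the cardinality asymptotics of Lemma~\ref{lem:card}, and the convergence $\tilde\rho^{\n_1,\n_2}_{A+B,\emptyset,N}[1]\to\prod_{z\in\Lambda}\langle\varphi^{\Delta\n_1(z)+(A+B)_z}\rangle_0\langle\varphi^{\Delta\n_2(z)}\rangle_0$, the prefactor converges to $w_{A+B,\beta}(\n_1)w_{\emptyset,\beta}(\n_2)$. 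Since $\rho^{\n_1,\n_2}_{A+B,\emptyset,N_k}\to\rho^{\n_1,\n_2}_{A+B,\emptyset}$ by Corollary~\ref{cor:subseq} and $F\cdot\mathbbm{1}_{\mathcal{F}_B^{\Lambda'}}$ is bounded and measurable for the tangling $\sigma$-algebra (on which the limiting measure is carried by a finite set of partitions), one obtains $\rho^{\n_1,\n_2}_{A+B,\emptyset,N_k}[F\cdot\mathbbm{1}_{\mathcal{F}_B^{\Lambda'}}]\to\rho^{\n_1,\n_2}_{A+B,\emptyset}[F\cdot\mathbbm{1}_{\mathcal{F}_B^{\Lambda'}}]$, which gives the asserted limit.

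The step I expect to be the main obstacle is the connectivity identification of the second paragraph. The point is that the restriction to $W_N$ is exactly what forces the cluster structure of the Ising multigraph to be a function of the blockwise tanglings and the projected current alone — outside $W_N$ a source can lie on an external edge, and two external edges may share an endpoint, producing adjacencies invisible at the level of $(\n,\ct)$ — and one must also verify that the relabelling freedom among external lifts realising a given $(\n_1,\n_2)$ is precisely what is averaged out by the measures $\tilde\rho_{z,N}$ of Definition~\ref{def: block current measures}, so that $\mathbbm{1}_{\mathcal{F}_{\tilde B}^{\Lambda'_N}}$ descends unambiguously to $\mathbbm{1}_{\mathcal{F}_B^{\Lambda'}}$.
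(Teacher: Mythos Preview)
Your proposal is correct and follows exactly the approach the paper indicates: the paper's own proof is literally the one-line ``Using the same method as above, we also obtain the following.'' Your write-up is in fact more careful than the paper's, spelling out the one genuinely new ingredient --- that on $W_N(\Lambda)\times W_N(\Lambda')$ the indicator $\mathbbm{1}_{\mathcal{F}_{\tilde B}^{\Lambda'_N}}$ descends to $\mathbbm{1}_{\mathcal{F}_B^{\Lambda'}}$ because sources touch only internal edges and external edges are simple and vertex-disjoint --- which the paper leaves implicit.
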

We are now equipped to complete the conditional proof of Theorem \ref{thm: switching lemma}
\begin{proof}[Proof of Theorem \textup{\ref{thm: switching lemma}}]
We first apply \eqref{eq: switching for GS} for $F\mathbbm{1}_C$, where \begin{equation*}C:=\left\{\sum_{i=1}^N(\Delta\tilde\n_1^{\textup{ext}}(x,i)+\Delta\tilde\n_2^{\textup{ext}}(x,i))\leq M ,\: \forall x\in \Lambda_{\fg}\right\}
\end{equation*}
for some $M\geq 1$. Note that $F\mathbbm{1}_C$ is bounded. Using the two preceding lemmas and taking the limit along an appropriate sequence $(N_k)_{k\geq 1}$ gives
\begin{equation*}
\begin{aligned}
    &\sum_{\substack{\n_1\in \Omega_{\Lambda}:\partial \n_1 = \partial A \\\n_2\in \Omega_{\Lambda'}:\partial \n_2 = \partial B}} 
    \mathbbm{1}_{(\n_1,\n_2)\in C}w_{\beta}^A(\n_1) w_{\beta}^B(\n_2) \rho^{A,B}_{\n_1,\n_2}\big[ F[\overline{\mathcal{H}}^{A,B}(\n_1,\n_2,\ct)] \big]
    \\
    &=
    \sum_{\substack{\n_1\in \Omega_{\Lambda}:\partial \n_1 = \partial(A+B) \\\n_2\in \Omega_{\Lambda'}: \partial \n_2 = \emptyset}} 
    \mathbbm{1}_{(\n_1,\n_2)\in C}w_{\beta}^{A+B}(\n_1)w_{\beta}^\emptyset(\n_2)  \rho^{A+B,\emptyset}_{\n_1,\n_2} \big[ F[\overline{\mathcal{H}}^{A+B,\emptyset}(\n_1,\n_2,\ct)] \mathbbm{1}_{\cF_{\Lambda'}^{A+B}(B)} \big].
\end{aligned}    
\end{equation*}
Sending $M$ to infinity and using the dominated convergence theorem we obtain 
\begin{equation*}
\begin{aligned}
    \sum_{\substack{\n_1\in \Omega_{\Lambda}:\partial \n_1 = \partial A \\\n_2\in \Omega_{\Lambda'}: \partial \n_2 = \partial B}} 
    &w_{\beta}^A(\n_1) w_{\beta}^B(\n_2) \rho^{A,B}_{\n_1,\n_2}\big[ F[\overline{\mathcal{H}}^{A,B}(\n_1,\n_2,\ct)] \big]
    \\
    &=
    \sum_{\substack{\n_1\in \Omega_{\Lambda}:\partial \n_1 = \partial(A+B) \\\n_2\in \Omega_{\Lambda'}: \partial \n_2 = \emptyset}} 
    w_{\beta}^{A+B}(\n_1)w_{\beta}^\emptyset(\n_2)  \rho^{A+B,\emptyset}_{\n_1,\n_2} \big[ F[\overline{\mathcal{H}}^{A+B,\emptyset}(\n_1,\n_2,\ct)] \mathbbm{1}_{\cF_{ \Lambda'}^{A+B}(B)} \big].
\end{aligned}    
\end{equation*}

It remains to prove that for $A \in \cM(\Lambda)$, $B\in \cM(\Lambda')$ and $(\n_1, \n_2) \in \Omega_{\Lambda}\times \Omega_{\Lambda'}$ with $(\sn_1,\sn_2)=(\partial A,\partial B)$, the measure $\rho^{A,B}_{\n_1,\n_2}$ is a product measure. It suffices to show that $\tilde\rho^{A,B}_{\n_1,\n_2}$ is a product measure, and for that, we can assume that $F$ is bounded. We write
$\tilde{\rho}_{\n_1,\n_2,N}^{A,B}[F]= C_1+C_2,
$
where
\begin{equation*}
    C_1:=\frac{1}{|W_N^{\textup{ext}}(\n_1,\n_2; A,  B)|}\sum_{\substack{(\tilde\n_1^{\textup{ext}}, \tilde\n_2^{\textup{ext}}) \in S_1}}\Big(\bigotimes_{z \in \Lambda} \tilde\rho_{z,N}^{\partial\tilde\n_1^{\textup{ext}}(z)\sqcup \widetilde{A_z},\partial\tilde\n_2^{\textup{ext}}(x)\sqcup \widetilde{B_z}}\Big)\left[ F \right]
\end{equation*}
for $S_1:=\{(\tilde\n_1^{\textup{ext}}, \tilde\n_2^{\textup{ext}}) \in W^{\textup{ext}_N} \, : \, \Theta_N(\tilde\n_1^{\textup{ext}}) = \n_1, \Theta_N(\tilde\n_2^{\textup{ext}}) = \n_2,  \partial\tilde\n_1^{\textup{ext}} \cap \widetilde{A} = \emptyset, \partial\tilde\n_2^{\textup{ext}} \cap \widetilde{B} = \emptyset, (\partial \tilde{\n}_1^{\textup{ext}}\sqcup \tilde{A})\cap (\partial \tilde{\n}_2^{\textup{ext}}\sqcup \tilde{B})=\emptyset \}$,
and
\begin{equation*}
    C_2:=\frac{1}{|W_N^{\textup{ext}}(\n_1,\n_2; A,  B)|}\sum_{\substack{(\tilde\n_1^{\textup{ext}}, \tilde\n_2^{\textup{ext}}) \in S_2}}\Big(\bigotimes_{z \in \Lambda} \tilde\rho_{z,N}^{\partial\tilde\n_1^{\textup{ext}}(z)\sqcup \widetilde{A_z},\partial\tilde\n_2^{\textup{ext}}(x)\sqcup \widetilde{B_z}}\Big)\left[ F \right]
\end{equation*}
for $S_2:=\{(\tilde\n_1^{\textup{ext}}, \tilde\n_2^{\textup{ext}}) \in W^{\textup{ext}_N} \, : \, \Theta_N(\tilde\n_1^{\textup{ext}}) = \n_1, \Theta_N(\tilde\n_2^{\textup{ext}}) = \n_2,  \partial\tilde\n_1^{\textup{ext}} \cap \widetilde{A} = \emptyset, \partial\tilde\n_2^{\textup{ext}} \cap \widetilde{B} = \emptyset, (\partial \tilde{\n}_1^{\textup{ext}}\sqcup \tilde{A})\cap (\partial \tilde{\n}_2^{\textup{ext}}\sqcup \tilde{B})\neq \emptyset \}$.
Notice that because of the symmetries of the model, all the product measures appearing in $C_1$ are equal. Thus, for a fixed pair of currents $(\tilde\n_1^{\textup{ext}}, \tilde\n_2^{\textup{ext}})$ satisfying the conditions of the sum of the numerator in $C_1$, one finds,
\begin{equation*}
    C_1=\bigotimes_{z \in \Lambda}\left(\tilde\rho_{z,N}^{\partial\tilde\n_1^{\textup{ext}}(z)\sqcup \widetilde{A_z},\partial\tilde\n_2^{\textup{ext}}(x)\sqcup \widetilde{B_z}}\right)\left[F\right] \frac{1}{|W_N^{\textup{ext}}(\n_1,\n_2; A,  B)|}|S_1|.
\end{equation*}
Arguing as in the proof of Lemma~\ref{lem:card} we obtain
\begin{equation*}
|S_1| 
\geq \prod_{i=1}^2\prod_{\lbrace x,y\rbrace\subset\Lambda}\binom{N_{x,y}}{\n_i(x,y)}^2\n_i(x,y)!\prod_{x\in \Lambda}\binom{N_{x,\fg}}{\n_i(x,\fg)},
\end{equation*}
where $N_{x,y}=N-\Delta\n(x)-\Delta\n(y)-A_x-A_y-B_x-B_y$ for $\n=\n_1+\n_2$, hence
\begin{equation}\label{eq: wext_conv_1}
   \frac{1}{|W_N^{\textup{ext}}(\n_1,\n_2; A,  B)|}|S_1|\underset{N\rightarrow \infty}{\longrightarrow} 1,
\end{equation}
which implies that along $(N_k)_{k\geq 1}$ the term $C_1$ converges to a product measure. To handle $C_2$, notice that
\begin{equation*}
    |C_2|\leq \left(\prod_{z\in \Lambda}\max_{\substack{k\leq \Delta\n_1(z)+A_z\\ p\leq \Delta\n_2(z)+B_z}} \langle \phi_z^k \rangle_{0} \langle \phi_z^p \rangle_{0}+o(1) \right)\Vert F\Vert_\infty  \frac{1}{|W_N^{\textup{ext}}(\n_1,\n_2; A,  B)|}|S_2|.
\end{equation*} 
It follows from \eqref{eq: wext_conv_1} that 
\begin{equation*}
    \frac{1}{|W_N^{\textup{ext}}(\n_1,\n_2; A,  B)|}|S_2|= o(1),
\end{equation*}
hence $C_2$ tends to $0$ as $N$ tends to infinity.
\end{proof}

\subsubsection{Proof of Proposition \ref{prop: stronger weight convergence}}\label{proof of stronger weight prop}
We now turn to the proof of Proposition \ref{prop: stronger weight convergence}. As explained above, we will argue that the main contribution in the weights $\overline{w}_{N,A,\beta}(\n)$ comes from currents $\tilde \n$ which belong to the set $W_N$ introduced in Definition \ref{def: def w_N}. We start by showing that this contribution admits an explicit limit, which is this first part of Proposition \ref{prop: stronger weight convergence}.
\begin{lem}\label{lem: lemma convergence on w_N} For all $A\in \mathcal{M}(\Lambda_\fg)$ and $\n \in \Omega_{\Lambda_\fg}$, 
\begin{equation*}
    \dfrac{(c_N N)^{|A|}}{\mathbf{Z}_N^{|\Lambda|}}\sum_{\substack{\tilde{\n}\in W_N\\ \Theta_N(\tilde{\n})=\n \\ \partial \tilde{\n}=\tilde{A}}}w_{N,\beta}(\tilde{\n})\underset{N\rightarrow \infty}\longrightarrow w_{\beta}^A(\n).
\end{equation*}
\end{lem}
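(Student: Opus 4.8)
The plan is to exploit the rigidity that the constraints defining $W_N$ impose on the admissible lifts. Fix $A\in\cM(\Lambda_\fg)$ and $\n\in\Omega_{\Lambda_\fg}$; we may assume $\partial\n=\partial A$, as otherwise a parity count shows that both sides of the claimed identity vanish. \textbf{Step 1 (decomposition and bijection).} I would first show that every $\tilde{\n}\in W_N$ with $\Theta_N(\tilde{\n})=\n$ and $\partial\tilde{\n}=\tilde{A}$ splits uniquely as $\tilde{\n}=\tilde{\n}^{\textup{ext}}+\tilde{\n}^{\textup{int}}$ with $\tilde{\n}^{\textup{ext}}\in W^{\textup{ext}}_N$ and $\tilde{\n}^{\textup{int}}=\sum_{x\in\Lambda}\tilde{\n}^{\textup{int}}(x)$, $\tilde{\n}^{\textup{int}}(x)\in W^{\textup{int}}_{x,N}$, as in Definition~\ref{def: def w^int}. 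Since $\tilde{\n}\in W_{2,N}$, every external edge has multiplicity at most $1$ and no vertex is incident to two external edges, so the total external weight at the block $B_{x,N}$ equals $\Delta\n(x)$ and is carried by $\Delta\n(x)$ distinct vertices, which are exactly the sources of $\tilde{\n}^{\textup{ext}}$ in that block. Because $\partial\tilde{\n}=\tilde{A}$ and $\tilde{\n}\in W_{1,N}$, the vertices of $\tilde A_x$ carry no external weight and are disjoint from those $\Delta\n(x)$ vertices; tracking the parity of the total degree at each vertex then forces the source set of $\tilde{\n}^{\textup{int}}(x)$ to be exactly $S_x:=\tilde A_x\sqcup(\partial\tilde{\n}^{\textup{ext}}\cap B_{x,N})$, of cardinality $\Delta\n(x)+A_x$. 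Conversely, any $\tilde{\n}^{\textup{ext}}\in W_N^{\textup{ext}}(\n;A)$ (in the notation of Lemma~\ref{lem:card}) together with any choice of internal currents on the blocks with prescribed source sets $S_x$ produces such a $\tilde{\n}\in W_N$.

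\textbf{Step 2 (factorization of the sum).} Using that $w_{N,\beta}$ is multiplicative over edge-disjoint currents, $w_{N,\beta}(\tilde{\n})=w_{N,\beta}(\tilde{\n}^{\textup{ext}})\prod_{x\in\Lambda}w_{N,\beta}(\tilde{\n}^{\textup{int}}(x))$, and since all external multiplicities equal $1$ the factor $w_{N,\beta}(\tilde{\n}^{\textup{ext}})=\prod_{\lbrace x,y\rbrace\subset\Lambda}(\beta c_N^2 J_{x,y})^{\n_{x,y}}\prod_{x\in\Lambda}(\beta c_N h_x)^{\n_{x,\fg}}$ depends only on $\n$. By the symmetry of the complete graph, $\sum_{\partial\tilde{\n}^x=S_x}w_{N,\beta}(\tilde{\n}^x)$ depends on $S_x$ only through $|S_x|=\Delta\n(x)+A_x$ and equals $\mathbf{Z}_N\,\mu^0_{x,N,\beta}\big[\prod_{i=1}^{\Delta\n(x)+A_x}\sigma_{(x,i)}\big]$. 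Summing over the bijection of Step~1 and cancelling the $\mathbf{Z}_N$ powers yields
\begin{equs}
  \dfrac{(c_N N)^{|A|}}{\mathbf{Z}_N^{|\Lambda|}}\sum_{\substack{\tilde{\n}\in W_N\\ \Theta_N(\tilde{\n})=\n,\ \partial\tilde{\n}=\tilde{A}}}w_{N,\beta}(\tilde{\n})
  &=(c_NN)^{|A|}\,\big|W_N^{\textup{ext}}(\n;A)\big|\prod_{\lbrace x,y\rbrace\subset\Lambda}(\beta c_N^2 J_{x,y})^{\n_{x,y}}\prod_{x\in\Lambda}(\beta c_N h_x)^{\n_{x,\fg}}
  \\&\qquad\times\prod_{x\in\Lambda}\mu^0_{x,N,\beta}\Big[\prod_{i=1}^{\Delta\n(x)+A_x}\sigma_{(x,i)}\Big].
\end{equs}

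\textbf{Step 3 (asymptotics and bookkeeping).} I would then insert $|W_N^{\textup{ext}}(\n;A)|=(1+o(1))\prod_{\lbrace x,y\rbrace\subset\Lambda}\frac{N^{2\n_{x,y}}}{\n_{x,y}!}\prod_{x\in\Lambda}\frac{N^{\n_{x,\fg}}}{\n_{x,\fg}!}$ from Lemma~\ref{lem:card}, multiply and divide each block factor by $(c_NN)^{\Delta\n(x)+A_x}$, and apply Lemma~\ref{lem: GS} to get $(c_NN)^{\Delta\n(x)+A_x}\mu^0_{x,N,\beta}\big[\prod_i\sigma_{(x,i)}\big]\to\langle\varphi^{\Delta\n(x)+A_x}\rangle_0$. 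Using the identity $\sum_{x\in\Lambda}(\Delta\n(x)+A_x)=2\sum_{\lbrace x,y\rbrace\subset\Lambda}\n_{x,y}+\sum_{x\in\Lambda}\n_{x,\fg}+|A|$ and grouping $N^{2\n_{x,y}}c_N^{2\n_{x,y}}=(c_NN)^{2\n_{x,y}}$, $N^{\n_{x,\fg}}c_N^{\n_{x,\fg}}=(c_NN)^{\n_{x,\fg}}$, one checks that the scalar prefactor collapses to $(c_NN)^0=1$, so the right-hand side converges to $\prod_{\lbrace x,y\rbrace\subset\Lambda}\frac{(\beta J_{x,y})^{\n_{x,y}}}{\n_{x,y}!}\prod_{x\in\Lambda}\frac{(\beta h_x)^{\n_{x,\fg}}}{\n_{x,\fg}!}\prod_{x\in\Lambda}\langle\varphi^{\Delta\n(x)+A_x}\rangle_0=w_{A,\beta}(\n)$; all products are finite, so there is no difficulty in passing to the limit.

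\textbf{Main obstacle.} The delicate part is Step~1: rigorously establishing the bijective product structure of $W_N$-lifts and, in particular, the fact that the internal source set on each block has cardinality exactly $\Delta\n(x)+A_x$ \emph{independently} of the choice of external component $\tilde{\n}^{\textup{ext}}$. This uniformity is precisely what decouples the blocks and allows the single-site moments (hence Lemma~\ref{lem: GS}) to enter; once it is in place, the rest is careful tracking of the scalings $c_N\sim N^{-3/4}$ and $c_NN\sim N^{1/4}$ together with the counting estimate of Lemma~\ref{lem:card}.
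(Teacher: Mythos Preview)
Your proposal is correct and follows essentially the same approach as the paper: decompose each $\tilde{\n}\in W_N$ into its external and internal parts, factorize the weight, sum the internal currents on each block to produce the single-site Ising moments $\mu^0_{x,N,\beta}\big[\prod_{i=1}^{\Delta\n(x)+A_x}\sigma_{(x,i)}\big]$, count the external configurations via Lemma~\ref{lem:card}, and conclude with Lemma~\ref{lem: GS}. Your Step~1 spells out the source-set identification more carefully than the paper does, and your Step~3 makes the power-counting identity $\sum_x(\Delta\n(x)+A_x)=2\sum_{\{x,y\}}\n_{x,y}+\sum_x\n_{x,\fg}+|A|$ explicit, but the argument is the same.
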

\begin{proof}
Fix $A\in \mathcal{M}(\Lambda_\fg)$ and $\n\in \Omega_{\Lambda_\fg}$. Notice that for every $\tilde{\n}\in W_N$, we have a unique decomposition $\tilde{\n}=\tilde{\n}_1+\tilde{\n}_2$ with $\tilde{\n}_1\in W^{\textup{int}}_N$ and $\tilde{\n}_2\in W^{\textup{ext}}_N$ (these sets were introduced in Definition \ref{def: def w^int}). This yields the factorization
$
    w_{N,\beta}(\tilde{\n})=w_{N,\beta}(\tilde{\n}_1)w_{N,\beta}(\tilde{\n}_2).
$    
Now write,
\begin{align*}
    \sum_{\substack{\tilde{\n}\in W_N\\ \Theta_N(\tilde{\n})=\n \\ \partial \tilde{\n}=\tilde{A}}}w_{N,\beta}(\tilde{\n})&=\sum_{\substack{\tilde{\n}\in \Omega_{\Lambda_\fg}^N\\ \Theta_N(\tilde{\n})=\n \\ \partial \tilde{\n}=\tilde{A}}}w_{N,\beta}(\tilde{\n}_1)w_{N,\beta}(\tilde{\n}_2)\mathbbm{1}_{\tilde{\n}_1+\tilde{\n}_2\in W_N}\\ &=\sum_{\substack{\tilde{\n}_2\in W^{\textup{ext}}_N\\ \Theta_N(\tilde{\n}_2)=\n\\ \partial \tilde{\n}_2\cap \tilde{A}=\emptyset}}w_{N,\beta}(\tilde{\n}_2)\sum_{\substack{\tilde{\n}_1\in W^{\textup{int}}_N\\ \partial \tilde{\n}_1=\tilde{A}\sqcup \partial \tilde{\n}_2}}w_{N,\beta}(\tilde{\n}_1).
\end{align*}
Notice that for $\tilde{\n}_2\in W_N^{\textup{ext}}$ satisfying $\Theta_N(\tilde{\n}_2)=\n$, and $\partial \tilde{\n}_2\cap \tilde{A}=\emptyset$,
\begin{equation*}
    \dfrac{1}{\mathbf{Z}_N^{|\Lambda|}}\sum_{\substack{\tilde{\n}_1\in W^{\textup{int}}_N\\ \partial \tilde{\n}_1=\tilde{A}\sqcup \partial \tilde{\n}_2}}w_{N,\beta}(\tilde{\n}_1)=\prod_{x\in \Lambda}\mu^0_{x,N}\left[\prod_{j=1}^{A_x+\Delta\n(x)}\sigma_{(x,j)}\right].
\end{equation*}
Using this we get,
\begin{multline}\label{eq:intermediate}
    \dfrac{(c_N N)^{|A|}}{\mathbf{Z}_N^{|\Lambda|}}\sum_{\substack{\tilde{\n}\in W_N\\ \Theta_N(\tilde{\n})=\n \\ \partial \tilde{\n}=\tilde{A}}}w_{N,\beta}(\tilde{\n})=(c_N N)^{|A|}\prod_{x\in \Lambda}\mu^0_{x,N}\left[\prod_{j=1}^{A_x+\Delta\n(x)}\sigma_{(x,j)}\right]\sum_{\substack{\tilde{\n}_2\in W^{\textup{ext}}_N\\ \Theta_N(\tilde{\n}_2)=\n\\ \partial \tilde{\n}_2\cap \tilde{A}=\emptyset}}w_{N,\beta}(\tilde{\n}_2).
\end{multline}
Now, recall that $W^\textup{ext}_N\subset W_{2,N}$ so that for $\tilde{\n}_2\in W^{\textup{ext}}_N$ with $\Theta_N(\tilde{\n}_2)=\n$, one has
\begin{equation*}
    w_{N,\beta}(\tilde{\n}_2)=\prod_{\lbrace x,y\rbrace\subset\Lambda}(\beta c_N^2J_{x,y})^{\n_{x,y}}\prod_{x\in \Lambda}(\beta c_N h_x)^{\n_{x,g}}.
\end{equation*}
This implies for the left hand side of \eqref{eq:intermediate} that
\begin{multline*}
    \text{(LHS)}=(c_N N)^{|A|}\prod_{x\in \Lambda}\mu^0_{x,N}\left[\prod_{j=1}^{A_x+\Delta\n(x)}\sigma_{(x,j)}\right]\prod_{\lbrace x,y\rbrace\subset\Lambda}(\beta c_N^2J_{x,y})^{\n_{x,y}}\\ \times\prod_{x\in \Lambda}(\beta c_N h_x)^{\n_{x,g}}|\lbrace \tilde{\n}_2\in W^{\textup{ext}}_N: \: \Theta_N(\tilde{\n}_2)=\n, \: \partial \tilde{\n}_2\cap \tilde{A}=\emptyset\rbrace|.
\end{multline*}
By \eqref{eq: card},
\begin{equation}\label{number of currents in w2N}
    |\lbrace \tilde{\n}_2\in W^{\textup{ext}}_N: \: \Theta_N(\tilde{\n}_2)=\n, \: \partial \tilde{\n}_2\cap \tilde{A}=\emptyset\rbrace|=(1+o(1))\prod_{\lbrace x,y\rbrace\subset\Lambda}\frac{N^{2\n_{x,y}}}{\n_{x,y}!}\prod_{x\in \Lambda}\frac{N^{\n_{x,\fg}}}{\n_{x,\fg}!},
\end{equation}
hence
\begin{multline*}
    \text{(LHS)}=(1+o(1))\prod_{\lbrace x,y\rbrace\subset\Lambda}\frac{(\beta J_{x,y})^{\n_{x,y}}}{\n_{x,y}!}\prod_{x\in \Lambda}\frac{(\beta h_x)^{\n_{x,\fg}}}{\n_{x,\fg}!}\\\times\prod_{x\in \Lambda}(c_N N)^{A_x+\Delta\n(x)}\mu^0_{x,N}\left[\prod_{j=1}^{A_x+\Delta\n(x)}\sigma_{(x,j)}\right].
\end{multline*}
Using Lemma \ref{lem: GS}, we obtain that
\begin{multline*}
    \dfrac{(c_N N)^{|A|}}{\mathbf{Z}_N^{|\Lambda|}}\sum_{\substack{\tilde{\n}\in W_N\\ \Theta_N(\tilde{\n})=\n \\ \partial \tilde{\n}=\tilde{A}}}w_{N,\beta}(\tilde{\n})\underset{N\rightarrow \infty}\longrightarrow \prod_{\lbrace x,y\rbrace\subset\Lambda}\frac{(\beta J_{x,y})^{\n_{x,y}}}{\n_{x,y}!}\prod_{x\in \Lambda}\frac{(\beta h_x)^{\n_{x,\fg}}}{\n_{x,\fg}!}\prod_{x\in \Lambda}\left\langle \varphi^{A_x+\Delta\n(x)}\right\rangle_0
\end{multline*}
which is the desired result.
\end{proof}

We now prove the second part of Proposition \ref{prop: stronger weight convergence} by induction on $|\n|$, namely \eqref{eq: second half}. We only write the proof in the special case $h=0$ in which the ghost $\fg$ plays no role. It is easy to extend the result to the general case $h\geq 0$ (see Remark~\ref{remark case h>0} below).

\begin{proof}[Proof of second half of Proposition \textup{\ref{prop: stronger weight convergence}}]
Recall that $h=0$. We proceed by induction on $|\n|$. For $k\geq 0$, introduce $\mathbf{H}_k$ : For every $\n \in \Omega_{\Lambda_\fg}$ with $|\n|=k$, and every $A \in \mathcal{M}(\Lambda)$, 
\begin{equation*}
    \overline{w}_{N,A,\beta}(\n)\underset{N\rightarrow \infty}\longrightarrow w_{\beta}^A(\n).
\end{equation*}
First, if $|\n|=0$, then, proceeding as in the proof of Lemma \ref{lem: lemma convergence on w_N}, we get 
\begin{equation}\label{eq: convergence weights proof 1}
    \overline{w}_{N,A,\beta}(\n)=\prod_{x\in \Lambda}(c_N N)^{A_x+\Delta\n(x)}\mu_{x,N}^0\left[\prod_{j=1}^{A_x+\Delta\n(x)}\sigma_{(x,j)}\right],
\end{equation}
and the right-hand side in \eqref{eq: convergence weights proof 1} converges to $w_{A,\beta}(\n)$ by Lemma \ref{lem: GS}.

Let $k\geq 0$ and assume that $\mathbf{H}_i$ are true for all $0\leq i \leq k$. Let $\n \in \Omega_{\Lambda}$ such that $|\n|=k+1$. By Lemma \ref{lem: lemma convergence on w_N}, it is sufficient to prove that $\alpha_1(N)$ and $\alpha_2(N)$ vanish where
\begin{equation*}
    \alpha_i(N):=\dfrac{(c_N N)^{|A|}}{\mathbf{Z}_N^{|\Lambda|}}\sum_{\substack{\tilde{\n}\in \Omega^N_{\Lambda}\setminus W_{i,N}\\ \Theta_N(\tilde{\n})=\n \\ \partial \tilde{\n}=\tilde{A}}}w_{N,\beta}(\tilde{\n}).
\end{equation*}
We first prove that $\alpha_1(N)\rightarrow 0$. Notice that 
\begin{equation*}
    \Omega^N_{\Lambda}\setminus W_{1,N}=\bigcup_{\substack{u,v\in \Lambda\\ u\neq v}}\bigcup_{1\leq i \leq A_u}\bigcup_{1\leq j\leq N} V_{i,j}(u,v),
\end{equation*}
where 
$
    V_{i,j}(u,v):=\lbrace \tilde{\n}\in \Omega^N_{\Lambda}, \: \tilde{\n}(u,i,v,j)\geq 1\rbrace.
$    
Then, by substracting $1$ to the weight of the edge $\lbrace (u,i),(v,j)\rbrace$ for every $\tilde{\n} \in V_{i,j}(u,v)$, we get
\begin{eqnarray*}
    \alpha_1(N)&\leq & \sum_{u\neq v}\sum_{i=1}^{A_u}\sum_{j=1}^N\dfrac{(c_N N)^{|A|}}{\mathbf{Z}_N^{|\Lambda|}}\sum_{\substack{\tilde{\n}\in V_{i,j}(u,v)\\ \Theta_N(\tilde{\n})=\n \\ \partial \tilde{\n}=\tilde{A}}}w_{N,\beta}(\tilde{\n})\\ &\leq&\sum_{u\neq v} \sum_{i=1}^{A_u}\sum_{j=1}^N \beta J_{u,v} c_N^2  \dfrac{(c_N N)^{|A|}}{\mathbf{Z}_N^{|\Lambda|}}\sum_{\substack{\tilde{\m}\in \Omega_{\Lambda}^N\\ \Theta_N(\tilde{\m})=\n^{-} \\ \partial \tilde{\m}=\tilde{A}_{i,j}}}w_{N,\beta}(\tilde{\m})
\end{eqnarray*}
where $\n^{-}$ is the current satisfying $\n^-_e=\n_e$ for all edges $e\neq uv$, and $\n^-_{uv}=\n_{uv}-1$; and $\tilde{A}_{i,j}=\tilde{A}\Delta \lbrace (u,i),(v,j)\rbrace$. Notice that $\tilde{A}_{i,j}=\tilde{A}\setminus \lbrace (u,i),(v,j)\rbrace$ if $1\leq j \leq A_v$, and $\tilde{A}_{i,j}=\left(\tilde{A}\setminus \lbrace (u,i)\rbrace\right)\cup \lbrace (v,j)\rbrace$ otherwise. With this in mind, we associate to these new sets of sources their corresponding elements in $\mathcal{M}$: define $A'$ from $A$ by letting $A'_{u}=A_u-1$, $A'_v=A_v-1$, and $A'_z=A_z$ otherwise; also define $A''$ from $A$ by letting $A_{u}''=A_u-1$, $A_v''=A_v+1$, and $A_z''=A_z$ otherwise. Then, using the induction hypothesis ($|\n^{-}|=|\n|-1$),  
\begin{eqnarray*}
    \alpha_{1}(N)&\leq& \sum_{\substack{u\neq v\\ 1\leq i \leq A_u}}\beta J_{u,v}c_N^2\left(\sum_{j=1}^{A_v}(c_NN)^2\overline{w}_{N,A',\beta}(\n^-)+\sum_{j=A_v+1}^N\overline{w}_{N,A'',\beta}(\n^-) \right)\\ &\leq & \sum_{\substack{u\neq v\\ 1\leq i \leq A_u}}\beta J_{u,v}c_N^2\left(\sum_{j=1}^{A_v}(c_NN)^2 O(1)+\sum_{j=A_v+1}^N O(1)\right)\\ &=& O(1/\sqrt{N})\underset{N\rightarrow \infty}\longrightarrow 0.
\end{eqnarray*}
We used the fact that $(c_NN)^2c_N^2\rightarrow 0$ and $c_N^2N\rightarrow 0$. Moreover, to bound the second sum in the right-hand side of the first inequality, we used the symmetries of the model to write that for all $A_v+1\leq j \leq N$, 
\begin{equation*}
    \dfrac{(c_N N)^{|A|}}{\mathbf{Z}_N^{|\Lambda|}}\sum_{\substack{\tilde{\m}\in \Omega_{\Lambda}^N\\ \Theta_N(\tilde{\m})=\n^{-} \\ \partial \tilde{\m}=\tilde{A}_{i,j}}}w_{N,\beta}(\tilde{\m})=\overline{w}_{N,A'',\beta}(\n^-).
\end{equation*}
We now prove that $\alpha_2(N)\rightarrow 0.$ The idea is essentially the same: we seek a non-zero weight between to different blocks that we diminish to apply the induction hypothesis. Notice that if $\tilde{\n} \in \Omega^N_{\Lambda_\fg}\setminus W_{2,N}$ then there exist $u\in \Lambda$ and $i \in \lbrace 1,\ldots,N\rbrace$, such that
\begin{equation*}
    \sum_{\substack{(u,i)\in e\\ e\not\subset B_{u,N}}}\tilde{\n}_e\geq 2.
\end{equation*}
There are two cases: either for some $(v,j)$ with $u\neq v$, one has $\tilde{\n}(u,i,v,j)\geq 2$ and in such case we can subtract $2$ to the weight of this edge (which does not change the set of sources) to gain a $(c_N^2)^2$ which is in competition with a factor $O(N^2)$, and hence vanishes; or we can find two distinct vertices $(v,j)$ and $(w,k)$ with $u\neq v,w$ such that $\tilde{\n}(u,i,v,j)=1$ and $\tilde{\n}(u,i,w,k)=1$ so that the deletion of these weights adds two new sources, $(v,j)$ and $(w,k)$, and gives a factor $(c_N^2)^2$ which is in competition with the factor $(c_NN)^{-2}O(N^3)=O(N^{5/2})$, and once again vanishes. More formally, define $X_{1,N}$ (resp. $X_{2,N}$) to be the subset of currents of $\Omega^N_{\Lambda_g}\setminus W_{2,N}$ that falls into the first case (resp. second case). Then, 
\begin{eqnarray*}
    \dfrac{(c_N N)^{|A|}}{\mathbf{Z}_N^{|\Lambda|}}\sum_{\substack{\tilde{\n}\in X_{1,N}\\ \Theta_N(\tilde{\n})=\n \\ \partial \tilde{\n}=\tilde{A}}}w_{N,\beta}(\tilde{\n})&\leq & \sum_{u\neq v}\sum_{1\leq i \leq N}\sum_{1\leq j \leq N}\dfrac{(c_N N)^{|A|}}{\mathbf{Z}_N^{|\Lambda|}}\sum_{\substack{\tilde{\n}(u,i,v,j)\geq 2\\ \Theta_N(\tilde{\n})=\n \\ \partial \tilde{\n}=\tilde{A}}}w_{N,\beta}(\tilde{\n})\\ &\leq& \sum_{u\neq v}\sum_{1\leq i,j \leq N} (\beta J_{u,v}c_N^2)^2\overline{w}_{N,A,\beta}(\n^{2-})\\ &=&O(N^2c_N^4)\underset{N\rightarrow \infty}\longrightarrow 0,
\end{eqnarray*}
where $\n^{2-}$ is the current  satisfying $\n^{2-}_e=\n_e$ for all edges $e\neq uv$ and $\n^{2-}_{u,v}=\n_{u,v}-2$. We have used the induction hypothesis on the last line (since $|\n^{2-}|=|\n|-2$).

Now, for the second case, bear in mind that if we can find, for $\tilde{\n} \in X_{2,N}$, $(u,i), (v,j)$ and $(w,k)$ satisfying the properties described above, with one of them belonging to $\partial \tilde{\n}$ then $\tilde{\n}$ belongs to $\Omega_{\Lambda}^N\setminus W_{1,N}$ and the contribution of such currents goes to zero since $\alpha_1(N)\rightarrow 0$. Thus, 
\begin{equation*}
\begin{aligned}
    \dfrac{(c_N N)^{|A|}}{\mathbf{Z}_N^{|\Lambda|}}\sum_{\substack{\tilde{\n}\in X_{2,N}\\ \Theta_N(\tilde{\n})=\n \\ \partial \tilde{\n}=\tilde{A}}}&w_{N,\beta}(\tilde{\n})\leq  \alpha_1(N)\\&+ \sum_{\substack{u,v,w\in \Lambda\\v\neq u\\ w\neq u}}\sum_{i=A_u+1}^N\sum_{j=A_v+1}^N\sum_{k=A_w+1}^N \dfrac{(c_N N)^{|A|}}{\mathbf{Z}_N^{|\Lambda|}}\sum_{\substack{\tilde{\n}(u,i,v,j)=1\\ \tilde{\n}(u,i,w,k)=1\\ \Theta_N(\tilde{\n})=\n \\ \partial \tilde{\n}=\tilde{A}}}w_{N,\beta}(\tilde{\n}).
\end{aligned}    
\end{equation*}
Notice that for $i\geq A_u+1$, $j\geq A_v+1$ and $k\geq A_w+1$,
\begin{equation*}
    \dfrac{(c_N N)^{|A|}}{\mathbf{Z}_N^{|\Lambda|}}\sum_{\substack{\tilde{\n}(u,i,v,j)=1\\ \tilde{\n}(u,i,w,k)=1\\ \Theta_N(\tilde{\n})=\n \\ \partial \tilde{\n}=\tilde{A}}}w_{N,\beta}(\tilde{\n})=\beta^2 J_{u,v}J_{u,w}c_N^4(c_NN)^{-2}\overline{w}_{N,B,\beta}(\n^{-,-})
\end{equation*}
where $B\in \mathcal{M}$ is defined from $A$ by only changing its values on $v$ and $w$ where $B_v=A_v+1$ and $B_w=A_w+1$, and $\n^{-,-}$ is obtained from $\n$ by reducing by $1$ the weights of the edges $uv$ and $uw$. We can once again apply the induction hypothesis to get
\begin{equation*}
    \dfrac{(c_N N)^{|A|}}{\mathbf{Z}_N^{|\Lambda|}}\sum_{\substack{\tilde{\n}\in X_{2,N}\\ \Theta_N(\tilde{\n})=\n \\ \partial \tilde{\n}=\tilde{A}}}w_{N,\beta}(\tilde{\n})\leq O(c_N^2N)\underset{N\rightarrow \infty}\longrightarrow 0.
\end{equation*}
We finally obtained $\alpha_2(N)\rightarrow 0$ which yields $\mathbf{H}_{k+1}$.
\end{proof}
\begin{rem}\label{remark case h>0} In the case where we add an external field, the preceding proof may be adapted using the following ideas. For the vanishing of $\alpha_1(N)$ above, adding the external field means considering the case where $v=\fg$ which adds a $O(c_N(c_NN))$ term (because in this case removing $1$ to the weight of the edge reduces by $1$ the number of sources in $\Lambda_N$). For $\alpha_2(N)$, in the first case, consider the situation where $v$ is equal to $\fg$ which turns a $O(N^2c_N^4)$ into a $O(Nc_N^2)$; and in the second case, consider the situation where $v$ or $w$ is equal to $\fg$ which turns a $O((c_N^2)^2N^{5/2})$ into a $O((c_N^3)(c_NN)^{-1}N^2)=O(c_N^2N)$ (because in this case we add exactly one source in $\Lambda_N$).
\end{rem}

\section{Positive probability of admissible partitions}

The goal of this section is to prove that any element of $\cT_{\n_1,\n_2}^{A,B}(z)$ has positive probability under $\rho^{A,B}_{z,\n_1,\n_2}$. In particular, this shows that the distribution of tanglings in each block is not deterministic. This of crucial importance for the proof of Theorem \ref{thm:main}.

Let $\Lambda$ be a finite subset of $V\cup\lbrace \fg\rbrace$. Let $A,B\in \cM(\Lambda)$, and consider a pair of currents $(\n_1,\n_2)$ on $\Lambda$ satisfying $(\sn_1,\sn_2)=(\partial A,\partial B)$. We write $\n=\n_1+\n_2$. Recall that up to relabelling the elements of $\cB_z^{A,B}(\n_1,\n_2)$, the measure $\rho_{z,\n_1,\n_2}^{A,B}$ only depends on $(\Delta\n_1(z),\Delta\n_2(z),A_z,B_z)$. The following proposition is the main result of this section.

\begin{prop}\label{prop: pos prob} 
For every integer $M\geq 1$, there exists a constant $\kappa =\kappa(M)>0$ such that for every $A$, $B$, $\n_1, \n_2$ as above, and $z\in \Lambda$ satisfying $\max(A_z,B_z,\Delta\n(z))\leq M$, we have
\begin{equation*}
    \rho_{z,\n_1,\n_2}^{A,B}(P)
    \geq
    \kappa,
\end{equation*}
for all $P\in \cT_{\n_1,\n_2}^{A,B}(z)$. In particular, 
\begin{equation*}
    \rho_{z,\n_1}^{A}(P)
    \geq
    \kappa,
\end{equation*}
for all $P\in \cT_{\n_1}^A(z)$.
\end{prop}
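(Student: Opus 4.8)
By construction of the block tanglings, $\rho^{\n_1,\n_2}_{z,A,B}$ depends — up to relabelling the points of $\cB_z(\n,A,B)$ — only on the pair $(S_z,T_z):=(\Delta\n_1(z)+A_z,\,\Delta\n_2(z)+B_z)$, which consists of two \emph{even} integers (since $z\in\partial\n_1\Leftrightarrow z\in\partial A$, and likewise for $\n_2,B$) with $S_z+T_z\le 3M$. Hence there are only finitely many cases, so it suffices to bound $\rho^{\n_1,\n_2}_{z,A,B}(\{P\})$ below by a strictly positive constant in each of them and take the minimum. Fix a case and realise it at one site: take $\Lambda=\Lambda'=\{z\}$, $h=0$, $\n_1=\n_2=0$, and moments $A,B$ at $z$ with $A_z=S_z$, $B_z=T_z$. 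Applying the switching principle (Corollary~\ref{thm: probabilistic switching}) with $F=\mathbbm{1}_{\{\ct=P\}}$ gives, writing $C_z:=S_z+T_z$ and letting $\rho^0_{z,C}:=\mathbf{P}^{A+B,\emptyset}_{\{z\},\{z\}}$ (a probability measure on the even partitions of the $C_z$-point block),
\[
\rho^{\n_1,\n_2}_{z,A,B}(\{P\})=\frac{\langle\varphi^{C_z}\rangle_0}{\langle\varphi^{S_z}\rangle_0\,\langle\varphi^{T_z}\rangle_0}\;\rho^0_{z,C}(\{P\}),
\]
since every admissible $P$ lies in the pairing event $\cF_B^{\{z\}}$. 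As $\langle\varphi^{2j}\rangle_0\in(0,\infty)$ for all $j$, the proposition reduces to the ``in particular'' assertion, i.e.\ to showing that $\rho^0_{z,C}$ charges every even partition of $[C_z]$ for every even $C_z\le 3M$.

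\noindent\textbf{Identifying $\rho^0_{z,C}$ through switching.}
For an even-sized $W\subseteq[C_z]$ let $\cF_W$ be the event that the tangling restricts to an even partition of $W$; note that, over $\mathbb{F}_2$, $\cF_W=\{\,\mathbbm{1}_W\perp U_{\ct}\,\}$ with $U_{\ct}:=\operatorname{span}\{\mathbbm{1}_{P_i}:P_i\in\ct\}$, and that $W\mapsto\mathbbm{1}_{\cF_W}$ is $\mathbb{F}_2$-linear. Running Corollary~\ref{thm: probabilistic switching} at one site with $F=1$ yields the marginal identities $\rho^0_{z,C}(\cF_W)=\langle\varphi^{\,C_z-|W|}\rangle_0\langle\varphi^{|W|}\rangle_0/\langle\varphi^{C_z}\rangle_0$, and iterating it — in a suitable auxiliary finite volume in which the block $z$ carries enough current-edges, so that arbitrary subsets of its points arise as injected moments — produces the joint law of the family $(\cF_W)_W$ in closed form, always as products of single-site moments $\langle\varphi^{2j}\rangle_0$. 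Since this family generates the $\sigma$-algebra (an even partition is recovered from $U_{\ct}^\perp$), a Möbius-type inversion over the partition lattice expresses each atom $\rho^0_{z,C}(\{P\})$ as an explicit alternating combination of products of the moments $\langle\varphi^{2j}\rangle_0$, $2j\le C_z$. It then remains to check this combination is strictly positive: this is where the non-Gaussianity of the single-site measure $\rho_{g,a}$ (forced by $g>0$) enters, through strict correlation inequalities between its moments — for instance each ``fully connected'' value $\rho^0_{z,[2j]}(\{[2j]\})$, encoding the $2j$-point truncated interaction, is $>0$, whereas it would vanish for a Gaussian single site.

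\noindent\textbf{Main obstacle.}
The heart of the argument is this last point: pinning down exactly which strict inequalities on the moments of $\rho_{g,a}$ are needed to make every atom $\rho^0_{z,C}(\{P\})$ strictly positive, and proving them — an input that genuinely uses $g>0$ (for example via analyticity in the coupling, or a direct comparison of $\rho_{g,a}$ with a Gaussian showing that no cumulant-type quantity degenerates). A secondary technical burden is organising the iterated applications of the switching principle so as to obtain the joint law of the events $\cF_W$ in a usable form, together with the Möbius bookkeeping; once both are in hand, $\kappa(M)$ is obtained as the minimum of finitely many strictly positive numbers.
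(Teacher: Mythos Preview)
Your reduction step is correct and different from the paper: you show, via the switching principle at a single site, that the double-current bound follows from the single-current one, i.e.\ from strict positivity of $\rho^0_{z,C}(\{P\})$ for every even partition $P$ of a $C$-point block with $C\le 3M$. This is a clean observation.

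The gap is in what follows. You claim that iterating the switching principle yields the joint law of the events $(\cF_W)_W$ ``in closed form'', and that M\"obius inversion over the partition lattice then produces each atom as an explicit alternating combination of single-site moments. Neither of these steps is carried out, and the first is not obviously true. The switching principle, applied once, gives only the marginals $\rho^0_{z,C}(\cF_W)=\langle\varphi^{|W|}\rangle_0\langle\varphi^{C-|W|}\rangle_0/\langle\varphi^{C}\rangle_0$, which depend on $|W|$ alone; already for $C=6$ these furnish two equations for three unknowns (one block, a $4{+}2$ split, a pairing), so the marginals do not determine the law. Obtaining the joint law would require a multiple-current switching identity that is not in the paper, and you do not supply one. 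Even granting exact formulas, the ``Main obstacle'' you identify --- proving that each resulting alternating combination of moments is strictly positive --- is the heart of the matter, and you leave it open. These are not residual details: they are the proof.

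The paper's argument is entirely different and bypasses both issues. It works in the Griffiths--Simon prelimit, i.e.\ with the Ising model on $K_N$ with sources $\tilde S$ (and $\tilde T$). There it establishes two geometric lemmas uniformly in $N$: every \emph{pairing} of $\tilde S$ occurs with probability bounded below (Lemma~\ref{lem:pair}, proved by induction via exploration of the cluster of one pair, using the uniform cluster-size bound of Proposition~\ref{prop:uni_size} and a shift of the parameter $a$ as in Remark~\ref{shift}); and any two partition classes can be \emph{merged} at bounded multiplicative cost (Lemma~\ref{lem:merge}, via a multi-valued map that glues two large clusters at a single vertex). Starting from a pairing and merging repeatedly produces every admissible partition with probability bounded below uniformly in $N$; passing to the limit gives $\rho^{\n_1,\n_2}_{z,A,B}(P)\ge\kappa(M)$. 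No moment identities or positivity of alternating sums are needed.
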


\begin{rem}
The occurrence of the partition consisting of one element, i.e.\ the partition for which all the points of the block are tangled together, plays the role of an insertion tolerance property inside the block $\cB_z$ of $\cH^{A,B}(\n_1,\n_2,\ct)$. We call this event $\textup{ECT(z)}$. Proposition \ref{prop: pos prob} implies, in particular, that $\textup{ECT}(z)$ occurs with positive probability. 
\end{rem}

Let us recall that $\rho_{z,\n_1, \n_2}^{A,B}$ is constructed as the weak subsequential limit of the probability measures $\rho^{\tilde{S},\tilde{T}}_{z, N}$ where $S=\Delta\n_1(z)+A_z$ and $T=\Delta\n_2(z)+B_z$ (recall that $\tilde{S}\cap \tilde{T}=\emptyset$ by our choice of projection, as explained in Definition \ref{def: block current measures}). This convergence is in fact strong, i.e.\ pointwise on events, since the underlying space is finite dimensional. The measures in the prelimit are themselves defined as the pushforward of the block-spin double current measures  $\PP_{K_N}^{\tilde{S},\tilde{T}}$ under the partitions on sources induced by the clusters of $\tilde \n_1 +\tilde \n_2 \sim \PP_{K_N}^{\tilde{S},\tilde{T}}$. 

A fundamental step to proving Proposition \ref{prop: pos prob} is understanding the large $N$ asymptotics of  the size of the cluster of a source. We then use this to prove two lemmas that are used in the proof of Proposition \ref{prop: pos prob}. The first one asserts that pairings occur with positive probability, where a {\it pairing} of $\tilde{S}$ is a partition of it into sets of cardinality $2$, and the second one can be seen as merging the clusters of the sources at finite cost. In the following and later on this in section, we make the dependency on $a$ explicit as it is of great importance.

\subsection{Large $N$ asymptotics of the size of the cluster}
 Let $S \in \NN$ be finite and recall that, for each $N\geq S$, $\tilde S$ is its natural injection into $K_N$. Given a vertex $x\in \tilde{S}$, we write $\mathcal{C}_x=\mathcal{C}_x(\tilde{\n})$ for the connected component of $x$ in $\tilde{\n}$.

\begin{prop}\label{prop:uni_size}
Let $S=2k$, $k>0$, and let $a_0 \in \RR$.
Then there exists a constant $C=C(k,a_0)>0$ such that for every $x\in \tilde{S}$ and every $N\geq 2k$, we have
\begin{equation*}
    \sup_{a\geq a_0}\mathbb{E}^{\tilde{S},\emptyset}_{K_N,a}
    \left[\frac{|\mathcal{C}_x|}{\sqrt{N}}\right]
    \leq C.
\end{equation*}
\end{prop}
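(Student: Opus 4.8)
The goal is a uniform (in $a \ge a_0$ and in $N$) bound on $\mathbb{E}^{\tilde S,\emptyset}_{K_N,a}[|\mathcal{C}_x|/\sqrt N]$ where $\mathcal C_x$ is the cluster of a fixed source $x$ in the single random current on the complete graph $K_N$ with Griffiths--Simon coupling constants. The natural strategy is to reduce everything to the Ising model on $K_N$ (the Curie--Weiss model at the near-critical coupling dictated by $c_N, d_N$), use the backbone representation / source-switching to describe $\mathcal C_x$, and then exploit that the relevant two-point functions and susceptibilities of the near-critical Curie--Weiss model are well understood: the susceptibility is of order $\sqrt N$ (this is precisely why the block-spin moments have the $N^{-3/4}$ scaling and converge to $\phi^4$ moments, cf.\ Proposition~\ref{prop: CV griffiths} and Remark after it), and adding external sources or increasing $a$ only decreases correlations.

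First I would set up the random current on $K_N$ with source set $\tilde S = \{1,\dots,2k\}$, and express $\mathbb{E}^{\tilde S,\emptyset}_{K_N,a}[|\mathcal C_x|]$ as a sum over vertices $y \in K_N$ of $\mathbf{P}^{\tilde S,\emptyset}_{K_N,a}[x \overset{\tilde\n}{\longleftrightarrow} y]$. Using the switching lemma for the Ising model on $K_N$ (which is available in the prelimit, as exploited throughout Section~\ref{sec: switching}) one rewrites, for $y \notin \tilde S$, the connection probability $\mathbf{P}^{\tilde S,\emptyset}_{K_N,a}[x \leftrightarrow y]$ in terms of a ratio of partition functions with modified source sets — schematically $\langle \sigma_x \sigma_y\rangle_{K_N,a}$ times the probability that, in a doubled current with sources $\tilde S \,\triangle\, \{x,y\}$, the vertices $x,y$ lie in a common cluster — and in all cases this is bounded above by $\langle \sigma_x \sigma_y \rangle_{K_N,a}$ (the pairing probability is at most $1$). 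Summing over $y$ gives the clean bound
\begin{equs}
\mathbb{E}^{\tilde S,\emptyset}_{K_N,a}\big[|\mathcal C_x|\big]
\;\le\;
2k + \sum_{y \in K_N} \langle \sigma_x \sigma_y\rangle^{\text{Ising}}_{K_N,a}
\;=\;
2k + \chi_{K_N}(a),
\end{equs}
where $\chi_{K_N}(a)$ is the susceptibility of the Curie--Weiss--type model with the Griffiths--Simon couplings $c_N^2\beta J$ (here $J$ is trivial, all pairs equal) and single-site coupling $d_N$.

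The heart of the argument is then the quantitative claim that $\chi_{K_N}(a) \le C(k,a_0)\sqrt N$ uniformly over $a \ge a_0$. This is exactly the statement that $\langle \Phi_{x,N}^2\rangle_0 = (c_N N)^2 \mu^0_{x,N}[\sigma_{(x,1)}\sigma_{(x,2)}] \cdot (1+o(1))$ converges to $\langle \varphi^2\rangle_0$ (Lemma~\ref{lem: GS} with $p=2$), rearranged: $c_N^2 N^2 \langle \sigma\sigma\rangle$ is bounded, and since $c_N = \tilde g N^{-3/4}$ we get $\langle \sigma_x\sigma_y\rangle \lesssim N^{-1/2}$ for a typical pair, hence $\chi_{K_N} = 1 + (N-1)\langle\sigma_x\sigma_y\rangle \lesssim \sqrt N$. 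For the uniformity in $a$ I would invoke Griffiths' / monotonicity in $a$ (Proposition~\ref{prop: applications monotonicity switching}(iii), in its Ising-on-$K_N$ incarnation): increasing $a$ increases $d_N$'s effective suppression... more carefully, one checks that $\chi_{K_N}(a)$ is monotone decreasing in $a$, so $\sup_{a \ge a_0} \chi_{K_N}(a) = \chi_{K_N}(a_0)$, and then the $p=2$ case of Lemma~\ref{lem: GS} applied at the single value $a_0$ closes the bound. Dividing by $\sqrt N$ yields the proposition with $C = C(k,a_0)$.

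\textbf{Main obstacle.} The delicate point is the switching-lemma reduction of $\mathbf{P}^{\tilde S,\emptyset}_{K_N,a}[x \leftrightarrow y]$ to a multiple of $\langle\sigma_x\sigma_y\rangle$ when $x$ (and possibly $y$) already belong to the source set $\tilde S$: one must handle the bookkeeping of sources carefully (e.g.\ the cases $y \in \tilde S$, $y = x$, $y \notin \tilde S$ separately), and in each case produce a bound of the form $\langle\sigma_x\sigma_y\rangle \le$ (something summing to $O(\sqrt N)$) possibly with an extra $O(k)$ additive term from the $|\tilde S|$ sources themselves. A clean way around the case analysis is to dominate $|\mathcal C_x|$ stochastically by the cluster size in a source-free current with two extra sources at $\{x, x'\}$ for a dummy $x'$, using the standard fact that conditioning on the source structure only helps; but justifying that domination rigorously in the tangled/$K_N$ setting is itself the technical crux. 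The other, milder, subtlety is making the monotonicity-in-$a$ statement precise at the level of the Curie--Weiss model with the $N$-dependent couplings, which requires that the sign of $\frac{d}{da}\langle\sigma_x\sigma_y\rangle$ be controlled uniformly — this follows from the GHS/Griffiths-type inequalities already cited in the paper, but should be stated explicitly.
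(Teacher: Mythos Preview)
Your proposal is correct and follows essentially the same route as the paper: expand $|\mathcal C_x|$ as a sum of connection probabilities, convert each via the Ising switching lemma to $\langle\sigma_x\sigma_y\rangle$, invoke monotonicity in $a$ to reduce to $a=a_0$, and conclude via Lemma~\ref{lem: GS} that $(N-2k)\langle\sigma_x\sigma_u\rangle_{K_N,a_0}\sim\sqrt{g/12}\,\langle\varphi^2\rangle_{0,a_0}\sqrt N$.

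The one place where the paper is sharper than your write-up is precisely what you flag as the ``main obstacle'': it is not an obstacle at all. For $u\notin\tilde S$ and $x\in\tilde S$, switching gives the \emph{exact identity}
\[
\mathbf P^{\tilde S,\emptyset}_{K_N,a}[x\leftrightarrow u]
=\frac{\langle\sigma_{\tilde S\triangle\{x,u\}}\rangle_{K_N,a}\,\langle\sigma_x\sigma_u\rangle_{K_N,a}}{\langle\sigma_{\tilde S}\rangle_{K_N,a}}
=\langle\sigma_x\sigma_u\rangle_{K_N,a},
\]
because $\tilde S\triangle\{x,u\}=(\tilde S\setminus\{x\})\cup\{u\}$ is a permutation of $\tilde S$ and $K_N$ is vertex-symmetric. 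So there is no ``pairing probability $\le 1$'' to worry about and no stochastic domination argument needed: the ratio of source-partition-functions is identically $1$. For the remaining $2k$ vertices in $\tilde S$ the trivial bound $|\mathcal C_x\cap\tilde S|\le 2k$ (your additive $2k$) suffices, exactly as you say. With this one-line symmetry observation in hand, your proof and the paper's are the same.
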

\begin{proof}
Note that for every $N\geq 2k$, we trivially have
\begin{equation*}
    \sup_{a\in I}\mathbb{E}^{\tilde{S},\emptyset}_{K_N,a}
    \left[\frac{|\mathcal{C}_x|}{\sqrt{N}}\right]
    \leq \sqrt{N},
\end{equation*} 
thus it suffices to handle the case of large $N$.
Let $u\not\in \tilde{S}$. Then, for every $a\geq a_0$, we have
\begin{equation*}
\begin{aligned}
    \mathbb E^{\tilde{S},\emptyset}_{K_N,a}\left[\frac{|\mathcal{C}_x|}{\sqrt{N}}\right]
    &= \frac{1}{\sqrt{N}}\left((N-2k)\mathbb P^{\tilde{S},\emptyset}_{K_N,a}[x\longleftrightarrow u]+O(1)\right)\\ 
    &= \frac{N-2k}{\sqrt{N}}\frac{\langle \sigma_{\tilde{S}\triangle \{x,u\}}\rangle_{K_N,a} \langle \sigma_x \sigma_u\rangle_{K_N,a}}{\langle \sigma_{\tilde{S}} \rangle_{K_N,a}}+O(1/\sqrt{N})\\
    &\leq \frac{N-2k}{\sqrt{N}}\langle \sigma_x \sigma_u\rangle_{K_N,a_0} +O(1/\sqrt{N})\\
    &\underset{N\rightarrow \infty}{\longrightarrow}\sqrt{12g}\langle \varphi^2\rangle_{0,a_0}.
\end{aligned}    
\end{equation*}
In the first line, we used the symmetry of the model. In the second line, we used the random current representation and the usual switching lemma for the Ising model. In the third line, we used that $\langle \sigma_{\tilde{S}\Delta \{x,u\}}\rangle_{K_N,a}=\langle \sigma_{\tilde{S}} \rangle_{K_N,a}$ by symmetry (recall that $x \in \tilde S$ but $u \not\in \tilde S$), and the mononoticity of $a\mapsto \langle \sigma_x \sigma_u\rangle_{K_N,a}$, which follows from differentating and applying Griffiths' inequalities. Finally, in the last line we used Lemma~\ref{lem: GS}. The desired assertion follows readily. \end{proof}

\begin{rem}
From the proof above, one can show that $\EE^{\tilde S,\emptyset}_{K_n,a}\left[\frac{|\cC_x|}{(c_N N)^2}\right] \rightarrow \langle \phi^2 \rangle_{0,a}$ as $N \rightarrow \infty$. The scaling limit of the size of the rescaled cluster is obtained in \textup{\cite{krachun2023scaling}}.
\end{rem}

We now make an observation\footnote{This observation is not new, see \cite[Lemma 11.2]{A82}.} that is subtle and means that arguments involving exploration of clusters naturally change the underlying model. 
\begin{rem}\label{shift}
Let $u\in \RR$. The scaling limit of the Ising model on $K_{N-r_N(u)}$ with interaction $d_N(a):=\frac{1}{N}\left(1-\frac{2a}{\sqrt{12gN}}\right)$, where $r_N(u)=\lceil u\sqrt{N} \rceil$, coincides with the $\varphi^4$ single-site measure with parameter $a(u)=a+\sqrt{3g}u$ in place of $a$. Indeed, writing $\tilde{N}:=N-r_N(u)$, we have
\begin{equation*}
    d_N(a)=\frac{1}{\tilde{N}}\left(1-\frac{2(a+\sqrt{3g}u)}{\sqrt{12g\tilde{N}}}+O\left(\frac{1}{\tilde{N}}\right)\right).
\end{equation*}
\end{rem}

\subsection{Pairings occur with positive probability}

Let $\cP$ denote the (random) even partition of the source set $\tilde{S}$ induced by the cluster of $\tilde \n_1 + \tilde \n_2 \sim \PP_{K_N}^{\tilde{S},\emptyset}$. Recall that $a$ is a parameter of the single site measure.
\begin{lem}[Pairings happen with positive probability]\label{lem:pair}
Let $S=2k$, $k>0$, and let $I=[\ell,L]$ be a compact interval. Then, there exists a constant $c=c(I,k)>0$ such that for every pairing $P$ of $\tilde{S}$ and for every $N\geq 2k$,
\begin{equation*}
    \inf_{a\in I}\PP^{\tilde{S},\emptyset}_{K_N,a}[\cP=P]\geq c.
\end{equation*}
\end{lem}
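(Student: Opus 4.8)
The goal is to show that every pairing $P$ of the source set $\tilde S$ (with $|\tilde S| = 2k$) is realized with probability bounded below uniformly over $a \in I$ and $N \geq 2k$. The natural strategy is to construct, for a given pairing $P = \{\{x_1,y_1\},\ldots,\{x_k,y_k\}\}$, an explicit local modification of currents that produces the configuration $\{\cP = P\}$ with controlled weight cost. Concretely, I would start from the identity expressing $\PP^{\tilde S,\emptyset}_{K_N,a}[\cP = P]$ as a ratio of weighted sums of currents with prescribed source set; the event $\{\cP = P\}$ is (up to the tangling, which plays no role here since $\tilde S$ consists of genuine degree-one sources in the prelimit Ising model) the event that in $\tilde\n_1 + \tilde\n_2$ the clusters pair up exactly according to $P$.

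\textbf{Key steps.} First I would reduce to a lower bound on the probability, under $\PP^{\tilde S, \emptyset}_{K_N,a}$, of the event that each source $x_i$ is directly connected to $y_i$ by a single edge $\{x_i,y_i\}$ carrying current exactly $1$, and no other edges are incident to $\tilde S$, with the remaining current (sourceless) living on $K_N \setminus \tilde S$; this is a sub-event of $\{\cP = P\}$, so a lower bound on it suffices. Second, using the switching lemma for the Ising model on $K_N$ (as in Proposition~\ref{prop:uni_size}), I would write this probability as $(d_N)^k$ (the weight of the $k$ forced edges) times a ratio of partition functions: the numerator is $Z_N$ over $K_N \setminus \tilde S$ with no sources (i.e.\ $\mathbf{Z}$ on $K_{N-2k}$), the denominator is $\sum_{\partial \tilde\n = \tilde S} w_N(\tilde\n)$, which by the current expansion equals $\mathbf{Z}(K_N) \cdot \mu^0_{K_N,a}[\sigma_{\tilde S}]$. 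Third, using $d_N \sim 1/N$, Remark~\ref{shift} (the cluster exploration / removal of $2k$ vertices shifts $a$ by $O(1/\sqrt N) \to 0$, so $\mathbf{Z}(K_{N-2k})/\mathbf{Z}(K_N)$ converges), and Lemma~\ref{lem: GS} (which gives $(c_N N)^{2k}\mu^0_{K_N,a}[\sigma_{\tilde S}] \to \langle \varphi^{2k}\rangle_{0,a} \in (0,\infty)$), I would check that
\begin{equs}
\PP^{\tilde S,\emptyset}_{K_N,a}[\cP = P] \geq (d_N)^k \frac{\mathbf{Z}(K_{N-2k})}{\mathbf{Z}(K_N)\,\mu^0_{K_N,a}[\sigma_{\tilde S}]} = \frac{(c_N N)^{2k} (d_N)^k N^{2k-?}}{\cdots}
\end{equs}
has a strictly positive limit as $N \to \infty$; more carefully, the powers of $N$ match because $d_N^k \sim N^{-k}$, $(c_N N)^{2k} \sim N^{k/2}\cdot N^{k/2}$... here one must track the exponents carefully, but the point is that the combinatorial factor counting the choices of which of the $N$ vertices realize the sourceless bulk current cancels the scaling, exactly as in the proof of Lemma~\ref{lem:card} and Lemma~\ref{lem: lemma convergence on w_N}. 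Fourth, to upgrade pointwise-in-$N$ positivity of the limit to a uniform lower bound over all $N \geq 2k$ and all $a \in I = [\ell, L]$: continuity of all quantities in $a$ plus compactness of $I$ handles the $a$-uniformity of the limit; and since for each fixed $N$ the probability $\PP^{\tilde S,\emptyset}_{K_N,a}[\cP = P]$ is a continuous, strictly positive function of $a$ on $I$ (strict positivity for finite $N$ is clear because the constructed sub-event has strictly positive weight), a standard argument — the convergent sequence together with its limit, all positive and continuous in $a$, attains a positive infimum — gives the claimed constant $c = c(I,k) > 0$.

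\textbf{Main obstacle.} The delicate point is bookkeeping the powers of $N$ and $c_N, d_N$ so that the ratio of partition functions times the forced-edge weight times the entropy factor (number of placements of the bulk current) has a finite nonzero limit, rather than $0$ or $\infty$; this is exactly the homogeneity issue flagged in Section~\ref{section: proof of switching lemma}, and the resolution is to mimic the renormalized-weight computations (Lemma~\ref{lem: lemma convergence on w_N}, Lemma~\ref{lem:card}) — inserting the factor $(c_N N)^{|\tilde S|}$ at the right place. A secondary subtlety is that one needs the limit to be positive, which requires $\langle \varphi^{2k}\rangle_{0,a} > 0$ (true, as the single-site measure has full support) and $\liminf_N \mathbf{Z}(K_{N-2k})/\mathbf{Z}(K_N) > 0$ (true by Remark~\ref{shift}, since the shifted-$a$ partition functions converge to a common positive constant). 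The uniformity over $a \in I$ is then routine given continuity; I do not expect genuine difficulty there, only care.
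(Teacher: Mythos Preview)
Your approach has a genuine gap: the sub-event you construct has probability tending to $0$ as $N\to\infty$, so it cannot yield a uniform positive lower bound. Let us check the scaling. Your sub-event forces each pair $\{x_i,y_i\}$ to be joined by a single edge of current $1$, with no other edges of $\tilde\n_1+\tilde\n_2$ incident to $\tilde S$. Its probability is
\[
\frac{(d_N)^k\,\big(Z^{\emptyset}_{K_{N-2k},d_N}\big)^2}{Z^{\tilde S}_{K_N,d_N}\,Z^{\emptyset}_{K_N,d_N}}
=\frac{(d_N)^k}{\mu^0_{K_N,a}[\sigma_{\tilde S}]}\cdot\Big(\frac{Z^{\emptyset}_{K_{N-2k},d_N}}{Z^{\emptyset}_{K_N,d_N}}\Big)^2.
\]
Now $d_N^k\sim N^{-k}$, while by Lemma~\ref{lem: GS} one has $\mu^0_{K_N,a}[\sigma_{\tilde S}]\sim (c_N N)^{-2k}\langle\varphi^{2k}\rangle_{0,a}\sim c\,N^{-k/2}$, and the partition-function ratio is $\Theta(1)$ (removing $2k=O(1)$ vertices shifts $a$ by $O(N^{-1/2})$ via Remark~\ref{shift}). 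Hence the sub-event probability behaves like $N^{-k}/N^{-k/2}=N^{-k/2}\to 0$. There is no ``combinatorial factor counting the choices of which of the $N$ vertices realize the sourceless bulk'': the sources sit at fixed positions $1,\dots,2k$ and the forced edges are determined by $P$, so there is no entropy to recover the missing $N^{k/2}$. The analogy with Lemmas~\ref{lem:card} and~\ref{lem: lemma convergence on w_N} is misleading --- there the entropy comes from placing external edges into blocks of size $N$, which has no counterpart here.

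The underlying reason is that in this near-critical Curie--Weiss model the typical cluster of a source has size $\Theta(\sqrt N)$ (cf.\ Proposition~\ref{prop:uni_size} and the lower bound on $|\pi(P_r)|$ in the proof of Lemma~\ref{lem:merge}), so pinning a cluster to two vertices is a rare event. The paper's proof circumvents this by induction on $k$: one conditions on the full cluster $\mathcal C_x$ of one pair $\{1,2\}$, uses Proposition~\ref{prop:uni_size} to ensure $|\mathcal C_x|\leq C_0\sqrt N$ with probability at least $1/2$, and observes via Remark~\ref{shift} that removing $O(\sqrt N)$ vertices shifts $a$ by a bounded amount, so the induction hypothesis applies on the complement with the remaining $2k$ sources and an enlarged (but compact) interval $I'$. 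The point is that one must let the cluster be its natural size $\Theta(\sqrt N)$ rather than force it to be $O(1)$.
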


\begin{proof}
We prove the assertion by induction on $k$. The case $k=1$ is trivial. Let us assume that the statement holds for some $k\geq 1$. We will prove it for $k+1$.

Note that for every $N\geq 2k+2$, there exists $\alpha_N>0$ such that 
$
    \inf_{a\in I}\PP^{\tilde{S},\emptyset}_{K_N,a}[\cP=P]\geq \alpha_N,
$
hence we can assume that $N$ is sufficiently large. To simplify notation, we identify $\tilde{S}$ with $\{1,2,\ldots,2k+2\}$.
Consider a parameter $a\in I$, and let us write $P_0=\lbrace \lbrace 1,2\rbrace,\ldots,\lbrace 2k+1,2k+2\rbrace\rbrace$.
Then, conditioning on $\mathcal{C}_x$, and letting $x\in\{1,2\}$,
\begin{equation*}
\PP^{\tilde{S},\emptyset}_{K_N,a}[\cP=P_0]
=\frac{1}{\langle \sigma_{\tilde{S}}\rangle_{K_N,a}}\sum_{\mathcal{S}\subset (K_N\setminus \tilde{S})\cup\{1,2\}}\frac{Z^{\tilde{S},\emptyset}_{K_N,a}[\mathcal{C}_{x}=\mathcal{S},\cP=P_0]}{Z^{\emptyset,\emptyset}_{K_N,a}}.
\end{equation*}
Notice that for $P_0'=\lbrace \lbrace 3,4\rbrace,\ldots,\lbrace 2k+1,2k+2\rbrace\rbrace$,
\begin{eqnarray*}
    Z^{\tilde{S},\emptyset}_{K_N,a}[\mathcal{C}_x=\mathcal{S},\cP=P_0]
    &=&Z^{\{1,2\},\emptyset}_{\mathcal{S},a}[\mathcal{C}_{x}=\mathcal{S}]Z^{\tilde{S}\setminus \lbrace 1,2\rbrace,\emptyset}_{K_N\setminus \mathcal{S},a}[\cP=P_0'] \\
    &=& Z^{\{1,2\},\emptyset}_{K_N,a}[\mathcal{C}_{x}=\mathcal{S}]\frac{Z^{\tilde{S}\setminus \lbrace 1,2\rbrace,\emptyset}_{K_N\setminus \mathcal{S},a}[\cP=P_0']}{Z^{\emptyset,\emptyset}_{K_N\setminus \mathcal{S},a}}.
\end{eqnarray*}
Using the above equalities, we obtain
\begin{equation}\label{partition}
\PP^{\tilde{S},\emptyset}_{K_N,a}[\cP=P_0]=\sum_{\mathcal{S}\subset (K_N\setminus \tilde{S})\cup\{1,2\}} Q_{\mathcal{S},N}(a)\PP^{\{1,2\},\emptyset}_{K_N,a}[\mathcal{C}_{x}=\mathcal{S}]\PP^{\tilde{S}\setminus \lbrace 1,2\rbrace,\emptyset}_{K_N\setminus \mathcal{S},a}[\cP=P'_0],
\end{equation}
where
\begin{equation*}
Q_{\mathcal{S},N}(a):=\frac{\langle \sigma_{1}\sigma_{2}\rangle_{K_N,a}\langle \sigma_{\tilde{S}\setminus \lbrace 1,2\rbrace}\rangle_{K_N\setminus \mathcal{S},a}}{\langle \sigma_{\tilde{S}}\rangle_{K_N,a}}.
\end{equation*}

Let us now consider a constant $C_0>0$ such that 
$
    \PP_{K_N,a}^{\{1,2\},\emptyset}[A]\geq \frac{1}{2} \text{ for every } a\in I,
$
where $A=A(N)=\{|\mathcal{C}_{x}|<C_0\sqrt{N}, \: (\mathcal{C}_{x}\setminus \{1,2\})\cap \tilde{S}=\emptyset\}$. The existence of such a constant follows from Proposition~\ref{prop:uni_size} and the symmetry of the model, provided that $N$ is large enough.
Using Griffiths inequality yields that for every $\mathcal{S}$ such that $|\mathcal{S}|< C_0\sqrt{N}$ and every $a\in I$,
\begin{equation*}
Q_{\mathcal{S},N}(a)\geq Q_N:=\frac{\langle \sigma_{1}\sigma_{2}\rangle_{K_N,L}\langle \sigma_{\tilde{S}\setminus \lbrace 1,2\rbrace}\rangle_{K_{N-\lceil C_0\sqrt{N} \rceil},L}}{\langle \sigma_{\tilde{S}}\rangle_{K_N,\ell}}.
\end{equation*}
Since every summand in \eqref{partition} is non-negative, using the above inequalities we obtain
\begin{equation*}
    \PP^{\tilde{S},\emptyset}_{K_N,a}[\cP=P_0]\geq \frac{1}{2}Q_N\inf_{b\in I} \inf_{ |\mathcal{S}|<C_0\sqrt{N}}\PP^{\tilde{S}\setminus \lbrace 1,2\rbrace,\emptyset}_{K_N\setminus \mathcal{S},b}[\cP=P'_0]
\end{equation*}
for every $a\in I$.

Using Remark~\ref{shift} and Lemma~\ref{lem: GS} we also obtain that
\begin{equation*}
    \lim_{N\rightarrow\infty}Q_N=\frac{\langle \varphi^2\rangle_{0,L}\langle \varphi^{2k}\rangle_{0,L+\sqrt{3g}C_0}}{\langle \varphi^{2k+2}\rangle_{0,\ell}}.
\end{equation*}
Finally, it follows from our induction hypothesis and Remark~\ref{shift} that
\begin{equation*}
   \inf_{b\in I} \inf_{|\mathcal{S}|<C_0\sqrt{N}}\PP^{\tilde{S}\setminus \lbrace 1,2\rbrace,\emptyset}_{K_N \setminus \mathcal{S},b}[\cP=P'_0]\geq c(k,I'),
\end{equation*}
where $I'=[\ell,L+\sqrt{3g}C_0]$. This concludes the proof.
\end{proof}

\subsection{Cluster merging at finite cost}

Given a partition $P=\{P_1,P_2,\ldots,P_n\}$ of $\tilde{S}$, we write $P^{i,j}$ for the partition of $\tilde{S}$ with $P_i$ and $P_j$ merged together, i.e.\ $P^{i,j}=(P\setminus \{P_i,P_j\})\cup \{P_i\cup P_j\}$.

\begin{lem}[Cluster merging at finite cost]\label{lem:merge}
Let $S=2k$, $k>0$. For every $\varepsilon\in (0,1)$, there exists $\delta=\delta(\varepsilon)>0$ such that the following holds for every $N$ large enough.
Assume that for a partition $P$ of $\tilde{S}$ we have
\begin{equation}\label{ECT: nondegen}
\PP_{K_N}^{\tilde{S},\emptyset}[\cP=P]
\geq
\varepsilon.
\end{equation} 
Then for every $i,j$ we have
\begin{equation}\label{ECT: gluing prob}
\PP_{K_N}^{\tilde{S},\emptyset} [\cP=P^{i,j}]
\geq 
\delta \PP_{K_N}^{\tilde{S},\emptyset}[\cP=P].
\end{equation}
\end{lem}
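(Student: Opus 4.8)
The plan is to prove Lemma~\ref{lem:merge} via an exploration/conditioning argument analogous to the one used for Lemma~\ref{lem:pair}, the key point being that merging two clusters of sources can be achieved by forcing a single additional edge, and this costs only a bounded factor because of the uniform control on cluster sizes from Proposition~\ref{prop:uni_size} and the positivity of two-point functions.

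\medskip
\noindent\textbf{Step 1: Conditioning on the two clusters to be merged.} Fix a partition $P=\{P_1,\dots,P_n\}$ of $\tilde S$ with $\PP_{K_N}^{\tilde S,\emptyset}[\cP=P]\geq\varepsilon$, and fix indices $i,j$. Pick representatives $x\in P_i$ and $y\in P_j$. Decompose the event $\{\cP=P\}$ according to the pair of clusters $(\cS_i,\cS_j)=(\cC_x,\cC_y)$: these are disjoint vertex sets with $\cS_i\cap\tilde S=P_i$, $\cS_j\cap\tilde S=P_j$. As in the computation leading to \eqref{partition}, conditioning on $\cC_x=\cS_i$ and $\cC_y=\cS_j$ factorises the partition function into an Ising current partition function on $K_N\setminus(\cS_i\cup\cS_j)$ with source set $\tilde S\setminus(P_i\cup P_j)$ realising the induced partition, times single-cluster factors on $\cS_i$ and $\cS_j$, times a ratio of two-point-type functions $Q_{\cS_i,\cS_j,N}(a)$ built from $\langle\sigma_{P_i}\rangle,\langle\sigma_{P_j}\rangle,\langle\sigma_{\tilde S}\rangle$ (here using that $\fg$ plays no role). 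This gives an expansion
\begin{equs}
\PP_{K_N}^{\tilde S,\emptyset}[\cP=P]
=
\sum_{\cS_i,\cS_j} Q_{\cS_i,\cS_j,N}\,
\PP^{P_i,\emptyset}_{K_N}[\cC_x=\cS_i]\,
\PP^{P_j,\emptyset}_{K_N\setminus\cS_i}[\cC_y=\cS_j]\,
\PP^{\tilde S\setminus(P_i\cup P_j),\emptyset}_{K_N\setminus(\cS_i\cup\cS_j)}[\cP=P\text{ restricted}],
\end{equs}
and a parallel expansion for $\PP_{K_N}^{\tilde S,\emptyset}[\cP=P^{i,j}]$ in which the last two factors are replaced by the probability that, on the complement, the clusters of $P_i\cup P_j$ merge into a single class while the rest stays as in $P$.

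\medskip
\noindent\textbf{Step 2: Restricting to small clusters.} By Proposition~\ref{prop:uni_size} (and Markov's inequality, using the symmetry of the model to control clusters of arbitrary sources), there is $C_0=C_0(k)$ so that, uniformly in the relevant range of $a$, the probability that both $\cC_x$ and $\cC_y$ have size $<C_0\sqrt N$ and meet $\tilde S$ only in $P_i$, resp.\ $P_j$, is at least $1/2$. Combined with the hypothesis $\PP[\cP=P]\geq\varepsilon$, we may restrict the sum in Step~1 to such pairs $(\cS_i,\cS_j)$ at the cost of a factor $\geq\varepsilon/2$: there is a set $\mathcal{G}$ of ``good'' pairs with total contribution at least $\varepsilon/2$ and with $|\cS_i|+|\cS_j|<2C_0\sqrt N$.

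\medskip
\noindent\textbf{Step 3: Merging costs a bounded factor.} For a good pair, produce the merged configuration by adding one unit of current on a single edge joining $\cS_i$ to $\cS_j$ — say the edge $\{x,y\}$ (if $x\not\sim y$ in the current, add an edge between a vertex of $\cC_x$ and one of $\cC_y$; any such edge suffices). Adding one edge changes the weight by the factor $\beta c_N^2 J$ in the Griffiths--Simon prelimit; but the cleaner route is to compare at the level of the conditioned probabilities directly: on the complement $K_N\setminus(\cS_i\cup\cS_j)$ the ``rest of $P$'' event is unchanged, so the merging probability on the complement is bounded below by the non-merging probability times a ratio of two-point functions of the form $\langle\sigma_u\sigma_v\rangle_{K_{N-O(\sqrt N)}}$, which by Remark~\ref{shift} and Lemma~\ref{lem: GS} converges to $\langle\varphi^2\rangle_{0,a'}>0$ for the appropriately shifted parameter, hence is $\geq$ some $\delta_0>0$ for $N$ large. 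Likewise the ratio $Q_{\cS_i,\cS_j,N}$ versus the corresponding merged ratio $Q^{\mathrm{merged}}_{\cS_i,\cS_j,N}$ differs by a bounded factor, uniformly over good pairs, again by Griffiths monotonicity in $a$ together with the convergence of moments. Multiplying through, each good-pair summand in the $P$-expansion is at most $\delta_0^{-1}$ times a corresponding summand in the $P^{i,j}$-expansion, whence $\PP[\cP=P^{i,j}]\geq \delta_0(\varepsilon/2)^{-1}\cdot(\varepsilon/2)\cdot\delta_0\cdots$; collecting constants gives \eqref{ECT: gluing prob} with $\delta=\delta(\varepsilon)$ depending only on $\varepsilon$ and $k$.

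\medskip
\noindent\textbf{Main obstacle.} The delicate point is Step~3: making rigorous the claim that ``adding one edge between the two clusters'' costs only a bounded multiplicative factor. One must phrase the comparison so that the clusters $\cS_i,\cS_j$ (whose sizes grow like $\sqrt N$) only ever enter through (i) single-cluster probabilities, which cancel between the $P$ and $P^{i,j}$ expansions once one conditions on the same $\cS_i,\cS_j$, and (ii) two-point ratios $Q$ on a graph of size $N-O(\sqrt N)$, which are handled by Remark~\ref{shift}. Keeping track of exactly which Ising correlation ratios appear — and checking they all converge to strictly positive $\varphi^4$ single-site moments under the parameter shift — is where the care is needed; the combinatorics of the edge insertion itself is routine once this bookkeeping is set up. A secondary subtlety is that one should verify the parameter shifts stay in a fixed compact interval so that all the ``$\geq\delta_0>0$'' bounds are genuinely uniform in $N$; this follows because only $O(\sqrt N)$ vertices are ever removed, so the shift is $O(C_0)$ and bounded.
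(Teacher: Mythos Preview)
Your Step~3 contains a genuine gap that is not merely bookkeeping. The operation ``add one unit of current on an edge between $\cS_i$ and $\cS_j$'' changes the parity at both endpoints and hence changes the source set of $\tilde\n_1$; the resulting configuration no longer lies in $\{\partial\tilde\n_1=\tilde S,\ \partial\tilde\n_2=\emptyset\}$. If instead you add two edges (or one edge to each of $\tilde\n_1,\tilde\n_2$) to preserve parity, the weight change is $O(d_N^2)=O(N^{-2})$ while the number of available edge choices is only $|\cS_i|\cdot|\cS_j|=O(N)$ on your good event, so the comparison loses a factor $O(N^{-1})$ and fails in the limit. Your alternative phrasing via ``a ratio of two-point functions of the form $\langle\sigma_u\sigma_v\rangle_{K_{N-O(\sqrt N)}}$'' is not made precise: once you have conditioned on $\cC_x=\cS_i$ and $\cC_y=\cS_j$ with $\cS_i\cap\cS_j=\emptyset$, nothing on the complement can merge them, and it is unclear which conditional probabilities you are actually comparing. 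The conditioning/exploration framework from Lemma~\ref{lem:pair} does not transfer directly, because there the goal is to \emph{separate} clusters (which costs nothing), whereas here you must \emph{join} two objects of size $\Theta(\sqrt N)$ at $O(1)$ cost.

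The paper's proof uses a different mechanism that sidesteps this obstruction. Rather than inserting an edge, it \emph{identifies} a non-source vertex $x\in\pi(P_i)$ with a non-source vertex $y\in\pi(P_j)$ (and re-introduces an isolated vertex labelled $y$ to keep the graph size fixed). Since no edges are added or removed, the current weight is unchanged. One first shows, via a short path-counting/first-moment bound, that on at least half of $\{\cP=P\}$ both $\pi(P_i)$ and $\pi(P_j)$ (the sets of vertices lying on self-avoiding walks between sources in $P_i$, resp.\ $P_j$) have size $\geq c\sqrt N$, giving $\geq c_1 N$ choices of gluing pair. The crucial combinatorial point is that the gluing vertex is topologically recoverable in the image: it is the unique vertex whose removal simultaneously decreases the number of self-avoiding walks between vertices of $P_i$ and between vertices of $P_j$. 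Hence the only freedom in the preimage is the label of the re-introduced isolated vertex, so $|f^{-1}(\cdot)|\leq N$, and the multi-valued map principle yields \eqref{ECT: gluing prob}.
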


\begin{proof}
Consider a partition $P=\{P_1,P_2,\ldots,P_n\}$ satisfying our assumption \eqref{ECT: nondegen}. Fix $i,j\in \{1,2,\ldots,n\}$ distinct and let $r\in \{i,j\}$. We start by defining $\Pi(P_r)$ to be the (random) set of (vertex-)self-avoiding walks where each walk in 
$\Pi(P_r)$ traverses only edges with non-zero value in $\tilde\n:=\tilde\n_1+\tilde\n_2$, and starts and ends in $P_r$.
We write $\pi(P_r)$ to denote the set of vertices visited by the self-avoiding walks in $\Pi(P_r)$. It is possible that some vertices of $\mathcal{C}_{P_r}:=\bigcup_{x\in P_r}\mathcal{C}_x$ are not contained in $\pi(P_r)$, which can happen when $\tilde\n$ contains a loop starting and ending at some vertex in $\pi(P_r)$. 

We first show that with high probability $\pi(P_r)$ has size $\Omega(\sqrt{N})$. Indeed, let $m>0$ and $x,y\in P_r$, with $x\neq y$. For a self-avoiding walk $\gamma$ in $K_N$ which starts at $x$, ends at $y$, and has length $\gamma$, we write $\{x\overset{\gamma}{\longleftrightarrow}y\}$ for the event that $\tilde \n_1(e)>0$ for all edges $e$ of $\gamma$. When $\tilde{\n}_1\in \{x\overset{\gamma}{\longleftrightarrow}y\}$, the current 
\begin{equation*}
\tilde{\m}_1(e):= \begin{cases}
      \tilde{\n}_1(e)-1, & \text{for } e \text{ edge of } \gamma, \\
      \tilde{\n}_1(e), & \text{otherwise},
\end{cases}
\end{equation*}
has sources $\tilde{S}\setminus \{x,y\}$. Thus, using the inequality $\frac{1}{\tilde{\n}_1(e)}\leq \frac{1}{\tilde{\m}_1(e)}$ and arguing as in the proof of Proposition~\ref{prop:uni_size}, we obtain (for the single current measure)
\begin{equation*}
     \PP^{\tilde{S}}_{K_N}[x\overset{\gamma}{\longleftrightarrow}y]
     \leq d_N^m \frac{Z_{K_N}^{\tilde{S}\setminus\lbrace x,y\rbrace}}{Z^{\tilde{S}}_{K_N}} 
     =
     d_N^m\frac{\langle \sigma_{\tilde{S}\setminus \{x,y\} }\rangle_{K_N}}{\langle\sigma_{\tilde{S}}\rangle_{K_N}}
     \leq
     C_1d_N^m\sqrt{N},
\end{equation*}
where $d_N$ is defined in \eqref{eq: def c_N and d_N}, $m$ is the length of $\gamma$, and $C_1=C_1(k)>0$ is a constant.

Note that there are at most $N^{m-1}$ self-avoiding walks of length $m$ connecting $x$ and $y$ in $K_N$. Moreover, when $|\pi(P_r)|\leq m$, each $x\in P_r$ is connected to some $y\in P_r$ by a self-avoiding walk of length at most $m$. Since adding a second current cannot decrease $|\pi(P_r)|$, for every $1>\eta>0$  
\begin{equation*}
    \mathbb P^{\tilde{S},\emptyset}_{K_N}\left[\{\cP=P\}\cap\left(\{|\pi(P_i)|
    \leq 
    \eta \sqrt{N}\}\cup \{|\pi(P_j)|\leq \eta \sqrt{N}\}\right)\right] 
    \leq
    C_2d_N\eta N 
    \leq
    C_3\eta,
\end{equation*}
where $C_2=C_2(k)>0, C_3=C_3(k)>0$ are constants. Thus, because of our assumption \eqref{ECT: nondegen} there exists a constant $c>0$ such that 
\begin{equation}\label{size gamma}
\PP^{\tilde{S},\emptyset}_{K_N}[\{\cP=P\}\cap \{|\pi(P_i)|,|\pi(P_j)| \geq c\sqrt N \}]
\geq 
\frac 12 \PP^{\tilde{S},\emptyset}_{K_N}[\cP=P]
\end{equation}
for every $N\geq 2k$. 

Let $(\tilde \n_1, \tilde \n_2) \in \{ \cP=P \}\cap \{|\pi(P_i)|,|\pi(P_j)| \geq c\sqrt N \}$. For every $x \in \pi(P_i) \setminus \tilde{S}$ and $y \in \pi(P_j) \setminus \tilde{S}$, define $(\tilde\n_1^{x,y}, \tilde\n_2^{x,y})$ as follows. First, we identify $x$ and $y$ in the natural way, labelling the point obtained from the gluing by $x$. Also, we preserve the labelling of all other vertices. The gluing procedure gives rise to a pair of currents $(\tilde \n_1^{x,y}, \tilde \n_2^{x,y})$ which has the property that $\cP=P^{i,j}$. The caveat is that they are defined on $N-1$ vertices. To fix this, we add a new vertex, labelled $y$, and extend $\tilde\n_1^{x,y}$ and $\tilde\n_2^{x,y}$ in the natural way such that $y$ is of degree $0$ in their sum. 

We define $f$ as the multi-valued map which associates to each $(\tilde\n_1, \tilde\n_2)$ the set of all possible $(\tilde \n_1^{x,y}, \tilde \n_2^{x,y})$. Note that, since $|\pi(P_i)|,|\pi(P_j)| \geq c\sqrt N$, we have
\begin{equation*}
|f(\tilde \n_1, \tilde \n_2)| \geq (c\sqrt{N}-|\tilde{S}|)^2\geq c_1N,
\end{equation*} 
for some constant $c_1>0$, provided that $N$ is large enough. Moreover, since no new edges were added, $\PP^{\tilde{S},\emptyset}_{K_N}[(\tilde\n_1,\tilde\n_2)] = \PP^{\tilde{S},\emptyset}_{K_N}[(\tilde\n_1^{x,y}, \tilde\n_2^{x,y})]$. Thus, for every $(\tilde\n_1, \tilde\n_2) \in \{ \cP=P \}\cap \{|\pi(P_i)|,|\pi(P_j)| \geq c\sqrt N \}$,
\begin{equation*}
\PP_{K_N}^{\tilde{S},\emptyset}[(\tilde\n_1,\tilde\n_2)]
=
\sum_{(\tilde\n,\tilde\m)\in f(\tilde\n_1,\tilde\n_2)}\frac{\PP_{K_N}^{\tilde{S},\emptyset}[(\tilde\n,\tilde\m)]}{|f(\tilde\n_1,\tilde\n_2)|}
\leq
\sum_{(\tilde\n,\tilde\m)\in f(\tilde\n_1,\tilde\n_2)}\frac{\PP_{K_N}^{\tilde{S},\emptyset}[(\tilde\n,\tilde\m)]}{c_1 N}
\end{equation*}
which, by Lemma \ref{size gamma}, yields
\begin{equation}\label{ECT: conclusion}
\begin{aligned}
\frac 12 \PP_{K_N}^{\tilde{S},\emptyset}[\cP=P]
&\leq
\sum_{\substack{(\tilde\n_1,\tilde\n_2) \in \{ \cP=P \} \\ |\pi(P_i)|, |\pi(P_j)| \geq c\sqrt N}} \sum_{(\tilde\n, \tilde\m) \in f(\tilde\n_1, \tilde\n_2)} 
\frac{\PP_{K_N}^{\tilde{S},\emptyset}[(\tilde\n,\tilde\m)]}{c_1 N}
\\
&\leq
\sum_{(\tilde\n, \tilde\m) \in \{ \cP=P^{i,j}\} } \frac{|f^{-1}(\tilde \n, \tilde \m)|}{c_1 N} \PP^{\tilde{S},\emptyset}_{K_N}[(\tilde\n,\tilde\m)].
\end{aligned}
\end{equation}

We now estimate $|f^{-1}(\tilde \n, \tilde \m)|$. Let $(\tilde\n_1,\tilde\n_2) \in f^{-1}(\tilde \n, \tilde \m)$. Then, there exists $x \in \pi(P_i), y \in \pi(P_j)$ such that $\tilde\n = \tilde\n_1^{x,y}$ and $\tilde\m = \tilde\n_2^{x,y}$. Consider the number of self-avoiding walks between vertices of $P_i$ in $(\tilde\n,\tilde\m)$ and the number of self-avoiding walks between vertices of $P_j$ in $(\tilde\n,\tilde\m)$. Notice that \textit{both} numbers strictly decrease when we remove $x$ from $\cC_{P_i\cup P_j}$ and that no other point has this property. Since we do not relabel $x$ in the construction of $(\tilde\n_1^{x,y},\tilde\n_2^{x,y})$, the topological consideration above implies that this point is the same for any $(\tilde\n_3,\tilde\n_4) \in f^{-1}(\tilde\n,\tilde\m)$. See Figure~\ref{fig: merging}. Thus, $\mathcal{C}_{P_i}(\tilde\n_1, \tilde\n_2) = \mathcal{C}_{P_i}(\tilde\n_3,\tilde\n_4)$. 

Since the labels of all other vertices are preserved after gluing except for $y$, this means that any $(\tilde\n_3,\tilde\n_4) \in f^{-1}(\tilde\n,\tilde\m)$ can be mapped bijectively to $(\tilde\n_1,\tilde\n_2)$ by swapping the label of $y$ with a vertex of degree zero in $(\tilde\n_1,\tilde\n_2)$. Thus, $|f^{-1}(\tilde\n,\tilde\m)| \leq N$. Putting this into \eqref{ECT: conclusion} gives the desired result. 
\end{proof}

\begin{figure}[!tbp]
  \centering
    \includegraphics[width=.5\textwidth]{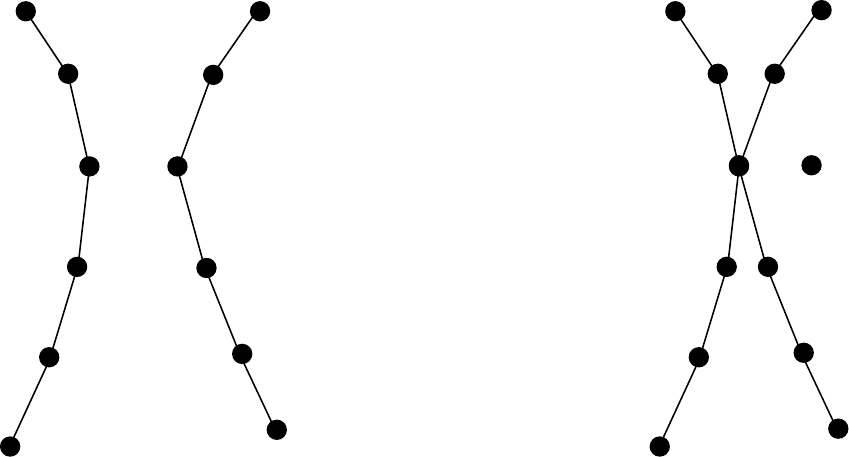}
    \put(-33,60){$x$}
    \put(-3,60){$y$}
    \put(-170,60){$x$}
    \put(-135,60){$y$}
    \put(-180,-13){$\mathcal{C}_{P_i}$}
    \put(-130,-13){$\mathcal{C}_{P_j}$}
    \put(-45,-13){$\mathcal{C}_{P_i}$}
    \put(-3,-13){$\mathcal{C}_{P_j}$}
    \caption{An example illustrating the merging of $\mathcal{C}_{P_i}$ and $\mathcal{C}_{P_j}$.}
    \label{fig: merging}
\end{figure} 

\subsection{Proof of Proposition \ref{prop: pos prob}}
We prove the estimate for the case of a double current; the case of a single current follows similarly. Let $P$ be an admissible partition of $\tilde{S}\sqcup \tilde{T}$. 
Note that,
\begin{equation*}
\begin{aligned}
    \PP_{K_N}^{\tilde{S},\tilde{T}}[P]=\frac{\langle \sigma_{\tilde{S}\sqcup \tilde{T}}\rangle_{K_N}}{\langle \sigma_{\tilde{S}}\rangle_{K_N}\langle \sigma_{\tilde{T}}\rangle_{K_N}}\PP_{K_N}^{\tilde{S}\sqcup \tilde{T},\emptyset}[P, \:\cF_{\tilde{T}}]
    =(1+o(1))\frac{\langle \varphi^{S+T}\rangle_{0}}{\langle \varphi^{S}\rangle_{0}\langle \varphi^{T}\rangle_{0}}
    \PP_{K_N}^{\tilde{S}\sqcup \tilde{T},\emptyset}[P]
\end{aligned}    
\end{equation*}
since any configuration $\tilde{\n}_1+\tilde{\n}_2\sim \PP_{K_N}^{\tilde{S},\tilde{T}}$ that realises $P$ immediately realises $\cF_{\tilde{T}}$ because of the admissibility constraint on $P$. This computation gives Lemmas \ref{lem:pair} and \ref{lem:merge} for the measure $\PP_{K_N}^{\tilde{S},\tilde{T}}$. As a consequence, there exists a constant $c>0$ such that, if $N$ is large enough and if $P$ is an admissible pairing of $\tilde{S}\sqcup \tilde{T}$, 
$\PP^{\tilde{S},\tilde{T}}_{K_N}[\cP=P]\geq c$. Note that by successively merging the partition classes of a pairing, we can obtain any admissible partition. Using Lemma~\ref{lem:merge}, we obtain that $\PP^{\tilde{S},\tilde{T}}_{K_N}[\cP=P]\geq c'$ uniformly in $N$ (sufficiently large) for a constant $c'>0$. Letting $N$ go to infinity, we obtain the desired result. $\hfill \square$

\section{Infinite volume measures} \label{sec: inf vol}
 
\subsection{Infinite volume distributions on currents}

Denote by $(\Omega_{V}, \mathcal{A}_V)$ the space of currents on $V$ equipped with the $\sigma$-algebra generated by cylinder events (i.e.\ events depending on finitely many edges). For $\Lambda \subset V$ finite, denote by $(\Omega_\Lambda, \cA_\Lambda)$ the space of currents on $\Lambda$ equipped with its Borel $\sigma$-algebra. There are natural injections $\Omega_\Lambda \hookrightarrow \Omega_{V}$ and $\mathcal{A}_\Lambda \hookrightarrow \mathcal{A}_V$. Thus, we canonically identify measures on $(\Omega_\Lambda, \cA_\Lambda)$ as measures on $(\Omega_{V}, \mathcal{A}_V)$. Similarly, denote by $(\Omega_{\Lambda_\fg}, \cA_{\Lambda_\fg})$ the space of currents on $\Lambda_\fg$ equipped with its Borel $\sigma$-algebra. We can also canonically identify measures on $(\Omega_\Lambda, \cA_\Lambda)$ as measures on $(\Omega_{V}, \mathcal{A}_V)$. We consider $\beta>0,h=0$ fixed in this section and drop from it from notation when convenient.

\begin{defn}
For $\Lambda \subset V$ finite, let $\mathsf{P}_\Lambda^\emptyset$ and $\mathsf{P}_{\Lambda_\fg}^\emptyset$ be the probability measures on $\Omega_\Lambda$ and $\Omega_{\Lambda_\fg}$, respectively, defined by
\begin{equation*}
\begin{aligned}
    \mathsf{P}_\Lambda^\emptyset[\n]
    =
    \frac{w_{\beta}^\emptyset(\n)}{\sum_{\substack{\m \in \Omega_\Lambda}}w_{\beta}^\emptyset(\m)}, \qquad
    \mathsf{P}_{\Lambda_\fg}^\emptyset[\n]
    =
    \frac{w_{\beta,h_\Lambda(\PLUS)}^\emptyset(\n)}{\sum_{\substack{\m \in \Omega_{\Lambda_\fg}}}w_{\beta, h_\Lambda(\PLUS)}^\emptyset(\m)},
\end{aligned}    
\end{equation*}
where $h_\Lambda(\PLUS)_x=\sum_{y\notin \Lambda}J_{x,y}\PLUS_x$ for $x \in \Lambda$.
We denote the expectations with respect to $\mathsf{P}_\Lambda^\emptyset$ and $\mathsf{P}_{\Lambda_\fg}^\emptyset$ by $\mathsf{E}^\emptyset_{\Lambda}$ and $\mathsf{E}^\emptyset_{\Lambda_\fg}$, respectively. Later, when clear from context, we will drop $h_\Lambda(\PLUS)$ from the notation.
\end{defn}

\begin{prop} \label{prop: inf vol flat} 
The probability measures $\mathsf{P}^\emptyset_{\Lambda}$ (resp. $\mathsf{P}^\emptyset_{\Lambda_\fg}$ ) converge weakly as $\Lambda \uparrow V$ to a probability measure $\mathsf{P}^0$ (resp. $\mathsf{P}^+$) on $\Omega_V$. The convergence is in fact strong for cylinder events, i.e.\ for any $A \in \cA_V$ depending on finitely many edges,
\begin{equation*}
\lim_{\Lambda \uparrow V} \mathsf{P}^\emptyset_{\Lambda}[A] = \mathsf{P}^0[A] 
\quad \text{ and } \quad
\lim_{\Lambda \uparrow V} \mathsf{P}^\emptyset_{\Lambda_\fg}[A] = \mathsf{P}^+[A].
\end{equation*}
Furthermore, $\mathsf{P}^0$ and $\mathsf{P}^+$ are invariant and mixing (hence, ergodic) under automorphisms in $\Gamma$\footnote{Recall we fix a vertex-transitive subgroup $\Gamma \subset \textup{Aut}(G)$.}.
\end{prop}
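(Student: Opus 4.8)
\textbf{Proof strategy for Proposition \ref{prop: inf vol flat}.} The plan is to obtain the convergence by a monotonicity/correlation-inequality argument on cylinder events, and then deduce invariance and mixing from the corresponding properties of the limiting $\phi^4$ correlations. First I would establish convergence of $\mathsf{P}^\emptyset_\Lambda$. By the current expansion \eqref{eq:current_expansion}, for any $A\in\cM(\Lambda)$ the quantity $\mathsf{P}^\emptyset_\Lambda$ assigns weights $w_{\emptyset,\beta,0}(\n)$ to sourceless currents, and the key observation is that ratios of the form $\mathsf{P}^\emptyset_\Lambda[\n_e = k \text{ for } e\in E_0]$ for a finite edge set $E_0$ can be written as sums of products of $w$-weights over currents compatible with the prescribed values, divided by the full partition function restricted to sourceless configurations. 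Using Corollary \ref{thm: probabilistic switching} (or directly Proposition \ref{prop: monotonicity in vol} applied to the relevant moment generating quantities), one shows that for fixed $E_0$, the finite-dimensional marginals $\mathsf{P}^\emptyset_\Lambda[(\n_e)_{e\in E_0} = \cdot]$ are monotone in $\Lambda$ in an appropriate sense — concretely, since enlarging $\Lambda$ can only add new sourceless configurations that factor through the $\cF$-events, one gets that $\mathsf{P}^\emptyset_\Lambda[A]$ converges for every cylinder event $A$. The cleanest route is to note that $\mathsf{P}^\emptyset_\Lambda$-probabilities of cylinder events are polynomial expressions (with non-negative coefficients) in the finite-volume two-point and higher correlation functions $\langle \varphi_B\rangle_{\Lambda,\beta}$ via Taylor expansion of $\exp(\beta\sum J_{x,y}\varphi_x\varphi_y)$; these correlations converge as $\Lambda\uparrow V$ to the free (resp.\ plus, after incorporating $h_\Lambda(\PLUS)$) infinite-volume correlations by Proposition \ref{prop: monotonicity in vol} and the tightness/monotonicity results of Section 2. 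Thus the limits $\mathsf{P}^0[A]$ and $\mathsf{P}^+[A]$ exist and define consistent families, hence probability measures on $(\Omega_V,\cA_V)$ by Kolmogorov extension, with the stated strong convergence on cylinder events.

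For the $\fg$-version, the only new point is that the external field is the inhomogeneous $h_\Lambda(\PLUS)_x = \sum_{y\notin\Lambda}J_{x,y}\PLUS_x$; one has to check that this converges appropriately, but by condition $(\textbf{C4})$ and the calculation in Lemma \ref{lem: psi sum} the sums $\sum_{y\notin\Lambda}J_{x,y}\PLUS_y$ are finite and, as $\Lambda\uparrow V$, tend to $\sum_{y\in V}J_{x,y}\PLUS_y$ minus a vanishing tail — actually one wants the field at each fixed $x$ to stabilize; care is needed here since $h_\Lambda(\PLUS)_x$ decreases to $0$ as $\Lambda\uparrow V$, so the limiting measure $\mathsf{P}^+$ is the current measure of the plus state built as in Remark \ref{rem: Gibss regular limit}. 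I would phrase this as: the weights $w_{\emptyset,\beta,h_\Lambda(\PLUS)}(\n)$ converge edge-by-edge, and the cylinder marginals converge by the same polynomial-in-correlations argument, now using that $\langle\cdot\rangle_{\Lambda,\beta,h_\Lambda(\PLUS)} = \langle\cdot\rangle^\PLUS_{\Lambda,\beta}$ (Remark \ref{rem: b.c. as mag. field}) converges to $\langle\cdot\rangle^+_\beta$ by Proposition \ref{prop: construction of plus and minus}.

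For $\Gamma$-invariance: each automorphism $\gamma\in\Gamma$ acts on currents by $(\gamma\n)_{x,y} = \n_{\gamma^{-1}x,\gamma^{-1}y}$, and since $J$ is $\Gamma$-invariant ($\textbf{C2}$) and $\PLUS$ is a function of $d_G(o,\cdot)$ (hence essentially $\Gamma$-invariant up to the basepoint, which washes out in the limit), the limiting weights are $\Gamma$-invariant; more carefully, $\mathsf{P}^\emptyset_{\gamma\Lambda} = \gamma_*\mathsf{P}^\emptyset_\Lambda$ for the homogeneous-field free case, and taking $\Lambda\uparrow V$ along a $\Gamma$-cofinal sequence gives $\gamma_*\mathsf{P}^0 = \mathsf{P}^0$; for $\mathsf{P}^+$ one uses that the plus correlations $\langle\cdot\rangle^+_\beta$ are $\Gamma$-invariant (Proposition \ref{prop: extremal and ergodic}), so the cylinder marginals of $\mathsf{P}^+$, being polynomials in these, are $\Gamma$-invariant. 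Finally, for mixing: a cylinder event $A$ depending on edges in a finite region $R_A$ and $\gamma_x$-translated event $\gamma_x B$ depending on $\gamma_x R_B$ have joint probability expressible as a polynomial in correlations of the form $\langle \varphi_{C} \cdot \gamma_x\varphi_{D}\rangle^\#_\beta$ (with $C$ supported near $R_A$, $D$ near $R_B$); by Proposition \ref{prop: extremal and ergodic} (mixing of $\langle\cdot\rangle^+_\beta$) and Proposition \ref{prop: free plus ergod} (mixing of $\langle\cdot\rangle^0_\beta$ on even functions — and the relevant $\varphi_C$, $\varphi_D$ can be taken even, or one pairs sources across the two regions so that each factor is even) these factorize as $d_G(o,x)\to\infty$, giving $\mathsf{P}^\#[A\cap\gamma_x B] \to \mathsf{P}^\#[A]\mathsf{P}^\#[B]$. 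The main obstacle I anticipate is the bookkeeping in the last step: expressing a general cylinder event's probability as a \emph{finite} polynomial in correlations with controlled support requires truncating the current values on the finitely many edges of interest and resumming the rest, and one must ensure that the even-function restriction needed to invoke Proposition \ref{prop: free plus ergod} is not violated — this is handled by observing that conditioning the current on sourceless configurations forces every $\Delta\n(x)$ to be even, so the relevant single-site moments $\langle\varphi^{\Delta\n(x)}\rangle_0$ already involve only even powers, and the cylinder events decompose into pieces each controlled by even correlations across the two far-apart regions.
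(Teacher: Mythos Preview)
Your high-level strategy --- reduce cylinder probabilities to expressions in spin correlations, then invoke convergence and mixing of the spin measures --- is exactly what the paper does. However, the central technical claim that cylinder probabilities are ``polynomial expressions with non-negative coefficients'' in correlations is incorrect, and the monotonicity argument built on it does not go through. Already for the single-edge event $\{\n_e=0\}$ with $e=\{x,y\}$ one has
\[
\mathsf{P}^\emptyset_\Lambda[\n_e=0]=\big\langle e^{-\beta J_{x,y}\varphi_x\varphi_y}\big\rangle_{\Lambda,\beta}^0,
\]
which is an \emph{infinite} series in correlations with \emph{alternating} signs. The paper makes this precise: for a finite edge set $F$ and degree profile $d_F$, it derives the identity
\[
\mathsf{P}^\emptyset_\Lambda[\mathcal{C}_{F,d_F}]=\Big\langle e^{-K_F}\!\!\prod_{x\in V(F)\setminus \mathring{V}(F)}\!\!\varphi_x^{d_F(x)}\Big\rangle_{\Lambda,\beta}^0\, f(F,d_F),\qquad K_F=\beta\!\!\sum_{\{x,y\}\in F}\!\!J_{x,y}\varphi_x\varphi_y,
\]
and then uses the pointwise bound $e^{-K_F}\leq e^{\beta(\sup J)\sum\varphi_x^2}$ together with regularity of $\langle\cdot\rangle^0_\beta$ (resp.\ $\langle\cdot\rangle^+_\beta$) to get an integrable dominating function, so that the DLR equations give convergence of this local expectation. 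This is where the proof lives, and neither Griffiths monotonicity nor Proposition \ref{prop: monotonicity in vol} suffices.

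There is a second missing ingredient: to pass from the events $\mathcal{C}_{F,d_F}$ to an arbitrary cylinder event you need to control the tail in $d_F$, and the paper does this via a uniform-in-$\Lambda$ degree bound (Lemma \ref{degree control}, which gives $\mathsf{E}^\emptyset_\Lambda[\Delta\n(x)]$ in terms of $\langle\varphi_x^4\rangle$ and $\langle\varphi_x^2\rangle$) followed by Markov and inclusion--exclusion. Your proposal does not account for this truncation. On the other hand, your approach to $\Gamma$-invariance and mixing is correct and matches the paper: once the explicit formula above is in hand, invariance follows from invariance of $\langle\cdot\rangle^\#_\beta$, and mixing follows from Proposition \ref{prop: extremal and ergodic} for $\#=+$ and Proposition \ref{prop: free plus ergod} for $\#=0$ (the observable $e^{-K_F}\prod\varphi^{d_F}$ is even, so the restriction to even functions is harmless, exactly as you anticipated).
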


In order to prove the above proposition, we need the following lemma.
\begin{lem}\label{degree control}
Let $\Lambda\subset V$ finite. Then for every $x \in \Lambda$ we have
\begin{equation*}
    \mathbf{E}^{\emptyset,\emptyset}_{\Lambda_\fg,\Lambda}[\Delta\n_1(x)+\Delta\n_2(x)]=4g\Big(\langle\varphi_x^4\rangle_{\Lambda}^0+\langle\varphi_x^4\rangle_{\Lambda}^\PLUS\Big)+2a\Big(\langle\varphi_x^2\rangle_{\Lambda}^0+\langle\varphi_x^2\rangle_{\Lambda}^\PLUS\Big)-2.
\end{equation*}
\end{lem}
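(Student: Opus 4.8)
\textbf{Proof strategy for Lemma~\ref{degree control}.} The plan is to compute the expected total degree at $x$ under $\mathbf{P}^{\emptyset,\emptyset}_{\Lambda_\fg,\Lambda}$ by relating $\Delta\n_i(x)$ to logarithmic derivatives of partition functions, exactly as one does for the random current representation of the Ising model. Recall that $\mathbf{P}^{\emptyset,\emptyset}_{\Lambda_\fg,\Lambda}$ is, by the definition preceding Corollary~\ref{thm: probabilistic switching} (with $A=B=\emptyset$), the law of $\n_1+\n_2$ where $\n_1$ is sampled from the sourceless current measure on $\Lambda_\fg$ with weights $w_{\emptyset,\beta,h_\Lambda(\PLUS)}$ and $\n_2$ independently from the sourceless current measure on $\Lambda$ with weights $w_{\emptyset,\beta,0}$; the tangling $\ct$ plays no role since $\Delta\n(x)$ only depends on the current. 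So it suffices to show that for a sourceless current $\n$ with weights $w_{\emptyset,\beta,h}(\n) = \prod_{\{x,y\}}\tfrac{(\beta J_{x,y})^{\n_{x,y}}}{\n_{x,y}!}\prod_x \tfrac{(\beta h_x)^{\n_{x,\fg}}}{\n_{x,\fg}!}\langle\varphi^{\Delta\n(x)}\rangle_0$, one has
\begin{equs}
\mathsf{E}[\Delta\n(x)] = 4g\langle\varphi_x^4\rangle_{\Lambda,\beta,h} + 2a\langle\varphi_x^2\rangle_{\Lambda,\beta,h} - 1,
\end{equs}
and then sum this identity over the two independent currents (with $h = h_\Lambda(\PLUS)$ for $\n_1$, giving the $\PLUS$ correlators via Remark~\ref{rem: b.c. as mag. field}, and $h=0$ for $\n_2$, giving the free correlators), producing the $-2$ on the right-hand side.

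\textbf{Key steps.} First I would exploit the structure of the single-site moments. Since $\textup{d}\rho_{g,a}(t) = z_{g,a}^{-1}e^{-gt^4-at^2}\textup{d}t$, integration by parts gives the identity $\langle \varphi^{k+1}\cdot(4g\varphi^3 + 2a\varphi)\rangle_0 = k\langle\varphi^{k-1}\rangle_0$ for every $k\geq 0$, i.e.\
\begin{equs}
4g\langle\varphi^{k+4}\rangle_0 + 2a\langle\varphi^{k+2}\rangle_0 = k\langle\varphi^{k}\rangle_0.
\end{equs}
This is the algebraic heart of the computation: it converts the "degree $k$" weight $\langle\varphi^k\rangle_0$ times $k$ into the weight one obtains by raising the single-site power by $2$ or $4$. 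Second, I would write $\mathsf{E}[\Delta\n(x)] = \sum_{y\in\Lambda_\fg}\mathsf{E}[\n_{x,y}]$ and note that, by the current expansion (the Proposition giving \eqref{eq:current_expansion}--\eqref{eq:weights_def}) applied to the partition function $Z_{\Lambda,\beta,h}$, summing $\Delta\n(x)$ against the sourceless weights amounts to inserting a factor $\Delta\n(x)$ inside $\sum_{\partial\n=\emptyset} w_{\emptyset,\beta,h}(\n)$. A direct manipulation — either differentiating $\log Z$ with respect to an auxiliary parameter multiplying all edges at $x$, or more simply reindexing the sum and using the integration-by-parts identity termwise — shows that
\begin{equs}
\sum_{\partial\n=\emptyset}\Delta\n(x)\, w_{\emptyset,\beta,h}(\n) = \sum_{\partial\n=\emptyset} w_{\emptyset,\beta,h}(\n)\cdot\frac{k\langle\varphi^{k}\rangle_0}{\langle\varphi^{k}\rangle_0}\Big|_{k=\Delta\n(x)},
\end{equs}
and then the identity above replaces $\Delta\n(x)\langle\varphi^{\Delta\n(x)}\rangle_0$ by $(4g\langle\varphi^{\Delta\n(x)+4}\rangle_0 + 2a\langle\varphi^{\Delta\n(x)+2}\rangle_0)$, which is precisely the weight $w_{4\cdot\mathbbm{1}_x,\beta,h}(\n)$ plus $w_{2\cdot\mathbbm{1}_x,\beta,h}(\n)$ up to the combinatorial bookkeeping. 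Dividing by $Z_{\Lambda,\beta,h}$ and recognising the current expansions of $\langle\varphi_x^4\rangle_{\Lambda,\beta,h}$ and $\langle\varphi_x^2\rangle_{\Lambda,\beta,h}$ yields $\mathsf{E}[\Delta\n(x)] = 4g\langle\varphi_x^4\rangle_{\Lambda,\beta,h} + 2a\langle\varphi_x^2\rangle_{\Lambda,\beta,h} - 1$, the $-1$ coming from the shift $k\mapsto k$ versus the normalisation (equivalently, from handling the $k=0$ term, where $\Delta\n(x)=0$ but the right-hand side of the IBP identity vanishes, versus the $-1$ appearing when one instead writes it as $4g\langle\varphi^4\rangle + 2a\langle\varphi^2\rangle - \langle\varphi^0\rangle$).

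\textbf{Main obstacle.} The only delicate point is the careful bookkeeping in the second step: one must make sure the combinatorial factors $1/\n_{x,y}!$ behave correctly when "inserting a source" at $x$, so that $\sum_{\partial\n=\emptyset}\Delta\n(x)w_{\emptyset}(\n)$ genuinely equals the numerator of the current expansion for a moment with $A_x \in \{2,4\}$ — this is the standard (and slightly fiddly) point in all such computations for the Ising/$\varphi^4$ random current representation, and is cleanest to do by differentiating $Z_{\Lambda,\beta,h}$ with respect to a dummy scalar $\lambda$ multiplying every $J_{x,y}$ and $h_x$ incident to $x$, evaluating at $\lambda=1$, and then matching against the explicit $\varphi$-integral form $Z_{\Lambda,\beta,h} = z_{g,a}^{|\Lambda|}\int e^{\beta H}\prod\textup{d}\rho$, where the same $\lambda$-derivative produces $\langle (4g\varphi_x^4 + 2a\varphi_x^2 - 1)\cdot e^{\beta H}\rangle$ directly via integration by parts in $\varphi_x$. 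Once both sides of the $\lambda$-derivative are identified, the lemma follows by adding the $h=h_\Lambda(\PLUS)$ and $h=0$ contributions and invoking Remark~\ref{rem: b.c. as mag. field} to rewrite the former with $\PLUS$ boundary conditions. I do not expect any analytic difficulty beyond this.
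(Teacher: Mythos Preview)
Your approach is exactly the paper's: decompose into the two independent sourceless currents, prove the single-current identity $\mathsf{E}[\Delta\n(x)]=4g\langle\varphi_x^4\rangle_{\Lambda,\beta,h}+2a\langle\varphi_x^2\rangle_{\Lambda,\beta,h}-1$ via the single-site integration-by-parts recursion, and add the two contributions using Remark~\ref{rem: b.c. as mag. field}.

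There is, however, a genuine slip in your integration-by-parts identity that makes your accounting of the $-1$ confused. From $\rho'(t)=-(4gt^3+2at)\rho(t)$ one gets
\[
4g\langle\varphi^{k+4}\rangle_0+2a\langle\varphi^{k+2}\rangle_0=(k+1)\langle\varphi^{k}\rangle_0,
\]
not $k\langle\varphi^k\rangle_0$ (check $k=0$: the left side equals $1$). With your version, substituting $k=\Delta\n(x)$ would yield $\mathsf{E}[\Delta\n(x)]=4g\langle\varphi_x^4\rangle+2a\langle\varphi_x^2\rangle$ with no $-1$, and your explanation of where the $-1$ comes from (``the $k=0$ term'', ``shift $k\mapsto k$'') does not actually produce it. The paper sidesteps this by computing $(\Delta\n(x)+1)w_{\emptyset}(\n)$ directly: writing $u_{2k}=\langle\varphi^{2k}\rangle_0$, the correct recursion $(2k+1)u_{2k}=2a\,u_{2k+2}+4g\,u_{2k+4}$ gives
\[
(\Delta\n(x)+1)\,w_{\emptyset}(\n)=4g\,w_{\{x,x,x,x\}}(\n)+2a\,w_{\{x,x\}}(\n),
\]
so $\mathsf{E}[\Delta\n(x)+1]=4g\langle\varphi_x^4\rangle+2a\langle\varphi_x^2\rangle$ and the $-1$ is just the subtraction of the added $1$. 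Your alternative $\lambda$-derivative route (rescale $\varphi_x\mapsto\varphi_x/\lambda$ and differentiate at $\lambda=1$) is correct and gives the same thing cleanly; it is equivalent to the paper's computation.
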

\begin{proof}
For $k\geq 0$, let $u_{2k}:=\langle \varphi^{2k}\rangle_{0}$.
An integration by parts yields that
\begin{equation*}
    (2k+1)u_{2k}=2a\cdot u_{2k+2}+4g\cdot u_{2k+4}.
\end{equation*}
Note that for $i\in \{1,2\}$,
\begin{equation*}
    \mathbf{E}^{\emptyset,\emptyset}_{\Lambda_\fg,\Lambda}[\Delta\n_i(x)]=\dfrac{\sum_{\sn_i=\emptyset}(\Delta\n_i(x)+1)w^{\emptyset}(\n_i)}{\sum_{\sn_i=\emptyset}w^{\emptyset}(\n_i)}-1,
\end{equation*}
and
\begin{eqnarray*}
    (\Delta\n_i(x)+1)w^{\emptyset}(\n_i)&=&w(\n_i)\prod_{y\neq x}u_{\Delta\n_i(y)}\cdot(\Delta\n_i(x)+1)u_{\Delta\n_i(x)}\\ &=& w(\n_i)\prod_{y\neq x}u_{\Delta\n_i(y)}(2a\cdot u_{\Delta\n_i(x)+2}+4g\cdot u_{\Delta\n_i(x)+4})\\ &=& 4g\cdot w^{\lbrace x,x,x,x\rbrace}(\n_i)+2a\cdot w^{\{x,x\}}(\n_i).
\end{eqnarray*}
The desired result follows.
\end{proof}

Without loss of generality, we only prove Proposition \ref{prop: inf vol flat} for $\mathsf{P}^0$. We first construct the limit on special cylinder events (which generate the $\sigma$-algebra) where both the bond parities and degrees of the current are locally specified. Fix $F\subset \mathcal{P}_2(V)$ finite. Let $V(F)$ be the set of $x \in V$ which belong to an edge in $F$. 
For $d_F\in (2\mathbb N)^{V(F)}$, we consider the following event 
\begin{equation*}
\cC_{F,d_F}:=
\left\{ 
\n \in \Omega_V \middle\vert
    \begin{array}{c}
    \n_e \text{ is even for all } e \in F, \text{ and } \\
    \Delta \n_F(x) = d_F(x)	 \text{ for all } x \in V(F)
  \end{array}
\right\},
\end{equation*}
where $\n_F$ is the restriction of $\n$ to pairs in $F$.

\begin{lem} \label{lem: flat measure cyl}
Let $F \subset \mathcal{P}_2(V)$ finite and $d_F \in V(F)^{2\NN}$. The probabilities $\mathsf{P}_{\Lambda}^\emptyset[\mathcal{C}_{F,d_F}]$ converge as $\Lambda \uparrow V$. We define $\mathsf{P}^0[\cC_{F,d_F}]$ to be this limit.
\end{lem}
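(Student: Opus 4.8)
\textbf{Proof proposal for Lemma \ref{lem: flat measure cyl}.}

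The plan is to express $\mathsf{P}_{\Lambda}^\emptyset[\mathcal{C}_{F,d_F}]$ as a ratio of sums of current weights and to show that, after conditioning on the restriction of the current to $F$, what remains factorises into quantities that have already been shown to converge. First I would write, for any current $\n$ with $\partial\n = \emptyset$, the decomposition $\n = \n_F + \n_{F^c}$, where $\n_F$ is supported on pairs in $F$ and $\n_{F^c}$ is supported on the complementary pairs. The weight $w_{\emptyset,\beta,0}(\n)$ does \emph{not} factorise as $w(\n_F)w(\n_{F^c})$ because of the single-site moments $\langle \varphi^{\Delta\n(x)}\rangle_0$ at vertices $x \in V(F)$; however, once we fix the value of the vector $(\Delta\n_F(x))_{x\in V(F)} = d_F$ and the parities of $\n_e$ for $e\in F$, the remaining sum over currents $\m$ on $\Lambda\setminus F$ (with prescribed degree contributions $d'(x)$ at vertices $x\in V(F)$, where the total degree $\Delta\n(x) = d_F(x) + d'(x)$ must be even so that the source set stays empty, using that $d_F(x)\in 2\NN$) is precisely a $\phi^4$-type current sum. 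The key algebraic observation is that summing $\prod_{x} \langle \varphi^{\Delta\n(x)}\rangle_0$ over all currents extending a fixed $\n_F$ reproduces, up to the explicit combinatorial factor $\prod_{e\in F} (\beta J_e)^{\n_e}/\n_e!$ coming from the $F$-edges, a finite-volume $\phi^4$ partition function with shifted single-site moments at the vertices of $V(F)$ — equivalently, the current expansion of a correlation function $\langle \varphi_A \rangle^0_{\Lambda',\beta}$ for a suitable $A$ supported on $V(F)$ and $\Lambda' = \Lambda\setminus(\text{edges of }F)$.

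Concretely, I would show
\begin{equs}
\mathsf{P}_{\Lambda}^\emptyset[\mathcal{C}_{F,d_F}]
=
\sum_{\substack{\n_F:\ \n_e\in 2\NN\ \forall e\in F\\ \Delta\n_F(x)=d_F(x)\ \forall x\in V(F)}}
\left(\prod_{e\in F}\frac{(\beta J_e)^{\n_e}}{\n_e!}\right)
\frac{\sum_{\substack{\m\in\Omega_{\Lambda}:\ \m_e=0\ \forall e\in F\\ \Delta\m(x)=d_F(x)\ \forall x\in V(F)\ (\mathrm{mod}\ 2\ \mathrm{this\ is\ }\partial\m=V(F)\ \mathrm{constraint})}} w_{\emptyset,\beta,0}(\m + \n_F)}{\sum_{\sm=\emptyset}w_{\emptyset,\beta,0}(\m)},
\end{equs}
and then recognise each term in the numerator sum, after dividing by the partition function in the denominator, as a finite product of finite-volume $\phi^4$ correlations of the form $\langle\varphi_{V(F)}^{\,\ast}\rangle^0_{\Lambda,\beta}$ via the current expansion Proposition of Section \ref{sec: switching} (equation \eqref{eq:current_expansion}). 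Since only finitely many terms $\n_F$ contribute (the constraint $\Delta\n_F(x)=d_F(x)$ bounds $|\n_F|$), and since each finite-volume correlation $\langle\varphi_A\rangle^0_{\Lambda,\beta}$ converges as $\Lambda\uparrow V$ to $\langle\varphi_A\rangle^0_\beta$ by monotonicity in volume (Proposition \ref{prop: monotonicity in vol}) together with the tightness/uniqueness of the free infinite-volume limit recalled at the start of Section \ref{sec: inf vol}, the finite sum of products converges termwise, giving the claimed limit $\mathsf{P}^0[\mathcal{C}_{F,d_F}]$.

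The main obstacle I anticipate is purely bookkeeping: correctly setting up the bijection between currents extending a fixed $\n_F$ and currents on a modified graph with extra "source" demands at $V(F)$, and keeping track of the factorials and the shifted single-site moments so that the ratio really does collapse to a product of genuine $\phi^4$ correlation functions rather than some auxiliary object. One subtlety is that the parity constraint "$\n_e$ even for $e\in F$" is what makes the decomposition compatible with $\partial\n=\emptyset$ and with $d_F(x)$ being even; I would check that the event $\mathcal{C}_{F,d_F}$ as defined (with $d_F \in (2\NN)^{V(F)}$) is exactly the event whose probability is a nonnegative combination of such correlation ratios. No new analytic input is needed beyond convergence of finite-volume correlations, which is already available; the content is entirely the combinatorial identity, and once it is in place the limit is immediate.
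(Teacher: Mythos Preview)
Your algebraic decomposition is correct and is exactly what the paper does: split $\n=\n_F+\n_{\Lambda\setminus F}$, sum out the $F$-part, and recognise the remaining sum as a shifted partition function. The paper arrives at the identity
\begin{equs}
\mathsf{P}_\Lambda^\emptyset[\mathcal{C}_{F,d_F}]
=
f(F,d_F)\cdot\Big\langle e^{-K_F}\prod_{x\in V(F)\setminus\mathring{V}(F)}\varphi_x^{d_F(x)}\Big\rangle^0_{\Lambda,\beta},
\qquad K_F=\beta\sum_{\{x,y\}\in F}J_{x,y}\varphi_x\varphi_y,
\end{equs}
where $f(F,d_F)$ is precisely your finite combinatorial factor coming from the sum over $\n_F$.

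The gap is in your convergence step. The ratio $Z^{d_F}_{\Lambda\setminus F}/Z^\emptyset_\Lambda$ is \emph{not} a polynomial correlation $\langle\varphi_A\rangle^0_{\Lambda,\beta}$ under the full interaction, nor is it simply $\langle\varphi_A\rangle^0_{\Lambda',\beta}$ under the modified interaction $J'$ (with $F$-edges removed): the denominators do not match. What one actually gets is
\begin{equs}
\frac{Z^{d_F}_{\Lambda\setminus F}}{Z^\emptyset_\Lambda}
=
\langle\varphi_{d_F}\rangle^0_{\Lambda,\beta,J'}\cdot\frac{Z^\emptyset_{\Lambda,J'}}{Z^\emptyset_{\Lambda,J}}
=
\Big\langle e^{-K_F}\prod_x\varphi_x^{d_F(x)}\Big\rangle^0_{\Lambda,\beta}.
\end{equs}
The factor $e^{-K_F}$ is local but \emph{unbounded} (since $K_F$ can be arbitrarily negative), so neither monotonicity of polynomial correlations in the volume nor weak convergence of $\langle\cdot\rangle^0_\Lambda$ lets you pass to the limit. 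The paper closes this with a one-line integrability bound via Young's inequality,
\begin{equs}
e^{-K_F}\prod_x\varphi_x^{d_F(x)}
\leq
e^{\beta(\sup_{e\in F}J_e)\sum_{x\in V(F)}\varphi_x^2}\prod_x\varphi_x^{d_F(x)},
\end{equs}
whose right-hand side is integrable for $\langle\cdot\rangle^0_\beta$ by regularity, and then invokes the DLR equations for this dominated local function. Once you add this ingredient, your argument is complete and coincides with the paper's.
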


\begin{proof}

Let $\Lambda \supset V(F)$. Define $\mathring{V}(F) \subset V(F)$ be the set of $x \in V(F)$ such that if $J_{x,y} > 0$ for $y \in V$, then $y \in V(F)$, i.e.\ vertices whose $J$-support are contained in $V(F)$, which might be empty if $J$ has infinite range.

For $\n \in \Omega_\Lambda$, define
$
    w(\n)
    :=
    \prod_{\lbrace x,y\rbrace \subset \Lambda}\frac{(\beta J_{x,y})^{\n_{x,y}}}{(\n_{x,y})!}
    \text{ and }
    Z_\Lambda^A:=\sum_{\sn=\partial A}w^A(\n).
$
We write $\n_{\Lambda \setminus F}$ to denote the restriction of $\n$ to edges in the complement of $F$ and denote $Z_{\Lambda \setminus F}^A$ for the corresponding partition function.

Observe that we can write, 
\begin{equation*}
\begin{aligned}
    w^\emptyset(\n)\mathbbm{1}\{\n \in\mathcal{C}_{F,d_F}\}
    &=
    w(\n_F)\mathbbm{1}\{\n_F\in \mathcal{C}_{F,d_F}\}  \prod_{x\in  \mathring{V}(F)}
\langle \varphi^{d_F(x)}\rangle_0 \times w(\n_{\Lambda\setminus F})
\\
&\quad\quad\quad \times 
\prod_{x\in \Lambda\setminus \mathring{V}(F)}\langle \varphi^{\Delta\n_{\Lambda\setminus F}(x)+d_F(x)\mathbbm{1}[x\in V(F)]}\rangle_0.
\end{aligned}
\end{equation*}
Therefore,
\begin{equation*}
\begin{aligned}
    \mathsf{P}_\Lambda^\emptyset[\mathcal{C}_{F,d_F}]
    &=
    \frac{\sum_{\n \in \Omega_\Lambda}w^\emptyset(\n)\mathbbm{1}[\n \in\mathcal{C}_{F,d_F}]\mathbbm{1}\{\sn=\emptyset\}}{Z^\emptyset_\Lambda}
    =
    \frac{Z^{d_F}_{\Lambda\setminus F}}{Z^\emptyset_\Lambda}f(F,d_F),
\end{aligned}    
\end{equation*}
where 
\begin{equation*}
    f(F,d_F)
    :=
    \prod_{x\in  \mathring{V}(F)}\langle \varphi^{d_F(x)}\rangle_0\sum_{\sn_F=\emptyset}w(\n_F)\mathbbm{1}\{\n_F\in \mathcal{C}_{F,d_F}\}.
\end{equation*}
As a result, we get
\begin{equation}\label{result}
    \mathsf{P}_\Lambda^\emptyset[\mathcal{C}_{F,d_F}]
    =
    \Big\langle e^{-K_F} \prod_{x\in V(F)\setminus \mathring{V}(F)}\varphi_x^{d_F(x)} \Big\rangle_{\Lambda,\beta}^0 \, f(F,d_F),
\end{equation}
where
$
    K_F
    =
    \beta\sum_{\lbrace x,y\rbrace\in F}J_{x,y}\varphi_x\varphi_y.
$
Note that by Young's inequality, for any $(\varphi_x)_{x\in V(F)}\in \mathbb R^{V(F)}$,
\begin{equation}\label{ke bound}
e^{-K_F}\prod_{x \in V(F) \setminus \mathring{V}(F)}\varphi_x^{d_F(x)} 
\leq
e^{\beta\big(\sup_{\lbrace x,y\rbrace \in F}J_{x,y} \big)\sum_{x \in V(F)} \varphi_x^2}  \prod_{x \in V(F) \setminus \mathring{V}(F)}\varphi_x^{d_F(x)} .
\end{equation}
The expectation of the right-hand side of \eqref{ke bound} is finite for $\langle \cdot \rangle_\beta^0$ (i.e.\ the infinite volume measure). Hence, the the left-hand side of \eqref{ke bound} is integrable with respect to $\langle \cdot \rangle_\beta^0$. Since the left-hand side is a local function, the convergence as $\Lambda \uparrow V$ follows by the DLR equations. The observations above yield that $\underset{\Lambda \uparrow V}{\lim}\mathsf{P}_\Lambda^\emptyset(\mathcal{C}_{F,d_F})$ exists and is equal to 
\begin{equation}\label{expression of the limit}
    \Big\langle  e^{-K_F}\prod_{x\in V(F)\setminus \mathring{V}(F)}\varphi_x^{d_F(x)}\Big\rangle_{\beta}^0 \:f(F,d_F).
\end{equation}
\end{proof}

\begin{proof}[Proof of Proposition \textup{\ref{prop: inf vol flat}}]
We first prove the existence of $\sfP^0$. Let $A \in \mathcal{A}_V$ be a cylinder event. Consider a sequence of finite sets of vertices $\Lambda$ converging to $V$. We show that $(\sfP_\Lambda^\emptyset[A])_{\Lambda}$ is Cauchy. The limit does not depend on the sequence since, by Lemma \ref{lem: flat measure cyl}, all limit points have the same values on events $\cC_{F,d_F}$, for any $F \subset \mathcal{P}_2(V)$ finite and $d_F \in V(F)^{2\NN}$.

Let $M \in \NN$, and consider a finite $F \subset \mathcal{P}_2(V)$ such that $A \in \cA_{V(F)}$. Note that the collection $\cC_F:=\{ \cC_{F',d_{F'}} : F' \subset F, d_{F'} \in V(F')^{2\NN} \}$ is stable under intersection. Thus, by inclusion--exclusion, there exists $N=N(M)>0$ such that
$
    \mathsf{P}^\emptyset_{\Lambda}\left[\cA \cap \mathcal{E}\right]
=
\sum_{n=1}^N p_{n,\Lambda},
$
where $\mathcal{E}=\left\lbrace \sup_{x \in V(F)}\Delta \n_F(x) \leq M\right\rbrace$, and $p_{n,\Lambda}$ are proportional to probabilities involving finite intersections and unions of events in $\cC_F$. By Lemma \ref{lem: flat measure cyl}, $p_{n,\Lambda} \rightarrow p_n$ as $\Lambda \uparrow V$. 

On the other hand, note that
\begin{equation*}
\begin{aligned}
\sup_{\Lambda : V(F) \subset \Lambda \subset V}\mathsf{P}^\emptyset_{\Lambda}\left[A \cap \mathcal{E}^c\right]
&\leq
\sup_{\Lambda : V(F) \subset \Lambda \subset V} \sum_{x \in V(F)} \mathsf{P}^\emptyset_{\Lambda}[ \Delta \n_E(x) > M]
\\
&\leq
\frac{1}M \sum_{x \in V(F)} \sup_{\Lambda : V(F) \subset \Lambda \subset V} \mathsf{E}^\emptyset_{\Lambda} [\Delta \n(x)]
\\
&\leq 
\frac{C(F,\beta)}M \underset{M \rightarrow \infty}{\longrightarrow} 0,
\end{aligned}
\end{equation*}
where the first inequality is by trivial inclusion and a union bound; the second inequality is by Markov's inequality; and the third inequality is by the moment bounds of $\Delta\n$ in Lemma \ref{degree control} together with the convergence of $\langle \varphi_x^{2k} \rangle_\Lambda^0$ as $\Lambda \uparrow V$. 

Hence, $(\mathsf{P}_{\Lambda}^\emptyset[A])_{\Lambda}$ is Cauchy. Define $\mathsf{P}^0 [A]$ to be this limit. Since cylinder events generate $\cA_V$, this extends to a unique probability measure on $\Omega_V$.

Now we establish the invariance under $\Gamma$. By \eqref{expression of the limit} and the fact that $\langle \cdot\rangle_{\beta}^0$ is invariant under $\Gamma$, we have that for all $x \in V$, $F\subset \mathcal{P}_2(V)$ finite, and $d_F\in V(F)^{2\mathbb N}$,
$
\mathsf{P}^0\left[\gamma_x\mathcal{C}_{F,d_F}\right]
    =
    \mathsf{P}^0\left[\mathcal{C}_{F,d_F}\right].
$
$\Gamma$-invariance then follows by a truncation argument as above.

In order to establish mixing, notice that (again by a truncation argument) it suffices to show that for two finite sets $F,F'\subset E$, 
\begin{equation}\label{ergod aim}
    \lim_{d_G(o,x)\rightarrow \infty}\mathsf{P}^0\left[\mathcal{C}_{F,d_F}\cap \gamma_x\mathcal{C}_{F',d_F'}\right]
    =
\mathsf{P}^0\left[\mathcal{C}_{F,d_F}\right]\mathsf{P}^0\left[\mathcal{C}_{F',d_{F'}}\right],
\end{equation}
where $\gamma_x \in \Gamma$ is such that $\gamma_x\cdot o = x$. Taking $x$ such that $d_G(o,x)$ is large enough, we notice that 
$
\mathcal{C}_{F,d_F}\cap \gamma_x\mathcal{C}_{F',d_{F'}}
=
\mathcal{C}_{F\cup(\gamma_x F'),d_F\cup d_{\gamma_x F'}}.
$
Thus,
\begin{equation}\label{ergod f}
     \lim_{d_G(o,x) \rightarrow \infty} f(F\cup(\gamma_x F'),d_F\cup d_{\gamma_x F'})
     =
     f(F,d_F)f(F',d_F').
\end{equation}

Moreover, by Proposition \ref{prop: free plus ergod}, the restriction of $\langle \cdot \rangle_{\beta}^0$ to even functions is ergodic under $\Gamma$. Hence.
\begin{equation}\label{ergod expectation}
\begin{aligned}
    \lim_{d_G(o,x)\rightarrow \infty} \Big\langle \prod_{y\in V(F\cup (\gamma_x F'))\setminus \mathring{V}(F\cup (\gamma_x F'))}\varphi_y^{(\Delta_F\cup\Delta_{\gamma_x F'})(x)} e^{-K_{F\cup(\gamma_x F')}}\Big\rangle_{\beta}^0
    \\
    =
    \Big\langle \prod_{y\in V(F)\setminus \mathring{V}(F)}\varphi_y^{\Delta_F(x)} e^{-K_{F}}\Big\rangle_{\beta}^0\Big\langle \prod_{y\in V(F')\setminus \mathring{V}(F')}\varphi_y^{\Delta_{F'}(x)} e^{-K_{F'}}\Big\rangle_{\beta}^0.
\end{aligned}    
\end{equation}
Combining \eqref{ergod f} and \eqref{ergod expectation} yields \eqref{ergod aim}. 
\end{proof}

\begin{rem} \label{rem: local limit}
Note that connections to the ghost vertex $\fg$ disappear in the weak limit. Indeed, for any sourceless current $\n$ with $\n_{o,\fg}>0$, consider the configuration $\n'_{o,\fg}=\n_{o,\fg}-1$ and $\n'_{x,y}=\n_{x,y}$ otherwise and notice that $w_{\beta,h_\Lambda(\fg)}^\emptyset(\n)\leq \beta h_\Lambda(\fg)_o w_{\beta,h_\Lambda(\fg)}^{\{o,\fg\}}(\n')$. Summing over all such $\n$ we obtain
$
\mathsf{P}^{\emptyset}_{\Lambda_\fg}[\n_{o,\fg}>0]
\leq
\beta h_\Lambda(\fg)_o \langle \phi_o \rangle^{\PLUS}_\Lambda,
$
which tends to $0$ by the integrability assumption on $J$. 

In addition, no ``long'' edges remain in the weak limit. More precisely, fix $\Delta \subset V$ and take any $n$ large enough such that $\Lambda:=B_n \supset \Delta$. We claim that
$
\sum_{y \in \Lambda \setminus \Delta} \mathsf{P}^\emptyset_{\Lambda_\fg} [  \n_{o,y} > 0 ]
\leq
C(\Delta),
$
where $C(\Delta)$ is uniform over $\Lambda$ and such that $C(\Delta) \rightarrow 0$ as $\Delta \uparrow V$. Indeed, proceeding as above, we have
\begin{equation*}
\sum_{y \in \Lambda \setminus \Delta} \mathsf{P}^\emptyset_{\Lambda_\fg} [  \n_{o,y} > 0 ]
\leq \sum_{y \in \Lambda \setminus \Delta} \beta J_{o,y}  \langle \phi_o \phi_y \rangle^{\PLUS}_{\Lambda}
\leq
\beta \sqrt{\langle \phi_o^2\rangle_{\Lambda}^{\PLUS}} \sum_{y \in \Lambda \setminus \Delta} J_{o,y}  \sqrt{\langle \phi_y^2 \rangle^{\PLUS}_{\Lambda}}
\end{equation*}
where we used the Cauchy--Schwarz inequality in the last inequality. By Corollary \textup{\ref{app cor: finite vol reg}} and Remark \textup{\ref{rem: boundary saviour}}  there exists $C > 0$ such that $\langle \phi_y^2 \rangle_\Lambda^{\PLUS} \leq \sqrt{C \max(\log( d_G(o,y)), 1)}$ uniformly in $\Lambda$. 
Hence, there exists $c_1 > 0$ such that, uniformly in $\Lambda$ and $\Delta$,
\begin{equation*}
\sum_{y \in \Lambda \setminus \Delta} \mathsf{P}^\emptyset_{\Lambda_\fg} [  \n_{o,y} > 0 ]
\leq
c_1 \sum_{y \in V\setminus \Delta} J_{o,y}  \sqrt[4]{C \max(\log( d_G(o,y)), 1)}=:C(\Delta)
\end{equation*}
After a summation by parts, the fact that $C(\Delta)\rightarrow 0$ is ensured by the integrability assumption on $J$, c.f. the calculation in Lemma \textup{\ref{lem: psi sum}}.

The above considerations imply that certain global properties, such as the degree of a vertex in the random current remaining even, is preserved in the infinite volume limit. This can be seen as a strengthening of weak convergence of these measures to a type of local limit.  
\end{rem}

\subsection{Infinite volume distributions on tangled currents}

For $\Lambda \subset V$ finite, equip $\Omega_{\Lambda_\fg,\Lambda}^\cT$ with its natural Borel $\sigma$-algebra $\cA_{\Lambda_\fg,\Lambda}^\cT$. Equip $\Omega^{\cT}_{V,V}$ with the $\sigma$-algebra generated by local (tangled) events, denoted by $\mathcal{A}_{V,V}^{\cT}$. We canonically identify measures on $\Omega_{\Lambda_\fg,\Lambda}^\cT$ with their push-forward under the natural injection $\cA_{\Lambda_\fg,\Lambda}^{\cT} \hookrightarrow \mathcal{A}_{V,V}^{\cT}$, by viewing the ghost as the contraction of $\Lambda^c$ to a single vertex. Recall also that we view $\Omega^{\mathcal{T}}_{\Lambda_\fg}$ (resp. $\Omega^{\mathcal{T}}_{\Lambda}$) as a subset of $\Omega^{\mathcal{T}}_{\Lambda_\fg,\Lambda}$ equipped with its natural Borel $\sigma$-algebra $\mathcal{A}^\mathcal{T}_{\Lambda_\fg}\subset \cA_{\Lambda_\fg,\Lambda}^\cT$ (resp.  $\mathcal{A}^\mathcal{T}_{\Lambda}\subset \cA_{\Lambda_\fg,\Lambda}^\cT$).

\begin{defn}
For $\Lambda \subset V$ finite, let $\bfP_{\Lambda_\fg, \Lambda}^{\emptyset,\emptyset}$, $\bfP_{\Lambda_\fg}^\emptyset$, and $\bfP_{\Lambda}^\emptyset$ be the probability measures on $\Omega_{\Lambda_\fg,\Lambda}^\cT$ defined, for $A \in \cA^{\cT}_{\Lambda_\fg,\Lambda}$, by
\begin{equation*}
\begin{aligned}
    \bfP_{\Lambda_\fg,\Lambda}^{\emptyset,\emptyset}[A]
    &:=
    \frac{\sum_{\n_1 \in \Omega_{\Lambda_\fg},\n_2 \in \Omega_\Lambda} \mathbbm{1}\{ \sn_1=\emptyset, \sn_2=\emptyset\} w_{\beta,h_\Lambda(\PLUS)}^\emptyset(\n_1)w_{\beta}^\emptyset(\n_2)\rho_{\n_1,\n_2}^{\emptyset,\emptyset}[A]}
    {\sum_{\n_1 \in \Omega_{\Lambda_\fg},\n_2 \in \Omega_\Lambda} \mathbbm{1}\{ \sn_1=\emptyset, \sn_2=\emptyset\}  w_{\beta,h_\Lambda(\PLUS)}^\emptyset(\n_1)w_{\beta}^\emptyset(\n_2)},
    \\
    \bfP_{\Lambda_\fg}^\emptyset [A]
    &:=
     \frac{\sum_{\n_1 \in \Omega_{\Lambda_\fg}} \mathbbm{1}\{ \sn_1=\emptyset\} w_{\beta,h_\Lambda(\PLUS)}^\emptyset(\n_1)\rho_{\n_1}^{\emptyset}[A]}
    {\sum_{\n_1 \in \Omega_{\Lambda_\fg}} \mathbbm{1}\{ \sn_1=\emptyset\}  w_{\beta,h_\Lambda(\PLUS)}^\emptyset(\n_1)},
    \\
    \bfP_{\Lambda}^\emptyset [A]
    &:=
     \frac{\sum_{\n_1 \in \Omega_{\Lambda}} \mathbbm{1}\{ \sn_1=\emptyset\} w_{\beta}^\emptyset(\n_1)\rho_{\n_1}^{\emptyset}[A]}
    {\sum_{\n_1 \in \Omega_{\Lambda}} \mathbbm{1}\{ \sn_1=\emptyset\}  w_{\beta}^\emptyset(\n_1)}.
\end{aligned}    
\end{equation*} 
\end{defn}

\begin{prop}\label{prop: existence mixing infinite volume tanglings}
The probability measures $\bfP^{\emptyset,\emptyset}_{\Lambda_\fg,\Lambda}$, $\bfP^\emptyset_{\Lambda_\fg}$, and $\bfP^\emptyset_\Lambda$ converge weakly as $\Lambda \uparrow V$, respectively, to probability measures $\bfP^{+,0}$, $\bfP^+$, $\bfP^0$ on $\Omega_{V,V}^\cT$. The convergence is in fact strong for cylinder events, i.e.\ for any $A \in \cA_{V,V}^\cT$ depending on finitely many edges and tanglings,
\begin{equation*}
\lim_{\Lambda \uparrow V} \bfP^{\emptyset,\emptyset}_{\Lambda_\fg, \Lambda}[A]
=
\bfP^{+,0}[A],
\quad
\lim_{\Lambda \uparrow V} \bfP^{\emptyset}_{\Lambda_\fg}[A]
=
\bfP^{+}[A],
\quad
\lim_{\Lambda \uparrow V} \bfP^{\emptyset}_{\Lambda}[A]
=
\bfP^{0}[A].
\end{equation*}
Furthermore, $\bfP^{+,0}$, $\bfP^+$, and $\bfP^0$ are invariant and mixing (hence, ergodic) under automorphisms in $\Gamma$. 
\end{prop}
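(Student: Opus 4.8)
The plan is to mimic the proof of Proposition~\ref{prop: inf vol flat} (the untangled case) and reduce everything to statements already established there, using the key structural fact from Theorem~\ref{thm: switching lemma} and the remark following it: the single-site tangling kernels $\rho^{\n_1,\n_2}_{z,A,B}$ (in particular $\rho^\n_{z,\emptyset}$) are \emph{local} objects depending only on $(\Delta\n_1(z),\Delta\n_2(z))$ and on $g,a$, not on $\Lambda$, $\beta$, $h$, or the rest of the currents. Consequently, a cylinder event $A \in \cA_V^\cT$ depending on finitely many edges and tanglings inside some finite $F \subset \mathcal{P}_2(V)$ (with vertex set $V(F)$) can be written, after conditioning on the underlying current $\n$, as a bounded function of $\n_F$ and of the degrees $(\Delta\n(x))_{x\in V(F)}$ averaged against a product of fixed local kernels. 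More precisely, I would first establish the analogue of Lemma~\ref{lem: flat measure cyl}: for the special cylinder events obtained by further specifying a tangling pattern inside each block of $V(F)$ on top of the event $\cC_{F,d_F}$, the probability under $\P^\emptyset_\Lambda$ (resp.\ $\P^\emptyset_{\Lambda_\fg}$, $\P^{\emptyset,\emptyset}_{\Lambda_\fg,\Lambda}$) factorizes as
\begin{equs}
\P^\emptyset_\Lambda[\,\cdot\,] = \Big\langle e^{-K_F} \prod_{x \in V(F)\setminus \mathring V(F)} \varphi_x^{d_F(x)} \Big\rangle^0_{\Lambda,\beta} \; \tilde f(F,d_F,\text{tangling data}),
\end{equs}
exactly as in \eqref{result}, because the single-site kernel at $x$ depends on $\Delta\n(x) = \Delta\n_F(x) + \Delta\n_{\Lambda\setminus F}(x)$ only through the outgoing degree, and this is absorbed into the $\langle \varphi^{\Delta\n_{\Lambda\setminus F}(x)+d_F(x)}\rangle_0$ factor precisely as before. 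The tangling-specific piece is a finite combinatorial sum over even partitions weighted by ratios of single-site moments, which is manifestly independent of $\Lambda$.

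Given this, the convergence along cylinder events follows verbatim from the argument in the proof of Proposition~\ref{prop: inf vol flat}: the local function $e^{-K_F}\prod \varphi_x^{d_F(x)}$ is integrable under $\langle\cdot\rangle^0_\beta$ by Young's inequality \eqref{ke bound} and the regularity/temperedness of the infinite volume measure, so the DLR equations give convergence of the $\langle\cdot\rangle^0_{\Lambda,\beta}$ factor; the tangling factor is already constant in $\Lambda$. For the general cylinder event one truncates on $\{\sup_{x \in V(F)}\Delta\n_F(x) \le M\}$ using inclusion–exclusion over the intersection-stable family $\cC_F$ (now enlarged by tangling patterns, still finite and stable under intersection), controls the tail $\{\Delta\n_F(x) > M\}$ by Markov's inequality together with the uniform degree bound of Lemma~\ref{degree control} (which gives $\sup_\Lambda \mathsf E^\emptyset_\Lambda[\Delta\n(x)] < \infty$ via convergence of $\langle\varphi_x^{2k}\rangle^0_\Lambda$), and concludes that the sequence is Cauchy on every cylinder event. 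This defines the limiting measures $\P^{+,0}, \P^+, \P^0$ on $(\Omega_V^\cT, \cA_V^\cT)$, and strong convergence on cylinder events is immediate from the construction.

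For $\Gamma$-invariance and mixing I would again follow the untangled template. $\Gamma$-invariance of the cylinder probabilities follows from the explicit formula above: the tangling factor $\tilde f$ depends only on the isomorphism class of $F$ with its degree/tangling decorations (by the symmetry of the single-site kernels under relabelling), and the $\langle\cdot\rangle^0_\beta$ factor is $\Gamma$-invariant since $\langle\cdot\rangle^0_\beta \in \cG_\Gamma(\beta)$ and $K_F, \prod\varphi_x^{d_F(x)}$ transform correctly under $\gamma_x$; a truncation argument then upgrades this to full $\Gamma$-invariance. For mixing, it suffices (again by truncation) to check the two-block factorization \eqref{ergod aim} for cylinder events with tangling decorations: when $d_G(o,x)$ is large the decorated events on $F$ and $\gamma_x F'$ become a single decorated event on the disjoint union $F \cup \gamma_x F'$, the tangling factor $\tilde f$ multiplies exactly (disjoint blocks), and the remaining factor is $\langle \prod_{y} \varphi_y^{\cdots} e^{-K_{F\cup\gamma_x F'}}\rangle^0_\beta$, which converges to the product of the two individual expectations by Proposition~\ref{prop: free plus ergod} (ergodicity of $\langle\cdot\rangle^0_\beta$ on even functions — note the function involved is even because all the relevant degrees $d_F(x)$ are even by construction of $\cC_{F,d_F}$, and $e^{-K_F}$ is even). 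Combining these gives mixing for $\P^0$; the cases $\P^+$ and $\P^{+,0}$ are identical except that one works with $\langle\cdot\rangle^+_\beta$-type expectations and invokes the mixing established in Proposition~\ref{prop: extremal and ergodic} / Remark~\ref{rem: local limit}, with the ghost-edge and long-edge contributions vanishing in the limit exactly as spelled out in Remark~\ref{rem: local limit}.

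The main obstacle I anticipate is \emph{not} any of the limiting arguments — those are faithful copies of the $\varphi^4$-current case — but rather the bookkeeping needed to justify that the tangling data genuinely factors out as a $\Lambda$-independent constant. One must be careful that the cylinder $\sigma$-algebra $\cA_V^\cT$ is generated by events that are jointly local in edges \emph{and} tanglings, that such a local tangled event on $V(F)$ really only constrains $(\n_F, (\Delta\n(x))_{x\in V(F)})$ together with the partitions $\ct_x$ for $x \in V(F)$, and that conditioning on the current $\n$ the law of $(\ct_x)_{x \in V(F)}$ under $\P^\emptyset_\Lambda$ is exactly $\bigotimes_{x\in V(F)} \rho^\n_{x,\emptyset}$ — which is where the locality remark after Theorem~\ref{thm: switching lemma} is doing the essential work. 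A secondary subtlety for $\P^+$ and $\P^{+,0}$ is that the ghost field contributes to the degrees $\Delta\n_1(z)$ (through $\n_{z,\fg}$), so one needs Remark~\ref{rem: local limit} to ensure ghost connections disappear in the limit, so that the limiting tangling kernels are governed only by the genuine (non-ghost) degrees; once that is in hand the argument closes.
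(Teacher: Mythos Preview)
Your overall strategy — exploit the locality of the tangling kernels to reduce everything to Proposition~\ref{prop: inf vol flat} — is exactly the paper's, but the paper executes it more directly, and your explicit factorization contains an error.

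The paper's argument is essentially one sentence: $\P^\emptyset_\Lambda$ is obtained by first sampling $\n$ from $\sfP^\emptyset_\Lambda$ and then, conditionally on $\n$, sampling the tanglings independently at each vertex from $\rho^\n_{x,\emptyset}$, which depends only on $\Delta\n(x)$. One then \emph{defines} $\P^0$ by the same two-step procedure with $\sfP^0$ in place of $\sfP^\emptyset_\Lambda$; convergence, $\Gamma$-invariance and mixing are inherited from $\sfP^0$ because integrating out the tanglings turns any tangled cylinder event into a bounded function of $\n_F$ and $(\Delta\n(x))_{x\in V(F)}$, and Proposition~\ref{prop: inf vol flat} together with its degree-truncation already handles such functions. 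The double-current case is identical with $\sfP^+\otimes\sfP^0$.

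Your displayed factorization, by contrast, is not correct as written. The event $\cC_{F,d_F}$ pins down only $\Delta\n_F(x)$, whereas the tangling kernel at $x$ depends on the \emph{full} degree $\Delta\n(x)=d_F(x)+\Delta\n_{\Lambda\setminus F}(x)$; indeed the block $\cB_x(\n)$ itself has that size, so ``specifying a tangling pattern at $x$ on top of $\cC_{F,d_F}$'' is not even well-posed without fixing $\Delta\n(x)$. Furthermore, $\rho^\n_{x,\emptyset}$ is \emph{not} a ratio of single-site moments $\langle\varphi^k\rangle_0$ that could be absorbed into the weight as you claim --- it is a genuinely new limit object built from the Griffiths--Simon approximation (Section~\ref{section: proof of switching lemma}) --- so nothing in the factor $\langle\varphi^{\Delta\n_{\Lambda\setminus F}(x)+d_F(x)}\rangle_0$ ``absorbs'' it. To salvage your route you would have to further condition on the full degrees $(\Delta\n(x))_{x\in V(F)}$, obtain the $\Lambda$-dependent joint law of $(\n_F,(\Delta\n(x)))$, and prove convergence of \emph{that} (via the degree truncation of Lemma~\ref{degree control} and the long-edge estimate of Remark~\ref{rem: local limit}). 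This can be done, but it is precisely what the paper's coupling argument packages in one line.
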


\begin{proof} 
Without loss of generality, we prove the result for $\bfP^{+,0}$. First observe that $\bfP_{\Lambda_\fg, \Lambda}^{\emptyset,\emptyset}$ is obtained via the following coupling: first, sample $(\n_1,\n_2)$ according to the measure $\mathsf{P}_{\Lambda_\fg}^\emptyset\otimes\mathsf{P}_{\Lambda}^\emptyset$. Then, independently on each block $\mathcal{B}_x^{\emptyset,\emptyset}(\n_1,\n_2)$ for $x\in \Lambda$, sample a tangling $\ct_x$ according to the measure $\rho^{\emptyset,\emptyset}_{x,\n_1,\n_2}$ constructed in Proposition \ref{thm: switching lemma}. Thus, to define $\bfP^{+,0}$, we extend the coupling to infinite volume in the obvious way by using Proposition \ref{prop: inf vol flat}. 

The $\Gamma$-invariance and mixing is a direct consequence of Proposition \ref{prop: inf vol flat} and the construction via the coupling. 
\end{proof}


 \subsection{Uniqueness of the infinite cluster}

We prove that samples under the measures $\bfP^{+,0}$, $\bfP^+$, and $\bfP^0$, viewed as their associated multigraphs (as defined in Section \ref{subsec: tangled currents}) $\cH^{\emptyset,\emptyset}(\n_1,\n_2,\ct)$, $\mathcal{H}^{\emptyset}(\n_1,\ct_1)$, and $\mathcal{H}^{\emptyset}(\n_2,\ct_2)$ respectively, almost surely have at most one infinite cluster. We start by proving an insertion tolerance property for local events that are in $\cA_V$, i.e.\ do not depend on the tanglings. This, together with Proposition~\ref{prop: pos prob}, implies a (weak) insertion tolerance property for local events in $\cA_{V,V}^\cT$ (resp. $\cA_V^{\cT}$).

\begin{defn}
 For $\Lambda \subset V$ finite, define $\Phi_\Lambda:\Omega_{V}\times \Omega_V \rightarrow \Omega_{V}\times \Omega_V$ as the map $(\n_1,\n_2) \mapsto (\n_1', \n_2')$, where, for $x,y \in V$,
 \begin{equation*}
(\n_1',\n_2')\{x,y\}
=
\begin{cases}
(2,0), \qquad &\text{ if } (\n_1,\n_2)\{x,y\} = (0,0), \, x,y\in \Lambda \text{ and } J_{x,y} > 0,
\\
(\n_1,\n_2)\{x,y\}, \qquad &\text{ otherwise}.
\end{cases}
\end{equation*}
In particular, $\n_2'=\n_2$. We similarly define a map $\Omega_V\rightarrow \Omega_V$ that we still denote by $\Phi_\Lambda$.
\end{defn}

\begin{prop}\label{prop: insertion tolerance}
For $\Lambda \subset V$ finite, there exists $c=c(\Lambda, \beta, J) > 0$ such that, for all $A \in \cA_V$,
\begin{align}\label{eq: insertion tolerance}
        \mathsf{P}^+ \otimes \sfP^0[\Phi_\Lambda(A)]
    &\geq
    c\mathsf{P}^+ \otimes \sfP^0[A],
    \\\mathsf{P}^+[\Phi_\Lambda(A)]&\geq c\mathsf{P}^+[A],\label{eq: inser tol 2}
    \\\mathsf{P}^0[\Phi_\Lambda(A)]&\geq c\mathsf{P}^0[A].\label{eq: inser tol 3}
\end{align}
\end{prop}

\begin{proof} We only prove \eqref{eq: insertion tolerance} as the proof of \eqref{eq: inser tol 2} and \eqref{eq: inser tol 3} follows by similar considerations.
It is sufficient to consider $A$ depending only on finitely many edges. Recall that $B_n$ is the ball of radius $n$ centred on the fixed origin $o \in V$. Let $N$ be such that $A \in \cA_{B_N}$. We show the estimate \eqref{eq: insertion tolerance} holds for the measures $\mathsf{P}_{B_n \sqcup \{\fg\},}^\emptyset \otimes \sfP^\emptyset_{B_n}$ with constant uniform in $n > N$. The result then follows by Proposition \ref{prop: inf vol flat}. Note that for every $(\n_1',\n_2')$ we have $|\Phi_{\Lambda}^{-1}(\n_1',\n_2')|\leq 2^{|\mathcal{P}_2(\Lambda)|}$, hence 
\begin{equation}\label{piece 0}
\begin{aligned}
    \mathsf{P}_{B_n \sqcup \{\fg\},}^\emptyset \otimes \sfP^\emptyset_{B_n}[\Phi_\Lambda(A)]
     \geq 
     \dfrac{2^{-|\mathcal{P}_2(\Lambda)|}} {Z_{B_n\sqcup \{\fg\},B_n}^{\emptyset,\emptyset }}
     \sum_{\substack{\n_1 \in \Omega_{B_n \sqcup \{\fg\}}, \n_2 \in \Omega_{B_n} \\ \sn_1=\emptyset,\sn_2=\emptyset \\
     \n\in A}} w_{\beta}^\emptyset(\n_1')w_{\beta}^\emptyset(\n_2'),
\end{aligned}     
\end{equation}
where we recall $\Phi_{\Lambda}(\n_1,\n_2)=(\n_1',\n_2')$, and $\n=\n_1+\n_2$. By definition,
\begin{equation}\label{piece 1}
    w_{\beta}^\emptyset(\n_1')w_{\beta}^\emptyset(\n_2')
    =
    w_{\beta}^\emptyset(\n_1)w_{\beta}^\emptyset(\n_2) 
    \prod_{x\in \Lambda}\frac{\langle \varphi^{\Delta\n_1'(x)}\rangle_{0}}{\langle \varphi^{\Delta\n_1(x)}\rangle_{0}} \,
    \prod_{\lbrace x,y\rbrace \subset \Lambda}  \frac{(J_{x,y})^2}{2},
\end{equation}
where the final product is over all $x,y\in \Lambda$ such that $J_{x,y}>0$ and $(\n_1,\n_2)\lbrace x,y\rbrace=(0,0)$.
By Proposition \ref{bound single-site}, there exists $c_1>0$ such that, for all $k\geq 0$, $\dfrac{\langle \varphi_x^{2k+2}\rangle_{0}}{\langle \varphi_x^{2k}\rangle_{0}} \geq c_1$. Since $\Delta\n_1\leq\Delta\n_1'(x)\leq \Delta\n_1(x)+2|\Lambda|$, we conclude that
\begin{equation}\label{piece 2}
    \prod_{x\in \Lambda}\frac{\langle \varphi^{\Delta\n_1'(x)}\rangle_{0}}{\langle \varphi^{\Delta\n_1(x)}\rangle_{0}}\geq 
    \prod_{x\in \Lambda} \min\left(1, c_1^{|\Lambda|}\right)=
    \min\left(1, c_1^{|\Lambda|^2}\right)>0.
\end{equation}
Inserting \eqref{piece 1} and \eqref{piece 2} into \eqref{piece 0}, we get the existence of $c=c(\Lambda,\beta,J)$ such that, for all $n>N$, 
\begin{equation*}
    \mathsf{P}_{B_n \sqcup \{\fg\},}^\emptyset \otimes \sfP^\emptyset_{B_n}[\Phi_\Lambda(A)]
    \geq 
    c \mathsf{P}_{B_n \sqcup \{\fg\}}^\emptyset \otimes \sfP^\emptyset_{B_n}[A].
\end{equation*}
Taking $n\to \infty$ concludes the proof.
\end{proof}

\begin{defn}
 For $\Lambda \subset V$ finite, define $\Phi_\Lambda^\cT:\Omega_{V,V}^\cT \rightarrow \Omega_{V,V}^\cT$ as the map $(\n_1,\n_2,\ct) \mapsto (\n'_1,\n_2',\ct')$, given by $(\n_1',\n_2')=\Phi_\Lambda(\n_1,\n_2)$ and 
 \begin{equation*}
\ct_x'
=
\begin{cases}
\{\cB_x^{\emptyset,\emptyset}(\n'_1,\n_2')\}, \qquad &\text{ if } x\in \Lambda,
\\
\ct_x, \qquad &\text{ otherwise},
\end{cases}
\end{equation*}
i.e.\ in $(\n'_1,\n_2',\ct')$, the event $\textup{ECT}(x)$ happens for all $x\in \Lambda$. We similarly define a map $\Omega^\mathcal{T}_V\rightarrow \Omega^\mathcal{T}_V$ that we still denote by $\Phi^\mathcal{T}_\Lambda$.
\end{defn}

\begin{prop}\label{prop: insertion tolerance with tanglings}
If $A \in \cA_V$ is such that $\bfP^{+,0}[A]>0$, then $\bfP^{+,0}[\Phi_\Lambda^\cT(A)]>0$ for every $\Lambda \subset V$ finite. A similar result holds for $\bfP^+$ and $\bfP^0$.
\end{prop}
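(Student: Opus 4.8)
The plan is to combine the current-level insertion tolerance of Proposition~\ref{prop: insertion tolerance} with the strict positivity of admissible tanglings of Proposition~\ref{prop: pos prob}, exploiting the explicit coupling used to construct $\P^{+,0}$ in Proposition~\ref{prop: existence mixing infinite volume tanglings}. It suffices to treat events $A$ depending on finitely many edges with $\P^{+,0}[A]>0$. First I would unwind the definition of $\Phi_\Lambda^\cT$ and record the bookkeeping identity
\begin{equs}
\Phi_\Lambda^\cT(A)
=
\big\{ (\n',\ct') : \n'\in\Phi_\Lambda(A),\ \ct'_x=\{\cB_x(\n')\}\ \text{for all}\ x\in\Lambda \big\}.
\end{equs}
The point is that $\Phi_\Lambda$ only modifies edges inside $\Lambda$, hence leaves the degree $\Delta\n(x)$ and the block $\cB_x(\n)$ unchanged for $x\notin\Lambda$; so a tangling outside $\Lambda$ of the modified current is also a tangling of the original one, which lets one build preimages freely (the reverse inclusion being immediate from the definition of $\Phi_\Lambda^\cT$).

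Next I would use the coupling description of $\P^{+,0}$: sample $(\n_1,\n_2)$ from $\sfP^+\otimes\sfP^0$, then sample the tanglings independently over blocks with $\ct_x\sim\rho^{\n_1,\n_2}_{x,\emptyset,\emptyset}$. Setting $\n=\n_1+\n_2$ and noting that the event $\textup{ECT}(x)$ is the one-block partition $\{\cB_x(\n)\}$, which lies in $\cT_{\n,\emptyset,\emptyset}(x)$ because $\Delta\n_1(x)$ and $\Delta\n_2(x)$ are even (the source sets being empty), this gives
\begin{equs}
\P^{+,0}\big[\Phi_\Lambda^\cT(A)\big]
=
\mathbb{E}\Big[\mathbbm{1}\{\n\in\Phi_\Lambda(A)\}\prod_{x\in\Lambda}\rho^{\n_1,\n_2}_{x,\emptyset,\emptyset}\big(\{\cB_x(\n)\}\big)\Big],
\end{equs}
the expectation being under $\sfP^+\otimes\sfP^0$. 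Since $\mathbb{E}[\Delta\n(x)]<\infty$ for each $x$ — this follows from Lemma~\ref{degree control} in finite volume and passage to the limit, using convergence of the moments $\langle\varphi_x^{2k}\rangle^0_\Lambda$ and $\langle\varphi_x^{2k}\rangle^{\PLUS}_\Lambda$ together with regularity of the infinite-volume measures — the degree $\Delta\n(x)$ is a.s.\ finite, and Proposition~\ref{prop: pos prob} applied with $M=\Delta\n(x)\vee 1$ shows each factor $\rho^{\n_1,\n_2}_{x,\emptyset,\emptyset}(\{\cB_x(\n)\})$ is a.s.\ strictly positive. Hence the integrand above is nonnegative and strictly positive on $\{\n\in\Phi_\Lambda(A)\}$.

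To finish, Proposition~\ref{prop: insertion tolerance} provides $c=c(\Lambda,\beta,J)>0$ with $\sfP^+\otimes\sfP^0[\Phi_\Lambda(A)]\geq c\,\sfP^+\otimes\sfP^0[A]=c\,\P^{+,0}[A]>0$, so the integrand is strictly positive on an event of positive probability and therefore has positive expectation, i.e.\ $\P^{+,0}[\Phi_\Lambda^\cT(A)]>0$. The hard part here is not analytic — all the needed probabilistic estimates are already available — but rather getting the bookkeeping right: the identity for $\Phi_\Lambda^\cT(A)$ and the verification that $\textup{ECT}(x)$ is an admissible tangling so that Proposition~\ref{prop: pos prob} is applicable. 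If a uniform lower bound were wanted one could first intersect $A$ with a bounded-degree event $\{\Delta\n(x)\le M'\ \forall x\in\Lambda\}$ (which has probability close to $1$ for large $M'$ by the same moment bound), at the cost of a factor $\kappa(M'+2|\Lambda|)^{|\Lambda|}$ from Proposition~\ref{prop: pos prob}; this refinement is unnecessary for the qualitative claim.
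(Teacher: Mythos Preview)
Your proof is correct and follows the same approach as the paper: you use the coupling construction of $\P^{+,0}$ (sample $(\n_1,\n_2)\sim\sfP^+\otimes\sfP^0$, then tanglings independently from $\rho^{\n_1,\n_2}_{x,\emptyset,\emptyset}$), invoke Proposition~\ref{prop: insertion tolerance} to get positive probability for the current part $\Phi_\Lambda(A)$, and Proposition~\ref{prop: pos prob} to ensure the ECT tangling at each $x\in\Lambda$ has positive conditional probability. The paper's proof is a two-sentence sketch of exactly this; your version just spells out the bookkeeping identity for $\Phi_\Lambda^\cT(A)$ and the admissibility of the one-block partition, and the opening reduction to cylinder events is unnecessary since your argument already works for arbitrary $A\in\cA_V$.
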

\begin{proof}
Recall that, conditionally on $\n$, the tanglings $\ct=(\ct_x)_{x\in V}$ are independent and the distribution of $\ct_x$ only depends on $(\Delta\n_1(x),\Delta\n_2(x))$. Therefore, the result readily follows from Propositions~\ref{prop: pos prob} and \ref{prop: insertion tolerance}. 
\end{proof}

Let $\mathfrak{N}$ denote the random variable that counts the number of infinite clusters in $\mathcal{H}(\n_1,\n_2,\ct)$ (resp. $\mathcal{H}(\n_1,\ct_1)$ or $\mathcal{H}(\n_2,\ct_2)$). Define for $\ell\in \mathbb N\cup \lbrace \infty\rbrace$,
$
    \mathcal{E}_\ell
    :=
    \lbrace \mathfrak{N}=\ell\rbrace.
$
Note that by ergodicity one has that for all $\ell$, $\bfP^{+,0}[\mathcal{E}_\ell]\in \lbrace 0,1\rbrace$. The following result establishes that almost surely there exists at most one infinite cluster.

\begin{prop}[Uniqueness of infinite cluster]\label{prop: uniqueness of inf cluster} 
For $\bfP\in\{\bfP^{+,0},\bfP^+,\bfP^0\}$ we have
\begin{equation*}
    \bfP[\mathcal{E}_0]+\bfP[\mathcal{E}_1]=1.
\end{equation*}
\end{prop}

\begin{proof}
We only prove the result for $\bfP^{+,0}$, as the proof for $\bfP^{+}$ and $\bfP^0$ is analogous. We start by constructing a convenient sequence of finite domains $(\Lambda_N)_{N}$ as follows. For each $x\in V$, consider a sequence of vertices $\pi_{o,x}=(u_0=o,u_1,\cdots,u_k=x)$ with $J_{u_i,u_{i+1}}>0$ for all $0\leq i\leq k-1$, which exists by the irreducibility assumption. We then consider the domains $\Lambda_N:=\bigcup_{x\in B_N} \pi_x$.

We first establish that $\bfP^{+,0}[\mathcal{E}_\ell]=0$ for $\ell \in \mathbb N\setminus\lbrace 0,1\rbrace$. Let $\ell \in \mathbb N$ with $\ell\geq 2$ and assume by contradiction that $\bfP^{+,0}[\mathcal{E}_\ell]=1$. Let $\mathcal{I}_N = \mathcal{I}_N(\ell)\subset \cE_\ell$ be the event that the $\ell$ infinite clusters intersect the blocks of the vertices in $B_N$. Observe that $\bfP^{+,0}[\mathcal{I}_N]\underset{N\rightarrow \infty}\longrightarrow 1$, so in particular there exists $N$ such that $\bfP^{+,0}[\cI_N]>0$. By Proposition~\ref{prop: insertion tolerance with tanglings}, we have
$
\bfP[\Phi_{\Lambda_N}^\cT(\cI_N)]>0.
$
However, one can easily see that $\Phi_{\Lambda_N}^\cT(\cI_N)\subset \cE_1$, and thus $\bfP^{+,0}[\mathcal{E}_1]>0$, which is a contradiction with $\bfP^{+,0}[\cE_\ell]=1$, as desired.

Now we establish $\bfP^{+,0}[\mathcal{E}_\infty]=0$. 
Assume for a contradiction that $\bfP^{+,0}[\mathcal{E}_\infty]=1$. Let $\mathcal{J}_N$ be the event that at least three distinct infinite clusters intersect the blocks of the vertices in $B_N$. Note that
$\bfP^{+,0}[\mathcal{J}_N]\underset{N\rightarrow\infty}\longrightarrow 1$,
hence there is some $N$, fixed throughout, such that
$
\bfP^{+,0}[\mathcal{J}_N]>0.
$
Given $\Lambda\subset V$ and a double tangled current $(\n_1,\n_2,\ct)$, we denote by $\mathcal{L}(\Lambda,\n_1,\n_2,\ct)$ the lift of $\Lambda$ in $\cH(\n_1,\n_2,\ct)$, i.e. the subgraph of $\cH(\n_1,\n_2,\ct)$ spanned by $\cup_{x\in\Lambda} \cB_x(\n_1,\n_2)$. 
We introduce the following notion of coarse trifurcation. Let $\mathsf{CT}_o$ denote the set of $(\n,\ct)$ such that:
\begin{enumerate}
    \item[-] $\n(x,y)>0$ for all edges $xy\in \Lambda_N$.
    \item[-] ECT$(x)$ happens for all vertices  $x\in \Lambda_N$.
    \item[-] The cluster of the origin in $\mathcal{H}(\n_1,\n_2,\ct)$ breaks into at least three distinct infinite clusters by removing $\cL(\Lambda_N,\n_1,\n_2,\ct)$. 
\end{enumerate}
One can easily verify that $\Phi_{\Lambda_N}(\cJ_N)\subset \mathsf{CT}_o$, hence by Proposition~\ref{prop: insertion tolerance with tanglings} we have
\begin{equation}\label{eq:trifurcation positive}
    \bfP^{+,0}[\mathsf{CT}_o]
    >
    0.
\end{equation}
Furthermore, note that by $\Gamma$-invariance, we have $\bfP^{+,0}[\mathsf{CT}_0]=\bfP^{+,0}[\mathsf{CT}_x]$ for all $x \in V$, where $\mathsf{CT}_x=\gamma_x \mathsf{CT}_0$, and $\gamma_x\in \Gamma$ is an automorphism such that $\gamma_x o=x$. A vertex $x\in V$ is called a coarse trifurcation if $\mathsf{CT}_x$ occurs.

For each $x\in V$ denote $\Lambda_N(x)=\gamma_x\Lambda_N$. Let $n \in \NN$ and define $S_n\subset B_n$ to be a set of (sparse) vertices such that $\Lambda_N(x)\cap\Lambda_{N}(y)=\emptyset$ for all distinct $x,y\in S_n$, and $S_n$ is maximal with respect to this property. Then 
$
|S_n|\geq \frac{|B_n|}{|B_{K}|},
$
where $K>N$ is large enough so that $\Lambda_N\subset B_K$.
Let $T_n=T(S_n)$ denote the set of coarse trifurcations in $S_n$, and $L_n$ denote the set of edges in $\cH(\n_1,\n_2,\ct)$ between $\cL(B_n,\n_1,\n_2,\ct)$ and $\cL(V\setminus B_n,\n_1,\n_2,\ct)$. We now consider the graph obtained by identifying all vertices of $\mathcal{L}(\Lambda_N(t),\n_1,\n_2,\ct)$ into a single vertex for each $t\in T_n$. It is standard that one can construct a subgraph of this new graph that is a forest, where every leaf corresponds to an element of $L_n$, while every element of $T_n$ corresponds to a vertex of degree at least three. See e.g.\ \cite[Theorem 7.6]{LyonsPeres}.
Therefore, we have
\begin{equation}\label{eq:T_n<L_n}
|T_n|\leq|L_n|.
\end{equation} 
On the one hand, by \eqref{eq:trifurcation positive} and $\Gamma$-invariance, 
\begin{equation}\label{eq:bound on T_n}
    \mathbf{E}^{+,0}[|T_n|]=|S_n|\bfP^{+,0}[\mathsf{CT}_0]\geq c(N)|B_n|,
\end{equation}
for some constant $c(N)>0$.
On the other hand, for every $m>0$ we have, by $\Gamma$-invariance,
\begin{equation*}
\begin{aligned}
\mathbf{E}^{+,0}[|L_n|]
&=
\sum_{\substack{x \in B_n\\y \in V \setminus B_n}} \mathbf{E}^{+,0}[\n_{x,y}]  
\leq
\sum_{x\in \partial^m_{\textup{in}} B_n} \mathbf{E}^{+,0}[\Delta\n(x)]+\sum_{\substack{x\in B_n\setminus \partial_{\textup{in}}^m B_n}} \mathbf{E}^{+,0}[\Delta^m\n(x)]\\
&\leq |\partial^m_{\textup{in}} B_n|\mathbf{E}^{+,0}[\Delta\n(o)] + |B_n|\mathbf{E}^{+,0}[\Delta^m\n(o)],
\end{aligned}
\end{equation*}
where $\partial^m_{\textup{in}} B_n:=\{x\in B_n : d_G(x, V\setminus B_n)\leq m\}$ and $\Delta^m\n(x):=\sum_{y\in V : d_G(x,y)>m} \n_{x,y}$. Notice that $|\partial^m_{\textup{in}} B_n|\leq |B_{m}| |\partial B_n|$.
Moreover, recall that $c_0:=\mathbf{E}^{+,0}[\Delta\n(o)]<\infty$ by Lemma~\ref{degree control}. In particular, $c_m:=\mathbf{E}^{+,0}[\Delta^m\n(o)]\to 0$ as $m\to \infty$. Altogether, we have
\begin{equation}\label{eq:bound on L_n}
\mathbf{E}^{+,0}[|L_n|]\leq c_0|B_m||\partial B_n|+c_m|B_n|
\end{equation}
Combining \eqref{eq:T_n<L_n}, \eqref{eq:bound on T_n} and \eqref{eq:bound on L_n}, we obtain 
\begin{equation*}
c(N)\leq c_0|B_m|\frac{|\partial B_n|}{|B_n|} + c_m.
\end{equation*}
Letting $n\to\infty$ and then $m\to\infty$, we find that $c(N)\leq 0$, thus obtaining the desired contradiction.
\end{proof}
\begin{rem}\label{rem: at most two ends}
As a by-product of the above proof we also obtain that the infinite cluster (if it exists) in $\mathcal{H}(\n_1,\n_2,\ct)$ has at most two ends almost surely. Indeed, if the infinite cluster has at least three ends, one can proceed as above to prove that coarse trifurcations happen with positive probability and then obtain a contradiction.
\end{rem}

 \section{$\Gamma$-invariant Gibbs measures} \label{sec: gamma inv }

In this section, we prove Theorem~\ref{thm:main}. Fix $\beta>0$, which we may sometimes omit from the notation without risk of confusion. By Theorem~\ref{Lebowitz}, we first need to prove that the condition $4$ therein holds. A straightforward argument then yields that under \eqref{eq: condition for continuity}, $|\cG(\beta)|=1$ to complete the proof. We therefore make the following assumption for the rest of this section, and in the end, we derive a contradiction.

\begin{ass} 
Assume that there exist $x,y\in V$ such that $J_{x,y}>0$ and
\begin{equation}\label{ass:contradiction} \tag{A}
    \langle \varphi_x \varphi_y  \rangle^+_\beta
	\neq
	\langle \varphi_x \varphi_y  \rangle^0_\beta.
\end{equation}
\end{ass} 

Under this assumption, we show that the following event happens with positive probability.

 \begin{defn}[Bridges]
We say that a pair $x,y\in V$ is a bridge for a double tangled current $(\n_1,\n_2,\ct)$ (resp. a tangled current $(\n,\ct)$) if there exist two vertices $\overline{x}$ and $\overline{y}$ in the blocks of $x$ and $y$ in the multigraph $\cH(\n_1,\n_2,\ct)$ (resp. $\cH(\n,\ct)$) such that there is exactly one edge $e$ between $\overline{x}$ and $\overline{y}$, and furthermore the cluster of $\overline{x},\overline{y}$ splits into two disjoint infinite clusters after removing $e$. We denote the event that $\lbrace x,y\rbrace$ is a bridge by $\cA_{x,y}$.
 \end{defn}
 
In what follows, we write $\{A\longleftrightarrow B \text{ in }\Lambda\}$ to denote the event that $A$ is connected to $B$ in $\mathcal{H}_{\Lambda}(\n_1,\n_2,\ct)$ (resp. $\mathcal{H}_{\Lambda}(\n,\ct)$). To simplify the notation, for $x,y\in V$, we write $x\longleftrightarrow y$ instead of $\mathcal{B}_x\longleftrightarrow \mathcal{B}_y$. Moreover, we identify a moment $A\in \mathcal{M}(V)$ with the corresponding multiset it defines, where each $x\in V$ appears as many times as the value of $A_x$, e.g.\ we identify the moment $A$ defined as $A_x=A_y=2$ and $A_z=0$ otherwise, with $xxyy$.
 
\begin{prop}\label{prop:bridges}
Assume \eqref{ass:contradiction}. Then, 
 \begin{equation*}
 \bfP^{+,0}_{\beta}[\cA_{x,y}]>0
 \quad
 \text{ and }
 \quad
 \bfP^{+}_{\beta}[\cA_{x,y}]>0.
 \end{equation*}
\end{prop}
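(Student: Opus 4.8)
The strategy is to use Assumption \eqref{ass:contradiction} together with the probabilistic switching lemma (Corollary~\ref{thm: probabilistic switching}) to manufacture, at the level of finite volumes, a current configuration in which two vertices $x,y$ sit at the end of a ``lonely'' edge whose removal disconnects them, and then to push this property to the infinite-volume measures $\P^{+,0}_\beta$ and $\P^+_\beta$ using the weak convergence of Proposition~\ref{prop: existence mixing infinite volume tanglings} together with the uniqueness of the infinite cluster (Proposition~\ref{prop: uniqueness of inf cluster}) and the insertion-tolerance estimates (Propositions~\ref{prop: insertion tolerance} and~\ref{prop: insertion tolerance with tanglings}). The key point is that \eqref{ass:contradiction} forces a genuine ``source-connectivity defect'': by the Ginibre inequality (Proposition~\ref{Ginibre inequality}) and Lemma~\ref{lem: inequality even spin} we know $\langle \varphi_x\varphi_y\rangle^+_\beta \geq \langle \varphi_x\varphi_y\rangle^0_\beta$ with strict inequality (similarly for $\varphi_x^2$), and by the switching lemma this difference is exactly the probability, under a suitable double-current measure with sources $\{x,y\}$ coupled to a sourceless current, that $x$ and $y$ are \emph{not} connected in the combined tangled current.

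\textbf{Step 1: translate the assumption via switching.} First I would apply Corollary~\ref{thm: probabilistic switching} (and Proposition~\ref{prop: first consequence sl}) in finite volume with source set $A=xy$ (or $A=xx$) and $B=\emptyset$, together with the source-switching that expresses
$
1-\dfrac{\langle \varphi_x\varphi_y\rangle^2_{\Lambda,\beta}}{\langle\varphi_x\varphi_y \cdot \varphi_x\varphi_y\rangle_{\Lambda,\beta}}
$
— or rather the appropriate ratio comparing plus and free boundary conditions — as a probability that in a double (tangled) current the sources of one copy fail to be paired inside the connectivity structure. Concretely, using the second inequality in Lemma~\ref{lem: inequality even spin} and the characterization of its failure, Assumption \eqref{ass:contradiction} gives a uniform-in-$\Lambda$ (along a subsequence) lower bound on the $\P^{+,\emptyset}_{\Lambda_\fg,\Lambda}$-probability of the event that, in the tangled current $(\n,\ct)$ with sources at $x,y$ from the plus-part, the blocks $\cB_x$ and $\cB_y$ lie in \emph{distinct} clusters of $\cH(\n,\ct)$. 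Passing to the infinite-volume limit via Proposition~\ref{prop: existence mixing infinite volume tanglings} (and using that the relevant event is, up to an arbitrarily small error controlled by Remark~\ref{rem: local limit}, local), we obtain that under $\P^{+,0}_\beta$ the source blocks $\cB_x,\cB_y$ are disconnected with positive probability.

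\textbf{Step 2: from disconnection of sources to a bridge.} On this positive-probability event, $x$ and $y$ carry odd degree in $\cH(\n,\ct)$ but lie in different clusters; hence at least one of them — say both, after using insertion tolerance to add an edge $\{x,y\}$ and the tangling $\textup{ECT}$ at $x,y$ via $\Phi^\cT_{\{x,y\}}$ (Proposition~\ref{prop: insertion tolerance with tanglings}) — lies on an infinite cluster. More precisely: an odd-degree vertex in a sourceless-plus-two-sources current must be connected to $\fg$ or to the other source; since it is \emph{not} connected to the other source, and since connections to $\fg$ disappear in the infinite-volume limit (Remark~\ref{rem: local limit}), each of $\cB_x$, $\cB_y$ must meet an infinite cluster, and by uniqueness of the infinite cluster (Proposition~\ref{prop: uniqueness of inf cluster}) — wait, this would say they are connected, a contradiction. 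The correct resolution, which is the real content, is that the switching identity does not compare $\P^{+,0}$ to itself but produces the \emph{difference} measure: one reinterprets \eqref{ass:contradiction} as saying that with positive probability under $\P^{+,0}_\beta$ there is a single edge $e$ joining two infinite portions such that removing $e$ splits the cluster — i.e.\ exactly the bridge event $\cA_{x,y}$ after a finite local modification (adding the edge $\{x,y\}$ via $\Phi_{\{x,y\}}$ and tangling, then using that the two sides were already infinite by Remark~\ref{rem: at most two ends}). Applying $\Phi^\cT_{\{x,y\}}$ to this event keeps positive probability by Proposition~\ref{prop: insertion tolerance with tanglings}, and the resulting configurations lie in $\cA_{x,y}$; the same argument with $\P^+_\beta$ in place of $\P^{+,0}_\beta$ gives $\P^+_\beta[\cA_{x,y}]>0$.

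\textbf{Main obstacle.} The delicate part is Step~1: making precise, at the level of \emph{tangled} currents and in the \emph{infinite volume}, the statement that the gap in \eqref{ass:contradiction} equals (a positive multiple of) the probability that the two sources fail to be tangled/connected together. This requires combining the switching lemma of Theorem~\ref{thm: switching lemma} for the pair $(A,B)=(xy,\emptyset)$ or $(xx,\emptyset)$ with the weak convergence of the tangled-current measures, and carefully controlling the error terms coming from long edges and ghost edges (Remark~\ref{rem: local limit}) as well as the boundary-condition discrepancy between $\PLUS$ and free encoded in $h_\Lambda(\PLUS)$. Once this ``defect $=$ disconnection'' dictionary is in place, turning a disconnection into a bridge is a soft argument using insertion tolerance and uniqueness/at-most-two-ends of the infinite cluster.
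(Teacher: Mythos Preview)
Your Step~1 is correct in spirit and matches the paper: the switching principle gives
\[
\langle \varphi_x\varphi_y\rangle^\PLUS_{B_N}-\langle \varphi_x\varphi_y\rangle^0_{B_N}
=\langle \varphi_x\varphi_y\rangle^\PLUS_{B_N}\,
\P^{xy,\emptyset}_{B_N\sqcup\{\fg\},B_N}\big[\tilde x\centernot\longleftrightarrow\tilde y \text{ in }B_N\big],
\]
so under \eqref{ass:contradiction} this disconnection probability is uniformly bounded below.

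The genuine gap is in Step~2, where you try to convert ``sources disconnected'' into ``bridge'' using $\Phi^\cT_{\{x,y\}}$ and uniqueness of the infinite cluster. This does not work, for two reasons. First, the disconnection event lives under the \emph{sourced} measure $\P^{xy,\emptyset}$, while $\Phi^\cT_\Lambda$ and Proposition~\ref{prop: insertion tolerance with tanglings} are stated for the \emph{sourceless} measure $\P^{+,0}$; the map $\Phi_\Lambda$ preserves the source set (it adds $2$ to edges), so it cannot transport one to the other. Second, even on the right measure, $\Phi^\cT_{\{x,y\}}$ adds a \emph{double} edge and forces $\textup{ECT}$, which does not produce a configuration in $\cA_{x,y}$ (a bridge needs a single edge between specific block vertices whose removal disconnects). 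The contradiction you noticed --- that in infinite volume the sources would lie in distinct infinite clusters, violating uniqueness --- is a symptom: you cannot first pass to the infinite-volume sourced measure and then argue.

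What the paper does instead is construct, at \emph{finite volume}, an explicit map $(\n_1,\n_2,\ct)\mapsto(\n_1',\n_2,\ct')$: one increments $\n_1(x,y)$ by exactly $1$ (making $\n_1'$ sourceless) and reinterprets the former source vertices $\tilde x,\tilde y$ as the two new endpoints in $\cB_x,\cB_y$ of this added edge. After first restricting to $\{\Delta\n(x),\Delta\n(y)\leq M\}$, this map is at most $(M+1)^2$-to-$1$, and the weight ratio is bounded by \eqref{eq:weights_def} and Proposition~\ref{prop: pos prob}; hence $\P^{\emptyset,\emptyset}_{B_N\sqcup\{\fg\},B_N}[\cA^f_{x,y}]$ is uniformly bounded below, and one passes to the limit. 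You also do not address the second case $\langle\varphi_x^2\rangle^+_\beta\neq\langle\varphi_x^2\rangle^0_\beta$, which requires a different map adding \emph{two} edges between $x$ and a $J$-neighbour $y$ and pairing their $y$-endpoints in the tangling.
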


\begin{proof} 
Let $\cA^f_{x,y}$ denote the event that there exist two vertices $\overline{x}$ and $\overline{y}$ in the blocks of $x$ and $y$ in the multigraph $\cH(\n_1,\n_2,\ct)$ (resp. $\mathcal{H}(\n,\ct)$) such that there is exactly one edge $e$ between $\overline{x}$ and $\overline{y}$, and furthermore the cluster of $\overline{x},\overline{y}$ splits into two disjoint clusters after removing $e$ that are both connected to $\fg$.
By Proposition~\ref{prop: existence mixing infinite volume tanglings} and Remark~\ref{rem: local limit}, it suffices to show that there exists $c>0$ such that $\bfP_{B_N \sqcup \{\fg\}, B_N}^{\emptyset,\emptyset}[\cA^f_{x,y}]>c$ and $\bfP_{B_N \sqcup \{\fg\}}^{\emptyset}[\cA^f_{x,y}]>c$ for every $N$ large enough.

Let $c:=\langle \varphi_x \varphi_y  \rangle^+_\beta - \langle \varphi_x \varphi_y  \rangle^0_\beta>0$ and take $N$ large enough so that $\langle \varphi_x \varphi_y  \rangle^\PLUS_{B_N} - \langle \varphi_x \varphi_y  \rangle^0_{B_N}$ is larger than $c/2$. By the switching principle of Theorem \ref{thm: switching lemma}, we have 
\begin{equation}\label{eq:p2p_plus-free1}
c/2<\langle \varphi_x \varphi_y  \rangle^\PLUS_{B_N} - \langle \varphi_x \varphi_y  \rangle^0_{B_N}=\langle \varphi_x \varphi_y  \rangle^\PLUS_{B_N} \bfP^{xy,\emptyset}_{B_N\sqcup \{\fg\},B_N}[\tilde x \centernot\longleftrightarrow \tilde y \text{ in } B_N],
\end{equation}
where $\tilde x=xa(1),\tilde y=ya(1)$ are the extra vertices added in $\cB_x, \cB_y$ due to the sources. Notice that due to the sources constraint, $\tilde x$ is connected to $\tilde y$ in $B_N\sqcup \{\fg\}$, in particular on the event in \eqref{eq:p2p_plus-free1} both $\tilde x$ and $\tilde y$ are connected to $\fg$, and therefore $\lbrace x,y\rbrace$ would become a bridge by adding an edge between $\tilde x$ and $\tilde y$ in $\cH^{xy,\emptyset}(\n_1,\n_2,\ct)$. We now implement this modification rigorously.
First, take $M$ large enough (depending on $c$ only) such that
\begin{equation}\label{eq:p2p_plus-free2}
\langle \varphi_x \varphi_y  \rangle^\PLUS_{B_N}\bfP^{xy,\emptyset}_{B_N\sqcup \{\fg\},B_N}[\tilde x \centernot{\longleftrightarrow} \tilde y \text{ in } B_N,~ \Delta\n(x)\leq M,~ \Delta\n(y)\leq M]>c/4.
\end{equation}
To each $(\n_1,\n_2,\ct)$ in the event above, we consider the modified configuration $(\n_1',\n_2,\ct')$ constructed as follows.
First, set $\n_1'(x,y)=\n_1(x,y)+1$ and $\n_1'(e)=\n_1(e)~\forall e\neq \lbrace x,y\rbrace$ (notice that $\partial \n_1'=\emptyset$). In order to define $\ct'$, first set $\ct'_z=\ct_z~\forall z\notin \{x,y\}$, which makes sense as $\cB_z^{xy,\emptyset}(\n_1,\n_2)=\cB_z^{\emptyset,\emptyset}(\n'_1,\n_2) ~ \forall z\notin \{x,y\}$. 
By seeing the previously extra vertices $\tilde x, \tilde y$ now as the endpoints of the added edge between $x$ and $y$ in $\n_1'$, we construct a natural bijection $\psi_z:\cB_z^{xy,\emptyset}(\n_1,\n_2)\to \cB_z^{\emptyset,\emptyset}(\n'_1,\n_2)$ for $z\in\{x,y\}$.
We can then set $\ct'_x=\psi_x(\ct_x)$  and  $\ct'_y=\psi_y(\ct_y)$. One can see that $\lbrace x,y\rbrace$ is a bridge in the configuration $(\n_1',\n_2,\ct')$, with $\overline{x}=\tilde{x}$ and $\overline{y}=\tilde{y}$--- see Figure~\ref{fig: bridge xy}. On the one hand, notice that the map $(\n_1,\n_2,\ct)\mapsto(\n_1',\n_2,\ct')$ is at most $(M+1)^2$-to-$1$ due to the degree constraints. On the other hand, by \eqref{eq:weights_def} and Proposition~\ref{prop: pos prob}, the ratio between the probabilities of $(\n_1,\n_2,\ct)$ and $(\n_1',\n_2,\ct')$ under $\bfP^{xy,\emptyset}_{B_N\sqcup\{\fg\}, B_N}$ and $\bfP^{\emptyset,\emptyset}_{B_N\sqcup \{\fg\}, B_N}$, respectively, is bounded above by a constant $C=C(M)\in(0,\infty)$. All in all, we conclude that for $N$ large enough,
\begin{equation*}
\bfP^{\emptyset,\emptyset}_{B_N\sqcup \{\fg\}, B_N}[\cA_{x,y}^f]>\frac{c}{4C \langle \varphi_x \varphi_y  \rangle^\PLUS_{B_N}},
\end{equation*}
as we wanted to prove. For the single tangled current, simply notice that by the stochastic domination \eqref{eq: domination} we have $\bfP^{xy}_{B_N\sqcup \{\fg\}}[\tilde x {\centernot\longleftrightarrow} \tilde y \text{ in } B_N]\geq \bfP^{xy,\emptyset}_{B_N\sqcup \{\fg\}, B_N}[\tilde x {\centernot\longleftrightarrow} \tilde y \text{ in } B_N]$, and the above proof adapts readily.
\end{proof}

\begin{figure}[!tbp]
  \centering
    \includegraphics[width=\textwidth]{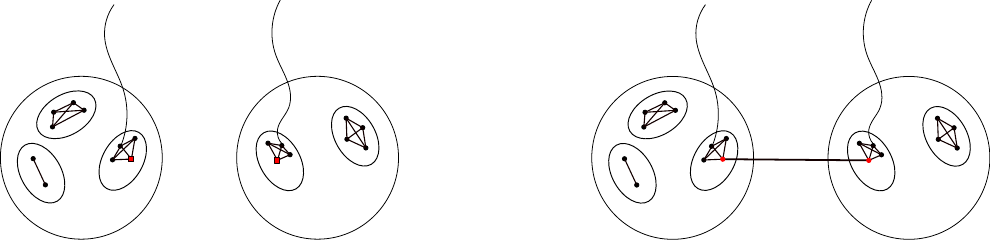}
    \put(-330,-10){$\mathcal{B}_x$}
    \put(-250,-10){$\mathcal{B}_y$}
    \put(-260,88){$\fg$}
    \put(-320,88){$\fg$}
    \put(-120,-10){$\mathcal{B}_x$}
    \put(-30,-10){$\mathcal{B}_y$}
    \put(-105,88){$\fg$}
    \put(-48,88){$\fg$}
    \caption{An illustration of the mapping used in the proof of Proposition \ref{prop:bridges}. On the left, an example of $(\n_1,\n_2,\ct)$ satisfying the event $\{\tilde x {\centernot\longleftrightarrow} \tilde y \text{ in } B_N\}$; on the right, its image $(\n_1',\n_2,\ct')$ by the map. The red vertices represent $\overline{x}, \overline{y}$.}
    \label{fig: bridge xy}
\end{figure}

\begin{defn}[Flows]
Given two vertices $\tilde x,\tilde y$ in a multigraph $\cH$, we define the flow $f(\tilde x,\tilde y,\cH)$ between $\tilde x$ and $\tilde y$ in $\cH$ as the maximal number of edge disjoint paths between $\tilde x$ and $\tilde y$ in $\cH$. We also define $f(\tilde x,\infty,\cH)$ as the maximal number of edge disjoint infinite path in $\cH$ starting from $\tilde x$.

For a double tangled current $(\n_1,\n_2,\ct)$ (resp. a tangled current $(\n,\ct)$) on $V$ and $x,y\in V$, we denote by $f(x,y,\n_1,\n_2,\ct)$ (resp. $f(x,y,\n,\ct)$) the maximum of $f(\tilde x,\tilde y,\cH(\n_1,\n_2,\ct))$ (resp. $f(\tilde{x},\tilde{y},\cH(\n,\ct)$) over all $\tilde x \in \cB_x$ and $\tilde y\in \cB_y$, and by $f(x,\infty,\n_1,\n_2,\ct)$ (resp. $f(x,\infty,\n,\ct)$) the maximum of $f(\tilde x,\infty,\cH(\n_1,\n_2,\ct))$ (resp. $f(\tilde x,\infty,\cH(\n,\ct)$) over all $\tilde x \in \cB_x$.
\end{defn}

\begin{lem}\label{lem:flow_properties}
Assume \eqref{ass:contradiction}. One has
\begin{align}
\label{eq:flow_p2p}
\lim_{d_G(o,y) \rightarrow \infty} \bfP^{+,0}_\beta[f(x,y,\n_1,\n_2,\ct) \geq 2]
&=
0,
\\
\label{eq:flow_inf}
\bfP^{+,0}_\beta[f(x,\infty,\n_1,\n_2,\ct) > 2]
&=
0.
\end{align}
Moreover, a similar statement holds for the measure $\mathbf{P}^+_\beta$.
\end{lem}

\begin{proof}
Note that by ergodicity and Proposition \ref{prop:bridges}, almost surely there are infinitely many bridges. Also, there is a unique infinite cluster, which has at most two ends almost surely by Proposition \ref{prop: uniqueness of inf cluster} and Remark \ref{rem: at most two ends}. For any $(\n_1,\n_2,\ct)$ satisfying these properties, it is proved in \cite[Lemma 3]{RAO20} that  $\lim_{d_G(o,y)\rightarrow \infty} \mathbbm{1}\{f(x,y,\n_1,\n_2,\ct) \geq 2\} = 0$ and $f(x,\infty,\n_1,\n_2,\ct) \leq 2$. Then, \eqref{eq:flow_p2p} follows by dominated convergence and \eqref{eq:flow_inf} follows directly. The proof for $\mathbf{P}^+_\beta$ follows the same argument.
\end{proof}

\begin{lem}\label{lem:inf}
Let $x\in V$ and $\beta>0$. If $\bfP^{+,0}_\beta[x \longleftrightarrow \infty]>0$, then $\inf_{y\in V} \langle \phi_x \phi_y \rangle^0_\beta>0$. 
\end{lem}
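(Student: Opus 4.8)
The plan is to combine the switching principle with the uniqueness of the infinite cluster for $\P^{+,0}_\beta$ and the insertion tolerance property. Fix $x\in V$, $\beta>0$, and set $p:=\P^{+,0}_\beta[x\longleftrightarrow\infty]>0$. By monotonicity in the volume (Proposition~\ref{prop: monotonicity in vol}) one has $\langle\varphi_x\varphi_y\rangle^0_\beta=\sup_N\langle\varphi_x\varphi_y\rangle^0_{B_N,\beta}$, so it suffices to bound $\langle\varphi_x\varphi_y\rangle^0_{B_N,\beta}$ from below uniformly in $y$, for $N$ large.

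\emph{Step 1 (reduction to a connectivity estimate).} Apply the probabilistic switching lemma (Corollary~\ref{thm: probabilistic switching}) on $\Lambda=B_N\sqcup\{\fg\}$, with the ghost carrying the $\PLUS$ boundary field, and $\Lambda'=B_N$, with $A=B=\mathbbm 1_x+\mathbbm 1_y$ and $F\equiv 1$. This gives
\begin{equs}
\langle\varphi_x\varphi_y\rangle^0_{B_N,\beta}
=
\frac{\langle\varphi_x^2\varphi_y^2\rangle^\PLUS_{B_N,\beta}}{\langle\varphi_x\varphi_y\rangle^\PLUS_{B_N,\beta}}\,
\P^{xxyy,\emptyset}_{B_N\sqcup\{\fg\},B_N,\beta}\big[\cF_{xy}^{B_N}\big],
\end{equs}
where $\cF_{xy}^{B_N}$ is the event that the distinguished $B$-vertex in $\cB_x$ is connected to that in $\cB_y$ within $B_N$. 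By Griffiths' inequality ($\langle\varphi_x^2\varphi_y^2\rangle^\PLUS\ge\langle\varphi_x^2\rangle^\PLUS\langle\varphi_y^2\rangle^\PLUS$), Cauchy--Schwarz ($\langle\varphi_x\varphi_y\rangle^\PLUS\le\sqrt{\langle\varphi_x^2\rangle^\PLUS\langle\varphi_y^2\rangle^\PLUS}$) and monotonicity in the volume and in the boundary condition, the prefactor is at least $\sqrt{\langle\varphi_x^2\rangle^\PLUS_{B_N}\langle\varphi_y^2\rangle^\PLUS_{B_N}}\ge\langle\varphi^2\rangle_0>0$. Next, forcing the tanglings at $\cB_x$ and $\cB_y$ to be the full partition costs a factor at least $\kappa^2>0$ by Proposition~\ref{prop: pos prob}, and comparing the weights of $\P^{xxyy,\emptyset}_{B_N\sqcup\{\fg\},B_N,\beta}$ with those of $\P^{\emptyset,\emptyset}_{B_N\sqcup\{\fg\},B_N,\beta}$ (they differ only by bounded single-site moment ratios at $x$ and $y$, controlled by Proposition~\ref{bound single-site}) shows that $\cF_{xy}^{B_N}$ has probability at least $c_1\,\P^{\emptyset,\emptyset}_{B_N\sqcup\{\fg\},B_N,\beta}[x\longleftrightarrow y]$ for a constant $c_1>0$ independent of $y$. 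Letting $N\to\infty$ (Proposition~\ref{prop: existence mixing infinite volume tanglings}, together with Remark~\ref{rem: local limit} to discard ghost edges) yields a constant $c_2=c_2(\beta)>0$ with
\begin{equs}
\langle\varphi_x\varphi_y\rangle^0_\beta\ \ge\ c_2\,\P^{+,0}_\beta\big[x\longleftrightarrow y\big]\qquad\text{for every }y\in V.
\end{equs}

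\emph{Step 2 (uniqueness of the infinite cluster).} Since $p>0$, the translation-invariant event that an infinite cluster exists has $\P^{+,0}_\beta$-probability $1$ (Proposition~\ref{prop: existence mixing infinite volume tanglings}), and by Proposition~\ref{prop: uniqueness of inf cluster} this cluster $\mathcal C_\infty$ is a.s.\ unique. By $\Gamma$-invariance, $\P^{+,0}_\beta[y\longleftrightarrow\infty]=p$ for all $y$, and $\P^{+,0}_\beta[\mathcal C_\infty\cap B_M(y)\neq\emptyset]=\P^{+,0}_\beta[\mathcal C_\infty\cap B_M\neq\emptyset]\to1$ as $M\to\infty$, uniformly in $y$. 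Fix $M=M(p)$ so this probability exceeds $1-p/2$; then, with $A_y:=\{x\longleftrightarrow\infty\}\cap\{\mathcal C_\infty\cap B_M(y)\neq\emptyset\}$, we have $\P^{+,0}_\beta[A_y]\ge p/2$ for every $y$. Now use irreducibility $(\textbf{C3})$ to choose, for each $y$, a finite set $\Lambda_y\supset B_M(y)$ containing a $J$-path from $y$ to every vertex of $B_M(y)$, picked so that $\Lambda_y=\gamma_y\Lambda_o$ with $\gamma_y\in\Gamma$, $\gamma_y o=y$ (possible by vertex-transitivity). In the modified configuration $\Phi^\cT_{\Lambda_y}$ all $J$-edges inside $\Lambda_y$ are present and all tanglings inside $\Lambda_y$ are full, so $y$ is connected in $\cH$ to every vertex of $B_M(y)$; hence on $\Phi^\cT_{\Lambda_y}(A_y)$ the vertex $y$ is connected to $\mathcal C_\infty$ (so $y\longleftrightarrow\infty$) while $x\longleftrightarrow\infty$ is preserved (edges are only added), and the a.s.\ uniqueness of $\mathcal C_\infty$ then forces $x\longleftrightarrow y$. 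By the quantitative insertion tolerance estimate (Propositions~\ref{prop: insertion tolerance} and~\ref{prop: insertion tolerance with tanglings}) and the $\Gamma$-invariance of its constant,
\begin{equs}
\P^{+,0}_\beta[x\longleftrightarrow y]\ \ge\ \P^{+,0}_\beta\big[\Phi^\cT_{\Lambda_y}(A_y)\big]\ \ge\ c(\Lambda_o,\beta,J)\,\P^{+,0}_\beta[A_y]\ \ge\ \tfrac12\,c(\Lambda_o,\beta,J)\,p\ =:\ c_3>0,
\end{equs}
with $c_3$ independent of $y$. Combining with Step 1 gives $\langle\varphi_x\varphi_y\rangle^0_\beta\ge c_2c_3>0$ for every $y$, i.e.\ $\inf_{y\in V}\langle\varphi_x\varphi_y\rangle^0_\beta>0$.

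\emph{On the main obstacle.} The delicate point is Step 2: passing from ``$x$ connects to infinity with positive probability'' to ``$x$ connects to $y$ with probability bounded below \emph{uniformly in} $y$''. A naive use of uniqueness only yields $\P^{+,0}_\beta[x\longleftrightarrow y]\ge 2p-1$, which is useless when $p\le\tfrac12$; the resolution is to use translation invariance to ensure $\mathcal C_\infty$ comes within a bounded, $y$-independent distance of $y$ with probability close to $1$, and then to spend a bounded (and, by vertex-transitivity, $y$-independent) amount of insertion tolerance to splice $y$ onto $\mathcal C_\infty$, invoking uniqueness to conclude $x\longleftrightarrow y$. A secondary, more routine difficulty is making the switching identity of Step 1 and the reduction to $\P^{+,0}_\beta$ precise, in particular tracking the ghost vertex, the auxiliary moment vertices produced by switching, and the single-block tanglings; this is handled using Theorem~\ref{thm: switching lemma}, Proposition~\ref{prop: pos prob} and Proposition~\ref{prop: existence mixing infinite volume tanglings}.
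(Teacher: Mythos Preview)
Your Step~1 is the paper's argument run in reverse: the paper shows $\P^{+,0}[x\leftrightarrow y]\leq C\,\langle\phi_x\phi_y\rangle^\PLUS\langle\phi_x\phi_y\rangle^0$ and then bounds $\langle\phi_x\phi_y\rangle^\PLUS$ via Cauchy--Schwarz, whereas you write $\langle\phi_x\phi_y\rangle^0\geq c_2\,\P^{+,0}[x\leftrightarrow y]$ directly. The content is the same switching identity, but you gloss over a point the paper treats explicitly: the weight comparison between $\P^{xxyy,\emptyset}$ and $\P^{\emptyset,\emptyset}$ involves the ratios $\langle\phi^{2k+2}\rangle_0/\langle\phi^{2k}\rangle_0$, and Proposition~\ref{bound single-site} bounds these only from \emph{below}. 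Controlling the normalisation requires first restricting to $\{\Delta\n(x),\Delta\n(y)\leq M\}$, and choosing $M$ uniformly in $y$ presupposes that $\P^{+,0}[x\leftrightarrow y]$ is already bounded away from zero --- so Step~2 must in fact be established first.

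Your Step~2 is a genuinely different route. The paper uses mixing (Proposition~\ref{prop: existence mixing infinite volume tanglings}) together with uniqueness of the infinite cluster to get $\P^{+,0}[x\leftrightarrow y]\to p^2$ as $d_G(x,y)\to\infty$, which yields the uniform lower bound in one line. Your insertion-tolerance argument is more constructive but has a real gap: Proposition~\ref{prop: insertion tolerance with tanglings} is stated only \emph{qualitatively}, and both it and Proposition~\ref{prop: insertion tolerance} are stated only for events $A\in\cA_V$ depending on the current alone, whereas your $A_y$ depends on the tangling. To make the step quantitative with a $y$-independent constant you must first intersect $A_y$ with $\{\Delta\n(z)\leq M\ \forall z\in\Lambda_y\}$ (which by Lemma~\ref{degree control} and $\Gamma$-invariance costs at most $p/4$ for $M$ large), so that Proposition~\ref{prop: pos prob} gives a uniform $\kappa(M)$ for $\textup{ECT}$, and then rerun the multi-valued-map argument of Proposition~\ref{prop: insertion tolerance} at the level of $(\n_1,\n_2,\ct)$. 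With these additions your argument goes through; the paper's mixing approach is simply shorter and avoids this bookkeeping.
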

\begin{proof}
Since $\beta$ is fixed, we shall omit it from the notation. By the uniqueness of the infinite cluster, for every $y\in V$,
\begin{equation}\label{eq:x con y}
\begin{aligned}
    \bfP^{+,0}[x \longleftrightarrow y]=\bfP^{+,0}[x \longleftrightarrow\infty,\: y \longleftrightarrow \infty] 
    + \bfP^{+,0}[x \longleftrightarrow y,\: x \centernot\longleftrightarrow \infty].
\end{aligned}    
\end{equation}
The first term converges to $(\bfP^{+,0}[x \longleftrightarrow \infty])^2$ by the mixing of Proposition \ref{prop: existence mixing infinite volume tanglings}, while the second term converges to $0$. Thus 
\begin{equation*}
\lim_{d_G(o,y)\to\infty} \bfP^{+,0}[x\longleftrightarrow y]= \big(\bfP^{+,0}[x\longleftrightarrow \infty]\big)^2,
\end{equation*}
hence 
$\bfP^{+,0}[x\longleftrightarrow y]$ remains bounded away from $0$ uniformly in $x,y$. We wish to show that there exists a constant $C>0$ independent of $x,y$ such that 
\begin{equation}\label{eq:xy bound}
\bfP^{+,0}[x\longleftrightarrow y]\leq C \langle \phi_x \phi_y \rangle^0.
\end{equation}
To this end, we shall work at finite volume.
For every $m>0$ large enough so that $x,y\in B_m$,
\begin{equation*}
\begin{aligned}
\bfP^{\emptyset,\emptyset}_{B_N\sqcup \{\fg\}, B_N}[x \longleftrightarrow y \text{ in } B_m]
\leq
\bfP^{\emptyset,\emptyset}_{B_N\sqcup \{\fg\}, B_N}[x \longleftrightarrow y] 
\leq
\bfP^{\emptyset,\emptyset}_{B_N\sqcup \{\fg\}, B_N}[x \longleftrightarrow y \text{ in } B_m]\\
+
\bfP^{\emptyset,\emptyset}_{B_N\sqcup \{\fg\}, B_N}[x,y \longleftrightarrow  B_m^c, x \centernot\longleftrightarrow y \text{ in } B_m],
\end{aligned}
\end{equation*}
hence sending $N$ and $m$ to infinity, and using the uniqueness of the infinite cluster, we deduce that 
\begin{equation*}
\lim_{N\to\infty}\bfP^{\emptyset,\emptyset}_{B_N\sqcup \{\fg\}, B_N}[x \longleftrightarrow y]= \bfP^{+,0}_{}[x \longleftrightarrow y],
\end{equation*}
which implies that $\bfP^{\emptyset,\emptyset}_{B_N\sqcup \{\fg\}, B_N}[x \longleftrightarrow y]$ remains bounded away from $0$ for every large enough $N$.
We can now find constants $M,C_1,C_2>0$ independent of $x,y$ and $N_0=N_0(x,y)>0$ such that for every $N\geq N_0$,
\begin{align*}
\bfP^{\emptyset,\emptyset}_{B_N\sqcup \{\fg\}, B_N}[x\longleftrightarrow y]&\leq 
C_1 \bfP^{\emptyset,\emptyset}_{B_N\sqcup \{\fg\}, B_N}[\mathcal{E}]
\leq C_2 \langle \phi_x^2 \phi_y^2 \rangle^0_{B_N} \, \bfP^{A,\emptyset}_{B_N\sqcup \{\fg\}, B_N}[\mathcal{E}] \\
&\leq C_2 \langle \phi_x \phi_y \rangle^\PLUS_{B_N} \, \langle \phi_x \phi_y \rangle^0_{B_N},
\end{align*}
where $\mathcal{E}:=\{x\longleftrightarrow y,\: \textup{ECT}(x),\: \textup{ECT}(y), \Delta\n(x)\leq M,\: \Delta\n(y)\leq M\big\}$ and $A=xxyy$. Sending $N$ to infinity and using that 
$
\langle \phi_x \phi_y \rangle^+ \leq \langle \phi_x^2  \rangle^+<\infty 
$
by the Cauchy--Schwarz inequality and $\Gamma$-invariance, we obtain that \eqref{eq:xy bound} holds.
It follows that $\inf_{y\in V}\langle \phi_x \phi_y \rangle^0>0$, as desired.
\end{proof}

\subsection{Proof of Theorem~\ref{thm:main}}

\begin{proof}[Proof of \eqref{eq: convex hull of pm} in Theorem \textup{\ref{thm:main}}]
Assuming \eqref{ass:contradiction}, we have $\bfP^{+,0}[\cA_{x,y}]>0$ by Proposition~\ref{prop:bridges}. In particular $\bfP^{+,0}[x\longleftrightarrow \infty]>0$, which implies
\begin{equation}\label{eq:p2p_spin}
c_1:=\inf_{y\in V} \langle \phi_x \phi_y \rangle^0_{\beta}>0
\end{equation}
by Lemma~\ref{lem:inf}.
We now consider two cases.

\textbf{First case.} $\bfP^{0}[x\longleftrightarrow\infty]>0$.
Equivalently, we have $\bfP^0[f(x,\infty,\n,\ct)\geq 1]>0$. We also know by Proposition~\ref{prop:bridges} that $\bfP^+[\cA_{x,y}]>0$, so in particular $\bfP^+[f(x,\infty,\n,\ct)\geq 2]>0$. Let $(\n_1,\ct_1)\sim \bfP^+$ and $(\n_2,\ct_2)\sim \bfP^0$ be independent and $(\n_1,\n_2,\ct)\sim \bfP^{+,0}$, then by \eqref{eq: domination} we can couple $(\n_1,\n_2,\ct,\ct_1,\ct_2)$ in such a way that $\ct\succ \ct_1\sqcup \ct_2$. 
Using this stochastic domination and inserting the event $\textup{ECT}(x)$, we obtain
\begin{equation*}
\bfP^{+,0}\big[f(x,\infty,\n_1,\ct_1)\geq 2,\: f(x,\infty,\n_2,\ct_2)\geq 1, \:\textup{ECT}(x)\big]>0.
\end{equation*}
Since $\{f(x,\infty,\n_1,\ct_1)\geq 2, f(x,\infty,\n_2,\ct_2)\geq 1, \textup{ECT}(x)]\}\subset \{f(x,\infty,\n_1,\n_2,\ct)\geq3\}$, we obtain a contradiction with Lemma~\ref{lem:flow_properties}--- see \eqref{eq:flow_inf}.

\textbf{Second case.} $\bfP^{0}[x\longleftrightarrow\infty]=0$.
We prove the following lemmas.
\begin{lem}\label{lem:p2p_free}
Assume $\bfP^0[x\longleftrightarrow\infty]=0$. Then, for every $x,y\in V$ there exists an integer $m=m(x,y)$ such that 
\begin{equation}\label{eq:p2p_free}
\bfP_{B_n}^{xy}[ x \longleftrightarrow  y \text{ in } B_m]
>
1/2
\end{equation}
for all $n$ sufficiently large.
\end{lem}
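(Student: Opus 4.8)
The plan is to reduce the statement to a one-arm estimate for the tangled current with source set $\{x,y\}$ and then to control that estimate via the switching principle, the decisive input being the absence of an infinite cluster for the free tangled current. For the reduction, note first that any tangled current $(\n,\ct)$ with $\partial\n=\{x,y\}$ has $x\longleftrightarrow y$ in $\cH(\n,\ct)$: a parity count shows that the source vertices $\tilde x=xa(1)$ and $\tilde y=ya(1)$ are the only odd-degree vertices of the associated multigraph. Hence, under $\P^{xy}_{B_n}$, the cluster $\cC_x$ contains $y$, so the event $\{x\centernot\longleftrightarrow y\text{ in }B_m\}$ forces $\cC_x\not\subseteq B_m$ and is thus contained in $\{x\longleftrightarrow B_m^c\}$. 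It therefore suffices to find $m=m(x,y)$ with $\sup_{n\ge m}\P^{xy}_{B_n}[x\longleftrightarrow B_m^c]<1/2$.

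For the one-arm bound, recall the switching identity from the proof of Proposition~\ref{prop: monotonicity in vol}, namely $\langle\varphi_x\varphi_y\rangle^0_{B_m}=\langle\varphi_x\varphi_y\rangle^0_{B_n}\,\P^{xy,\emptyset}_{B_n,B_m}[\cF^{B_m}_{xy}]$, where $\cF^{B_m}_{xy}$ is the event that $\tilde x$ and $\tilde y$ are connected using only blocks of $B_m$. Since $\langle\varphi_x\varphi_y\rangle^0_{B_k}$ increases to the finite, strictly positive limit $\langle\varphi_x\varphi_y\rangle^0_\beta$, the probability of $\cF^{B_m}_{xy}$ for the \emph{double} tangled current tends to $1$ as $m\to\infty$, uniformly over $n\ge m$. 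As the $\n_1$-current-marginal of $\P^{xy,\emptyset}_{B_n,B_m}$ is that of $\P^{xy}_{B_n}$, what remains is to show that $\{x\longleftrightarrow B_m^c\}$ under $\P^{xy}_{B_n}$ has probability bounded, uniformly in $n$, by a quantity vanishing as $m\to\infty$. This is where the hypothesis is used: a switching/backbone argument bounds this one-arm event in terms of connectivity events of the sourceless free tangled current $\P^{\emptyset}_{B_n}$, whose clusters are all finite because $\P^0[x\longleftrightarrow\infty]=0$. The remaining ingredients are: Proposition~\ref{prop: pos prob}, used to insert $\textup{ECT}(z)$ events (which have positive probability) so that connectivity in $\cH$ is controlled by connectivity of the underlying currents; Proposition~\ref{prop: existence mixing infinite volume tanglings} and Remark~\ref{rem: local limit}, to pass from $\P^{\emptyset}_{B_n}$ to its weak limit $\P^0$ while controlling the infinitely many long edges through the admissibility assumption $(\textbf{C4})$ on $J$; and the degree moment bound of Lemma~\ref{degree control}, for uniform tightness in $n$.

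I expect the passage from the double to the single tangled current to be the main obstacle. Adjoining a sourceless current only creates connections, so the double-current connection probability --- which tends to $1$ at no cost --- merely dominates the single-current one, and a genuinely new input is needed for the latter. The subtlety is that in the case at hand we have $c_1=\inf_{z}\langle\varphi_x\varphi_z\rangle^0_\beta>0$, so the two-point function does not decay and connectivities in these currents are long-range; naive union bounds over $\partial B_m$ diverge, and one must instead exploit the \emph{geometric} input that the free tangled current has no infinite cluster, made quantitative and uniform in the volume $B_n$. Together with the other lemmas of this subsection --- on flows and on the consequences of $\P^0[x\longleftrightarrow\infty]=0$ --- this is what produces the contradiction with \eqref{ass:contradiction} and thereby Theorem~\ref{thm:main}.
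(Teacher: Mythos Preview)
Your initial reduction is correct and matches the paper: under $\P^{xy}_{B_n}$ the source constraint forces $x\longleftrightarrow y$ in $\cH(\n,\ct)$, so $\{x\centernot\longleftrightarrow y\text{ in }B_m\}\subset\{x\longleftrightarrow\partial B_m\}$, and it suffices to bound this one-arm probability uniformly in $n$. But the rest of your proposal has a genuine gap precisely where you flag it. The double-current identity you quote is true but does not help: it controls the pairing event under $\P^{xy,\emptyset}_{B_n,B_m}$, and as you yourself note, the domination goes the wrong way to deduce anything about the single measure $\P^{xy}_{B_n}$. You then pivot to ``a switching/backbone argument'' comparing $\P^{xy}_{B_n}$ to $\P^{\emptyset}_{B_n}$, which is the right target, but this step is never made concrete --- and it is the entire content of the lemma.

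The paper's argument is direct and uses neither double currents nor switching. Argue by contradiction: if the lemma fails, then for every $m$ one has $\P^{xy}_{B_n}[x\longleftrightarrow\partial B_m]\geq 1/2$ for arbitrarily large $n$. Fix a $J$-path $x=u_0,u_1,\ldots,u_k=y$ (irreducibility). To any $\n$ with $\partial\n=\{x,y\}$, associate the sourceless current $\n'$ obtained by adding $1$ to each edge $\{u_i,u_{i+1}\}$; the map $\n\mapsto\n'$ is injective. After restricting to $\{\Delta\n(u_i)\leq M\ \forall i\}$ (cost $1/2\to1/4$ via Lemma~\ref{degree control}), the weight ratio $w_{xy,\beta}(\n)/w_{\emptyset,\beta}(\n')$ is bounded by a constant $C=C(x,y,M)$, and Proposition~\ref{prop: pos prob} handles the corresponding tanglings. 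This yields
\[
\langle\phi_x\phi_y\rangle^0_{B_n}\,\P^{xy}_{B_n}\bigl[x\longleftrightarrow\partial B_m,\ \Delta\n(u_i)\leq M\ \forall i\bigr]\ \leq\ C^2\,\P^{\emptyset}_{B_n}[x\longleftrightarrow\partial B_m],
\]
and since $\langle\phi_x\phi_y\rangle^0_{B_n}$ is bounded below (irreducibility again), sending $n\to\infty$ and then $m\to\infty$ gives $\P^0[x\longleftrightarrow\infty]>0$, a contradiction. The missing idea was this elementary path-addition map from sourced to sourceless currents; your ``backbone'' hint points in this direction but is not carried out.
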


\begin{lem}\label{lem:p2p_plus}
Assume \eqref{ass:contradiction}. Then, there exists $c_2 > 0$ such that, for all $x \in V$ there exists infinitely many $y \in V$ such that, there exists an integer $m =m(x,y)$ such that
 \begin{equation}\label{eq:p2p_plus}
\langle \phi_x \phi_y \rangle^\PLUS_{B_n} \bfP_{B_n\sqcup \{\fg\}}^{xy}[ x \longleftrightarrow y \text{ in } B_m]
>
c_2
 \end{equation}
for all $n$ sufficiently large.
\end{lem}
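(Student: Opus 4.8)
The plan is to read the quantity $\langle\phi_x\phi_y\rangle^\PLUS_{B_n}\,\P^{xy}_{B_n\sqcup\{\fg\}}[x\longleftrightarrow y\ \text{in}\ B_m]$ through the switching principle and bound it below by a finite--volume free two--point function, which \eqref{eq:p2p_spin} already controls from below. \textbf{(Double tangled current.)} Applying Corollary~\ref{thm: probabilistic switching} with source moment $A=\emptyset$ attached to the ghost--augmented box $B_n\sqcup\{\fg\}$, whose associated single--site field realises the $\PLUS$ boundary condition via Remark~\ref{rem: b.c. as mag. field}, and $B=\{x,y\}$ attached to the free sub--box $B_m\subset B_n$, one obtains for every $m\le n$ the identity
\begin{equs}
\langle\phi_x\phi_y\rangle^0_{B_m}=\langle\phi_x\phi_y\rangle^\PLUS_{B_n}\;\P^{xy,\emptyset}_{B_n\sqcup\{\fg\},B_m}\big[\tilde x\longleftrightarrow\tilde y\ \text{in}\ B_m\big],
\end{equs}
where $\tilde x,\tilde y$ are the source vertices in the blocks $\cB_x,\cB_y$; this is the localised form of the identity already used in the proof of Proposition~\ref{prop:bridges} (taken there with $B_n$ in place of $B_m$). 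Since $m\mapsto\langle\phi_x\phi_y\rangle^0_{B_m}$ is non--decreasing (Proposition~\ref{prop: monotonicity in vol}) and converges to $\langle\phi_x\phi_y\rangle^0_\beta\ge c_1>0$ by \eqref{eq:p2p_spin}, while $\langle\phi_x\phi_y\rangle^\PLUS_{B_n}$ stays bounded uniformly in $n$ by the uniform regularity of the finite--volume plus measures (Remark~\ref{rem: Gibss regular limit}), one may pick $m=m(x,y)$ with $\langle\phi_x\phi_y\rangle^0_{B_m}\ge c_1/2$, and as $\{\tilde x\longleftrightarrow\tilde y\ \text{in}\ B_m\}\subset\{x\longleftrightarrow y\ \text{in}\ B_m\}$ this proves the claim for the double tangled current with $c_2=c_1/2$, and in fact for \emph{every} $y\in V$.

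\textbf{(Single tangled current.)} The difficulty is that the single tangled current $\P^{xy}_{B_n\sqcup\{\fg\}}$ realises fewer connections than the double one (stochastic domination \eqref{eq: domination}), so the bound above does not transfer for free. The plan is to re--run the source--insertion step inside the single--current expansion: fix a path $\gamma\subset B_m$ from $x$ to $y$ (which exists by irreducibility \textbf{(C3)}) and restrict the sum defining the numerator to currents of the form $\n=\mathbbm 1_\gamma+\m'$, with $\m'$ vanishing on $\gamma$ and with $\n$--degree along $\gamma$ bounded by some $M$; comparing the corresponding weights \eqref{eq:weights_def} with those of $\P^\emptyset_{B_n\sqcup\{\fg\}}$ using the lower bound on single--site moment ratios (Proposition~\ref{bound single-site}), and then forcing $\textup{ECT}(z)$ for each $z\in\gamma$ via Proposition~\ref{prop: pos prob} to activate the whole path inside $B_m$, yields a lower bound. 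This is exactly the mechanism of Proposition~\ref{prop: insertion tolerance} and of (the reverse reading of) Lemma~\ref{lem:inf}, with the $\PLUS$ field kept everywhere so that the denominators match. The constant produced this way still depends \emph{a priori} on $d_G(x,y)$ through $|\gamma|$ and $M$, and this is where the hypothesis of the second case, $\P^0[x\longleftrightarrow\infty]=0$, re--enters and the ``infinitely many $y$'' is used: for $y$ with $d_G(x,y)$ large one instead compares directly with the double--current quantity, using that the absence of an infinite cluster under $\P^0$ forces the free part of the $x$--$y$ connection to concentrate on a bounded region, so that the single and double currents become comparable there and the uniform constant $c_2$ survives.

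\textbf{(Main obstacle.)} The delicate point is the single--current case: controlling, uniformly in the outer volume $n$, the portion of the $\tilde x$--$\tilde y$ connection that either escapes $B_m$ or runs through the ghost, while at the same time tracking the single--site moment shifts inherent to the $\varphi^4$ switching weights, so that the constants in the ECT reduction do not deteriorate as $y$ recedes to infinity. Everything else is routine bookkeeping with the switching principle of Section~\ref{sec: switching}, Griffiths' and Ginibre's inequalities, and the volume/boundary monotonicity statements already established.
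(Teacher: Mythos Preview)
Your first paragraph establishes the identity $\langle\phi_x\phi_y\rangle^0_{B_m}=\langle\phi_x\phi_y\rangle^\PLUS_{B_n}\,\P^{xy,\emptyset}_{B_n\sqcup\{\fg\},B_m}[\tilde x\longleftrightarrow\tilde y\text{ in }B_m]$ correctly via switching, and together with \eqref{eq:p2p_spin} this does give a uniform lower bound --- but for the \emph{double} tangled current $\P^{xy,\emptyset}_{B_n\sqcup\{\fg\},B_m}$, not for the single tangled current $\P^{xy}_{B_n\sqcup\{\fg\}}$ that the lemma requires. As you note yourself, the stochastic domination \eqref{eq: domination} points the wrong way: the extra sourceless current $\n_2$ and the coarser double tangling can only \emph{increase} connectivity, so your identity only yields $\langle\phi_x\phi_y\rangle^0_{B_m}\ge\langle\phi_x\phi_y\rangle^\PLUS_{B_n}\,\P^{xy}_{B_n\sqcup\{\fg\}}[\tilde x\longleftrightarrow\tilde y\text{ in }B_m]$, which is the opposite direction.

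Your second paragraph is where the real argument must live, but it is not a proof. The path--insertion you sketch produces a constant that decays with $|\gamma|$ and hence with $d_G(x,y)$, which is fatal since the lemma is applied in the main proof precisely by sending $d_G(x,y)\to\infty$. Your proposed remedy --- invoking $\P^0[x\leftrightarrow\infty]=0$ to argue that for distant $y$ the single and double currents become ``comparable'' --- is both outside the hypotheses of the lemma (the paper proves it under \eqref{ass:contradiction} alone) and unsubstantiated: you give no mechanism by which the absence of a free infinite cluster lets you strip the extra sourceless current and the coarser double tangling from the connection event without loss.

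The paper's argument is quite different and never touches the free two--point function. It starts from bridges under the \emph{single} plus measure: by Proposition~\ref{prop:bridges} one has $\P^+[\cA_{x,x'}]>0$ for a $J$--neighbour $x'$, and by mixing $\P^+[\cA_{x,x'}\cap\cA_{y,y'}]$ is bounded below uniformly for all $y$ far from $x$. Removing one unit of current from each bridge edge (and bounding degrees) maps this, at finite volume, to a \emph{single} current with four sources $x,x',y,y'$ lying in the event $\cB_m:=\{\overline x\longleftrightarrow\overline y\text{ in }B_m,\ \overline x\centernot\longleftrightarrow\overline x',\ \overline y\centernot\longleftrightarrow\overline y'\}$. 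The key step --- for which your proposal has no analogue --- is to factor out two of the four sources: the paper shows that the four--source partition function restricted to $\cB_m$ is bounded above by $\langle\varphi_{x'}\varphi_{y'}\rangle^\PLUS_{B_n}$ times the two--source ($x,y$) partition function restricted to the same event. This is done by returning to the Griffiths--Simon Ising model, conditioning on the cluster $\cC_{\tilde x}$ (which, on $\cB_m$, avoids $\tilde x',\tilde y'$), and applying Griffiths' inequality to dominate $\langle\sigma_{\tilde x'}\sigma_{\tilde y'}\rangle$ on the complement by its full--volume value. Dividing by $\langle\varphi_{x'}\varphi_{y'}\rangle^\PLUS_{B_n}$, which is bounded above uniformly by Cauchy--Schwarz and regularity, then yields the desired two--source lower bound with a constant independent of $y$.
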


Before proving Lemmas~\ref{lem:p2p_free} and \ref{lem:p2p_plus}, we conclude the proof of the main result. 
Let $x,y,m,n$ be such that both \eqref{eq:p2p_free} and \eqref{eq:p2p_plus} hold. By taking the product between \eqref{eq:p2p_free}, \eqref{eq:p2p_plus} and $\langle \phi_x \phi_y \rangle^0_{B_n}$ (which we can assume to be larger than $c_1/2$ by \eqref{eq:p2p_spin}), using \eqref{eq: domination}, and bounding $\Delta\n(x)$ and $\Delta\n(y)$ to insert $\textup{ECT}(x)$, $\textup{ECT}(y)$, we obtain
\begin{equation*}
\langle \phi_x \phi_y \rangle^\PLUS_{B_n} \langle \phi_x \phi_y \rangle^0_{B_n}
\bfP^{xy,xy}_{B_n\sqcup \{\fg\}, B_n}[f(x,y,\n_1,\n_2,\ct)\geq2 \text{ in } B_m]>c_3
\end{equation*}
for some uniform constant $c_3>0$.
Applying the switching principle of Theorem \ref{thm: switching lemma} we obtain
\begin{equation*}
\langle \phi_x^2 \phi_y^2 \rangle^\PLUS_{B_n}
\bfP^{xxyy,\emptyset}_{B_n\sqcup \{\fg\}, B_n}[f(x,y,\n_1,\n_2,\ct)\geq2 \text{ in } B_m]>c_3.
\end{equation*}
We can now remove the sources $A=xxyy$ at a finite cost by bounding $\Delta\n(x)$ and $\Delta\n(y)$, and inserting $\textup{ECT}(x)$, $\textup{ECT}(y)$, to obtain
$
\bfP^{\emptyset,\emptyset}_{B_n\sqcup \{\fg\}, B_n}[f(x,y,\n_1,\n_2,\ct)\geq2 \text{ in } B_m]>c_4
$
for some uniform constant $c_4>0$. Taking $n\to\infty$,
$
\bfP^{+,0}[f(x,y,\n_1,\n_2,\ct)\geq2 ]\geq c_4.
$
This contradicts \eqref{eq:flow_p2p} from Lemma~\ref{lem:flow_properties}, thus concluding the proof of condition 4 in Proposition \ref{Lebowitz} and, hence, establishing \eqref{eq: convex hull of pm} in Theorem \ref{thm:main}. 
\end{proof}

\begin{proof}[Proof of Lemma~\textup{\ref{lem:p2p_free}}]
Assume for a contradiction that \eqref{eq:p2p_free} does not hold. Then, there exist $x,y\in V$ such that for every $m\geq 0$, 
$
\bfP^{xy}_{B_n}[x \longleftrightarrow \partial B_m]\geq 1/2
$
for all $n$ sufficiently large, since there is a path from $x$ to $y$ in $(\n,\ct)$ due to the source constraints. We now wish to compare $\bfP^{\emptyset}_{B_n}[x \longleftrightarrow \partial B_m]$ to $\bfP^{xy}_{B_n}[x \longleftrightarrow \partial B_m]$ to deduce that the former remains bounded away from $0$, thus obtaining a contradiction. To this end, fix a sequence of vertices $u_0=x, u_1,\ldots, u_k=y$ such that $J_{u_i,u_{i+1}}>0$ for all $i=0,1,\ldots, k-1$, and let $M>0$ be large enough (depending on $x,y$ only) so that 
\begin{equation}\label{eq:p2p_plus_proof}
\bfP^{xy}_{B_n}[x \longleftrightarrow \partial B_m, \, \Delta\n(u_i)\leq M ,\: \forall i=0,1,,\ldots,k] \geq 1/4
\end{equation}
for every $m\geq 0$ and every $n$ sufficiently large.
Now given a current $\n$ with sources $x,y$, let $\n'$ be the sourceless current which is defined as
\begin{equation*}
\n'(s,t)=
\begin{cases}
     \n(s,t)+1, & \text{for } \{s,t\}\in \big\{\{u_0u_1\},\ldots,\{u_{k-1}u_k\}\big\}, \\
     \n(s,t), & \text{otherwise}.
 \end{cases}
\end{equation*}
By \eqref{eq:weights_def}, Proposition~\ref{prop: pos prob} and the fact that $\Delta\n(u_i)\leq M$ for all $i=0,1,\ldots,k$, we obtain
\begin{equation*}
w_{\beta}^{xy}(\n)\leq C_5 w_{\beta}^{\emptyset}(\n') \quad \text{and} \quad \rho_{\n}^{xy}[x \longleftrightarrow \partial B_m]\leq C_5\rho_{\n'}^{\emptyset}[x \overset{(\n',\ct)}{\longleftrightarrow} \partial B_m]
\end{equation*} 
for a constant $C_5>0$ depending on $x,y$ and $M$ only.  
Since the map $\n\mapsto \n'$ is injective, it follows that
\begin{equation*}
\langle \phi_x \phi_y \rangle^0_{B_n} \bfP^{xy}_{B_n}[x \longleftrightarrow \partial B_m, \, \Delta\n(u_i)\leq M \, \forall i=0,1,\ldots,k] \leq C_5^2 \bfP^{\emptyset}_{B_n}[x \longleftrightarrow \partial B_m].
\end{equation*}
By the irreducibility property, one can easily prove that $\langle \phi_x \phi_y \rangle^0_{B_n}\geq c_6$ for $n$ large enough, where $c_6>0$ depends on $x,y$ only. Recalling \eqref{eq:p2p_plus_proof}, we conclude that $\bfP^{\emptyset}_{B_n}[x \longleftrightarrow \partial B_m]\geq (c_6/4)C_5^2$ for every $m$ and every $n$ large enough.
Taking $n\to\infty$ and then $m\to \infty$, we obtain $\bfP^0[x \longleftrightarrow \infty]>0$, which concludes the proof by contradiction.
\end{proof}

\begin{proof}[Proof of Lemma~\textup{\ref{lem:p2p_plus}}]
Let $x,x'$ be such that $c_7:=\bfP^+[\mathcal{A}_{x,x'}]>0$. Then, by mixing we have
$
\lim_{y\to\infty}\bfP^+[\mathcal{A}_{x,x'}, \mathcal{A}_{y,y'}]=c_7^2,
$
where $y'=\gamma \cdot x'$ for an automorphism $\gamma\in \Gamma$ such that $\gamma\cdot x=y$. Thus, for every $y$ sufficiently far away from $x$, $\bfP^+[\mathcal{A}_{x,x'}, \mathcal{A}_{y,y'}]>c_7^2/2$. We now wish to modify our current so that it has sources $x,x',y,y'$. Assume without loss of generality that they are all distinct. Given $(\n,\ct)\in \mathcal{A}_{x,x'}\cap\mathcal{A}_{y,y'}$, let $(\n',\ct')$ be the tangled current with sources $x,x',y,y'$ defined as
\begin{equation*}
\n'(e)=\begin{cases}
     \n(e)-1, & \text{ for } e\in \{\lbrace x,x'\rbrace,\lbrace y,y'\rbrace\}, \\
     \n(e), & \text{ otherwise },
\end{cases}
\end{equation*}
where $\ct'$ coincides with $\ct$ up to redefining $\overline{x}, \overline{x}', \overline{y},\overline{y}'$ to be the extra vertices in the blocks added due to the sources, similarly to the proof of Proposition~\ref{prop:bridges}. Note that up to possibly exchanging $x$ and $x'$, and $y$ and $y'$, $(\n',\ct')\in \mathcal{B}_m(x,x',y,y')$ for some $m>0$, where 
$
\mathcal{B}_m(x,x',y,y'):=\lbrace (\n,\ct): \overline{x}\longleftrightarrow\overline{y} \text{ in } B_m, \: \overline{x}\centernot\longleftrightarrow\overline{x}', \: \overline{y}\centernot\longleftrightarrow\overline{y}' \rbrace,
$
and now $\overline{x}, \overline{x}', \overline{y},\overline{y}'$ denote the extra vertices in the blocks.
Thus a union bound implies that
$
\bfP^+[(\n',\ct')\in \mathcal{B}_m(x,x',y,y')]>c_7^2/8,
$
which in turn implies that
$
\bfP^{\emptyset}_{B_n\sqcup \{\fg\}}[(\n',\ct')\in \mathcal{B}_m(x,x',y,y')]>c_7^2/8
$
for all $n$ sufficiently large. After bounding the degrees, it follows that there exists a constant $c_8>0$ independent of $x,x',y,y'$ such that for every $n$ large enough
\begin{equation}\label{eq: proof final lemma 1}
\langle \varphi_x \varphi_{x'} \varphi_y \varphi_{y'} \rangle^\PLUS_{B_n}\bfP^{xx'yy'}_{B_n\sqcup \{\fg\}}[\mathcal{B}_m(x,x',y,y')]\geq c_8\bfP^{\emptyset}_{B_n\sqcup \{\fg\}}[(\n',\ct')\in \mathcal{B}_m(x,x',y,y')],
\end{equation}
hence for every $n$ sufficiently large,
\begin{equation*}
\begin{aligned}
\sum_{\substack{\partial \n=\{x,x',y,y'\}}} w_{\beta}^{xx'yy'}(\n) \rho_{\n}^{xx'yy'}&[\mathcal{B}_m(x,x',y,y')]
\\
&\geq c_8 \sum_{\substack{\partial \n={\emptyset}}} w_{\beta}^\emptyset(\n) \rho_{\n}^{\emptyset}[(\n',\ct')\in\mathcal{B}_m(x,x',y,y')].
\end{aligned}
\end{equation*} 
Our goal now is to prove that
\begin{multline}\label{eq: proof final lemma 2}
    \sum_{\substack{\partial \n=\{x,x',y,y'\}}} w_{\beta}^{xx'yy'}(\n) \rho_{\n}^{xx'yy'}[\mathcal{B}_m(x,x',y,y')]\leq\\ \langle \varphi_{x'}\varphi_{y'} \rangle^\PLUS_{B_n} \sum_{\substack{\partial \n=\{x,y\}}} w_{\beta}^{xy}(\n) \rho_{\n}^{xx'yy'}[\mathcal{B}_m(x,x',y,y')].
\end{multline}
Indeed, with this inequality and \eqref{eq: proof final lemma 1},
\begin{eqnarray*}
    \frac{c_7^2}{8}c_8&\leq& \langle \varphi_x \varphi_{x'}\varphi_y\varphi_{y'}\rangle_{B_n}^\PLUS\bfP^{xx'yy'}_{B_N\sqcup \{\fg\}}[\mathcal{B}_m(x,x',y,y')]\\ &\leq& \langle \varphi_{x'}\varphi_{y'}\rangle_{B_n}^\PLUS\langle \varphi_x\varphi_y\rangle_{B_n}^\PLUS\bfP^{xy}_{B_N\sqcup \{\fg\}}[ x \longleftrightarrow y \text{ in } B_m]\\ &\leq& \sqrt{\langle \varphi_{x'}^2\rangle_{B_n}^\PLUS\langle \varphi_{y'}^2\rangle_{B_n}^\PLUS}\langle \varphi_x\varphi_y\rangle_{B_n}^\PLUS\bfP^{xy}_{B_N\sqcup \{\fg\}}[ x \longleftrightarrow y \text{ in } B_m].
\end{eqnarray*}
This is sufficient to conclude since for $n\geq 1$ sufficiently large one has $\langle \varphi_{x'}^2\rangle_{B_n}^\PLUS,\langle \varphi_{y'}^2\rangle_{B_n}^\PLUS\leq 2\langle \varphi_x^2\rangle^+<\infty$.

We now prove \eqref{eq: proof final lemma 2}. To do this, we analyse the prelimit. Defining $h(\PLUS)_z:=\sum_{y\notin B_n}J_{z,y}\PLUS_y$ for all $z\in B_n$, we have that 
$
\mu_{B_n,N,\beta}^{h(\PLUS)}\left[\Phi_{x',N}\Phi_{y',N}\right]
$ tends to
$
\langle \varphi_{x'}\varphi_{y'}\rangle_{B_n}^\PLUS$ as $N\to\infty$,
where $\mu_{B_n,N,\beta}^{h(\PLUS)}$ has been defined in Definition \ref{def: measure gs graph}, and where we have used Proposition \ref{prop: CV griffiths}. Note that by the symmetries of the block-spins for any $i,j\in \lbrace 1,\ldots,N\rbrace$,
$
     \mu_{B_n,N,\beta}^{h(\PLUS)}[\sigma_{\tilde{x}'}\sigma_{\tilde{y}'}]= \mu_{B_n,N,\beta}^{h(\PLUS)}[\sigma_{(x',i)}\sigma_{(y',j)}],
$
where $\tilde{x}'=(x',1)$ and $\tilde{y}'=(y',1)$, so that the above convergence can be rephrased as
\begin{equation}\label{eq: convergence 2pIsing to 2pPhi4}
    (c_N N)^2\mu_{B_n,N,\beta}^{h(\PLUS)}\left[\sigma_{\tilde{x}'}\sigma_{\tilde{y}'}\right]\underset{N\rightarrow\infty}{\longrightarrow}\langle \varphi_{x'}\varphi_{y'}\rangle_{B_n}^\PLUS.
\end{equation}
Recall the definition of the Hamiltonian of the Ising measure $\mu_{B_n,N,\beta}^{h(\PLUS)}$ on $B_n\times K_N$ from Section~\ref{section: gs approx} and note that 
\begin{equation*}
\begin{aligned}
\mathbf{H}^{h(\PLUS)}_{B_n,\beta,N}(\sigma)
= 
-\sum_{\substack{u,v\in B_n\\ j,k\in \lbrace 1,\cdots, N\rbrace}}J_N((u,i),(v,j))\sigma_{(u,i)}\sigma_{(v,j)}-\beta c_N \sum_{(u,j) \in B_n\times K_N} h(\PLUS)_u \sigma_{(u,j)},
\end{aligned}
\end{equation*}
where $J_N$ is defined implicitly. Hence, we may as well write $\mu_{B_n,N,\beta,J_N}^{h(\PLUS)}=\mu_{B_n,N,\beta}^{h(\PLUS)}$. This notation allows us later to consider interactions other than $J_N$.

By Proposition~\ref{prop: stronger weight convergence}, the left-hand side of \eqref{eq: proof final lemma 2} can be obtained as the limit of
\begin{equation*}
    \frac{(c_N N)^4}{\mathbf{Z}_N^{|B_n|}}\sum_{\substack{\tilde{\n}\in \Omega^N_{B_n\sqcup\{\fg\}}\\\partial \tilde{\n}=\lbrace \tilde{x},\tilde{x}',\tilde{y},\tilde{y}'\rbrace}}w_{N,\beta}(\tilde{\n})\mathbbm{1}\{\mathcal{B}_m^N(\tilde{x},\tilde{x}',\tilde{y},\tilde{y}')\}
\end{equation*}
as $N$ goes to infinity, where $\tilde{z}=(z,1)$ for $z\in \lbrace x,x',y,y'\rbrace$ and
\begin{equation*}
    \mathcal{B}_m^N(\tilde{x},\tilde{x}',\tilde{y},\tilde{y}'):=\lbrace \tilde{\n}\in \Omega^N_{B_n\cup \lbrace\fg\rbrace}: \:\tilde{x}\overset{\tilde{\n}}{\longleftrightarrow}\tilde{y} \text{ in } B_m\times K_N, \: \tilde{x}\overset{\tilde{\n}}{\centernot\longleftrightarrow}\tilde{x}', \: \tilde{y}\overset{\tilde{\n}}{\centernot\longleftrightarrow}\tilde{y}' \rbrace.
\end{equation*} 
Now, denote by $\mathcal{C}_{\tilde{x}}$ the cluster of $\tilde{x}$. We say that a subset of $B_n\times K_N\cup\lbrace \fg\rbrace$ is admissible if it satisfies $\lbrace \cC_{\tilde{x}}=C\rbrace \subset \mathcal{B}_m^N(\tilde{x},\tilde{x}',\tilde{y},\tilde{y}')$. If $C$ is admissible, then
\begin{equation*}
    \sum_{\substack{\tilde{\n}\in \Omega^N_{B_n\sqcup\{\fg\}}\\\partial \tilde{\n}=\lbrace \tilde{x},\tilde{x}',\tilde{y},\tilde{y}'\rbrace}}w_{N,\beta,h(\PLUS)}(\tilde{\n})\mathbbm{1}\{\cC_{\tilde{x}}=C\}=\mu^{h^C(\PLUS)}_{B_n,N,\beta,J_N^C} [\sigma_{\tilde{x}'}\sigma_{\tilde{y}'}]\sum_{\substack{\tilde{\n}\in \Omega^N_{B_n\sqcup \{\fg\}}\\\partial \tilde{\n}=\{\tilde{x},\tilde{y}\}}}w_{N,\beta,h(\PLUS)}(\tilde{\n})\mathbbm{1}\{\cC_{\tilde{x}}=C\},
\end{equation*}
where $J_N^C$ is defined by 
\begin{equation*}
    J_N^C((u,i),(v,j))= \left\{
    \begin{array}{ll}
        J_N((u,i),(v,j)) & \mbox{if } (u,i),(v,j)\notin C \\
        0 & \mbox{otherwise.}
    \end{array}
    \right.
\end{equation*}
and $h^C(\PLUS)=0$ if $\fg\in C$ and $h^C(\PLUS)=h(\PLUS)$ otherwise.
Since $J_N^C\leq J_N$ and $h^C(\PLUS)\leq h(\PLUS)$, by Griffiths' inequality,
\begin{equation*}
    \mu^{h^C(\PLUS)}_{B_n,N,\beta,J_N^C} [\sigma_{\tilde{x}'}\sigma_{\tilde{y}'}]\leq \mu^{h(\PLUS)}_{B_n,N,\beta,J_N} [\sigma_{\tilde{x}'}\sigma_{\tilde{y}'}],
\end{equation*}
so that 
\begin{equation*}
    \sum_{\substack{\tilde{\n}\in \Omega^N_{B_n\sqcup\{\fg\}}\\\partial \tilde{\n}=\lbrace \tilde{x},\tilde{x}',\tilde{y},\tilde{y}'\rbrace}}w_{N,\beta}(\tilde{\n})\mathbbm{1}\{\cC_{\tilde{x}}=C\}\leq\mu^{h(\PLUS)}_{B_n,N,\beta,J_N} [\sigma_{\tilde{x}'}\sigma_{\tilde{y}'}]\sum_{\substack{\tilde{\n}\in \Omega^N_{B_n\sqcup \{\fg\}}\\\partial \tilde{\n}=\{\tilde{x},\tilde{y}\}}}w_{N,\beta}(\tilde{\n})\mathbbm{1}\{\cC_{\tilde{x}}=C\}.
\end{equation*}
Summing over $C$ admissible yields
\begin{equation}\label{eq: proof final lemma 3}
\begin{aligned}
    \sum_{\substack{\tilde{\n}\in \Omega^N_{B_n\sqcup\{\fg\}}\\\partial \tilde{\n}=\lbrace \tilde{x},\tilde{x}',\tilde{y},\tilde{y}'\rbrace}}w_{N,\beta}(\tilde{\n})\mathbbm{1}\{\mathcal{B}_m^N(\tilde{x},\tilde{x}',\tilde{y},\tilde{y}')\}\leq \mu^{h(\PLUS)}_{B_n,N,\beta,J_N} [\sigma_{\tilde{x}'}\sigma_{\tilde{y}'}] \\\times\sum_{\substack{\tilde{\n}\in \Omega^N_{B_n\sqcup\{\fg\}}\\\partial \tilde{\n}=\lbrace \tilde{x},\tilde{y}\rbrace}}w_{N,\beta}(\tilde{\n})\mathbbm{1}\{\mathcal{B}_m^N(\tilde{x},\tilde{x}',\tilde{y},\tilde{y}')\}.
\end{aligned}    
\end{equation}
Multiplying by $(c_N N)^4/\mathbf{Z}_N^{|B_n|}$ on each side of \eqref{eq: proof final lemma 3} and letting $N$ go to infinity  yields \eqref{eq: proof final lemma 2} by Proposition~\ref{prop: stronger weight convergence} and \eqref{eq: convergence 2pIsing to 2pPhi4}. 
\end{proof}

We now finish the proof of Theorem \ref{thm:main}. It remains to prove that \eqref{eq: condition for continuity} is sufficient to establish that $|\cG(\beta)|=1$. By Proposition \ref{prop: spont mag and gibbs is 0} it suffices to show that $m^*(\beta)=0$. From the preceding arguments, we know that $\langle \cdot \rangle^0$ and $\langle \cdot \rangle^+$ coincide on even functions, so condition \eqref{eq: condition for continuity} reads
\begin{equation*}
    \liminf_{d_G(o,x)\rightarrow \infty}\langle \varphi_o\varphi_{x}\rangle^0_\beta=\liminf_{d_G(o,x)\rightarrow \infty}\langle \varphi_o\varphi_{x}\rangle^+_\beta=0.
\end{equation*}
However, by Griffiths' inequality and $\Gamma$-invariance, for all $x\in V$,
$
    \langle \varphi_o\varphi_{x}\rangle^+_\beta\geq m^*(\beta)^2
$
which implies that $m^*(\beta)=0$. $\hfill \square$

\section{Continuity of the phase transition}
\label{sec: continuity}

In this section, we prove Theorem \ref{thm: continuity}. Fix a reflection positive interaction $J$ on $\ZZ^d$ in the sense of \cite[Definition 5.1 and Lemma 5.5]{B} (see also \cite[Section~3]{panis2023triviality} for a complete introduction to the subject in the case of the $\varphi^4$ model). We suppress $J$ from the notation when clear from context. For each such $J$, recall the definition of the associated random walk $(X_k)_{k\geq 0}$ (started at $0$) with step distribution given by
\begin{equation*}
    \mathbb P[X_{n+1}=y\: |\: X_n=x]=\frac{J_{x,y}}{|J|}
\end{equation*}
where $|J|:=\sum_{x\in \mathbb Z^d}J_{0,x}$. It is known that $(X_k)_{k\geq 0}$  is transient if and only if
\begin{equation}\label{eq: condition transience}
\int_{[-\pi,\pi)^d}\frac{1}{1-\mathbb E[e^{ip\cdot X_1}]}\frac{\textup{d}p}{(2\pi)^d}<\infty.
\end{equation}
A proof of this statement can be found in \cite[Proposition 4.2.3]{lawler2010}; it crucially uses the reflection symmetry and aperiodicity of the random walk.

We now state a version of the infrared bound \cite{FSS} that is suited to our setting.
\begin{prop} \label{prop: irb}
Let $J$ be reflection positive and such that the associated random walk is transient, i.e. \eqref{eq: condition transience} holds. For any $(v_x)_{x \in \ZZ^d} \in \CC^{\ZZ^d}$ of finite support, and any $\beta < \beta_c(J)$,
\begin{equation}\label{eq: improved IRB}
    \sum_{x,y\in \ZZ^d}v_x\overline{v_y}\langle \varphi_x\varphi_y\rangle_{\beta}^0\leq \frac{1}{2\beta|J|}\sum_{x,y\in \mathbb Z^d}v_x\overline{v_y}G(x,y)
\end{equation}
where $G$ is the Green's function of $(X_k)_{k \in \NN}$ given by
\begin{equation*}
    G(x,y):=\lim_{L\rightarrow \infty}\frac{1}{L^d}\sum_{p\in \mathbb T_L^*}\frac{e^{ip\cdot(x-y)}}{1-\mathbb E[e^{ip\cdot X_1}]}=\int_{[-\pi,\pi)^d}\frac{e^{ip\cdot(x-y)}}{1-\mathbb E[e^{ip\cdot X_1}]}\frac{\textup{d}p}{(2\pi)^d}<\infty.
\end{equation*}
\end{prop}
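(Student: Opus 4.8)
\textbf{Proof plan for Proposition \ref{prop: irb}.}

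The plan is to obtain the bound on the torus first and then pass to the infinite volume limit. On the torus $\TT_L^d$, the classical Gaussian domination argument of Fröhlich, Simon, and Spencer \cite{FSS} (see also \cite{FILS}) applies directly to the reflection positive interaction $J^L$: one considers the perturbed partition function $Z_{\TT_L^d,\beta,J^L}(h)$ obtained by shifting the spins by a real field $h=(h_x)_{x\in\TT_L^d}$ in the gradient part of the Hamiltonian, and reflection positivity yields that $h\mapsto Z(h)/Z(0)$ is maximised at $h=0$. Expanding to second order in $h$ gives, for any $(v_x)_{x\in\TT_L^d}\in\CC^{\TT_L^d}$,
\begin{equs}
\sum_{x,y\in\TT_L^d} v_x\overline{v_y}\,\langle \varphi_x\varphi_y\rangle_{\TT_L^d,\beta,J^L}
\leq
\frac{1}{2\beta}\sum_{x,y\in\TT_L^d} v_x\overline{v_y}\,\widehat{G}_L(x,y),
\end{equs}
where $\widehat G_L$ is the finite-volume Green's function of the random walk step kernel, i.e.\ $\widehat G_L(x,y)=\tfrac{1}{L^d}\sum_{p\in\TT_L^*}\frac{e^{ip\cdot(x-y)}}{1-\EE[e^{ip\cdot X_1}]}$ after normalising by $|J|$; here one must be slightly careful with the zero mode $p=0$, which is handled by restricting to $v$ with $\sum_x v_x = 0$ or by the standard argument that the divergent contribution cancels, exactly as in \cite{FSS,FILS,B}. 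Since we only need \eqref{eq: improved IRB} for finitely supported $(v_x)$, and ultimately only the pointwise statement, this restriction is harmless.

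Next I would take $L\to\infty$. For fixed finitely supported $v$, the left-hand side converges to $\sum_{x,y}v_x\overline{v_y}\langle\varphi_x\varphi_y\rangle^0_\beta$: indeed, for $\beta<\beta_c(J)$ the finite-volume $\varphi^4$ two-point functions on $\TT_L^d$ converge to those of the infinite-volume free measure $\langle\cdot\rangle^0_\beta$ (this follows from the Griffiths' inequality comparisons between torus, box, and infinite-volume measures together with the monotonicity and uniqueness results already established, e.g.\ Proposition \ref{prop: monotonicity in vol} and the fact that below $\beta_c$ there is a unique Gibbs state). On the right-hand side, the transience assumption \eqref{eq: condition transience} guarantees, via \cite[Proposition 4.2.3]{lawler2010}, that $\frac{1}{1-\EE[e^{ip\cdot X_1}]}$ is integrable on $[-\pi,\pi)^d$, so that $\widehat G_L(x,y)\to G(x,y)=\int_{[-\pi,\pi)^d}\frac{e^{ip\cdot(x-y)}}{1-\EE[e^{ip\cdot X_1}]}\frac{\textup{d}p}{(2\pi)^d}<\infty$ for each fixed $x,y$ (Riemann sum convergence, using aperiodicity so that the only singularity is the integrable one at $p=0$). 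Combining these gives \eqref{eq: improved IRB}.

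The main obstacle is the bookkeeping around the zero mode of the Laplacian/step kernel on the torus and its interaction with the $L\to\infty$ limit: the finite-volume Gaussian domination bound naturally produces $\frac{1}{L^d}\sum_{p\in\TT_L^*\setminus\{0\}}\frac{|\widehat v(p)|^2}{1-\EE[e^{ip\cdot X_1}]}$, and one must argue that omitting the $p=0$ term is legitimate (it is, for $v$ with zero sum, and the general finitely supported case is recovered by the standard trick of adding a constant to $v$ and using translation invariance, or by noting the $p=0$ contribution to the right-hand side is non-negative) and that the remaining sum converges to the stated integral. This is entirely standard — it is carried out in detail in \cite{FSS,FILS} and the book \cite{B} — but it is the only place where care is genuinely required; the rest is convergence of correlation functions, which is already in hand from the earlier sections. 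I would therefore structure the write-up as: (i) recall reflection positivity of $J^L$ and state the finite-volume Gaussian domination bound with a reference; (ii) identify the quadratic form with $\widehat G_L$; (iii) pass to the limit on both sides, citing transience for the right-hand side and the earlier convergence results for the left-hand side.
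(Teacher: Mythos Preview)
Your plan is essentially the same as the paper's: establish the infrared bound on the torus via reflection positivity (Gaussian domination), then pass to $L\to\infty$ on both sides. The paper cites \cite[Lemma 4.5]{B} for the torus inequality, uses the vanishing of the spontaneous magnetisation for $\beta<\beta_c$ to deal with the zero mode, and invokes transience for the convergence of the right-hand side, just as you outline.

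The one place where your write-up is thinner than the paper's is the convergence of torus two-point functions to those of $\langle\cdot\rangle^0_\beta$. You say this ``follows from the Griffiths' inequality comparisons between torus, box, and infinite-volume measures'', but Griffiths only gives a \emph{lower} bound $\langle\varphi_x\varphi_y\rangle_{\TT_L^d}\geq\langle\varphi_x\varphi_y\rangle^0_{B_L}$ (the periodised interaction $J^L$ dominates $J$ restricted to the box). There is no matching upper bound from monotonicity alone, since the torus is not a sub-system of any free-boundary box on $\ZZ^d$, and Proposition~\ref{prop: monotonicity in vol} does not apply to periodic boundary conditions. The paper explicitly flags this as a ``subtle point'' and handles it differently: one shows, using the regularity estimates of Corollary~\ref{app cor: finite vol reg} adapted to the torus and arguing as in \cite[Theorem 4.5]{LP76}, that the torus measures are uniformly regular, hence tight, and that any subsequential limit is a tempered Gibbs measure on $\ZZ^d$ --- which for $\beta<\beta_c$ is unique by Proposition~\ref{prop: spont mag and gibbs is 0}. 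Once you supply this, your argument goes through; but the Griffiths comparison alone does not close it.
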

\begin{proof}
The convergence and finiteness of $G(x,y)$ is a consequence of \eqref{eq: condition transience}. The infrared bound \eqref{eq: improved IRB} is a direct consequence of \cite[Lemma 4.5]{B} together with the fact that the spontaneous magnetisation vanishes for $\beta < \beta_c$. We stress a subtle point in the reasoning above: one can show, by using similar regularity estimates as in Corollary \ref{app cor: finite vol reg} and arguing as in the proof of \cite[Theorem 4.5]{LP76}, that the $\phi^4$ measures on $\TT_L^d$ converge weakly as $L \rightarrow \infty$ to the unique (since $\beta<\beta_c$, see Proposition~\ref{prop: spont mag and gibbs is 0}) Gibbs measure on $\ZZ^d$.
\end{proof}

We now turn to the proof of Theorem \ref{thm: continuity}.
\begin{proof}[Proof of Theorem \ref{thm: continuity}]
By left-continuity of $\beta\mapsto \langle \varphi_x\varphi_y\rangle_\beta^0$ for $x,y\in \ZZ^d$, we can also extend \eqref{eq: improved IRB} to $\beta=\beta_c$. Using equation \eqref{eq: improved IRB} with $v$ defined by $v_x=|B_n|^{-2}\mathbbm{1}_{B_n}(x)$ yields
\begin{equation}\label{eq: cesaro means}
    \frac{1}{|B_n|^2}\sum_{x,y\in B_n}\langle \varphi_x\varphi_y\rangle_{\beta_c}^0\leq \frac{1}{2\beta_c|J|}\frac{1}{|B_n|^2}\sum_{x,y\in B_n}G(x,y).
\end{equation}
The Riemann--Lebesgue Lemma gives that $G(x,y)\underset{|x-y|_1\rightarrow \infty}{\longrightarrow}0$. In particular,
\begin{equation*}
    \frac{1}{|B_n|^2}\sum_{x,y\in B_n}G(x,y)\underset{n\rightarrow \infty}{\longrightarrow}0.
\end{equation*}
By positivity of correlations we have that the lefthand side of \eqref{eq: cesaro means} converges to $0$, which in turn implies that \eqref{eq: condition for continuity} holds at $\beta=\beta_c$. By Theorem~\ref{thm:main}, one concludes that $|\cG_\Gamma(\beta_c)|=1$, and therefore $m^*(\beta_c)=0$ by Proposition~\ref{prop: spont mag and gibbs is 0}.
\end{proof}

\appendix

\section{Right-continuity of the spontaneous magnetisation}

\begin{prop}\label{prop:rightcontinuity}
The spontaneous magnetisation $m^*(\beta)$ is right-continuous at any $\beta>0$.    
\end{prop}
\begin{proof}
Let $\beta>0$. Then,for any $\Lambda\subset V$ with $\Lambda\ni o$ finite we have
\begin{equation*}
\begin{aligned}
\limsup_{\delta\to 0^+}m^*(\beta+\delta)&\leq \limsup_{\delta\to 0^+} \langle \varphi_o \rangle^{\PLUS}_{\Lambda,\beta+\delta}+\varepsilon(\Lambda,\beta)
=\langle \varphi_o \rangle^{\PLUS}_{\Lambda,\beta}+\varepsilon(\Lambda,\beta), 
\end{aligned}
\end{equation*}
where $\varepsilon(\Lambda,\beta)=\sup_{\delta\in [0,1]}\langle \varphi_o \mathbbm{1}_{\exists x \notin \Lambda: \: |\varphi_x|> \PLUS_x(M)} \rangle^{+}_{\beta+\delta}$. Here we used the monotonicity in boundary conditions (see Proposition \ref{prop: increasing in increasing bc}) and that $\langle \varphi_o \rangle^{\PLUS}_{\Lambda,\beta+\delta}$ is a continuous function of $\delta$. Since $\varepsilon(\beta,\Lambda)$ tends to $0$ as $\Lambda$ increases to $V$ by the continuity of the regularity constants--- see Proposition~\ref{prop: gibbs is regular}--- we have
$
\limsup_{\delta\to 0^+}m^*(\beta+\delta)\leq m^*(\beta).
$
On the other hand, we have that $m^*(\beta+\delta)\geq m^*(\beta)$ for any $\delta>0$, hence 
$
\lim_{\delta\to 0^+}m^*(\beta+\delta)=m^*(\beta),
$
as desired.
\end{proof}

\section{Correlation inequalities}
In this section, all the boundary conditions considered are implicitly assumed to satisfy the summability condition defined in the introduction, i.e.~$\sum_{y\notin \Lambda}J_{x,y}\eta_y<\infty ~~ \forall x\in \Lambda$.

\subsection{The FKG and absolute value FKG inequalities}

\begin{prop}[FKG inequality, {\cite[Theorem 4.4.1]{glimm2012quantum}}] \label{prop: FKG phi4}
Let $\Lambda\subset V$ finite, $\eta\in \RR^V$ and $h\in \mathbb R^\Lambda$. Then, for any increasing and bounded functions $F,G:\RR^\Lambda\to \RR$,
\begin{equation*}
    \langle F(\phi)G(\phi)\rangle_{\Lambda,\beta,h}^\eta\geq \langle F(\phi)\rangle_{\Lambda,\beta,h}^\eta\langle G(\phi)\rangle_{\Lambda,\beta,h}^\eta.
\end{equation*}
\end{prop}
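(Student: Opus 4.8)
\textbf{Proof strategy for Proposition \ref{prop: FKG phi4}.} The plan is to reduce the FKG inequality for the $\varphi^4$ measure to the classical FKG (Harris/Holley) inequality by verifying the lattice condition (also called the FKG lattice condition) for the density of $\langle \cdot \rangle_{\Lambda,\beta,h}^\eta$ with respect to the product Lebesgue measure on $\R^\Lambda$. Recall that by Holley's criterion, a probability density $p$ on $\R^\Lambda$ (with respect to a product reference measure) satisfies the FKG inequality for increasing bounded functions provided it is strictly positive and log-supermodular, i.e.
\begin{equs}
p(\phi \vee \psi)\, p(\phi \wedge \psi) \geq p(\phi)\, p(\psi) \qquad \forall \phi,\psi \in \R^\Lambda,
\end{equs}
where $\vee$ and $\wedge$ denote the coordinatewise maximum and minimum. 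Here the reference measure is $\textup{d}\rho_{\Lambda,g,a}(\phi) = \prod_{x\in\Lambda}\textup{d}\rho_{g,a}(\phi_x)$, which is already a product measure, so the single-site potential $g t^4 + a t^2$ plays no role in the supermodularity check — it cancels out.

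The key step is therefore to observe that the density of $\langle \cdot \rangle_{\Lambda,\beta,h}^\eta$ with respect to $\textup{d}\rho_{\Lambda,g,a}$ is proportional to
\begin{equs}
\exp\Big( \beta \sum_{x,y\in\Lambda} J_{x,y}\phi_x\phi_y + \beta\sum_{\substack{x\in\Lambda\\ y\in V\setminus\Lambda}} J_{x,y}\phi_x\eta_y + \beta\sum_{x\in\Lambda} h_x\phi_x \Big),
\end{equs}
and to check log-supermodularity of this expression. Writing $U(\phi)$ for the exponent, the linear terms $\beta\sum_x (\sum_{y\notin\Lambda} J_{x,y}\eta_y + h_x)\phi_x$ are additive over coordinates and hence automatically satisfy $U(\phi\vee\psi)+U(\phi\wedge\psi) = U(\phi)+U(\psi)$ on those terms. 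For the quadratic part, it suffices to check pair by pair: for a single term $\beta J_{x,y}\phi_x\phi_y$ with $J_{x,y}\geq 0$, one needs
\begin{equs}
(\phi_x\vee\psi_x)(\phi_y\vee\psi_y) + (\phi_x\wedge\psi_x)(\phi_y\wedge\psi_y) \geq \phi_x\phi_y + \psi_x\psi_y,
\end{equs}
which is the elementary inequality $(\max(a,c)\max(b,d) + \min(a,c)\min(b,d)) - (ab+cd) = (\max(a,c)-a)(\max(b,d)-b) + (\min(a,c)-a)(\min(b,d)-b) \geq 0$ — both products on the right are products of two numbers of the same sign (a short case check on whether $a\leq c$ or $a> c$). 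Summing over all pairs $\{x,y\}\subset\Lambda$ with $J_{x,y}\geq 0$ (using crucially ferromagneticity $J_{x,y}\geq 0$) gives supermodularity of $U$, hence log-supermodularity of the density. Strict positivity of the density is immediate since the exponential is everywhere positive and $\rho_{g,a}$ has full support.

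I then invoke Holley's inequality (or the continuum FKG theorem, e.g.\ as in Preston or Battle--Rosen) to conclude. The only mild technical point is that Holley's theorem is usually stated for measures on finite distributive lattices or on $\R^n$ with appropriate integrability; here boundedness of $F,G$ together with the fact that $\langle \cdot \rangle_{\Lambda,\beta,h}^\eta$ is a genuine probability measure (finite partition function, which holds since the single-site measure decays like $e^{-gt^4}$ and the exponent grows at most quadratically) makes the hypotheses trivially satisfied. I do not expect a serious obstacle here — the main (very minor) subtlety is just making sure the summability condition $\sum_{y\notin\Lambda}J_{x,y}|\eta_y| < \infty$ is in force so that the external-field-like term coming from the boundary condition is well-defined and finite, which is assumed throughout the appendix.
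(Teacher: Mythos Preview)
Your proposal is correct in spirit and in substance: verifying the FKG lattice condition for the density $\exp\big(\beta\sum J_{x,y}\phi_x\phi_y + \text{linear terms}\big)$ with respect to the product reference measure $\textup{d}\rho_{\Lambda,g,a}$ is exactly the standard route to FKG for ferromagnetic spin systems, and your observation that the single-site potential cancels and the linear terms are modular is right. One minor slip: the displayed algebraic identity you wrote for $(\max(a,c)\max(b,d) + \min(a,c)\min(b,d)) - (ab+cd)$ is not literally correct (try $a=b=1$, $c=d=2$), but the inequality you need is still trivially true --- after reducing by symmetry to $a\leq c$, the difference equals $0$ if $b\leq d$ and $(c-a)(b-d)\geq 0$ if $b>d$. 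So the supermodularity check goes through.

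The paper itself does not give any argument: its entire proof is a citation to \cite[Theorem 4.4.1]{glimm2012quantum}. The argument behind that citation is essentially the one you outline (the Holley/lattice-condition approach for continuous spins), so your proposal is a self-contained version of what the paper defers to the literature.
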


\begin{prop}[Absolute value FKG]\label{absolute FKG}
Let $\Lambda\subset V$ finite, $\eta\in \RR_+^V$ and $h\in \mathbb R_+^\Lambda$. Then, for any bounded increasing functions $F,G:(\RR_+)^\Lambda\to \RR$,
\begin{equation*}
    \langle F(|\varphi|)G(|\varphi|) \rangle^{\eta}_{\Lambda,\beta,h}\geq \langle F(|\varphi|) \rangle^{\eta}_{\Lambda,\beta,h} \langle G(|\varphi|) \rangle^{\eta}_{\Lambda,\beta,h}.
\end{equation*}
\end{prop}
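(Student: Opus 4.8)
\textbf{Plan for the proof of Proposition \ref{absolute FKG}.} The statement is an FKG-type inequality for the absolute value field, and the natural strategy is to reduce it to the usual lattice FKG/Holley inequality by conditioning on the signs, exactly in the spirit of the decomposition used repeatedly in the body of the paper (e.g.\ in Lemma \ref{lem: absolute domination}). First I would recall the representation of the $\phi^4$ measure as an Ising model with random (absolute-value-dependent) couplings: writing $\varphi_x = \varepsilon_x \psi_x$ with $\varepsilon_x = \textup{sgn}(\varphi_x) \in \{\pm 1\}$ and $\psi_x = |\varphi_x| \geq 0$, the law of $(\psi_x)_{x\in\Lambda}$ under $\langle \cdot \rangle^{\eta}_{\Lambda,\beta,h}$ has density proportional to $Z^{\ising,\textup{sgn}(\eta)}_{\Lambda, J^\eta(\psi), \beta, h\psi} \prod_{x\in\Lambda} \textup{d}\rho(\psi_x)$, where $J^\eta(\psi)_{x,y} = J_{x,y}\psi_x\psi_y$ for $x,y\in\Lambda$ (and the analogous boundary term) and $h\psi$ denotes the Ising magnetic field $(h_x\psi_x)_x$ — note all these couplings and fields are \emph{non-negative} since $h \geq 0$ is not assumed, but one may first treat $h \geq 0$ and then remark the general case follows since the sign of $h_x$ can be absorbed into $\eta$, or simply run the argument with the Ising partition function as stated.

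The key step is then to show that $\psi \mapsto \log Z^{\ising,\textup{sgn}(\eta)}_{\Lambda, J^\eta(\psi),\beta}$ is a supermodular function of $\psi \in (\R_+)^\Lambda$, i.e.\ has non-negative mixed second partial derivatives $\partial_{\psi_x}\partial_{\psi_y} \log Z \geq 0$ for $x \neq y$. Differentiating, $\partial_{\psi_x}\partial_{\psi_y} \log Z^{\ising}$ is a sum of terms of the form (coupling constants times) Ising truncated correlations $\langle \sigma_A ; \sigma_B \rangle^{\ising}$ with $A,B$ sets of sources, each of which is non-negative by the second Griffiths inequality for the Ising model (Griffiths--Hurst--Sherman / GKS-II). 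This is precisely the monotonicity-of-$G$ argument already invoked in the proof of Lemma \ref{lem: absolute domination}, promoted from ``increasing'' to ``supermodular''. Combined with the fact that the single-site measures $\rho$ are product (hence trivially give a supermodular log-density, being a sum of one-variable terms), the joint density of $\psi$ on $(\R_+)^\Lambda$ — with respect to the product reference measure $\prod_x \textup{d}\rho(\psi_x)$ — is log-supermodular. The continuous FKG (Holley/Preston) criterion then yields that $\langle \cdot \rangle^{\eta}_{\Lambda,\beta,h}$ restricted to $|\varphi|$ satisfies the FKG lattice condition, which gives positive correlation of all pairs of increasing functions of $|\varphi|$, as desired.

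Concretely the steps are: (1) write the density of $|\varphi|$ as the Ising partition function times the product single-site measure; (2) show $\log Z^{\ising,\tau}_{\Lambda,J^\eta(\psi),\beta}$ is $C^2$ and supermodular in $\psi$ via GKS-II, treating separately internal couplings $J_{x,y}\psi_x\psi_y$ and boundary/field terms; (3) invoke the continuous FKG theorem (e.g.\ \cite[Theorem 4.11]{B} or Preston's criterion) to conclude. The main obstacle is step (2): one must carefully organize the second-derivative computation — since $J^\eta(\psi)_{x,y}$ depends on $\psi$ bilinearly, $\partial_{\psi_x}$ hits both the explicit $\psi_x$ in couplings $J_{x,\cdot}\psi_x\psi_\cdot$ and (through $\psi_\cdot$) couplings $J_{\cdot,x}$, producing a manageable but slightly lengthy expression whose every term must be checked to be a non-negative combination of Ising Ursell functions. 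Alternatively, and perhaps cleaner, one can avoid differentiation entirely by discretizing: approximate $\phi^4$ by its Griffiths--Simon Ising model on $\Lambda \times K_N$ (as in Section \ref{section: gs approx}), where the absolute-value-FKG statement becomes a statement about the Ising model on $\Lambda \times K_N$ with the internal blocks conditioned on, provable directly from lattice FKG and GKS-II, and then pass to the limit $N \to \infty$ using Proposition \ref{prop: CV griffiths}; I would present the differentiation argument as the main line and mention this as a remark, since the reference \cite{glimm2012quantum} for the ordinary FKG (Proposition \ref{prop: FKG phi4}) already contains the supermodularity machinery needed here.
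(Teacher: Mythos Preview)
Your approach is genuinely different from the paper's. The paper never passes through the Ising representation; instead it uses the algebraic identity $-J_{x,y}\varphi_x\varphi_y = V_{x,y}(\varphi_x-\varphi_y) - \tfrac{J_{x,y}}{2}(\varphi_x^2+\varphi_y^2)$ with $V_{x,y}(z)=\tfrac12 J_{x,y}z^2$, absorbs the resulting single-site quadratic (and the linear boundary/field terms) into the reference measure, checks that each $V_{x,y}$ is convex, symmetric, with non-increasing second derivative, and then invokes the general absolute-value-FKG theorem for such gradient interactions from Lammers--Ott \cite[Corollary~6.4]{LO21}. The whole proof is three lines plus a citation; the work is outsourced to the structural conditions on $V_{x,y}$.

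Your route has a genuine gap at step (2). The second Griffiths inequality $\langle\sigma_A;\sigma_B\rangle\geq 0$ is valid for ferromagnetic Ising models with \emph{non-negative} external field; it does not extend to arbitrary field, equivalently to mixed $\pm$ boundary conditions. But in the proposition $\eta\in\R^V$ is arbitrary, so the Ising model you obtain after integrating out signs carries boundary spins $\tau_y=\textup{sgn}(\eta_y)$ of both signs --- exactly the situation GKS-II does not cover. Concretely, $\partial_{\psi_x}\partial_{\psi_y}\log Z^{\ising}$ contains terms with coefficients proportional to $\sum_{w\notin\Lambda}J_{x,w}\eta_w$ (of either sign) multiplying odd truncated correlations such as $\langle\sigma_x\sigma_v;\sigma_y\rangle$, and no Griffiths-type inequality controls these. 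Your proposed fix (``absorb the sign of $h_x$ into $\eta$'') does not help: the obstruction is precisely that the Ising boundary already has both signs, and gauge-flipping to make it $+$ would destroy ferromagneticity of some internal couplings. For a concrete failure of log-supermodularity, take two sites with $J_{1,2}=J$ and strong opposite fields $h_1=-h_2=h\gg J$: the dominant sign configuration is $(+,-)$, giving $Z(\psi)\sim e^{-\beta J\psi_1\psi_2+\beta h(\psi_1+\psi_2)}$, hence $\partial_{\psi_1}\partial_{\psi_2}\log Z\approx -\beta J<0$. The paper's gradient-model reformulation and the three conditions on $V_{x,y}$ are doing work that your GKS-II invocation does not replace; whatever mechanism \cite{LO21} uses to handle boundary data, it is not Griffiths' second inequality on the sign field.
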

\begin{proof}
We rewrite the Hamiltonian as
\begin{equation*}
    H^\eta_{\Lambda}(\varphi)
    :=
    \sum_{x,y\in \Lambda}V_{x,y}(\varphi_x-\varphi_y)-\sum_{x,y\in \Lambda}J_{x,y}\frac{\varphi_x^2+\varphi_y^2}{2}-\sum_{\substack{x\in \Lambda\\y\in V\setminus \Lambda}}J_{x,y}\varphi_x\eta_y - \sum_{x\in \Lambda} h_x\phi_x,
\end{equation*}
where $V_{x,y}(z)=\frac{1}{2}J_{x,y}z^2$. Note that $V_{x,y}$ is convex, symmetric, i.e.\ $V_{x,y}(-z)=V_{x,y}(z)$, and the second derivative of $V_{x,y}$ is a non-increasing function.
The desired assertion follows from \cite[Corollary 6.4]{LO21}. 
\end{proof}

\subsection{Proof of the Ginibre inequality}\label{appendix: ginibre}
 Recall that if $A:V\rightarrow \NN$ is finitely supported, we write
$
    \varphi_A=\prod_{x\in V}\varphi_x^{A_x}.
$
Let $A,B: V\rightarrow \NN$ be finitely supported. Let $\Lambda$ be a finite subset of $V$ that contains the support of $A$ and $B$, and let $|\eta|\leq\eta'$ outside $\Lambda$. We want to prove that
\begin{equation*}
    \langle \varphi_A\varphi_B\rangle^{\eta'}_{\Lambda,\beta}-\langle \varphi_A\varphi_B\rangle^\eta_{\Lambda,\beta}\geq \left|\langle \varphi_A\rangle^{\eta'}_{\Lambda,\beta}\langle \varphi_B\rangle^\eta_{\Lambda,\beta}-\langle \varphi_A\rangle_{\Lambda,\beta}^\eta\langle\varphi_B\rangle^{\eta'}_{\Lambda,\beta}\right|.
\end{equation*}
Let $h_x:=\sum_{y\notin \Lambda}J_{x,y}\eta_y$ and $h_x':=\sum_{y\notin \Lambda}J_{x,y}\eta_y'$. Then, with the notation of Section \ref{section: gs approx}, using the Ginibre inequality of the Ising model on $\Lambda_N=\Lambda\times K_N$, and linearity,
\begin{multline*}
   \Big|\langle \Phi_{A,N} \rangle^0_{\Lambda_N,\beta,h'}\langle  \Phi_{B,N}\rangle^0_{\Lambda_N,\beta,h}-\langle \Phi_{A,N} \rangle^0_{\Lambda_N,\beta,h}\langle  \Phi_{B,N}\rangle^0_{\Lambda_N,\beta,h'}\Big|\\\leq\langle \Phi_{A,N} \Phi_{B,N}\rangle^0_{\Lambda_N,\beta,h'}-\langle \Phi_{A,N} \Phi_{B,N}\rangle^0_{\Lambda_N,\beta,h}
\end{multline*}
where
$
    \Phi_{A,N}:=\prod_{x\in \Lambda}\Phi_{x,N}^{A_x},  \Phi_{B,N}:=\prod_{x\in \Lambda}\Phi_{x,N}^{B_x}.
$
Now, taking the limit as $N$ goes to infinity and using Proposition \ref{prop: CV griffiths}, we get the desired result. Note that we used the interpretation of boundary conditions as non-homogeneous external fields. Moreover, if $\langle \cdot\rangle_\beta\in \cG(\beta)$,
\begin{equation}\label{eq: appendix ginibre 2}
    \langle \varphi_A\varphi_B\rangle^+_{\beta}-\langle \varphi_A\varphi_B\rangle_\beta\geq \left|\langle \varphi_A\rangle^+_{\beta}\langle \varphi_B\rangle_\beta-\langle \varphi_A\rangle_\beta\langle\varphi_B\rangle^+_{\beta}\right|.
\end{equation}
Indeed, if we fix $\eta'=\PLUS$ above and integrate against $\langle \cdot\rangle_\beta$ among $\eta$ satisfying $|\eta|\leq \PLUS$ outside $\Lambda$,
\begin{multline*}
    \langle \varphi_A\varphi_B\rangle^{\PLUS}_{\Lambda,\beta}\langle\mathbbm{1}_{\forall x \notin \Lambda, \: |\eta_x|\leq \PLUS_x}\rangle_\beta-\Big\langle\langle \varphi_A\varphi_B\rangle^\eta_{\Lambda,\beta}\mathbbm{1}_{\forall x \notin \Lambda, \: |\eta_x|\leq \PLUS_x}\Big\rangle_\beta\\\geq \Big|\langle \varphi_A\rangle^{\PLUS}_{\Lambda,\beta}\Big\langle\langle \varphi_B\rangle^\eta_{\Lambda,\beta}\mathbbm{1}_{\forall x \notin \Lambda, \: |\eta_x|\leq \PLUS_x}\Big\rangle_\beta-\Big\langle\langle \varphi_A\rangle^\eta_{\Lambda,\beta}\mathbbm{1}_{\forall x \notin \Lambda, \: |\eta_x|\leq \PLUS_x}\Big\rangle_\beta\langle\varphi_B\rangle^{\PLUS}_{\Lambda,\beta}\Big|.
\end{multline*}
Using the results of Section \ref{sec: gibbs states} and taking $\Lambda\uparrow V$ yields \eqref{eq: appendix ginibre 2}.
\hfill $\square$

\subsection{Monotonicity in boundary conditions}
\label{subsec: monotone}

We have the following finite volume monotonicity properties for correlations and increasing functions. They follow from the Ginibre inequality and the FKG inequality respectively.
\begin{prop}
Let $\Lambda\subset V$ finite. Let $A: \Lambda\rightarrow \mathbb N$ be a degree function. For $|\eta| \leq \eta'$,
\begin{equation*}
\langle \varphi_A \rangle^\eta_{\Lambda,\beta}
\leq
\langle \varphi_A \rangle^{\eta'}_{\Lambda,\beta}.
\end{equation*}
\end{prop}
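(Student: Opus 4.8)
The statement is a finite-volume monotonicity in the boundary condition for the moments $\langle \varphi_A \rangle^\eta_{\Lambda,\beta}$. The natural route is to interpret the boundary condition as an external magnetic field and then apply the second Griffiths inequality. Concretely, as in Remark~\ref{rem: b.c. as mag. field}, write $\langle \varphi_A \rangle^\eta_{\Lambda,\beta} = \langle \varphi_A \rangle^0_{\Lambda,\beta,h}$ where $h_x := \sum_{y \notin \Lambda} J_{x,y}\eta_y$, and similarly $h'_x := \sum_{y \notin \Lambda} J_{x,y}\eta'_y$ for $\eta'$. The key difficulty is that when $\eta$ takes negative values, $h$ is not necessarily non-negative, so one cannot directly invoke Griffiths' first or second inequality (which require a non-negative field). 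The trick is to exploit the parity symmetry of the single-site measure $\rho_{g,a}$: flipping $\varphi_x \mapsto \varphi_x \,\mathrm{sgn}(?)$ does not work globally, but one can instead use the Griffiths--Simon approximation together with the spin-flip symmetry of the Ising model on $\Lambda_N = \Lambda \times K_N$.

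\textbf{Step 1: Reduce to a field comparison via Ginibre.} Actually the cleanest approach reuses the Ginibre inequality of Proposition~\ref{Ginibre inequality} (proved in Appendix~\ref{appendix: ginibre}), applied with $B = \emptyset$: for $|\eta_x| \leq \eta'_x$ for all $x \notin \Lambda$, it gives
\begin{equs}
\langle \varphi_A \rangle^{\eta'}_{\Lambda,\beta} - \langle \varphi_A \rangle^{\eta}_{\Lambda,\beta}
\geq
\left| \langle \varphi_A \rangle^{\eta'}_{\Lambda,\beta} \langle \varphi_\emptyset \rangle^{\eta}_{\Lambda,\beta} - \langle \varphi_A \rangle^{\eta}_{\Lambda,\beta} \langle \varphi_\emptyset \rangle^{\eta'}_{\Lambda,\beta} \right|.
\end{equs}
Since $\varphi_\emptyset = 1$, the right-hand side equals $\left| \langle \varphi_A \rangle^{\eta'}_{\Lambda,\beta} - \langle \varphi_A \rangle^{\eta}_{\Lambda,\beta} \right| \geq 0$, so in particular $\langle \varphi_A \rangle^{\eta'}_{\Lambda,\beta} - \langle \varphi_A \rangle^{\eta}_{\Lambda,\beta} \geq 0$, which is exactly the claim. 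One must check that $\eta, \eta'$ indeed satisfy the hypotheses of Proposition~\ref{Ginibre inequality}, namely $\eta, \eta' \in \Xi(M)$; but for the finite-volume statement here, the Ginibre inequality as established in Appendix~\ref{appendix: ginibre} only requires the summability condition $\sum_{y \notin \Lambda} J_{x,y}|\eta_y| < \infty$ (to make the finite-volume measures well defined), which is assumed at the start of the appendix.

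\textbf{Step 2: Verify the hypotheses and conclude.} The only subtlety is that Proposition~\ref{Ginibre inequality} is stated with $A, B : V \to \NN$ finitely supported; taking $B \equiv 0$ is admissible, and the support of $A$ is contained in $\Lambda$ by assumption (here $A: \Lambda \to \NN$ is extended by zero). Thus the inequality applies verbatim and yields the result. The main obstacle — and the reason this is not entirely trivial — is precisely that the boundary field need not be non-negative, so one genuinely needs the Ginibre-type inequality (valid for signed boundary conditions, because its proof goes through the spin-flip symmetric Griffiths--Simon Ising approximation) rather than the more elementary second Griffiths inequality of Proposition~\ref{prop: second consequence sl}, which would require $h \geq 0$. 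Once this is recognized, the proof is a one-line application.

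\begin{proof}
Let $h_x := \sum_{y \notin \Lambda} J_{x,y}\eta_y$ and $h'_x := \sum_{y \notin \Lambda} J_{x,y}\eta'_y$, which are finite by the summability assumption. View $A$ as a finitely supported function $V \to \NN$ supported in $\Lambda$, and apply the Ginibre inequality of Proposition~\ref{Ginibre inequality} with this $A$ and with $B \equiv 0$. Since $|\eta_x| \leq \eta'_x$ for all $x \notin \Lambda$, the hypotheses are satisfied, and as $\varphi_\emptyset \equiv 1$ we obtain
\begin{equs}
\langle \varphi_A \rangle^{\eta'}_{\Lambda,\beta} - \langle \varphi_A \rangle^{\eta}_{\Lambda,\beta}
\geq
\left| \langle \varphi_A \rangle^{\eta'}_{\Lambda,\beta} - \langle \varphi_A \rangle^{\eta}_{\Lambda,\beta} \right|
\geq 0,
\end{equs}
which is the desired inequality.
\end{proof}
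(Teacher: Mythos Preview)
Your proof is correct and follows exactly the approach indicated in the paper, which simply states ``This is a consequence of the Ginibre inequality.'' Your application of Proposition~\ref{Ginibre inequality} with $B\equiv 0$ (so $\varphi_B=1$) is precisely the intended argument, and your observation that the finite-volume Ginibre inequality in Appendix~\ref{appendix: ginibre} only requires the summability condition on $\eta,\eta'$ (rather than $\eta,\eta'\in\Xi(M)$) is accurate.
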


\begin{prop}\label{prop: increasing in increasing bc}
Let $\eta \leq \eta'$. Then, for any $f$ increasing and integrable,
\begin{equation*}
\langle f \rangle^\eta_{\Lambda,\beta}
\leq 
\langle f \rangle^{\eta'}_{\Lambda,\beta}.
\end{equation*}
\end{prop}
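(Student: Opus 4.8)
The statement to prove is Proposition \ref{prop: increasing in increasing bc}: for $\eta \leq \eta'$ (pointwise outside $\Lambda$), and any increasing integrable $f : \RR^\Lambda \to \RR$, one has $\langle f \rangle^\eta_{\Lambda,\beta} \leq \langle f \rangle^{\eta'}_{\Lambda,\beta}$. The natural approach is a monotone coupling via the Holley/FKG machinery. First I would reduce to the case where $f$ is bounded (by truncation and monotone convergence, using integrability of $f$ against both measures, which holds by the regularity estimate of Definition \ref{def: regular measure} or directly by the Gaussian-type tails of the single-site measure). Then I would recall the standard interpretation used throughout the paper (see Remark \ref{rem: b.c. as mag. field}): the boundary condition $\eta$ enters the Hamiltonian only through the effective external field $h_x^\eta := \sum_{y \in V \setminus \Lambda} J_{x,y}\eta_y$ for $x \in \Lambda$, i.e.\ $\langle \cdot \rangle^{\eta}_{\Lambda,\beta} = \langle \cdot \rangle^{0}_{\Lambda,\beta,h^\eta}$. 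Since $J_{x,y} \geq 0$ by the ferromagnetic assumption (C1) and $\eta \leq \eta'$, we have $h^\eta \leq h^{\eta'}$ pointwise on $\Lambda$.

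So the proposition reduces to: the $\phi^4$ finite-volume measure $\langle \cdot \rangle^{0}_{\Lambda,\beta,h}$ is stochastically increasing in $h$. This is a standard consequence of the FKG inequality of Proposition \ref{prop: FKG phi4}. Concretely, I would differentiate: for $h \leq h'$, write $h' = h + \int_0^1 \dot{h}_s\, ds$ along the linear interpolation $h_s = (1-s)h + s h'$ with $\dot h_s = h' - h \geq 0$, and compute
\begin{equs}
\frac{\textup{d}}{\textup{d}s} \langle f \rangle^{0}_{\Lambda,\beta,h_s}
=
\beta \sum_{x \in \Lambda} (h'_x - h_x)\, \langle f ; \varphi_x \rangle^{0}_{\Lambda,\beta,h_s},
\end{equs}
where $\langle f ; \varphi_x \rangle = \langle f \varphi_x \rangle - \langle f \rangle \langle \varphi_x \rangle$ is the truncated correlation. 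Since $f$ is increasing and $\varphi_x \mapsto \varphi_x$ is increasing, the FKG inequality (Proposition \ref{prop: FKG phi4}) gives $\langle f ; \varphi_x \rangle^{0}_{\Lambda,\beta,h_s} \geq 0$ for each $x$, and each prefactor $h'_x - h_x \geq 0$. Hence the derivative is nonnegative, so $s \mapsto \langle f \rangle^0_{\Lambda,\beta,h_s}$ is nondecreasing, giving $\langle f \rangle^0_{\Lambda,\beta,h} \leq \langle f \rangle^0_{\Lambda,\beta,h'}$. Undoing the reduction with $h = h^\eta$, $h' = h^{\eta'}$ yields the claim.

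\textbf{Main obstacle.} The only genuine technical point is justifying the differentiation under the integral sign and the integrability needed for the truncated correlations to be finite — i.e.\ that $\langle |f| (1 + |\varphi_x|) \rangle^0_{\Lambda,\beta,h_s}$ is bounded uniformly in $s \in [0,1]$. This follows from the explicit form of the measure: the single-site density $e^{-g t^4 - a t^2}$ dominates any polynomial times any exponential of a linear term, so all moments and exponential-linear moments are finite and depend continuously (indeed smoothly) on $h$, uniformly on the compact interval $[0,1]$. Alternatively one can invoke the regularity estimate already established (Definition \ref{def: regular measure} / Proposition \ref{prop: gibbs is regular}) to get the uniform bound, though at finite volume the direct argument is cleaner. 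With that in hand, dominated convergence legitimizes both the truncation reduction and the interchange of $\frac{\textup{d}}{\textup{d}s}$ with the integral, completing the proof.
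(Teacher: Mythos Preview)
Your proof is correct and follows essentially the same approach as the paper, which simply states that the result is a consequence of the FKG inequality (Proposition \ref{prop: FKG phi4}). You have supplied the standard details of that reduction---rewriting the boundary condition as an external field and differentiating along the linear interpolation---which the paper leaves implicit.
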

\subsection{Single-site moment bounds}

\begin{prop}\label{bound single-site}
There exists $C>0$ such that for all $k\geq 0$, 
$    
\langle\varphi^{2k}\rangle_{0}\leq C\langle \varphi^{2k+2}\rangle_{0}.
$    
\end{prop}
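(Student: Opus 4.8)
The plan is to obtain the bound from the log-convexity of the even moments of the single-site measure, which is the same as monotonicity of the ratios of consecutive moments. Write $u_{2k} := \langle \varphi^{2k}\rangle_0 = z_{g,a}^{-1}\int_{\RR} t^{2k} e^{-gt^4 - at^2}\,\textup{d}t$ for $k \ge 0$; since $g>0$ all these integrals are finite, and since the integrand is strictly positive almost everywhere we have $u_{2k} \in (0,\infty)$. First I would record the following consequence of the Cauchy--Schwarz inequality, applied to the functions $t \mapsto t^{k}$ and $t \mapsto t^{k+2}$ against the finite measure $e^{-gt^4-at^2}\,\textup{d}t$:
\[
u_{2k+2}^2 = \Big( z_{g,a}^{-1}\int_{\RR} t^{k}\cdot t^{k+2}\, e^{-gt^4-at^2}\,\textup{d}t\Big)^2 \le u_{2k}\,u_{2k+4}, \qquad \forall k \ge 0 .
\]

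Introducing the ratios $r_k := u_{2k+2}/u_{2k} \in (0,\infty)$, the displayed inequality says exactly that $r_k \le r_{k+1}$, i.e.\ the sequence $(r_k)_{k \ge 0}$ is non-decreasing. Hence $r_k \ge r_0 = u_2/u_0 = \langle \varphi^2\rangle_0$ for every $k \ge 0$, and since $\langle \varphi^2\rangle_0>0$ we conclude
\[
\frac{\langle \varphi^{2k}\rangle_0}{\langle \varphi^{2k+2}\rangle_0} = \frac{1}{r_k} \le \frac{1}{\langle \varphi^2\rangle_0} =: C ,
\]
which is the claim; note that $C$ depends only on $g$ and $a$.

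There is no genuine difficulty here; the only point worth flagging is a pitfall to avoid. One might be tempted to prove the estimate directly from the integration-by-parts identity $(2k+1)u_{2k} = 2a\,u_{2k+2} + 4g\,u_{2k+4}$ used in the proof of Lemma~\ref{degree control}, but this does not obviously close: when $a<0$ the first term has the wrong sign, and in any case one is left needing an upper bound on $u_{2k+4}$ in terms of $u_{2k+2}$, which is precisely a statement of the type being proved. Passing instead through the monotonicity of $(r_k)$ bypasses the recursion entirely and reduces everything to the single positive number $\langle \varphi^2 \rangle_0$.
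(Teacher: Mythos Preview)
Your proof is correct and takes a genuinely different route from the paper's. The paper splits the integral at $|s|=1$: for $|s|\ge 1$ one has $s^{2k}\le s^{2k+2}$, which contributes at most $1$ to the ratio, while on $[-1,1]$ the density $e^{-gs^4-as^2}$ is bounded above and below by positive constants $M,m$, reducing the remaining ratio to $\frac{M}{m}\cdot\frac{2k+3}{2k+1}\le 3M/m$. You instead exploit the log-convexity of the even moments via Cauchy--Schwarz, which yields monotonicity of the ratios $r_k=u_{2k+2}/u_{2k}$ and hence the sharp-looking bound $C=1/\langle\varphi^2\rangle_0$. Your argument is cleaner and yields a more intrinsic constant; the paper's argument is slightly more hands-on but has the minor advantage that it makes no use of the structure of the measure beyond positivity and integrability, so one sees immediately that the bound is robust to perturbations of the single-site density. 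Your closing remark about the pitfall with the recursion $(2k+1)u_{2k}=2a\,u_{2k+2}+4g\,u_{2k+4}$ is apt and worth keeping.
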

\begin{proof}
Note that there exists $C>0$ such that for every $k\geq 0$ we have
\begin{eqnarray*}
    \dfrac{\langle \varphi^{2k}\rangle_0}{\langle \varphi^{2k+2}\rangle_0}&\leq&
    \dfrac{\int_{-1}^1 s^{2k}\rho(\text{d}s)}{\int_{-1}^1s^{2k+2}\rho(\text{ds})}+1 \leq \dfrac{M}{m}\dfrac{2k+3}{2k+1}+1\leq C,
\end{eqnarray*}
where
$
    M:=\max_{s\in[-1,1]}e^{-gs^4-as^2} \text{ and }m:=\min_{s\in [-1,1]}e^{-gs^4-as^2}.
$
\end{proof}

 \section{Uniqueness of the phase transition}\label{app: different pt}
Define,
\begin{align}
    \beta_c^{\textup{MAG}}&:=\inf\Big\lbrace \beta>0: \text{ } \langle \varphi_o\rangle^+_\beta>0\Big\rbrace,
    \\
    \beta_c^{\textup{LRO}}&:=\inf\Big\lbrace \beta>0:\text{ } \inf_{x\in V}\langle \varphi_o\varphi_x\rangle^0_\beta>0\Big\rbrace,
    \\
    \beta_c^{\textup{SUSC}}&:=\inf\Big\lbrace \beta>0: \: \sum_{x\in V}\langle \varphi_o\varphi_x\rangle_\beta^0=\infty \Big\rbrace.
\end{align}

\begin{prop}
Let $G$ be a countably infinite and vertex-transitive graph of polynomial growth. Then,
\begin{equation*}
    \beta_c^{\textup{MAG}}=\beta_c^{\textup{LRO}}=\beta_c^{\textup{SUSC}}.
\end{equation*}
\end{prop}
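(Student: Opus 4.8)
The strategy is to prove the chain of inequalities $\beta_c^{\textup{MAG}}\leq \beta_c^{\textup{LRO}}\leq \beta_c^{\textup{SHARP}}\leq \beta_c^{\textup{MAG}}$, which closes the loop and gives equality. Two of these are essentially soft, and the remaining one is where the real content lies.

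\emph{Step 1: $\beta_c^{\textup{MAG}}\leq \beta_c^{\textup{LRO}}$.} This is the easiest inclusion. If $\inf_{x}\langle\varphi_o\varphi_x\rangle^0_\beta>0$, then by the Ginibre-type / Griffiths' inequality (Proposition~\ref{Ginibre inequality}, or simply monotonicity of boundary conditions together with $\Gamma$-invariance and the Cauchy--Schwarz inequality as used in the proof of Theorem~\ref{thm:main}) one has $\langle\varphi_o\varphi_x\rangle^+_\beta\geq \langle\varphi_o\varphi_x\rangle^0_\beta$ and $\langle\varphi_o\varphi_x\rangle^+_\beta\geq m^*(\beta)^2$ is \emph{not} quite what we want here; instead we use that $\langle\varphi_o\varphi_x\rangle^0_\beta\leq \langle\varphi_o\rangle^+_\beta\langle\varphi_x\rangle^+_\beta + (\text{truncated term})$ is false for the free state. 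The clean route: by the FKG/GKS inequalities, $\langle \varphi_o \varphi_x\rangle^+_\beta \to (m^*(\beta))^2$ is not automatic, but $\langle\varphi_o\varphi_x\rangle^+_\beta\geq (m^*(\beta))^2$ always holds by $\Gamma$-invariance and Cauchy--Schwarz applied to $\langle(\varphi_o-m^*)(\varphi_x-m^*)\rangle^+_\beta\geq 0$ — wait, this shows the reverse. The correct elementary fact is: in the $+$ state, by GKS, $\langle \varphi_o\varphi_x\rangle^+_\beta$ is decreasing in $d_G(o,x)$ along geodesics is false in general; rather one uses that the $+$ state is mixing (Proposition~\ref{prop: extremal and ergodic}), so $\langle\varphi_o\varphi_x\rangle^+_\beta\to (m^*(\beta))^2$ as $d_G(o,x)\to\infty$. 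Hence if $m^*(\beta)=0$ then $\langle\varphi_o\varphi_x\rangle^+_\beta\to 0$, and since $\langle\varphi_o\varphi_x\rangle^0_\beta\leq\langle\varphi_o\varphi_x\rangle^+_\beta$, we get $\inf_x\langle\varphi_o\varphi_x\rangle^0_\beta=0$. Contrapositively, $\inf_x\langle\varphi_o\varphi_x\rangle^0_\beta>0$ forces $m^*(\beta)>0$, giving $\beta_c^{\textup{MAG}}\leq\beta_c^{\textup{LRO}}$.

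\emph{Step 2: $\beta_c^{\textup{LRO}}\leq \beta_c^{\textup{SHARP}}$.} This is immediate: if $\inf_x\langle\varphi_o\varphi_x\rangle^0_\beta=:c>0$ then $\sum_x\langle\varphi_o\varphi_x\rangle^0_\beta\geq c\sum_x 1=\infty$ since $V$ is infinite. So long-range order forces divergence of the susceptibility, hence $\beta_c^{\textup{SHARP}}\leq\beta$ whenever $\beta>\beta_c^{\textup{LRO}}$, i.e.\ $\beta_c^{\textup{SHARP}}\leq\beta_c^{\textup{LRO}}$. Combined with Step~1 this already gives $\beta_c^{\textup{SHARP}}\leq\beta_c^{\textup{LRO}}\leq\beta_c^{\textup{MAG}}$ — wait, that is the wrong direction from what I wrote; let me re-orient: Steps 1--2 give $\beta_c^{\textup{MAG}}\leq \beta_c^{\textup{LRO}}$ and $\beta_c^{\textup{SHARP}}\leq\beta_c^{\textup{LRO}}$. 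What remains, and what is the crux, is to show $\beta_c^{\textup{LRO}}\leq\beta_c^{\textup{SHARP}}$ together with $\beta_c^{\textup{LRO}}\leq\beta_c^{\textup{MAG}}$. Equivalently, it suffices to prove: \textbf{(i)} for $\beta<\beta_c^{\textup{MAG}}$ one has $\sum_x\langle\varphi_o\varphi_x\rangle^0_\beta<\infty$, which gives $\beta_c^{\textup{SHARP}}\geq\beta_c^{\textup{MAG}}$; and \textbf{(ii)} for $\beta<\beta_c^{\textup{SHARP}}$ one has $\inf_x\langle\varphi_o\varphi_x\rangle^0_\beta=0$, giving $\beta_c^{\textup{LRO}}\geq\beta_c^{\textup{SHARP}}$. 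Part (ii) is again soft: if $\sum_x\langle\varphi_o\varphi_x\rangle^0_\beta<\infty$ then the summands tend to $0$ along any exhaustion, so in particular $\liminf_{d_G(o,x)\to\infty}\langle\varphi_o\varphi_x\rangle^0_\beta=0$, and by Theorem~\ref{thm:main} (the sufficient condition~\eqref{eq: condition for continuity}) this forces $|\cG(\beta)|=1$, hence $m^*(\beta)=0$ and a fortiori $\inf_x\langle\varphi_o\varphi_x\rangle^0_\beta=0$ (the free and plus states coincide and the two-point function, by mixing, tends to $m^*(\beta)^2=0$; since the free two-point function is summable along a subsequence and is monotone in the right sense, its infimum is $0$).

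\emph{Step 3 (the main obstacle): part (i), i.e.\ $\beta<\beta_c^{\textup{MAG}}\Rightarrow \sum_x\langle\varphi_o\varphi_x\rangle^0_\beta<\infty$.} This is the genuinely hard direction — it is the ``sharpness of the phase transition'' statement, the analogue for $\varphi^4$ of the Aizenman--Barsky--Fernández / Duminil-Copin--Tassion results for Ising. The plan is to run a differential-inequality argument for the magnetisation $M(\beta,h):=\langle\varphi_o\rangle^0_{\beta,h}$ (with ghost field $h>0$), using the random tangled current representation and switching principle of Section~\ref{sec: switching} in place of the classical Ising random current machinery. Concretely: express $\partial_h M$ and $\partial_\beta M$ via current expansions (Proposition on current expansion for $\phi^4$ and Proposition~\ref{prop: applications monotonicity switching}), then use the switching principle (Corollary~\ref{thm: probabilistic switching}) to derive an inequality of Aizenman--Barsky type, $M\leq h\,\partial_h M + M^2\,(\text{something}) + \beta M\,\partial_\beta M$, or alternatively the Duminil-Copin--Tassion approach via a well-chosen quantity $\varphi_\beta(S)$ on finite sets. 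From such a differential inequality one concludes that for $\beta<\beta_c^{\textup{MAG}}$ the susceptibility $\chi(\beta)=\sum_x\langle\varphi_o\varphi_x\rangle^0_\beta$ is finite. I expect this to be the most delicate part because (a) the unbounded spins require the regularity estimates of Section~2 and Appendix~\ref{app: LP} to control moments uniformly, (b) the tangled-current switching principle must be invoked carefully to reproduce the ``long-range order $\Leftrightarrow$ percolation of currents'' dictionary, and (c) one must check that the finite-volume differential inequalities pass to the infinite-volume limit. A cleaner alternative, if one is willing to quote more, is to invoke the Griffiths--Simon approximation directly: the statement $\beta_c^{\textup{MAG}}=\beta_c^{\textup{SHARP}}$ for the Ising model on $\Lambda\times K_N$ is known (sharpness of the Ising phase transition on amenable vertex-transitive graphs), and then take $N\to\infty$ and $\Lambda\uparrow V$ — but as the authors note in the introduction, infinite-volume convergence of Griffiths--Simon is not available, so this shortcut is not rigorous and the current-based argument above is the honest route.

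\textbf{Summary of the order of operations.} First establish the two trivial links (Steps~1--2 above, i.e.\ $\beta_c^{\textup{MAG}}\leq\beta_c^{\textup{LRO}}$ via mixing of the $+$ state, and $\beta_c^{\textup{SHARP}}\leq\beta_c^{\textup{LRO}}$ via infiniteness of $V$). Next deduce $\beta_c^{\textup{LRO}}\leq\beta_c^{\textup{SHARP}}$ from Theorem~\ref{thm:main} applied at $\beta<\beta_c^{\textup{SHARP}}$ (summability $\Rightarrow$ condition~\eqref{eq: condition for continuity} $\Rightarrow$ uniqueness $\Rightarrow$ no long-range order). Finally, prove the hard inclusion $\beta_c^{\textup{MAG}}\leq\beta_c^{\textup{SHARP}}$ by a differential-inequality / sharpness argument built on the random tangled current switching principle, which is where essentially all the work lies.
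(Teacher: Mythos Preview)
Your chain of inequalities does not close. Trace through what you actually prove: Step~1 gives $\beta_c^{\textup{MAG}}\leq\beta_c^{\textup{LRO}}$; Step~2 gives $\beta_c^{\textup{SHARP}}\leq\beta_c^{\textup{LRO}}$; part~(ii) gives $\beta<\beta_c^{\textup{SHARP}}\Rightarrow m^*(\beta)=0$ and $\inf_x\langle\varphi_o\varphi_x\rangle^0_\beta=0$, i.e.\ $\beta_c^{\textup{SHARP}}\leq\beta_c^{\textup{MAG}}$ and (again) $\beta_c^{\textup{SHARP}}\leq\beta_c^{\textup{LRO}}$; part~(i) gives $\beta_c^{\textup{MAG}}\leq\beta_c^{\textup{SHARP}}$. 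All together you obtain $\beta_c^{\textup{MAG}}=\beta_c^{\textup{SHARP}}\leq\beta_c^{\textup{LRO}}$, and the inequality $\beta_c^{\textup{LRO}}\leq\beta_c^{\textup{MAG}}$ is never established. Your argument for ``$\beta_c^{\textup{LRO}}\leq\beta_c^{\textup{SHARP}}$'' (summability $\Rightarrow$ no LRO) proves the reverse, easy direction. The missing implication is that spontaneous magnetisation forces long-range order \emph{in the free state}, which is not automatic since a priori $\langle\varphi_o\varphi_x\rangle^0_\beta$ and $\langle\varphi_o\varphi_x\rangle^+_\beta$ differ.

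The paper supplies exactly this missing link, and by a much lighter argument than anything you propose. It invokes inequality \eqref{eq: ineq a b} from the proof of Lemma~\ref{lem: inequality even spin}: for $\beta<\beta'$ one has $\langle\varphi_o\varphi_x\rangle^+_\beta\leq\langle\varphi_o\varphi_x\rangle^0_{\beta'}$. (That inequality is the Lebowitz-type large-deviation comparison using $\Gamma$-invariance and amenability.) Then for $\beta<\beta'<\beta_c^{\textup{LRO}}$ one sends $d_G(o,x)\to\infty$ along a sequence where the right-hand side vanishes, and by mixing of the plus state the left-hand side converges to $(m^*(\beta))^2$, forcing $m^*(\beta)=0$. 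This gives $\beta_c^{\textup{LRO}}\leq\beta_c^{\textup{MAG}}$ directly. For $\beta_c^{\textup{MAG}}=\beta_c^{\textup{SHARP}}$ the paper simply cites \cite{aizenman1987sharpness}, so there is no need to rebuild sharpness via tangled currents as you plan in Step~3. Your route could be salvaged by invoking the full Theorem~\ref{thm:main} to get $\langle\varphi_o\varphi_x\rangle^0_\beta=\langle\varphi_o\varphi_x\rangle^+_\beta$ at every $\beta$ and then using mixing, but you did not say this, and in any case the paper's use of \eqref{eq: ineq a b} is both more elementary and more transparent.
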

\begin{proof}
The assertion that $\beta_c^{\textup{MAG}} = \beta_c^{\textup{SUSC}}$ was established in \cite{aizenman1987sharpness}. We now prove that $\beta_c^{\textup{MAG}}=\beta_c^{\textup{LRO}}$. Let $\beta>\beta_c^{\textup{LRO}}$ and write $c:=\inf_{x\in V}\langle \varphi_o\varphi_x\rangle^0_\beta>0$. Then, by monotonicity in the boundary conditions and the mixing of $\langle \cdot \rangle^+_{\beta}$ we have
\begin{equation*}
c\leq \langle \varphi_o \varphi_x\rangle^0_{\beta}\leq \langle \varphi_o \varphi_x\rangle^+_{\beta}
\underset{d_G(o,x)\rightarrow \infty}{\longrightarrow}\big[\langle \varphi_o\rangle^+_{\beta}\big]^2.
\end{equation*}
This implies that $\langle \varphi_o\rangle^+_{\beta}>0$. Let now $\beta<\beta_c^{\textup{LRO}}$. Consider some $\beta<\beta'< \beta_c^{\textup{LRO}}$. It follows from \eqref{eq: ineq a b} that 
$
\langle \varphi_o \varphi_x\rangle^+_{\beta}\leq \langle \varphi_o \varphi_x\rangle^0_{\beta'}.
$
Letting $d_G(o,x)\rightarrow\infty$ and using the mixing of $\langle \cdot \rangle^+_{\beta}$ as above we obtain that 
$\langle \varphi_o\rangle^+_{\beta}=0$. The desired assertion follows.
\end{proof}

\begin{prop}\label{prop: spont mag and gibbs is 0}
Let $G$ be as in the proposition above. Let $\beta>0$. The following are equivalent,
\begin{enumerate}
    \item[$(i)$] $|\cG(\beta)|=1$,
    \item[$(ii)$] $\langle \cdot \rangle^+_\beta=\langle \cdot \rangle^-_\beta$,
    \item[$(iii)$] $\langle \varphi_o\rangle^+_\beta=\langle \varphi_o \rangle^-_\beta=0$.
\end{enumerate}
\end{prop}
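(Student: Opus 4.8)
\textbf{Proof proposal for Proposition \ref{prop: spont mag and gibbs is 0}.}

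The plan is to prove the cycle of implications $(i)\Rightarrow(ii)\Rightarrow(iii)\Rightarrow(i)$. The implication $(i)\Rightarrow(ii)$ is immediate, since $\langle\cdot\rangle^+_\beta$ and $\langle\cdot\rangle^-_\beta$ are both elements of $\cG(\beta)$ (by Proposition \ref{prop: construction of plus and minus}, they are infinite-volume limits of finite-volume measures with admissible boundary conditions, hence Gibbs by Remark \ref{rem: Gibss regular limit}). For $(ii)\Rightarrow(iii)$, observe that the spin-flip symmetry $\phi\mapsto-\phi$ of the single-site measure and the Hamiltonian at $h=0$ maps $\langle\cdot\rangle^\PLUS_{\Lambda,\beta}$ to $\langle\cdot\rangle^\MINUS_{\Lambda,\beta}$; passing to the limit $\Lambda\uparrow V$ gives $\langle\phi_o\rangle^-_\beta=-\langle\phi_o\rangle^+_\beta$. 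If $\langle\cdot\rangle^+_\beta=\langle\cdot\rangle^-_\beta$ then $\langle\phi_o\rangle^+_\beta=-\langle\phi_o\rangle^+_\beta$, so $\langle\phi_o\rangle^+_\beta=0=\langle\phi_o\rangle^-_\beta$, which is $(iii)$.

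The substantive implication is $(iii)\Rightarrow(i)$. Assume $m^*(\beta)=\langle\phi_o\rangle^+_\beta=0$. First I would upgrade this to $\langle\phi_x\rangle^+_\beta=0$ for all $x\in V$ by $\Gamma$-invariance. Next, by the maximality/minimality bound \eqref{eq: maximality} of Proposition \ref{prop: construction of plus and minus}, for any $\nu\in\cG(\beta)$ and any increasing bounded local $f$ one has $\langle f\rangle^-_\beta\le\nu(f)\le\langle f\rangle^+_\beta$; so it suffices to show $\langle\cdot\rangle^+_\beta=\langle\cdot\rangle^-_\beta$. By the spin-flip relation above, $\langle\cdot\rangle^-_\beta$ is the pushforward of $\langle\cdot\rangle^+_\beta$ under $\phi\mapsto-\phi$, so the two measures agree on all even bounded local functions automatically; it remains to control odd correlations. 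The key point is to show that $\langle\phi_A\rangle^+_\beta=0$ whenever $\sum_x A_x$ is odd. For this I would use the Ginibre inequality of Proposition \ref{Ginibre inequality} in its infinite-volume form: taking $B=\mathbbm 1_x$ for a well-chosen $x$ and using that $\langle\cdot\rangle^0_\beta$ satisfies $\langle\phi_x\rangle^0_\beta=0$ (it is spin-flip symmetric), together with $\langle\phi_x\rangle^+_\beta=0$, one gets
\begin{equs}
\langle \varphi_A\varphi_x\rangle^+_\beta-\langle \varphi_A\varphi_x\rangle^0_\beta
\geq
\big|\langle \varphi_A\rangle^+_\beta\langle \varphi_x\rangle^0_\beta-\langle\varphi_A\rangle^0_\beta\langle\varphi_x\rangle^+_\beta\big|=0,
\end{equs}
which controls $\langle\varphi_A\varphi_x\rangle^+_\beta$ in terms of $\langle\varphi_A\varphi_x\rangle^0_\beta$ but does not directly kill odd moments. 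A cleaner route: apply the second Ginibre inequality with $A$ odd and $B=A$, using $\langle\phi_A\rangle^0_\beta=0$, to get $\langle\phi_A^2\rangle^+_\beta-\langle\phi_A^2\rangle^0_\beta\ge(\langle\phi_A\rangle^+_\beta)^2$; then combine with the GHS-type monotonicity and the fact that under assumption $(iii)$ the free and plus measures coincide on even functions (which follows from Lemma \ref{lem: inequality even spin} and the Lebowitz characterisation once $m^*=0$, since $m^*(\beta)^2\le\langle\phi_o\phi_x\rangle^+_\beta$ forces, via sharpness $\beta_c^{\textup{MAG}}=\beta_c^{\textup{LRO}}$, that $\langle\phi_o\phi_x\rangle^0_\beta=\langle\phi_o\phi_x\rangle^+_\beta$). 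So $\langle\phi_A^2\rangle^+_\beta=\langle\phi_A^2\rangle^0_\beta$, whence $(\langle\phi_A\rangle^+_\beta)^2\le0$ and $\langle\phi_A\rangle^+_\beta=0$. Since all odd moments vanish and all even moments agree with those of the spin-flip-symmetric measure $\langle\cdot\rangle^0_\beta$, going to Laplace transforms (as in the footnote to $3\Rightarrow2$ in the proof of Proposition \ref{Lebowitz}) shows $\langle\cdot\rangle^+_\beta=\langle\cdot\rangle^-_\beta=\langle\cdot\rangle^0_\beta$, and then $|\cG(\beta)|=1$ by the squeeze \eqref{eq: maximality}.

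The main obstacle I anticipate is making rigorous the step that $m^*(\beta)=0$ forces the free and plus measures to coincide on even functions — i.e.\ verifying condition $4$ of Proposition \ref{Lebowitz} under assumption $(iii)$ alone, rather than under \eqref{eq: condition for continuity}. The cleanest argument is probably to invoke the sharpness of the phase transition (Appendix \ref{app: different pt}, $\beta_c^{\textup{MAG}}=\beta_c^{\textup{LRO}}$): if $\beta<\beta_c$ then by definition $\inf_x\langle\phi_o\phi_x\rangle^0_\beta=0$, so \eqref{eq: condition for continuity} holds trivially and Theorem \ref{thm:main} gives uniqueness; if $\beta=\beta_c$ one must instead argue directly, noting $m^*(\beta_c)=0$ means $\beta_c$ is approached from below where $\langle\phi_o\rangle^+_\beta=0$, and use left-continuity of $\beta\mapsto\langle\phi_o\phi_x\rangle^0_\beta$ together with \eqref{eq: ineq a b} to transfer; if $\beta>\beta_c$ then $m^*(\beta)>0$ and $(iii)$ fails, so there is nothing to prove. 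Handling the $\beta=\beta_c$ borderline carefully, and the case distinction, is where the care is needed, but no new ideas beyond what is already in the excerpt are required.
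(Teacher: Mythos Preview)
Your implications $(i)\Rightarrow(ii)$ and $(ii)\Rightarrow(iii)$ are fine and match the paper. The problems are entirely in $(iii)\Rightarrow(i)$, where you miss a one-line argument and wander into something both wrong and circular.

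First, the Ginibre step with $B=A$ does not give what you claim. With $\langle\phi_A\rangle^0_\beta=0$ the right-hand side of Proposition~\ref{Ginibre inequality} becomes
\begin{equs}
\big|\langle\varphi_A\rangle^+_\beta\cdot 0-0\cdot\langle\varphi_A\rangle^+_\beta\big|=0,
\end{equs}
not $(\langle\phi_A\rangle^+_\beta)^2$, so you cannot kill odd moments this way.

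Second, your fallback route is circular: you invoke the uniqueness conclusion of Theorem~\ref{thm:main} (under \eqref{eq: condition for continuity}) when $\beta<\beta_c$, but the proof of that very conclusion in Section~\ref{sec: gamma inv } explicitly appeals to Proposition~\ref{prop: spont mag and gibbs is 0}. More broadly, the ``main obstacle'' you flag (forcing plus and free to agree on even functions) is precisely what the random tangled current machinery of the paper is built to establish; you cannot assume it here.

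The paper's proof of $(iii)\Rightarrow(i)$ is much shorter and uses none of this. Take any $\nu\in\cG(\beta)$. By the stochastic domination \eqref{eq: maximality} and Strassen's theorem, there is a coupling with $\varphi^+_x\geq\varphi_x$ a.s.\ for every $x$. From $(iii)$ and $\Gamma$-invariance, $\langle\varphi_x\rangle^+_\beta=0$; the sandwich $\langle\varphi_x\rangle^-_\beta\leq\langle\varphi_x\rangle_\nu\leq\langle\varphi_x\rangle^+_\beta$ then gives $\langle\varphi_x\rangle_\nu=0$ as well. A nonnegative random variable with zero mean is zero a.s., so $\varphi^+_x=\varphi_x$ a.s.\ for every $x$, hence $\nu=\langle\cdot\rangle^+_\beta$. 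No comparison of even moments, no Lebowitz characterisation, no case split on $\beta$ versus $\beta_c$ is needed.
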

\begin{proof}
The implication $(i)\implies (ii)$ is trivial. For the implication $(ii)\implies (iii)$, note that if $\phi\sim \langle \cdot \rangle^+_{\beta}$, then $-\phi\sim \langle \cdot \rangle^{-}_{\beta}$. Hence, $\langle\varphi_o\rangle^+_\beta=-\langle \varphi_o \rangle^-_\beta$, which implies that $\langle \varphi_o\rangle^+_\beta=\langle \varphi_o \rangle^-_\beta=0$.

Finally, to show that $(iii)\implies (i)$, let $\langle \cdot \rangle_{\beta}\in \cG(\beta)$. Using Proposition~\ref{prop: construction of plus and minus} and Strassen's theorem \cite{Strassen} we obtain a coupling $(\PP, \varphi^+, \varphi)$, $\varphi^+\sim \langle\cdot \rangle^+_{\beta}$, $\varphi\sim \langle\cdot \rangle_{\beta}$, such that $\PP$-almost surely, $\varphi^+_x\geq \varphi_x$ for every $x\in V$. Again by Proposition \ref{prop: construction of plus and minus} we have $\langle\varphi_x\rangle_\beta=\langle\varphi_x\rangle^+_\beta=0$ for every $x\in V$. It follows that $\PP$-almost surely, $\varphi^+_x=\varphi_x$ for every $x\in V$, hence $\langle \cdot \rangle_{\beta}=\langle \cdot \rangle^+_{\beta}$.
\end{proof}

\section{Probabilistic estimates for Gibbs measures} \label{app: LP}

In this section, we develop the probabilistic estimates for Gibbs measures associated to the $\phi^4$ model on graphs of polynomial growth (see also \cite[Chapter~11]{PanThesis} for a slightly more general setting). The results in this section follow from adaptations of the foundational papers on the subject \cite{LP76} and \cite{R70}, which establish the theory for more general measures on $\ZZ^d$.
We optimise our estimates to the case of $\phi^4$, which has stronger concentration properties than the class considered in \cite{LP76}.

\subsection{Setup and main results}

We start by recalling some notations and definitions. We work on a graph $G=(V,E)$ which is connected, vertex-transitive graph and has polynomial growth, defined as follows. For a distinguished origin $o\in V$, we write $B_k$ to denote the ball of radius $k \in \NN$ centred at $o$ with respect to the graph distance $d_G(\cdot,\cdot)$. We assume that there exists $d\geq 1$ (called the dimension of $G$) and $c_G,C_G>0$ such that
\begin{equation}\label{eq: polygrowth constants}
    c_G k^d
    \leq 
    |B_k|
    \leq 
    C_G k^d, \qquad \forall k \geq 1.
\end{equation}

\begin{rem}
Usually a graph is said to have (at most) polynomial growth if there are constants $d\geq 1$ and $C_G>0$ such that the upper bound $|B_k|\leq C_G k^d$ is satisfied for every $k\geq 1$. That one can always find $c_G, C_G$ and $d$ such that \eqref{eq: polygrowth constants} holds follows from Gromov's theorem \textup{\cite{gromov1981groups}}.
\end{rem}
\begin{rem}\label{rem: delta}
It was proved in \textup{\cite{RT}}  (see also \textup{\cite{BD}} for a relevant result) that for every vertex-transitive graph of polynomial growth there are constants $C>0$ and $\delta>0$ such that
$|B_n\setminus B_{n-1}|\leq C n^{-\delta} |B_n|
\leq 
C n^{-\delta}$
for every $n\geq 1$.
In fact, this was proved under the weaker assumption that the graph satisfies a (uniform) doubling property, i.e.\ without the assumption of vertex-transitivity, which is easy to check in our case.
To the best of our knowledge, it is not known whether one can take $\delta=1$ for all vertex-transitive graphs of polynomial growth.
\end{rem}
\begin{rem}
The complications in generalising the results of \textup{\cite{LP76, R70}} to the context of vertex-transitive graphs of polynomial growth arise from the fact that it is not known whether one can take $\delta=1$ in the remark above. The results of the aforementioned articles apply to a much more general class of unbounded spin systems, which includes in particular the case $g=0$ for some $a$ chosen appropriately, and generalising our results to the whole class of unbounded spin systems considered there without a good understanding of the underlying graph structure seems rather complicated. The fact that we have a quartic term in our single site measure, i.e.\ that $g>0$, plays an important role in our analysis.
\end{rem}

We fix a set of interactions $J=(J_{x,y})_{x,y \in V}$ such that $J_{x,y} \geq 0$ for all $x,y \in V$ and 
$
J_{x,y} \leq \Psi(d_G(x,y)), \text{ for all $x\neq y \in V$},
$
where $\Psi : \NN \rightarrow \RR_+$ is defined as $\Psi(k)=C_\Psi k^{-d-\varepsilon}$ for some constants $C_\Psi>0$ and $\varepsilon>0$. In addition, we fix $(\beta,h,g,a)$. Recall that for $\eta \in \RR^V$, the $\phi^4$ model with these parameters on $\Lambda \subset V$ is the measure $\langle \cdot \rangle^\eta_{\Lambda,\beta,h}$ defined by \eqref{eq: def}. In this section, we only keep the dependence on $\Lambda$ explicit and it is sometimes convenient to write $\nu_\Lambda^\eta$ to denote the corresponding measure. 

For a Borel probability measure $\lambda$ on $\RR^V$, we write $\nu_\Lambda^\lambda$ to denote the corresponding measure with random boundary conditions sampled from $\lambda$. Let 
\begin{equation*}
    \Xi(M) = \{ \phi\in \RR^V : \, \exists  \Lambda_\phi \subset V \text{ finite},~ \phi^4_x \leq M \log d_G(o,x)  ~~\forall x \in V\setminus \Lambda_\phi \}.
\end{equation*}
We call $\lambda$ a law on weakly growing boundary conditions if $\lambda$ is a Borel probability measure on $\RR^V$ supported on $\Xi(M)$ for some $M > 0$. Finally, for $\Delta \subset \Lambda$, we write $\nu_{\Delta}^{(\Lambda), \lambda}$ to denote the restriction of $\nu_\Lambda^\lambda$ to $\RR^\Delta$, i.e.\ the measure defined by
\begin{equation*}
\nu_{\Delta}^{(\Lambda),\lambda}(\phi_\Delta)
=
\frac{1}{Z_\Lambda^\lambda}\int_{\RR^{\Lambda\setminus \Delta}} e^{-U^\lambda_\Lambda(\phi_\Delta \sqcup \phi_{\Lambda \setminus \Delta})} \textup{d}\phi_{\Lambda \setminus \Delta},
\end{equation*}
where $\phi_\Delta \sqcup \phi_{\Lambda \setminus \Delta}$ denotes the concatenation of two configurations and
\begin{equation*}
U^\lambda_\Lambda(\phi)
=
\beta H^\lambda_\Lambda(\phi) + \sum_{x \in \Lambda} \Big( g\phi_x^4 + a\phi_x^2 \Big)
\end{equation*}
is the potential. We now recall the definition of regular measures. Let $\nu$ be a measure on $\RR^V$. For $\gamma, \delta > 0$, we say $\nu$ is $(\gamma,\delta)$-regular if, for every $\Lambda \subset V$ finite, the density of the projection measure satisfies
\begin{equation*}
\frac{\textup{d}\nu}{\textup{d}\phi_\Lambda}(\phi_{\Lambda})
\leq
e^{-\sum_{x \in \Lambda} (\gamma \phi_x^4 - \delta)},
\end{equation*}
where $\textup{d}\phi_\Lambda$ is the Lebesgue measure on $\RR^\Lambda$. Note that, for any $\Lambda \subset V$ finite and $\lambda$ a law on weakly growing boundary conditions, $\nu_\Lambda^\lambda$ is $(\gamma,\delta)$-regular with constants depending only on $(\beta,h,g,a)$. Moreover, the constants are continuous in these dependencies and uniform over all subsets of $\Lambda$. This is a direct consequence of repeatedly applying H\"older's inequalities on $|H_\Lambda(\phi)|$.

Finally, we restate the definition of a Gibbs measure. Recall that a Borel probability measure $\nu$ on $\RR^V$ is \textit{tempered} if:
\begin{itemize}
\item for every $\Lambda \subset V$ finite, $\nu|_{\Lambda}$ is absolutely continuous with respect to Lebesgue measure on $\RR^\Lambda$,
\item $\nu(R)=1$ where
\begin{equation}\label{def: temperedness const}
R
=
\Big\{ (\varphi_x)_{x\in V} : \exists C\in(0,\infty), \sum_{x \in B_k} \varphi_x^2 \leq C|B_k| \text{ for all } k \in \NN \Big\}.
\end{equation}
\end{itemize}
Furthermore, recall that a Borel probability measure $\nu$ on $\mathbb R^{V}$ is a Gibbs measure at $(\beta,h)$ if it is tempered and for every finite $\Lambda\subset V$ and $f\in \RR^{\Lambda}$ bounded and measurable, the DLR equation
\begin{equation*}
    \nu(f)
    =
    \int_{\eta\in \RR^V}\langle f\rangle^\eta_{\Lambda,\beta,h}\textup{d}\nu(\eta)
\end{equation*}
holds. We call $\cG(\beta,h)$ the set of Gibbs measures. The main results of this section are the following. 
\begin{prop} \label{appendix prop: tightness}
Let $(\nu_{\Lambda_n})_{n\geq 0}$ be a sequence of finite volume measures with (random) weakly growing boundary conditions. Then, the sequence is tight and any limit point is a tempered probability measure.
\end{prop}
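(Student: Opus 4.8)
The plan is to establish tightness via a uniform fourth-moment bound coming from regularity, and then to show that temperedness is preserved under weak limits. Recall that, as noted in the preamble to this section, for any finite $\Lambda \subset V$ and any law $\lambda$ on weakly growing boundary conditions, the measure $\nu_\Lambda^\lambda$ is $(\gamma,\delta)$-regular with constants $\gamma,\delta>0$ depending only on $(\beta,h,g,a)$ (not on $\Lambda$ or $\lambda$). In particular, for every vertex $x \in \Lambda$, integrating the density bound of Definition \ref{def: regular measure} over all coordinates except $x$ gives a uniform estimate
\begin{equs}
\nu_\Lambda^\lambda[\phi_x^4] \leq \int_{\RR} t^4 \, \frac{e^{-\gamma t^4 + \delta}}{\int_\RR e^{-\gamma s^4 + \delta}\,\textup{d}s}\,\textup{d}t =: C_0
\end{equs}
where $C_0 = C_0(\gamma,\delta)$ is finite and, crucially, independent of $\Lambda$, $\lambda$, and $x$. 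This immediately yields a uniform second-moment bound $\nu_\Lambda^\lambda[\phi_x^2] \leq \sqrt{C_0}$ by Cauchy--Schwarz.

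\textbf{Tightness.} To prove tightness of $(\nu_{\Lambda_n})_{n\geq 0}$ on $\RR^V$ equipped with the product topology, it suffices to show tightness of the one-dimensional marginals at each fixed vertex $x \in V$, since a product of compact sets is compact and $\RR^V$ is a countable product of copies of $\RR$. For any $x \in V$ and $R > 0$, Markov's inequality together with the uniform bound above gives $\nu_{\Lambda_n}[\,|\phi_x| > R\,] \leq C_0 R^{-4}$ for all $n$ large enough that $x \in \Lambda_n$. Hence, given $\epsilon > 0$, enumerate $V = \{x_1, x_2, \ldots\}$ and choose $R_k$ so that $C_0 R_k^{-4} \leq \epsilon 2^{-k}$; the set $K_\epsilon := \prod_{k\geq 1} [-R_k, R_k]$ is compact in the product topology and $\nu_{\Lambda_n}[\,\phi \notin K_\epsilon\,] \leq \sum_k C_0 R_k^{-4} \leq \epsilon$ uniformly in $n$. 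This establishes tightness; by Prokhorov's theorem, every subsequence has a weakly convergent sub-subsequence.

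\textbf{Temperedness of limit points.} Let $\nu$ be any weak limit point, say $\nu_{\Lambda_{n_j}} \Rightarrow \nu$. Absolute continuity of each finite-volume restriction $\nu|_\Delta$ for $\Delta \subset V$ finite follows because the densities $\frac{\textup{d}\nu_{\Lambda_{n_j}}}{\textup{d}\phi_\Delta}$ are uniformly bounded (by $e^{\sum_{x\in\Delta}(\delta - \gamma\phi_x^4)} \leq e^{|\Delta|\delta}$ from regularity) and this bound passes to the limit, so $\nu|_\Delta$ has a density bounded by $e^{|\Delta|\delta}$; in fact $\nu$ is itself $(\gamma,\delta)$-regular. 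It remains to check that $\nu(R) = 1$, where $R$ is the temperedness event \eqref{def: temperedness const}. For this, fix $k$ and apply the uniform second-moment bound to get $\nu_{\Lambda_{n_j}}\big[\sum_{x \in B_k}\phi_x^2\big] \leq \sqrt{C_0}\,|B_k|$; since $\phi \mapsto \sum_{x\in B_k}\phi_x^2$ is a nonnegative lower-semicontinuous function, the portmanteau theorem gives $\nu\big[\sum_{x\in B_k}\phi_x^2\big] \leq \sqrt{C_0}\,|B_k|$ as well. Now set $Y_k := |B_k|^{-1}\sum_{x\in B_k}\phi_x^2$; then $\nu[Y_k] \leq \sqrt{C_0}$ for all $k$, so by Fatou $\nu[\liminf_k Y_k] \leq \sqrt{C_0} < \infty$, whence $\liminf_k Y_k < \infty$ $\nu$-a.s. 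However, to land in $R$ one needs $\sup_k Y_k < \infty$, not merely $\liminf_k Y_k < \infty$. To upgrade this, use the polynomial growth bound $|B_k| \geq c_G k^d$ together with the remark that $|B_k \setminus B_{k-1}| \leq C k^{-\delta}|B_k|$ for some $\delta > 0$ (Remark \ref{rem: delta}): passing to a sparse subsequence $k_i$ along which $|B_{k_i}|$ grows geometrically (possible since $|B_k|$ is between $c_Gk^d$ and $C_Gk^d$), a Borel--Cantelli argument using $\nu[Y_{k_i}] \leq \sqrt{C_0}$ and Markov's inequality, combined with the fact that $Y_k$ changes by a controlled multiplicative factor between consecutive sparse scales (because the annuli $B_{k_{i+1}}\setminus B_{k_i}$ contribute a bounded proportion), shows $\sup_k Y_k < \infty$ $\nu$-a.s. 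This is the one genuinely delicate point: the interpolation between $\liminf$ along a sparse geometric subsequence and $\sup$ over all $k$, which is exactly where the quantitative geometry of polynomial-growth graphs enters and mirrors the estimates of \cite{LP76}. Granting it, $\nu(R) = 1$, so $\nu$ is tempered, completing the proof. $\hfill\square$
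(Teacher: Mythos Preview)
Your overall strategy---uniform regularity $\Rightarrow$ tightness, then regularity passes to the limit $\Rightarrow$ temperedness---is exactly the route the paper takes (it defers to \cite{LP76}, with Corollary~\ref{app cor: finite vol reg} supplying the regularity input). Two remarks, one minor and one a genuine gap.

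\textbf{Which regularity statement to cite.} You invoke the preamble's claim that $\nu_\Lambda^\lambda$ is $(\gamma,\delta)$-regular with constants independent of $\Lambda$. Read literally for points near $\partial\Lambda$, that would require absorbing a boundary field of order $(\log n)^{1/4}$ when $\Lambda=B_n$, and $\delta$ then picks up an $n$-dependence (cf.\ Remark~\ref{rem: boundary saviour}). The uniform-in-$\Lambda$ statement you actually need is Corollary~\ref{app cor: finite vol reg}, which holds for $\Delta$ at distance $\gtrsim n^{\delta/2}$ from $\partial\Lambda$. This is harmless for both your tightness argument and for passing regularity to the limit, since any fixed $x$ (resp.\ fixed $\Delta$) is eventually deep inside $\Lambda_n$; but the citation should be to Corollary~\ref{app cor: finite vol reg}.

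\textbf{The Borel--Cantelli step.} Here there is a real gap. From $\nu[Y_{k_i}]\leq\sqrt{C_0}$ and Markov you only get $\nu[Y_{k_i}>t]\leq\sqrt{C_0}/t$, which is \emph{not summable} in $i$---even along a geometric subsequence---so Borel--Cantelli does not apply as you outline it, and no interpolation between scales rescues this. The fix is already in your hands: you have shown that $\nu$ itself is $(\gamma,\delta)$-regular, so the density of $\nu|_{B_k}$ is dominated by $\prod_{x\in B_k}e^{\delta-\gamma\phi_x^4}$. An exponential Chebyshev then gives, for any $\lambda>0$,
\[
\nu\Big[\sum_{x\in B_k}\phi_x^2>C|B_k|\Big]\;\leq\;\Big(e^{-\lambda C}\!\int_{\RR}e^{\delta-\gamma t^4+\lambda t^2}\,\textup{d}t\Big)^{|B_k|}\;\leq\;e^{-c|B_k|}
\]
for $C$ large enough. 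Since $|B_k|\geq c_Gk^d$, this is summable in $k$, and Borel--Cantelli yields $\sup_kY_k<\infty$ $\nu$-a.s.\ directly---no sparse subsequence and no interpolation needed. This is the argument implicit in the paper's reference to \cite{LP76}.
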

\begin{prop} \label{appendix prop: dlr}
Let $(\nu_{\Lambda_n})_{n\geq 0}$ be a sequence of finite volume measure with (random) weakly growing boundary conditions such that $\nu_{\Lambda_n}$ converges weakly to a tempered probability measure $\nu$. Then, $\nu$ is a Gibbs measure, i.e.\ satisfies the DLR equations.
\end{prop}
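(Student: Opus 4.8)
\textbf{Proof proposal for Proposition \ref{appendix prop: dlr}.}

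The plan is to verify the DLR equations directly by passing to the limit in a sequence of finite-volume identities, using the consistency (tower) property of the finite-volume specifications together with the uniform regularity estimates to control the tails. First I would fix a finite set $\Delta \subset V$ and a bounded measurable local function $f$ with support in $\Delta$, and take $n$ large enough that $\Delta \subset \Lambda_n$. The key structural input is the elementary consistency property of the finite-volume $\phi^4$ measures: for $\Delta \subset \Lambda_n$ and any boundary condition $\eta$,
\begin{equs}
\langle f \rangle^{\eta}_{\Lambda_n,\beta,h}
=
\int_{\RR^V} \langle f \rangle^{\xi}_{\Delta,\beta,h} \, \textup{d}\nu^{\eta}_{\Lambda_n}(\xi),
\end{equs}
which holds because conditioning the Gibbs measure $\nu^{\eta}_{\Lambda_n}$ on the spins outside $\Delta$ reproduces $\langle \cdot \rangle^{\xi}_{\Delta,\beta,h}$ with $\xi$ the (random) configuration on $\Lambda_n \setminus \Delta$ extended by $\eta$ outside. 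Averaging this over the random boundary law $\lambda_n$ gives $\nu_{\Lambda_n}(f) = \nu_{\Lambda_n}\big( \langle f \rangle^{\cdot}_{\Delta,\beta,h} \big)$, so it suffices to show that $\nu_{\Lambda_n}\big( \langle f \rangle^{\cdot}_{\Delta,\beta,h} \big) \to \nu\big( \langle f \rangle^{\cdot}_{\Delta,\beta,h} \big)$.

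The main obstacle, and the step I would spend the most care on, is that the map $\xi \mapsto \langle f \rangle^{\xi}_{\Delta,\beta,h}$ is \emph{not} a bounded continuous function of $\xi$ on all of $\RR^V$: it depends on $\xi$ through the effective external field $h'_x = \sum_{y \notin \Delta} J_{x,y} \xi_y$ for $x \in \Delta$, which is a genuinely infinite sum, and the dependence is unbounded as $\xi$ grows. To handle this I would first restrict to the event $\Xi(M)$ (on which all the measures $\nu_{\Lambda_n}$ are supported, uniformly in $n$, by hypothesis) and then split the sum defining $h'_x$ into a finite part over $y \in B_R$ and a tail over $y \notin B_R$. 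On $\Xi(M)$ the tail is bounded by $\sum_{y \notin B_R} \Psi(d_G(x,y)) \sqrt[4]{M\log d_G(o,y)}$, which tends to $0$ as $R \to \infty$ by the integrability assumption on $J$ together with the polynomial growth of $G$ (this is precisely the type of estimate carried out in Lemma \ref{lem: psi sum} and used in the proof of Proposition \ref{prop: extremal and ergodic}). Thus $\xi \mapsto \langle f \rangle^{\xi}_{\Delta,\beta,h}$ is, on $\Xi(M)$, a uniform limit of functions depending on finitely many coordinates, each of which is bounded and continuous in $\xi_{B_R}$; standard arguments on finiteness of the single-site measure and smooth dependence of the partition function on the external field give the continuity. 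Hence on $\Xi(M)$ the function is a uniform limit of bounded local continuous functions.

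With this in hand I would conclude as follows. Let $\varepsilon > 0$. Choose $R$ so that $\sup_n \nu_{\Lambda_n}$-almost surely (and $\nu$-almost surely) the tail contribution changes $\langle f \rangle^{\xi}_{\Delta,\beta,h}$ by at most $\varepsilon$ in sup-norm on $\Xi(M)$; since all the measures charge $\Xi(M)$ with full mass, the approximating local continuous function $g_R$ satisfies $|\nu_{\Lambda_n}(\langle f\rangle^{\cdot}_\Delta) - \nu_{\Lambda_n}(g_R)| \leq \varepsilon$ for all $n$ and similarly for $\nu$. Then $\nu_{\Lambda_n}(g_R) \to \nu(g_R)$ by weak convergence (here $g_R$ is bounded continuous). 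Combining, $\limsup_n |\nu_{\Lambda_n}(\langle f \rangle^{\cdot}_\Delta) - \nu(\langle f \rangle^{\cdot}_\Delta)| \leq 2\varepsilon$, and letting $\varepsilon \to 0$ gives the claim. Putting everything together with $\nu_{\Lambda_n}(f) \to \nu(f)$ (again weak convergence, $f$ bounded continuous local) yields $\nu(f) = \nu(\langle f \rangle^{\cdot}_{\Delta,\beta,h}) = \int \langle f \rangle^{\xi}_{\Delta,\beta,h}\,\textup{d}\nu(\xi)$, which is the DLR equation for $\Delta$. Since $\Delta$ and $f$ were arbitrary, $\nu \in \cG(\beta,h)$; temperedness is assumed in the statement. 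I would also remark that the measurability requirement built into the definition of Gibbs measure (that $\nu$ is supported on $\xi$ for which $\langle \cdot \rangle^{\xi}_{\Delta,\beta,h}$ is well-defined) is automatic here because $\nu$ is supported on $\Xi(M)$, on which the effective fields $h'_x$ are absolutely convergent.
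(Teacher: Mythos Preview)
Your overall architecture---verify DLR by the tower property $\nu_{\Lambda_n}(f)=\nu_{\Lambda_n}(\langle f\rangle^{\cdot}_{\Delta})$, truncate the effective field at range $R$, and pass to the limit via weak convergence---is exactly the strategy the paper inherits from \cite{LP76}. However, the step where you claim that on $\Xi(M)$ the tail $\sum_{y\notin B_R}J_{x,y}\xi_y$ is uniformly bounded by $\sum_{y\notin B_R}\Psi(d_G(x,y))\sqrt[4]{M\log d_G(o,y)}$ is not correct: membership in $\Xi(M)$ only guarantees $|\xi_y|^4\leq M\log d_G(o,y)$ outside a finite set $\Lambda_\xi$ that depends on $\xi$. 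Under $\nu_{\Lambda_n}$ the configuration equals the (random, unbounded) field on $\Lambda_n$ and the weakly growing boundary on $\Lambda_n^c$, so the exceptional set is essentially all of $\Lambda_n$; hence your tail bound fails for the spins in $\Lambda_n\setminus B_R$, and the claimed uniform approximation of $\xi\mapsto\langle f\rangle^\xi_\Delta$ by local continuous functions on $\Xi(M)$ does not hold.

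What is missing is precisely the ingredient the paper singles out: the uniform finite-volume regularity estimate (Corollary~\ref{app cor: finite vol reg}, built on Proposition~\ref{appendix prop: prob}). It gives $(\gamma,\delta)$ independent of $n$ such that each $\nu_{\Lambda_n}$ is $(\gamma,\delta)$-regular on subsets away from the boundary, which in turn yields a uniform-in-$n$ bound on $\nu_{\Lambda_n}\big[\sum_{y\in \Lambda_n\setminus B_R}J_{x,y}|\xi_y|\big]$ (or, equivalently, uniform smallness of the probability that the truncation error exceeds $\varepsilon$). With that input your $\varepsilon/3$ argument goes through: $g_R$ is bounded continuous and local so $\nu_{\Lambda_n}(g_R)\to\nu(g_R)$, and the regularity controls $|\nu_{\Lambda_n}(\langle f\rangle^\cdot_\Delta)-\nu_{\Lambda_n}(g_R)|$ uniformly in $n$. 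So the fix is not a change of strategy but an appeal to Corollary~\ref{app cor: finite vol reg} in place of the incorrect pointwise bound on $\Xi(M)$.
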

\begin{prop} \label{appendix prop: regular}
For any $(\beta,h)$, there exist $\gamma=\gamma(\beta,h), \delta=\delta(\beta,h)>0$ such that every Gibbs measure $\nu\in \cG(\beta,h)$ is $(\gamma,\delta)$-regular. Moreover, the constants $(\gamma,\delta)$ depend continuous on $(\beta,h)$.
\end{prop}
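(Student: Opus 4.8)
\textbf{Proof proposal for Proposition \ref{appendix prop: regular}.}

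The plan is to bootstrap from the regularity of finite-volume measures with weakly growing boundary conditions (which holds essentially by repeated applications of H\"older's inequality on $|H_\Lambda(\phi)|$, with constants continuous in $(\beta,h)$), transferring it to an arbitrary Gibbs measure $\nu\in\cG(\beta,h)$ via the DLR equations. The first step is to establish that $\nu$ concentrates on weakly growing configurations: since $\nu$ is tempered, it is supported on the event $R$ of \eqref{def: temperedness const}, and a Borel--Cantelli argument shows that, outside a finite (random) set, the single-site values $\phi_x$ grow at most like $(\log d_G(o,x))^{1/4}$. More precisely, I would first show that for $\nu$-almost every $\phi$ there exists a finite $\Lambda_\phi$ with $|\eta_x|\le\PLUS_x(M)$ for all $x\notin\Lambda_\phi$, provided $M$ is taken large enough depending on $(\beta,h,g,a)$; this is where one uses that $\nu(R)=1$ together with the DLR equations applied to single sites to control the tails of $\phi_x$ uniformly, following the estimates of \cite{LP76}. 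The quartic term $g>0$ in the single-site measure is essential here because it gives Gaussian-type concentration stronger than the class treated in \cite{LP76}, which is what allows the $\log$-type bound.

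The second step is the transfer itself. Fix $\Lambda\subset V$ finite and $\Delta\subset\Lambda$. By the DLR equation,
\begin{equs}
\frac{\textup{d}\nu}{\textup{d}\phi_\Delta}(\phi_\Delta)
=
\int_{\eta\in\RR^V}\frac{\textup{d}\nu_\Delta^{(\Lambda),\eta}}{\textup{d}\phi_\Delta}(\phi_\Delta)\,\textup{d}\nu(\eta).
\end{equs}
On the event $\{|\eta_x|\le\PLUS_x(M)\ \forall x\notin\Lambda\}$, which (after enlarging $\Lambda$ if necessary, or rather: by conditioning on the complement being a $\nu$-null-limit) carries $\nu$-mass tending to $1$, the integrand is bounded by $e^{-\sum_{x\in\Delta}(\gamma\phi_x^4-\delta)}$ with $(\gamma,\delta)$ the uniform finite-volume regularity constants from the remark following the definition of regular measures — these are uniform over subsets and over all $\Lambda$ containing a fixed region far from the boundary, and continuous in $(\beta,h,g,a)$. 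The contribution of the complementary event is controlled by splitting off the finitely many "bad" sites and using temperedness to absorb their contribution; since the density bound we want is itself integrable against Lebesgue measure, a careful choice of $M$ (depending only on $\gamma,\delta,\eps,d$ and the growth constants $c_G,C_G$) makes this error negligible. Letting $\Lambda\uparrow V$ and using that $\Delta$ was arbitrary finite yields the $(\gamma,\delta)$-regularity of $\nu$ on every finite $\Delta$.

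The continuity of the constants $(\gamma,\delta)$ in $(\beta,h)$ (indeed in $(g,a,J,\beta,h)$) follows because at every stage the constants produced are explicit functions of the H\"older exponents, the norms $\sum_{y}J_{o,y}$, $\sum_y J_{o,y}d_G(o,y)$-type sums controlled by {\bf (C4)} via Lemma \ref{lem: psi sum}, and the single-site moments $\langle\varphi^{2k}\rangle_0$, all of which depend continuously on the parameters; one simply tracks this dependence through the two steps above. The main obstacle I anticipate is the first step: establishing the a priori $\log^{1/4}$ growth bound on the spins under an arbitrary Gibbs measure, starting only from the quadratic temperedness constraint $\sum_{x\in B_k}\phi_x^2\le C|B_k|$. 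This requires adapting the iterative single-site tail estimates of \cite{LP76} to the polynomial-growth setting — in particular handling the fact (Remark \ref{rem: delta}) that one cannot take $\delta=1$ in the sphere-to-ball ratio bound — and it is precisely here that the restriction to graphs of polynomial growth, rather than general amenable graphs, enters. The rest is bookkeeping: once weakly growing boundary conditions are secured, the DLR transfer and the continuity of constants are routine.
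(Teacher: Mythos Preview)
Your proposal has the right overall architecture --- DLR plus uniform finite-volume regularity --- and you correctly identify where the difficulty lies. But there is a genuine gap in your Step~1, and it is precisely the circularity you half-anticipate.

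You want to go from temperedness ($\nu(R)=1$, an \emph{averaged} constraint $\sum_{x\in B_k}\phi_x^2\le C|B_k|$ with $C=C(\phi)$ random) to the \emph{pointwise} bound $\nu(\Xi(M))=1$ with $M$ depending only on $(\beta,h,g,a)$, via ``DLR equations applied to single sites'' and Borel--Cantelli. But conditioning on a single site gives a density $\propto e^{-g\phi_x^4 - a\phi_x^2 + \beta\phi_x\sum_y J_{x,y}\eta_y}$, and the linear coefficient $\sum_y J_{x,y}\eta_y$ is controlled (via Cauchy--Schwarz and Lemma~\ref{lem: psi sum}) only in terms of the \emph{random} temperedness constant $C(\eta)$. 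The resulting tail bound $e^{-\gamma t^4 + \delta(C(\eta))}$ is not uniform, and integrating over $\eta\sim\nu$ would require exponential moments of a power of $C(\eta)$ --- which temperedness alone does not give. So Borel--Cantelli would only yield $\Xi(M)$ with $M$ depending on the random $C$, which breaks the uniformity you need in Step~2. In short, you are using (a consequence of) regularity to prove regularity.

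The paper's route (following \cite{LP76}, Theorem 4.4) reverses your order: it proves regularity \emph{first}, working directly with tempered --- not weakly growing --- boundary conditions, and only afterwards deduces $\nu(\Xi(M))=1$ (this is exactly Lemma~\ref{lem: regular bc}). The mechanism that makes this work without circularity is the multiscale decomposition of Proposition~\ref{appendix prop: prob}: one splits according to the events $A_{\Delta,q_0}(x)$ and $B_{\Delta,q}(x)$, which depend on the \emph{inside} field. The crucial input is Proposition~\ref{largest-q}: for any tempered configuration, since $\psi_q\to\infty$ while the temperedness constant is fixed, there is always a largest scale $q$ at which $\sum_{y\in F_q}\phi_y^2\ge\psi_qV_q$, and at scale $q+1$ the boundary interaction is dominated by the bulk with a constant \emph{independent of the temperedness constant}. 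This is what produces uniform $(\gamma,\delta)$; the induction of Corollary~\ref{app cor: finite vol reg} (which is Ruelle's argument \cite{R70}) then converts the decomposition into the clean density bound. Your proposal would be rescued by replacing Step~1 with exactly this machinery --- but then Step~2 becomes superfluous.
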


The proofs of Propositions \ref{appendix prop: tightness}--\ref{appendix prop: regular} are based on of the proofs of Theorems 4.3, 4.5, and 4.4 in \cite{LP76}, respectively\footnote{Note that the proof of \cite[Theorem 4.5]{LP76} is given in the appendix of \cite{LP76}, where it is mislabelled as proof of Theorem 4.4.}, adapted from the case of $\ZZ^d$ to our setting. The extensions of these proofs require us to develop some technical results, which broadly speaking allow us to distinguish as well as quantify bulk and boundary contributions from the potential. The key geometric estimate is contained in Proposition \ref{largest-q}. The key probabilistic estimate is contained in Proposition \ref{appendix prop: prob}, which in particular implies uniform regularity estimates for finite volume measures, see Corollary \ref{app cor: finite vol reg}. Both key estimates require multiscale control over the decay and oscillations of the kernel $\Psi$--- which in turn controls the decay of the interactions $J$--- on scales determined by the geometry of the graph; this naturally imposes conditions jointly on the allowed graphs and interactions. For graphs of polynomial growth, these conditions are explicit. Since, given Proposition \ref{appendix prop: prob} and Corollary \ref{app cor: finite vol reg}, the proofs of Propositions \ref{appendix prop: tightness}--\ref{appendix prop: regular} are identical to \cite{LP76}, we omit them and refer to the exact references given above.

\subsection{Technical results on bulk-boundary contributions} 

We first verify that $\Psi$ is indeed summable on $G$.
\begin{lem}\label{lem: psi sum}
We have that
\begin{equation*}
\| \Psi \|
:=
\sup_{x \in V} \sum_{y \in V} \Psi( d_G(x,y))
=
\sum_{y \in V}\Psi( d_G(o,y))
<
\infty.
\end{equation*}
\end{lem}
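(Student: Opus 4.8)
The statement is the elementary fact that the kernel $\Psi(k) = C_\Psi k^{-d-\epsilon}$, used to bound the interactions, is summable over $V$ with the supremum attained at the origin by vertex-transitivity. The equality $\sup_{x\in V}\sum_{y\in V}\Psi(d_G(x,y)) = \sum_{y\in V}\Psi(d_G(o,y))$ is immediate: for any $x\in V$ there is (by vertex-transitivity) an automorphism $\gamma$ with $\gamma(o)=x$, and since $\gamma$ preserves $d_G$, the map $y\mapsto \gamma(y)$ is a bijection of $V$ with $d_G(x,\gamma(y)) = d_G(o,y)$, so the two sums coincide term by term. Thus it suffices to prove $\sum_{y\in V}\Psi(d_G(o,y)) < \infty$.

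For the finiteness, I would reorganise the sum according to the distance shells. Write $V\setminus\{o\} = \bigsqcup_{k\geq 1}(B_k\setminus B_{k-1})$, so that
\begin{equs}
\sum_{y\in V}\Psi(d_G(o,y))
=
\Psi(0)\text{-type term at }y=o\text{ (which we may take as }0\text{ or a constant)}+\sum_{k\geq 1}|B_k\setminus B_{k-1}|\,\Psi(k).
\end{equs}
Since $\Psi(k) = C_\Psi k^{-d-\epsilon}$ and, by the polynomial growth assumption \eqref{eq: polygrowth constants}, $|B_k\setminus B_{k-1}| \leq |B_k| \leq C_G k^d$, each term is bounded by $C_\Psi C_G\, k^{-\epsilon - d} k^d = C_\Psi C_G\, k^{-\epsilon\cdot 0}$... here I must be careful: $k^{-d-\epsilon}\cdot k^d = k^{-\epsilon}$, and $\sum_{k\geq1} k^{-\epsilon}$ diverges when $\epsilon\leq 1$. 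So the crude bound $|B_k\setminus B_{k-1}|\leq C_G k^d$ is not enough; I need the sharper shell estimate. The clean fix is to use Abel summation (summation by parts): group instead using $|B_k|$ directly. Writing $a_k := |B_k|$ and telescoping,
\begin{equs}
\sum_{k\geq 1}(a_k - a_{k-1})\Psi(k)
=
\sum_{k\geq 1} a_k\big(\Psi(k) - \Psi(k+1)\big) + \lim_{k\to\infty} a_k \Psi(k+1),
\end{equs}
where the boundary term vanishes because $a_k\Psi(k+1) \leq C_G k^d\, C_\Psi (k+1)^{-d-\epsilon}\to 0$. Now $\Psi(k)-\Psi(k+1) = C_\Psi(k^{-d-\epsilon}-(k+1)^{-d-\epsilon})$, and by the mean value theorem this is $O(k^{-d-\epsilon-1})$; combined with $a_k\leq C_G k^d$ we get a summand of order $k^{-\epsilon-1}$, and $\sum_{k\geq 1}k^{-\epsilon-1}<\infty$ for every $\epsilon>0$. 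Hence the sum converges.

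The only mild subtlety — and the step I would flag as the one requiring care rather than the "main obstacle" (there is no real obstacle here) — is handling the $y=o$ term, where $d_G(o,o)=0$ and $\Psi(0)$ is not defined by the formula $C_\Psi k^{-d-\epsilon}$; but this is a single term and contributes a harmless finite constant (or is simply excluded, since interactions $J_{x,y}$ are only bounded by $\Psi(d_G(x,y))$ for $x\neq y$). I would state the lemma's proof by: (i) reducing to the origin via vertex-transitivity; (ii) decomposing into shells; (iii) applying Abel summation together with \eqref{eq: polygrowth constants} and the mean value theorem to conclude $\sum_{k\geq 1}|B_k|\,|\Psi(k)-\Psi(k+1)| \leq C\sum_{k\geq1} k^{-1-\epsilon} <\infty$, with vanishing boundary term. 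This gives $\|\Psi\|<\infty$ as claimed.
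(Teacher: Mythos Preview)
Your proposal is correct and follows essentially the same approach as the paper: reduce to the origin by vertex-transitivity, then apply summation by parts to rewrite the shell sum as $\sum_k |B_k|\big(\Psi(k)-\Psi(k+1)\big)$ (plus a vanishing boundary term), and conclude from $\Psi(k)-\Psi(k+1)=O(k^{-(d+1+\varepsilon)})$ together with $|B_k|=O(k^d)$. The paper's proof is just a terser version of exactly this argument.
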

\begin{proof}
By summation by parts and $\Gamma$-invariance, we have that 
\begin{equation*}
\sum_{y\in V} \Psi(d_G(x,y))
=
\sum_{k=0}^\infty (\Psi(k)-\Psi(k+1))|B_k|.
\end{equation*}
Note that $\Psi(k)-\Psi(k+1)=O(k^{-(d+1+\varepsilon)})$, from which the result follows.
\end{proof}

We are interested in quantifying the oscillatory behaviour of $\Psi$ at large scales. In our analysis we require the scales to have a nonlinear dependence depending on the dimension. Let $\alpha=\frac{d}{\varepsilon}+1$ and define $F_n:=B_{t_n}$, where 
$t_n=\lfloor n^{\alpha} \rfloor$. We write $V_n=|F_n|$. We often refer to $n$ as the scale even though, strictly speaking, the scale is $t_n$. 

For $q,k\in \NN$, define
\begin{equation*}
\Psi_{k,q}
=
\sup_{x\in F_{q},\, y\in V\setminus F_{q+k}} \Psi(d_G(x,y)).
\end{equation*}
The coefficient $\Psi_{k,q}$ tracks the oscillation of $\Psi$ from scale $q$ to scale $q+k$. The next lemma states that the decay condition on $\Psi$ is enough to ensure that asymptotically the oscillations are small with respect to the volume.

\begin{lem}\label{lem: summable}
We have that 
\begin{equation}\label{fast-decay}
\lim_{q\to\infty}\sum_{k=1}^\infty \left(\Psi_{k,q}-\Psi_{k+1,q}\right) V_{q+k+1}
=
0.
\end{equation}
\end{lem}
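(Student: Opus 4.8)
The goal is to show that $\sum_{k\geq 1}(\Psi_{k,q}-\Psi_{k+1,q})V_{q+k+1}\to 0$ as $q\to\infty$. First I would record the two basic ingredients: the polynomial-growth bound $V_{q+k+1}=|B_{t_{q+k+1}}|\leq C_G t_{q+k+1}^d\leq C_G(q+k+1)^{\alpha d}$ with $\alpha=\frac d\varepsilon+1$, and the decay of $\Psi$ translated into a bound on $\Psi_{k,q}$. For the latter, note that if $x\in F_q=B_{t_q}$ and $y\in V\setminus F_{q+k}$, then $d_G(x,y)\geq d_G(o,y)-d_G(o,x)\geq t_{q+k}-t_q$, so
\begin{equs}
\Psi_{k,q}\leq C_\Psi (t_{q+k}-t_q)^{-(d+\varepsilon)}
\end{equs}
provided $t_{q+k}>t_q$ (which holds for $q$ large and $k\geq 1$ since $t_n=\lfloor n^\alpha\rfloor$ is eventually strictly increasing). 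Since $t_{q+k}-t_q\geq \tfrac12\big((q+k)^\alpha-q^\alpha\big)$ for $q$ large, and $(q+k)^\alpha-q^\alpha\geq c\,k\,(q+k)^{\alpha-1}$ (mean value theorem, as $\alpha>1$), we get $\Psi_{k,q}\leq C' k^{-(d+\varepsilon)}(q+k)^{-(\alpha-1)(d+\varepsilon)}$.

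The cleanest route is to avoid the telescoping structure entirely by an Abel summation (summation by parts) in reverse: since $\Psi_{k,q}$ is non-increasing in $k$ and $\Psi_{k,q}\to 0$ as $k\to\infty$ (by the display above), one has
\begin{equs}
\sum_{k=1}^\infty (\Psi_{k,q}-\Psi_{k+1,q})V_{q+k+1}
=
\Psi_{1,q}V_{q+2}+\sum_{k=2}^\infty \Psi_{k,q}\big(V_{q+k+1}-V_{q+k}\big),
\end{equs}
where the rearrangement is justified because all terms are non-negative (both $\Psi_{k,q}-\Psi_{k+1,q}\geq 0$ and $V_{q+k+1}\geq V_{q+k}$) and the volume increments $V_{q+k+1}-V_{q+k}=|B_{t_{q+k+1}}\setminus B_{t_{q+k}}|$ are what appear naturally. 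Then I would bound each piece. The boundary term $\Psi_{1,q}V_{q+2}\leq C'(q+2)^{-(\alpha-1)(d+\varepsilon)}\cdot C_G(q+2)^{\alpha d}$; the exponent is $\alpha d-(\alpha-1)(d+\varepsilon)=\alpha d-\alpha(d+\varepsilon)+(d+\varepsilon)=-\alpha\varepsilon+d+\varepsilon$, and since $\alpha=\frac d\varepsilon+1$ we have $\alpha\varepsilon=d+\varepsilon$, so this exponent equals $0$. That is borderline, not decaying — so I cannot afford to be this lossy on the boundary term, and I must instead use the increment $V_{q+k+1}-V_{q+k}$ together with Remark~\ref{rem: delta}.

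The key improvement is Remark~\ref{rem: delta}: there exist $C,\delta>0$ with $|B_n\setminus B_{n-1}|\leq Cn^{-\delta}|B_n|$ for all $n$. Telescoping this over the range $(t_{q+k},t_{q+k+1}]$ gives $V_{q+k+1}-V_{q+k}\leq C\sum_{n=t_{q+k}+1}^{t_{q+k+1}} n^{-\delta}|B_n|\leq C'\, t_{q+k}^{-\delta}\,(t_{q+k+1}-t_{q+k})\,V_{q+k+1}$, and since $t_{q+k+1}-t_{q+k}=O\big((q+k)^{\alpha-1}\big)$ while $t_{q+k}\asymp(q+k)^\alpha$ and $V_{q+k+1}\leq C_G(q+k+1)^{\alpha d}$, the increment is $O\big((q+k)^{-\alpha\delta+\alpha-1+\alpha d}\big)$, which gains a factor $(q+k)^{-\alpha\delta}$ over the crude bound $V_{q+k+1}$. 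Thus
\begin{equs}
\Psi_{k,q}\big(V_{q+k+1}-V_{q+k}\big)
\leq
C''\,k^{-(d+\varepsilon)}\,(q+k)^{-(\alpha-1)(d+\varepsilon)}\,(q+k)^{\alpha d-\alpha\delta+\alpha-1},
\end{equs}
and, using $\alpha\varepsilon=d+\varepsilon$ again, the $(q+k)$-exponent becomes $-\alpha\varepsilon+\varepsilon+\alpha d+\alpha-\alpha\delta-1 = -d+\varepsilon+\alpha d+\alpha-\alpha\delta-1-\varepsilon\dots$ — I would compute it cleanly as $-(\alpha-1)(d+\varepsilon)+\alpha(d+1-\delta)-1$. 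Since this is $\leq -\delta'$ for some $\delta'>0$ (one checks $\alpha d+\alpha-\alpha\delta-1<(\alpha-1)(d+\varepsilon)$ reduces to $\alpha(1-\delta+\varepsilon)<\varepsilon+d+1$, i.e. $\alpha(1-\delta+\varepsilon)<\alpha\varepsilon+1$, i.e. $\alpha(1-\delta)<1$; this requires $\delta$ close to $1$ and is the delicate point), the sum over $k$ is dominated by $(q+k)^{-\delta'}\lesssim q^{-\delta'}$ times a convergent series $\sum_k k^{-(d+\varepsilon)}$, giving the claimed limit. For the boundary term $\Psi_{1,q}V_{q+2}$ one argues similarly, writing $V_{q+2}\leq V_{q+1}+(V_{q+2}-V_{q+1})$ and iterating, or more simply bounding $\Psi_{1,q}V_{q+2}$ by the same telescoping but noting it is the $k=1$ contribution which must be treated with the increment bound rather than the full volume.

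\textbf{Main obstacle.} The crux is the borderline exponent: with $\alpha=\frac d\varepsilon+1$ the naive estimate $\Psi_{k,q}V_{q+k+1}$ has a flat (non-decaying) envelope in $q$, so the argument genuinely requires the volume-increment refinement from Remark~\ref{rem: delta}, and the gain $(q+k)^{-\alpha\delta}$ must beat the deficit. This is precisely why $\alpha$ was chosen with this specific nonlinear dependence on $d/\varepsilon$ — I expect the computation to work with room to spare once the $\delta$ from Remark~\ref{rem: delta} is inserted, but the bookkeeping of exponents (and making sure one never relies on $\delta=1$, which is unknown) is the part that needs care. An alternative, perhaps more robust, route that sidesteps optimizing $\alpha$: observe that the whole sum telescopes against $\Psi$ evaluated at graph distances, so one can instead write it as $\sum_{y}\big[\Psi(d(x_0,y))\big]$-type sums over annuli and invoke Lemma~\ref{lem: psi sum} directly to get summability, then use dominated convergence in $q$ since each term $\to 0$ pointwise; I would keep this as a fallback if the exponent arithmetic proves too tight.
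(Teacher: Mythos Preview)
Your summation-by-parts setup and the bound $\Psi_{k,q}\lesssim k^{-(d+\varepsilon)}(q+k)^{-(\alpha-1)(d+\varepsilon)}$ are exactly what the paper uses. The gap is in how you handle the volume increments: you bound $V_{q+k+1}-V_{q+k}$ through Remark~\ref{rem: delta}, which forces the condition $\alpha(1-\delta)<1$. Since $\alpha=d/\varepsilon+1$ can be arbitrarily large while only \emph{some} $\delta>0$ is guaranteed, this does not close, as you yourself flag.

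The paper bypasses Remark~\ref{rem: delta} entirely. It bounds $V_{q+k+1}\leq C_G\, t_{q+k+1}^d$ termwise (legitimate because $\Psi_{k,q}-\Psi_{k+1,q}\geq 0$) and Abel-sums against $t_{q+k+1}^d$ rather than $V_{q+k+1}$. The increments become the purely analytic quantity $t_{q+k+1}^d-t_{q+k}^d=O\big((q+k)^{\alpha d-1}\big)$, a mean-value estimate on the polynomial $n\mapsto n^{\alpha d}$; nothing about the graph is used beyond $|B_k|\leq C_G k^d$. Since $(\alpha-1)(d+\varepsilon)=\alpha d>\alpha d-1$, the generic summand is $\lesssim k^{-(d+\varepsilon)}(q+k)^{-1}$ and the tail over $k\geq 2$ is $O(q^{-1})$.

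Your observation that the boundary term $\Psi_{1,q}V_{q+2}$ sits at exponent exactly $0$ is sharp and applies equally to the paper's displayed inequality. Replacing the borderline choice $\alpha=d/\varepsilon+1$ by any strictly larger value makes this term decay polynomially while leaving the rest of the argument untouched, so this is a repairable choice of constant rather than a structural obstacle. Your dominated-convergence fallback does not help either: for each fixed $k$ the term $(\Psi_{k,q}-\Psi_{k+1,q})V_{q+k+1}$ is of order $k^{-(d+\varepsilon+1)}$ uniformly in $q$ and hence does not tend to $0$ pointwise.
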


\begin{proof}
By summation by parts
\begin{equation*}
\sum_{k=1}^\infty \left(\Psi_{k,q}-\Psi_{k+1,q}\right) V_{q+k+1}
\leq 
\Psi_{1,q}V_{q+2}+C_G\sum_{k=2}^\infty \Psi_{k,q} \left(t_{q+k+1}^d-t_{q+k}^d\right)
\end{equation*}
where $C_G$ is as in \eqref{eq: polygrowth constants}.
Note that $\Psi_{k,q}=C_\Psi(t_{q+k}+1-t_q)^{-d-\varepsilon}$. On one hand, by telescoping and a simple calculation
\begin{equation*}
t_{q+k}+1-t_q\geq\sum_{i=1}^k t_{q+i}-t_{q+i-1}\geq c_1\sum_{i=\lfloor k/2 \rfloor}^k (q+i)^{\alpha-1} \geq c_2 k(q+k)^{\alpha-1}
\end{equation*}
for some constants $c_1,c_2>0$, hence $\Psi_{k,q}\leq C_\Psi(c_2k)^{-d-\varepsilon}(q+k)^{-(\alpha-1)(d+\varepsilon)}$. On the other hand, 
$t_{q+k+1}^d-t_{q+k}^d=O((q+k)^{\alpha d-1})$. Since $(\alpha-1)(d+\varepsilon)>\alpha d -1$, the desired assertion follows.
\end{proof}

We now introduce a sequence $(\psi_j)_{j\in \NN}$ that plays the role of $C$ in \eqref{def: temperedness const} from the definition of tempered measures. It is chosen to increase from scale to scale sufficiently slowly so that the following holds.

\begin{lem}\label{lem: psi def}
There exists an increasing sequence $(\psi_j)_{j\in \mathbb{N}}$ converging to infinity such that $\psi_1\geq 1$, $
\lim_{j\to\infty} \frac{\psi_{j+1}}{\psi_j}
=
1$,
and
\begin{equation}\label{conv-0}
\lim_{q\to\infty}\sum_{k=1}^\infty \left(\Psi_{k,q}-\Psi_{k+1,q}\right)\psi_{q+k+2} V_{q+k+2}
=
0.
\end{equation}
\end{lem}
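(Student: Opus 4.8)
The goal is to diagonalise between two competing decay rates: the oscillation coefficients $\Psi_{k,q}$ decay fast enough (by Lemma~\ref{lem: summable}) that $\sum_k (\Psi_{k,q}-\Psi_{k+1,q}) V_{q+k+1} \to 0$, so we have ``room'' to slip in a sequence $(\psi_j)$ that grows to infinity while still preserving a summability statement at the slightly larger index $q+k+2$ and with the extra weight $\psi_{q+k+2}$. The standard device is a two-stage diagonal argument: first define a rapidly-but-not-too-rapidly increasing auxiliary sequence using Lemma~\ref{lem: summable}, then interpolate to get $\psi_{j+1}/\psi_j \to 1$.

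Concretely, I would proceed as follows. For each integer $m \geq 1$, set $a_m := \sup_{q \geq m} \sum_{k=1}^\infty (\Psi_{k,q} - \Psi_{k+1,q}) V_{q+k+2}$. By Lemma~\ref{lem: summable} (applied with a shift of index, which changes nothing since $V_{q+k+1} \leq V_{q+k+2} = O((q+k)^{\alpha d})$ differs only by a polynomial factor absorbed into the same computation), we have $a_m \to 0$ as $m \to \infty$; moreover each $a_m < \infty$ because the sum is dominated by $\|\Psi\|$-type bounds and the polynomial volume growth, exactly as in the proof of Lemma~\ref{lem: summable}. Choose a strictly increasing sequence of integers $1 = m_1 < m_2 < \cdots$ such that $a_{m_r} \leq 4^{-r}$ for all $r \geq 1$. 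Now define a step sequence $\chi_j := 2^{r}$ for $m_r \leq j < m_{r+1}$. Then $\chi_j \uparrow \infty$, and for $q \geq m_r$ one has, splitting the sum over $k$ according to which block $q+k+2$ falls into and using that $\chi$ is increasing,
\begin{equs}
\sum_{k=1}^\infty (\Psi_{k,q} - \Psi_{k+1,q}) \chi_{q+k+2} V_{q+k+2}
\leq \sum_{s \geq r} 2^{s+1} \sum_{\substack{k \geq 1 \\ m_s \leq q+k+2 < m_{s+1}}} (\Psi_{k,q} - \Psi_{k+1,q}) V_{q+k+2}
\leq \sum_{s \geq r} 2^{s+1} a_{m_r}
\end{equs}
which is not quite good enough as written; instead bound each inner sum by $a_{m_s} \leq 4^{-s}$ (valid since $q \geq m_r \geq$ the relevant lower index once we also enforce $q + k + 2 \geq m_s$, so $q \geq m_s - k - 2$; one must be slightly careful and it is cleanest to instead define $\chi_j$ to jump only at indices $j = m_r$ and bound the tail directly), giving $\sum_{s \geq r} 2^{s+1} 4^{-s} = O(2^{-r}) \to 0$ as $q \to \infty$. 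This produces a sequence $\chi$ with $\chi_j \uparrow \infty$ satisfying \eqref{conv-0}, but with possibly large jumps.

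To upgrade to $\psi_{j+1}/\psi_j \to 1$, smooth $\chi$ by linear (or geometric) interpolation: between the jump points $m_r$ and $m_{r+1}$ replace the constant value by a geometric interpolation, i.e.\ set $\psi_j := 2^{r + (j - m_r)/(m_{r+1} - m_r)}$ for $m_r \leq j \leq m_{r+1}$, and $\psi_1 = \max(\chi_1, 1) \geq 1$ (rescaling the whole sequence by the constant $\psi_1$ at the end changes nothing). Since $m_{r+1} - m_r \to \infty$ (we may always pass to a subsequence of the $m_r$ to guarantee this) the ratio $\psi_{j+1}/\psi_j = 2^{1/(m_{r+1}-m_r)} \to 1$. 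Monotonicity and divergence are clear, and $\psi_j \leq \chi_j \cdot 2 = 2\chi_{j}$ on each block (actually $\psi_j \leq 2 \chi_{j}$ since geometric interpolation stays below the next value), so \eqref{conv-0} is inherited from the bound already established for $\chi$, up to the harmless factor $2$.

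\textbf{Main obstacle.} The only genuinely delicate point is the bookkeeping in the displayed inequality above: one needs the shifted/weighted sum $\sum_k (\Psi_{k,q}-\Psi_{k+1,q}) \psi_{q+k+2} V_{q+k+2}$ to still vanish even though $\psi_{q+k+2}$ grows in $k$. This works precisely because the summability margin in Lemma~\ref{lem: summable} is \emph{strict}: the proof there shows $\Psi_{k,q} = O\big(k^{-d-\varepsilon}(q+k)^{-(\alpha-1)(d+\varepsilon)}\big)$ with $(\alpha-1)(d+\varepsilon) = d + \varepsilon > \alpha d - 1$ strictly, so multiplying by any sequence $\psi_{q+k+2}$ of subexponential — indeed, here polynomially bounded after the construction — growth still leaves a convergent, $q$-uniformly small tail. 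Making this quantitative (choosing the $m_r$ sparse enough, as a function of the gap $(\alpha-1)(d+\varepsilon) - (\alpha d -1) = 1 - \varepsilon(\alpha-1)$... in fact one should double-check this gap is positive: $\alpha = d/\varepsilon + 1$ gives $\varepsilon(\alpha - 1) = d$, so the gap is $(\alpha-1)(d+\varepsilon) - (\alpha d - 1) = (d/\varepsilon)(d+\varepsilon) - (d/\varepsilon+1)d + 1 = d + 1 - d = 1 > 0$) is routine but must be done carefully to keep the argument uniform in $q$.
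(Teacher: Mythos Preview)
Your diagonalisation strategy is the right one, and your ``main obstacle'' paragraph correctly identifies the spare decay (the gap in exponents is exactly $1$). But the displayed inequality has a genuine hole. Your quantity $a_m := \sup_{q \geq m}\sum_{k\geq 1}(\Psi_{k,q}-\Psi_{k+1,q})V_{q+k+2}$ controls the \emph{full} sum in $k$ once $q$ is large; it says nothing about the \emph{tail} in $k$ for a fixed moderate $q$. When you split into blocks $m_s \leq q+k+2 < m_{s+1}$ and try to bound the $s$-th block by $a_{m_s}$, you would need $q \geq m_s$, but for $s > r$ this fails (your parenthetical ``$q \geq m_s - k - 2$'' is tautological from $q+k+2\geq m_s$ and does not give $q\geq m_s$). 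Bounding each block by the only thing you actually have, namely $a_{m_r}$, yields $\sum_{s\geq r} 2^s\, a_{m_r} = \infty$. The fix---which is what your throwaway ``bound the tail directly'' gestures at but never executes---is to replace $a_m$ by a \emph{tail} quantity, e.g.\ $b_K := \sup_{q\geq 1}\sum_{k\geq K}(\Psi_{k,q}-\Psi_{k+1,q})V_{q+k+2}$; from the estimates in the proof of Lemma~\ref{lem: summable} one has $(\Psi_{k,q}-\Psi_{k+1,q})V_{q+k+2}\leq \Psi_{k,q}V_{q+k+2}=O(k^{-d-\varepsilon})$ uniformly in $q$, hence $b_K=O(K^{1-d-\varepsilon})\to 0$.

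The paper does exactly this, in a form that also eliminates your interpolation step. One chooses $k_n\uparrow\infty$ so that $\sum_{k\geq \max\{1,\,k_n-q-2\}}(\Psi_{k,q}-\Psi_{k+1,q})V_{q+k+2}\leq 2^{-n}$ for all $q\geq 1$: for $q\geq k_n-3$ this is the full sum, small by Lemma~\ref{lem: summable}; for $q<k_n-3$ it is a genuine tail in $k$ starting at $k_n-q-2$, small by the uniform bound $b_K\to 0$. Then one simply sets $\psi_j := n$ for $k_n\leq j < k_{n+1}$. This \emph{linear} growth gives $\psi_{j+1}/\psi_j \in \{1,(n+1)/n\}\to 1$ for free, so no geometric interpolation is needed, and the verification of \eqref{conv-0} is the two-line estimate $\sum_{k\geq 1}(\Psi_{k,q}-\Psi_{k+1,q})\psi_{q+k+2}V_{q+k+2} \leq \sum_{n\geq n(q)} n\cdot 2^{-n}\to 0$ as $q\to\infty$.
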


\begin{proof}
Let $(k_n)_{n\in \NN}$ be a strictly increasing sequence of positive integers such that 
\begin{equation*}
\sum_{k=m_{n,q}}^\infty \left(\Psi_{k,q}-\Psi_{k+1,q}\right)V_{q+k+2}
\leq
2^{-n} \quad \text{for every } q\geq 1,
\end{equation*}
where $m_{n,q}:=\max\{1,k_n-q-2\}$. Such a sequence exists because of \eqref{fast-decay} together with the fact that $V_{q+k+2}=O(V_{q+k+1})$. Define
\begin{equation*}
    \psi_j:= 
     \begin{cases}
    1, & \text{for } j< k_1 \\
    n, & \text{for } k_n\leq j< k_{n+1}, n\geq 1.
 \end{cases}
 \end{equation*}
Then, $\psi_1 \geq 1$, $\psi_{j+1}/\psi_j \rightarrow 1$, and for every $q\geq k_1-1$,
\begin{equation*}
\begin{aligned}
\sum_{k=1}^\infty \left(\Psi_{k,q}-\Psi_{k+1,q}\right)\psi_{q+k+2}V_{q+k+2}
&=
\sum_{n=n(q)}^\infty \sum_{k=m_{n,q}}^{m_{n+1,q}-1} \left(\Psi_{k,q}-\Psi_{k+1,q}\right)\psi_{q+k+2}V_{q+k+2}
\\
 &\leq \sum_{n=n(q)}^\infty n 2^{-n} 
 \rightarrow 
 0
\end{aligned} 
\end{equation*}
as $q \rightarrow \infty$, where $n(q):=\max\{n\geq 1 \mid m_{n,q}=1\}$. 
\end{proof}

We now state a technical result which plays an important role in the proof of the key probabilistic estimate in the next section. Loosely speaking, it states that, at the last scale $q$ at which the bulk contribution of $\sum_{x \in F_q} \phi_x^2$ dominates the volume, the boundary behaviour on the subsequent scale can be controlled by the bulk behaviour.   

\begin{prop}\label{largest-q}
There exists a constant $C_1>0$ such that the following holds. Let $\phi \in \RR^V$. Suppose there exists $q=q(\phi)\geq 1$ such that $q$ is the largest integer for which 
\begin{equation*}
\sum_{x \in F_q} \phi_x^2 
\geq 
\psi_q V_q.
\end{equation*}
Then,
\begin{equation*}
\frac{1}{2}\sum_{x\in F_{q+1}} \sum_{y\in V\setminus F_{q+1}} \Psi(d_G(x,y))\left(\phi_x^2+\phi_y^2\right)
\leq
C_1 \sum_{x\in F_{q+1}} \phi_x^2.
\end{equation*}
\end{prop}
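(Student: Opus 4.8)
The statement splits naturally into two pieces: the part of the double sum where $y$ is ``close'' to $F_{q+1}$, and the part where $y$ is ``far''. For the close part, I would introduce an intermediate scale: fix a constant $r \in \NN$ (to be chosen depending only on $d,\varepsilon$ and the graph constants) and split $V \setminus F_{q+1} = (F_{q+1+r}\setminus F_{q+1}) \sqcup (V \setminus F_{q+1+r})$. On the close annulus $F_{q+1+r}\setminus F_{q+1}$, I would bound $\Psi(d_G(x,y)) \leq \|\Psi\|$ crudely and use that $F_{q+1+r} \subset F_{q'}$ where $q' = q+1+r$. The crucial input is that $q$ is the \emph{largest} integer with $\sum_{x\in F_q}\phi_x^2 \geq \psi_q V_q$; hence for every $j > q$ we have $\sum_{x \in F_j}\phi_x^2 < \psi_j V_j$. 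In particular $\sum_{x\in F_{q+1+r}}\phi_x^2 < \psi_{q+1+r}V_{q+1+r} = O(\psi_{q+1}V_{q+1})$ by polynomial growth and the slow growth $\psi_{j+1}/\psi_j \to 1$ (so $\psi_{q+1+r}/\psi_{q+1}$ is bounded, uniformly in $q$, for fixed $r$). On the other hand $\psi_{q+1}V_{q+1}$ is comparable (up to a constant depending on $r$) to $\psi_q V_q \leq \sum_{x \in F_q}\phi_x^2 \leq \sum_{x\in F_{q+1}}\phi_x^2$. So the contribution of the close annulus to $\sum_y \Psi(\cdot)(\phi_x^2 + \phi_y^2)$ is bounded by a constant times $\sum_{x\in F_{q+1}}\phi_x^2$.

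For the far part, where $x \in F_{q+1}$ and $y \in V\setminus F_{q+1+r}$, I would use the decay of $\Psi$ quantified by the coefficients $\Psi_{k,q+1}$ from the excerpt: $\Psi(d_G(x,y)) \leq \Psi_{k, q+1}$ whenever $y \in F_{q+1+k+1}\setminus F_{q+1+k}$. Summing by parts in $k$, the term $\sum_{y \notin F_{q+1+r}}\Psi(d_G(x,y))\phi_y^2$ is controlled by $\sum_{k \geq r}(\Psi_{k,q+1}-\Psi_{k+1,q+1})\sum_{x'\in F_{q+1+k+1}}\phi_{x'}^2$, and each inner sum is $< \psi_{q+1+k+1}V_{q+1+k+1}$ again by maximality of $q$. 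This is exactly the quantity appearing in Lemma~\ref{lem: psi def}, equation \eqref{conv-0}, which tends to $0$ as $q \to \infty$; so for $q$ large it is bounded, and for the finitely many small $q$ one absorbs everything into $C_1$ (noting $\sum_{x\in F_{q+1}}\phi_x^2 \geq \psi_q V_q \geq \psi_1 V_1 \geq 1 > 0$, so small scales are harmless). The other half, $\sum_{x\in F_{q+1}}\phi_x^2 \sum_{y\notin F_{q+1}}\Psi(d_G(x,y))$, is even simpler: $\sum_{y\notin F_{q+1}}\Psi(d_G(x,y)) \leq \|\Psi\| < \infty$ by Lemma~\ref{lem: psi sum}, so this term is directly $\leq \|\Psi\|\sum_{x\in F_{q+1}}\phi_x^2$.

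Assembling: write $\frac12 \sum_{x\in F_{q+1}}\sum_{y\notin F_{q+1}}\Psi(d_G(x,y))(\phi_x^2 + \phi_y^2)$ as the sum of the $\phi_x^2$-part (bounded by $\frac12\|\Psi\|\sum_{x\in F_{q+1}}\phi_x^2$) and the $\phi_y^2$-part, and split the latter into close annulus plus far tail as above. The constant $C_1$ will depend on $d, \varepsilon, c_G, C_G, C_\Psi$ and on $\sup_q \psi_{q+1+r}/\psi_q$ (finite for fixed $r$) and on the finitely many small-$q$ corrections; crucially it does not depend on $\phi$. The main obstacle — and the place requiring care — is making the uniformity in $q$ honest: one must check that the quantity in \eqref{conv-0} being a \emph{null sequence} in $q$ (not merely finite) is what lets one pick a single $C_1$ working for all large $q$, and that the slow-growth property $\psi_{j+1}/\psi_j\to 1$ genuinely gives $\psi_{q+1+r}/\psi_{q+1}$ bounded uniformly in $q$ for each fixed $r$ (it does, by telescoping finitely many ratios each converging to $1$, hence bounded). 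A secondary subtlety is choosing $r$ first (large enough that the tail in \eqref{conv-0} restricted to $k\geq r$ is what we use, though in fact any fixed $r \geq 1$ works since the full sum already tends to $0$), and only then letting the annulus estimate produce its $r$-dependent constant.
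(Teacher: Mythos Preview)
Your proposal is correct and follows essentially the same route as the paper: split the double sum into the $\phi_x^2$-part (controlled directly by $\|\Psi\|$), a near annulus for the $\phi_y^2$-part (controlled via $\|\Psi\|$ and the maximality of $q$, which bounds $\sum_{y\in F_{q+2}}\phi_y^2$ by $\psi_{q+2}V_{q+2}\leq C\psi_qV_q\leq C\sum_{x\in F_{q+1}}\phi_x^2$), and a far tail handled by summation by parts together with \eqref{conv-0}. The paper simply takes your intermediate scale $r=1$ (splitting at $F_{q+2}$); introducing a general $r$ is harmless but unnecessary. One point to make explicit when you write it out: your far-tail bound is stated for a single $x\in F_{q+1}$, so after summing over $x$ you pick up a factor $V_{q+1}$; the paper then uses $V_{q+1}=O(V_q)$ together with $\psi_q\geq 1$ to conclude $V_{q+1}\cdot o_q(1)\leq C\psi_qV_q\leq C\sum_{x\in F_{q+1}}\phi_x^2$.
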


\begin{proof}
First observe that, for $k \geq 0$, we have
\begin{equation}\label{alt-sum}
\begin{aligned}
\sum_{k=1}^\infty \Psi_{k,q+1}\sum_{x\in F_{q+k+2}\setminus F_{q+k+1}} \phi_x^2 
&= 
\sum_{k=1}^\infty \left(\Psi_{k,q+1}-\Psi_{k+1,q+1}\right)\sum_{x\in F_{q+k+2}\setminus F_{q+2}} \phi_x^2
\\&\leq
\sum_{k=1}^\infty \left(\Psi_{k,q+1}-\Psi_{k+1,q+1}\right)\psi_{q+k+2} V_{q+k+2}.
\end{aligned}
\end{equation}

We now handle the double sum by taking cases according to the position of $x$ and $y$. Let us write
\begin{equation*}
\begin{aligned}
D
&:=
\sum_{x\in F_{q+1}} \sum_{y\in V\setminus F_{q+1}} \Psi(d_G(x,y))\left(\phi_x^2+\phi_y^2\right) 
\\ 
&=
\sum_{y\in F_{q+2}\setminus F_{q+1}} \phi_y^2 \sum_{x\in F_{q+1}} \Psi(d_G(x,y))
+ \sum_{y\in V\setminus F_{q+2}} \phi_y^2 \sum_{x\in F_{q+1}} \Psi(d_G(x,y))
\\
&+
\sum_{x\in F_{q+1}} \phi_x^2 \sum_{y\in V\setminus F_{q+1}} \Psi(d_G(x,y)) 
\\
&=:
D_1 + D_2 + D_3 .
\end{aligned}
\end{equation*}
Observe that, by Lemma~\ref{lem: psi sum},
\begin{equation*}
D_1 + D_3
\leq
\| \Psi \| \sum_{x \in F_{q+2}} \phi_x^2\leq \Vert \Psi\Vert \psi_{q+2}V_{q+2}.
\end{equation*}
Since $V_{q+2}=O(V_q)$ and $\psi_{q+2}/\psi_q\rightarrow 1$ as $q\rightarrow \infty$, it then follows that 
\begin{equation*}
D_1+D_3\leq C_3 \psi_q V_q\leq C_3\sum_{x\in F_{q+1}}\phi_x^2
\end{equation*}
for some $C_3>0$ large enough. To handle $D_2$,
we bound $\Psi(d_G(x,y))$ by $\Psi_{k,q+1}$ for $x\in F_{q+1}$ and $y\in F_{q+k+2}\setminus F_{q+k+1}$ to obtain that 
\begin{equation*}
D_2\leq V_{q+1} \sum_{k=1}^\infty \Psi_{k,q} \sum_{y\in F_{q+k+2}\setminus F_{q+k+1}} \phi_y^2.
\end{equation*}
Using \eqref{alt-sum} we obtain that
\begin{equation*}
D_2\leq V_{q+1} \sum_{k=1}^\infty \left(\Psi_{k,q+1}-\Psi_{k+1,q+1}\right) \psi_{q+k+2}V_{q+k+2}.
\end{equation*}
By \eqref{conv-0} and the fact that $V_{q+1}=O(V_q)$ we have
$
D_2\leq C_4\psi_qV_q\leq C_4 \sum_{x\in F_{q+1}}\phi_x^2
$
for some constant $C_4>0$.
The desired assertion follows readily. 
\end{proof}

\subsection{Key probabilistic regularity estimates}

We now prove the key probabilistic estimates which justify the extension of the proofs of Propositions~\ref{appendix prop: tightness}, \ref{appendix prop: dlr} and \ref{appendix prop: regular} for the case of $\ZZ^d$ to our setting. We state it for deterministic boundary conditions; the proof is exactly the same in the case of random tempered boundary conditions, although with heavier notation.  

\begin{prop} \label{appendix prop: prob}
Let $q_0>0$ and let $\Lambda = B_{n}$ for $n > t_{q_0}$ so that $F_{q_0} \subset \Lambda$. There exists $c_1=c_1(q_0) > 0$ such that, for all $\Delta \subset \Lambda$ with $F_{q_0} \subset \Delta \subset \{ x \in \Lambda : d_G(x,\partial \Lambda) > n^{\delta/2} \}$ where $\delta$ is as in Remark \ref{rem: delta}, and for any $x \in \Delta$, the following decomposition holds. For all $\eta \in \RR^V$ such that $|\eta_z| \leq \PLUS_z$, for all $z \in \Lambda^c$, for all $\phi_\Delta \in \RR^\Delta$, $\nu_{\Delta}^{(\Lambda),\eta}[\phi_\Delta]=\nu^{1,\eta}_\Delta[\phi_\Delta] +\nu^{2,\eta}_\Delta[\phi_\Delta]$ with
\begin{align}
\label{eq: nu' bound}
\nu^{1,\eta}_\Delta[\phi_\Delta]
&\leq
c_1^{|\Delta|} e^{-\frac g2 \phi_x^4} \nu_{\Delta \setminus \{x\}}^{(\Lambda)}[\phi_{\Delta \setminus \{x\}}],
\\
\label{eq: nu'' bound}
\nu^{2,\eta}_\Delta[\phi_\Delta]
&\leq
\sum_{q \geq q_0} e^{ -\sum_{z \in F_{q+1}(x)\cap \Delta} \frac g2\phi_z^4 + c_1 V_{q+1}} 
\, \nu_{\Delta \setminus F_{q+1}(x)}^{(\Lambda)}[\phi_{\Delta \setminus F_{q+1}(x)}].
\end{align}
We stress that $c_1$ uniform over all such  $\Delta$ and $\eta$, and moreover that it is continuous in the parameters of the model.
\end{prop}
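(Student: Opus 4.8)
The strategy is to isolate, within the Gibbs weight $e^{-U^\eta_\Lambda(\phi)}$ restricted to a single coordinate $\phi_x$, the ``diagonal'' quartic term $g\phi_x^4$ and the ``interaction'' terms linking $x$ to the rest of the graph, and then to decide case-by-case whether the quartic self-energy already dominates or whether we must borrow additional coordinates to absorb the interaction. Concretely, I would start from the identity
\begin{equs}
\nu_\Delta^{(\Lambda),\eta}[\phi_\Delta]
=
\frac{1}{Z_\Lambda^\eta}\int_{\RR^{\Lambda\setminus\Delta}} e^{-U^\eta_\Lambda(\phi_\Delta\sqcup\phi_{\Lambda\setminus\Delta})}\,\textup{d}\phi_{\Lambda\setminus\Delta}
\end{equs}
and split the potential as $U^\eta_\Lambda(\phi) = g\phi_x^4 + a\phi_x^2 + \beta\phi_x\big(\sum_{y} J_{x,y}\phi_y\big) + U^\eta_{\Lambda,\hat x}(\phi)$, where $U^\eta_{\Lambda,\hat x}$ collects everything not involving $x$, and the sum over $y$ runs over $\Lambda\setminus\{x\}$ together with the boundary $\eta$ outside $\Lambda$. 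The two pieces $\nu^{1,\eta}$ and $\nu^{2,\eta}$ will arise from partitioning $\RR^{\Lambda\setminus\Delta}$ (and implicitly the whole configuration, via the restriction) according to the value of the ``last dominant scale'' $q=q(\phi)$ of Proposition \ref{largest-q}: $\nu^{1,\eta}$ is the part where no such $q\geq q_0$ exists (so the bulk sums $\sum_{z\in F_q}\phi_z^2$ never exceed $\psi_q V_q$ beyond scale $q_0$, and the interaction energy around $x$ is tame), and $\nu^{2,\eta}$ is the part where a dominant scale $q\geq q_0$ does exist, summed over $q$.

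The first inequality \eqref{eq: nu' bound} is the easy regime. Using Young's inequality $\beta|\phi_x|\,|J_{x,y}\phi_y| \leq \tfrac14 g\phi_x^4 + c(\beta,g)(J_{x,y}\phi_y)^{4/3}$ — or, more crudely, bounding the linear-in-$\phi_x$ interaction term by $\tfrac12 g\phi_x^4 + (\text{function of }\phi_y)$ and $a\phi_x^2 \le \tfrac14 g\phi_x^4 + c(a,g)$ — I would absorb $a\phi_x^2$ and the full interaction around $x$ into $\tfrac12 g\phi_x^4$ plus a term depending only on the other coordinates, at the cost of a multiplicative constant per vertex; the key point is that $\sum_y J_{x,y} \le \|\Psi\|<\infty$ by Lemma \ref{lem: psi sum}, and on the event defining $\nu^{1,\eta}$ the boundary contribution $\sum_{y\notin\Lambda}J_{x,y}\eta_y$ is controlled by $|\eta_z|\le\PLUS_z$ together with the integrability $(\textbf{C4})$, exactly as in the calculation around \eqref{eq: hamiltonian est}. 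The residual factor $e^{-U^\eta_{\Lambda,\hat x}}$ integrated over $\RR^{\Lambda\setminus\Delta}$ reconstitutes $\nu_{\Delta\setminus\{x\}}^{(\Lambda)}[\phi_{\Delta\setminus\{x\}}]$ up to the constant $c_1^{|\Delta|}$, which tracks the per-vertex cost and is continuous in $(\beta,h,g,a)$ and uniform in $\Delta,\eta$ by the uniform regularity remark following the definition of $(\gamma,\delta)$-regular measures.

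The second inequality \eqref{eq: nu'' bound} is where the geometric input enters and is the main obstacle. On the event that a dominant scale $q=q(\phi)\ge q_0$ exists, Proposition \ref{largest-q} gives
\begin{equs}
\tfrac12 \sum_{z\in F_{q+1}(x)}\sum_{y\notin F_{q+1}(x)} \Psi(d_G(z,y))(\phi_z^2+\phi_y^2) \le C_1 \sum_{z\in F_{q+1}(x)}\phi_z^2,
\end{equs}
so that the entire interaction energy coupling the ball $F_{q+1}(x)$ to its complement is bounded by $C_1\sum_{z\in F_{q+1}(x)}\phi_z^2 \le C_1\psi_{q+1}V_{q+1} \le c_1 V_{q+1}$, using $\psi_{q+1}=O(\psi_{q_0})$-growth is slow but not bounded — here I need to be slightly careful and instead bound $\sum_{z\in F_{q+1}(x)}\phi_z^2$ by $\tfrac g2\sum \phi_z^4$-type terms plus $O(V_{q+1})$ via $\phi_z^2 \le \tfrac{g}{2C_1}\phi_z^4 + c$, which is exactly what produces the $-\sum_{z\in F_{q+1}(x)\cap\Delta}\tfrac g2\phi_z^4$ on the right side of \eqref{eq: nu'' bound}; the leftover coordinates in $F_{q+1}(x)\setminus\Delta$ get integrated out and, together with $e^{-U^\eta_{\Lambda,\widehat{F_{q+1}(x)}}}$, rebuild $\nu_{\Delta\setminus F_{q+1}(x)}^{(\Lambda)}$. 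The constant $c_1$ then incorporates $C_1$, $\|\Psi\|$, the single-site normalisations, and the $\psi$-growth, and is continuous in the parameters. The delicate points I anticipate are: (i) making sure the constraint $F_{q+1}(x) \subset \Lambda$ (or handling the overflow near $\partial\Lambda$) is compatible with the hypothesis $d_G(x,\partial\Lambda)>n^{\delta/2}$ and the choice of $\Delta$, which is precisely why the statement restricts $\Delta$ away from the boundary at scale $n^{\delta/2}$ and invokes Remark \ref{rem: delta}; (ii) verifying that the events $\{q(\phi)=q\}$ for $q\ge q_0$ together with $\{\text{no dominant scale}\}$ genuinely partition the configuration space, so the decomposition $\nu=\nu^{1,\eta}+\nu^{2,\eta}$ is exact rather than merely an upper bound; and (iii) keeping all constants uniform in $\eta$ with $|\eta|\le\PLUS$, which again reduces to the summation-by-parts estimate for $\sum_{y\notin\Lambda}J_{o,y}\PLUS_y$ from Lemma \ref{lem: psi sum}. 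Given these, the proof follows the template of \cite[proof of Theorem 4.4]{LP76}, with Propositions \ref{largest-q} and Lemma \ref{lem: summable}/\ref{lem: psi def} replacing the Euclidean-lattice estimates used there.
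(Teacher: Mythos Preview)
Your plan matches the paper's proof: the decomposition of $\nu_\Delta^{(\Lambda),\eta}$ according to whether a dominant scale $q\geq q_0$ exists, the use of Young's inequality together with the event $A_{\Delta,q_0}(x)$ to extract $\tfrac g2\phi_x^4$ for $\nu^{1,\eta}$, and the appeal to Proposition~\ref{largest-q} on the events $B_{\Delta,q}(x)$ for $\nu^{2,\eta}$ are exactly what the paper does.

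There is one technical device you gloss over, and it is worth making explicit because your phrase ``the residual factor $e^{-U^\eta_{\Lambda,\hat x}}$ integrated over $\RR^{\Lambda\setminus\Delta}$ reconstitutes $\nu_{\Delta\setminus\{x\}}^{(\Lambda)}$'' is not literally correct. The projected density $\nu_{\Delta\setminus\{x\}}^{(\Lambda)}$ is by definition $Z_\Lambda^{-1}\int e^{-U^\eta_\Lambda}\,\textup{d}\phi_{(\Lambda\setminus\Delta)\cup\{x\}}$, i.e.\ it carries the \emph{full} potential and an additional integral over $\phi_x$; after your Young step you are left with $e^{-U^\eta_{\Lambda,\hat x}}$ and no $\phi_x$-integral, so the two do not match directly. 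The paper closes this gap by introducing a bounded dummy variable $\phi_x'\in[-A,A]$, writing $W_\Lambda(\phi_x,\phi_{\Lambda\setminus\{x\}}) = W_\Lambda(\phi_x',\phi_{\Lambda\setminus\{x\}}) + (\text{difference})$, bounding the difference via $T_1=T_2+T_3$ exactly as you anticipate, and then --- crucially --- taking the \emph{integral mean} over $\phi_x'\in[-A,A]$ so that the remaining integrand is $e^{-U^\eta_\Lambda(\phi_{\Lambda\setminus\{x\}}\sqcup\phi_x')}$ with the full potential restored, which after averaging reproduces $\nu_{\Delta\setminus\{x\}}^{(\Lambda)}$ on the nose (up to the factor $(2A)^{-1}$ absorbed into $c_1$). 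The same trick with $\phi'_{F_{q+1}(x)\cap\Delta}$ is used for $\nu^{2,\eta}$. This is the device from \cite{LP76,R70} that makes the reconstruction clean; your sketch should incorporate it rather than appeal to a direct comparison.
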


\begin{proof}
Let $q_0 \in \NN$ be a sufficiently large constant to be a posteriori. Fix $x \in \Delta$. Write
\begin{equation*}
A_{\Delta, q_0}(x)
=
\{ \phi_\Delta \in \RR^\Delta : \sum_{y \in F_q(x)} \phi_y^2 \leq \psi_q V_q \textup{ for all } q \geq q_0 \},
\end{equation*}
where $F_q(x)=\gamma_x F_q$ where $\gamma_x$ is an automorphism such that $\gamma_x o=x$. Define
\begin{equation*}
\begin{aligned}
\nu^{1,\eta}_\Delta[\cdot]
&:=
\nu_\Delta^{(\Lambda),\eta}[\cdot \mathbbm{1}\{A_{\Delta,q_0}(x)\}],
\qquad
\nu^{2,\eta}_\Delta
:=
\nu^{(\Lambda)}_\Delta - \nu^{1,\eta}_\Delta.
\end{aligned}
\end{equation*}
On the event $A_{\Delta,q_0}(x)$, we are able to control the bulk behaviour (centred at $x$) of $\phi_y^2$ in spatial average; on the complement we have control via the oscillation and decay bounds.

We first establish \eqref{eq: nu' bound}. For $\Lambda = \Lambda_1 \sqcup \Lambda_2$, for all $\varphi'_\Lambda,\phi_\Lambda \in \RR^\Lambda$, define
\begin{equation*}
W_\Lambda(\phi'_{\Lambda_1},\phi_{\Lambda_2})
:=
U^\eta_{\Lambda_1}(\phi'_{\Lambda_1}) + U^\eta_{\Lambda_2}(\phi_{\Lambda_2}) - U^\eta_\Lambda(\varphi_{\Lambda_1}'\sqcup\phi_{\Lambda_2}).
\end{equation*}
In particular,
\begin{equation*}
W_\Lambda(\varphi_{\Lambda_1},\varphi_{\Lambda_2})
=
\Big\{\sum_{x,y \in \Lambda_1} + \sum_{x,y \in \Lambda_2}- \sum_{x,y \in \Lambda } \Big\}  J_{x,y}\phi_x\phi_y
=
-\sum_{x \in \Lambda_1, y \in \Lambda_2} J_{x,y} \phi_x \phi_y. 
\end{equation*}
In particular, observe that for our fixed $x\in \Delta$,
\begin{equation*}
W_\Lambda(\phi_x,\phi_{\Lambda \setminus \{x\}})
=
W_{\Lambda}(\phi_x', \phi_{\Lambda\setminus  \{x\}}) - \sum_{y \in \Lambda \setminus \{x\}} J_{x,y} (\phi_x - \phi_x') \phi_y.
\end{equation*}
Thus, by Young's inequality,
\begin{equation*}
\left| \sum_{y \in \Lambda \setminus \{x\}} J_{x,y} (\phi_x - \phi_x') \phi_y \right|
\leq
\frac{\sum_{y\in \Lambda\setminus \{x\}} J_{x,y}}{2} \Big( \phi_x^2 + (\phi'_x)^2 \Big) + T_1
\end{equation*}
where
\begin{equation*}
T_1
=
\sum_{y \in \Lambda \setminus \lbrace x\rbrace} J_{x,y} \phi_y^2
\leq
\sum_{y \in F_{q_0}(x)} J_{x,y} \phi_y^2 + \sum_{y \in \Lambda \setminus F_{q_0}(x)} J_{x,y} \phi_y^2
:=
T_2 + T_3.
\end{equation*}
By the definition of $A_{\Delta,q_0}(x)$,
$
T_2
\leq
|J|_\infty \psi_{q_0} V_{q_0}.
$

In order to handle $T_3$, we observe that
$
\sum_{y \in F_q(x)} \phi_y^2
\leq 
\psi_{q} V_{q}.
$
for every $q\geq q_0$. Moreover,
\begin{equation*}
\begin{aligned}
\sum_{y \in V\setminus F_{q_0}(x)}  \Psi(d_G(x,y)) \phi_y^2
&\leq 
\sum_{k\geq 1} \Psi_{k,q_0-1}\sum_{y\in F_{q_0+k}(x)\setminus F_{q_0-1+k}(x)} \phi_y^2\\
&=
\sum_{k\geq 1} (\Psi_{k,q_0-1}-\Psi_{k+1,q_0-1})\sum_{y\in F_{q_0+k}(x)\setminus F_{q_0}(x)} \phi_y^2\\
&\leq
\sum_{k\geq 1} (\Psi_{k,q_0-1}-\Psi_{k+1,q_0-1})\psi_{q_0+k}V_{q_0+k},
\end{aligned}
\end{equation*}
where in the first inequality we have bounded $\Psi(d_G(x,y))$ by $\Psi_{k,q_0-1}$ for $y\in F_{q_0+k}(x)\setminus F_{q_0-1+k}(x)$ (we use that $x\in F_{q_0-1}(x)$), then in the second inequality we use a summation by parts. Combining the above estimates together and using Lemma \ref{lem: psi def}, we have established that
\begin{equation*}
T_3
\leq
\sum_{k\geq 1} (\Psi_{k,q_0-1}-\Psi_{k+1,q_0-1})\psi_{q_0+k}V_{q_0+k}
\leq C_1.
\end{equation*}
All together we have established
\begin{equation*}
|W_\Lambda(\phi_x,\phi_{\Lambda \setminus \{x\}})
-
W_{\Lambda}(\phi_x', \phi_{\Lambda\setminus  \{x\}})|
\leq
\frac{\|\Psi\|}2 \big(\phi_x^2 + (\phi_x')^2\big) + |J|_\infty \psi_{q_0} V_{q_0} + C_1.
\end{equation*}

We now bound $\nu^{1,\eta}_\Delta[\varphi_\Delta]$. First observe that, for any $\phi_x' \in [-A,A]$, where $A>0$,
\begin{equation*} 
\begin{aligned}
\nu^{1,\eta}_\Delta[\phi_\Delta]
&=
Z_\Lambda^{-1} \int e^{-U^\eta_{\{x\}}(\phi_x) - U^\eta_{\Lambda \setminus \{x\}}(\phi_{\Lambda \setminus \{x\}}) + W(\phi_x, \phi_{\Lambda \setminus \{x\}})} \textup{d}\phi_{\Lambda \setminus \Delta}
\\
&=
e^{-U^\eta_{\{x\}}(\phi_x)}Z_\Lambda^{-1} \int e^{- U^\eta_{\Lambda \setminus\{x\}}(\phi_{\Lambda \setminus \{x\}}) + W(\phi_x', \phi_{\Lambda \setminus \{x\}}) - W(\phi_x', \phi_{\Lambda \setminus \{x\}}) + W(\phi_x, \phi_{\Lambda \setminus \{x\}}) } \textup{d}\phi_{\Lambda \setminus \Delta}
\\
&\lesssim
e^{-U^\eta_{\{x\}}(\phi_x)} e^{\frac{\sum_{y\in V} J_{0,y}}{2} \Big( \phi_x^2 + (\phi'_x)^2 \Big)} \int e^{-U^\eta_{\Lambda \setminus \{x\}}(\phi_{\Lambda \setminus \{x\}}) + W(\phi_x', \phi_{\Lambda \setminus \{x\}})} \textup{d}\phi_{\Lambda \setminus \Delta}
\\
&\lesssim
e^{-\frac g2 \phi_x^4}  \int e^{-U^\eta_{\Lambda}(\phi_{\Lambda\setminus \{x\}} \sqcup \phi'_x) + U^\eta_{\{x\}}(\phi_x') }\textup{d}\phi_{\Lambda \setminus \Delta}
\\
&\lesssim
e^{-\frac g2 \phi_x^4}  \int e^{-U^\eta_{\Lambda}(\phi_{\Lambda\setminus \{x\}} \sqcup \phi'_x)}\textup{d}\phi_{\Lambda \setminus \Delta}.
\end{aligned}
\end{equation*}
Above, the first two lines are straightforward manipulations; the third line follows from the preceding bounds; the fourth line follows from Young's inequality inside the exponentials, the fact that $|\phi_x'| \leq A$, and estimates similar to \eqref{eq: o(1)} to handle the boundary terms in $U^\eta_{\{x\}}(\phi_x)$; and the final line is by using that $|\phi_x'| \leq A$. Thus, taking the integral mean, 
\begin{equation*}
\nu^{1,\eta}_\Delta[\phi_\Delta]
\leq
c_1^{|\Delta|} e^{-\frac{g}{2}\phi_x^4} \int_{-A}^A \int e^{-U^\eta_{\Lambda}(\phi_{\Lambda\setminus \{x\}}\sqcup \varphi_x') }\textup{d}\phi_{\Lambda \setminus \Delta}\textup{d}\varphi_x'
\leq
c_1^{|\Delta|} e^{-\frac{g}{2}\phi_x^4} \int e^{-U^\eta_{\Lambda}(\phi_\Lambda) }\textup{d}\phi_{\Lambda \setminus \Delta \cup \{x\}}.
\end{equation*}
This last bound establishes \eqref{eq: nu' bound}.

We now turn to \eqref{eq: nu'' bound}. Write
\begin{equation*}
B_{\Delta,q}(x)
=
\{ \phi_\Delta \in \RR^\Delta : \sum_{y \in F_q(x)} \phi_y^2 > \psi_q V_q, \:  \sum_{y \in F_k(x)} \phi_y^2 \leq\psi_k V_k, \: \forall k>q\}.
\end{equation*}
Then, by a union bound and re-ordering terms
\begin{equation*}
\begin{aligned}
\nu^{2,\eta}_\Delta[\phi_\Delta]
&\leq
\sum_{q \geq q_0} \frac{\mathbbm{1}_{\{\phi_\Delta \in B_{\Delta,q}(x)\}}}{Z_\Lambda} \int e^{-U^\eta_\Lambda(\phi_\Lambda)} \textup{d}\phi_{\Lambda \setminus \Delta} 
\\
&=
\sum_{q \geq q_0} \frac{\mathbbm{1}_{\{\phi_\Delta \in B_{\Delta,q}(x)\}}}{Z_\Lambda} \int e^{-U^\eta_{F_{q+1}(x)\cap \Delta}(\phi_{F_{q+1}(x)\cap \Delta}) - U^\eta_{\Lambda \setminus( F_{q+1}(x)\cap\Delta)}(\phi_{\Lambda \setminus (F_{q+1}(x) \cap \Delta)})}
\\
&\qquad\qquad\qquad\qquad\qquad
\times e^{ W_\Lambda(\phi_{F_{q+1}(x)\cap \Delta}, \phi_{\Lambda \setminus (F_{q+1}(x)\cap\Delta)} )} \textup{d}\phi_{\Lambda \setminus \Delta} 
\\
&\leq
\sum_{q \geq q_0} \frac{\mathbbm{1}_{\{\phi_\Delta \in B_{\Delta,q}\}}}{Z_\Lambda} e^{-\sum_{y \in F_{q+1}(x) \cap \Delta} \frac {3g}4 \phi_y^4 + CV_{q+1}} 
\\
&\qquad \qquad \times
\int e^{-U^\eta_{\Lambda \setminus (F_{q+1}(x)\cap\Delta)}(\phi_{\Lambda \setminus (F_{q+1}(x) \cap \Delta)})+  W_\Lambda(\phi_{F_{q+1}(x)\cap \Delta}, \phi_{\Lambda \setminus (F_{q+1}(x) \cap \Delta)} )}  \textup{d}\phi_{\Lambda \setminus \Delta} 
\end{aligned}
\end{equation*}
where the third line is uses similar reasoning as above to handle $U^\eta_{F_{q+1}(x)\cap \Delta}(\phi_{F_{q+1}(x)\cap \Delta})$. 

Let $\phi_{F_{q+1}(x)\cap \Delta}' \in \RR^{F_{q+1}(x)\cap \Delta}$ be such that $|\phi'_z| \leq A$ for all $z\in F_{q+1}(z)\cap \Delta$. Observe that we can write 
\begin{equation*}
 W_\Lambda(\phi_{F_{q+1}(x)\cap \Delta}, \phi_{\Lambda\setminus (F_{q+1}(x)\cap \Delta)} )
 =
  W(\phi_{F_{q+1}(x)\cap \Delta}', \phi_{\Lambda \setminus (F_{q+1}(x) \cap \Delta)} ) + T_4,
\end{equation*}
where
\begin{equation}\label{eq: t4}
|T_4|
\leq
\sum_{\substack{z \in F_{q+1}(x) \cap \Delta \\ y \in \Lambda \setminus (F_{q+1}(x)\cap \Delta)}} \Psi(d_G(z,y)) \Big(\frac 12 \phi_z^2 + \frac 12 (\phi_z')^2 + \phi_y^2 \Big)
\end{equation}
On the event $B_{\Delta,q}(x)$, we can apply Proposition \ref{largest-q} and $\Gamma$-invariance to obtain
\begin{equation*}
\eqref{eq: t4}
\leq
C_2 \sum_{z \in F_{q+1} \cap \Delta} \phi_z^2  + C_3 \sum_{z \in F_{q+1}(x) \cap \Delta} (\phi_z')^2. 
\end{equation*}

Thus, by Young's inequality, using the boundedness of $\phi_x'$, we obtain 
\begin{equation*}
\begin{aligned}
\nu^{2,\eta}_\Delta[\phi_\Delta]
&\leq
\sum_{q \geq q_0} \frac{1}{Z_\Lambda} e^{-\sum_{y \in F_{q+1}(x) \cap \Delta}\frac{g}{2} \phi_y^4 + C_4 V_{q+1}} 
\\
&\qquad \qquad \times
\int e^{-U^\eta_{\Lambda \setminus (F_{q+1}(x)\cap \Delta)}(\phi_{\Lambda \setminus (F_{q+1}(x)\cap \Delta)})+  W_\Lambda(\phi'_{F_{q+1}(x)\cap \Delta}, \phi_{\Lambda \setminus (F_{q+1}(x) \cap \Delta)} )}  \textup{d}\phi_{\Lambda \setminus \Delta} 
\\
&=
\sum_{q \geq q_0} \frac{1}{Z_\Lambda} e^{-\sum_{y \in F_{q+1}(x) \cap \Delta}\frac{g}{2} \phi_y^4 + C_4 V_{q+1}} 
\\
&\qquad \qquad \times
\int e^{-U^\eta_{\Lambda }(\phi_{\Lambda \setminus (F_{q+1}(x) \cap \Delta)} \sqcup \phi'_{F_{q+1}(x)\cap\Delta})  + U^\eta_{F_{q+1}(x)\cap \Delta}(\phi'_{F_{q+1}(x)\cap \Delta})} \textup{d}\phi_{\Lambda \setminus \Delta}. 
\end{aligned}
\end{equation*}
The desired estimate follows by using the boundedness of $|\phi'_{F_{q+1}(x)\cap\Delta}|$ and taking the integral mean with respect to $\phi_{F_{q+1}\cap \Lambda}'$, similarly as for the estimate on $\nu^{1,\eta}_\Delta$.
\end{proof}

\begin{cor} \label{app cor: finite vol reg}
There exists $c_1>0$ such that the following holds. Let $n \geq 1$ and let $\Lambda \subset V$ be finite such that $\Lambda \supset B_n$. Let $\Delta \subset \Lambda$ such that $\Delta \subset B_{m}$, where $m = {n-n^{\frac \delta 2}}$ and $\delta$ is as in Remark \textup{\ref{rem: delta}}. Then, for all $\eta\in \RR^V$ such that $|\eta_z| \leq \sqrt{\log d_G (o,\partial \Lambda)}$ for all $z \in \Lambda^c$,
\begin{equation*}
\nu_\Delta^{(\Lambda), \eta}[\phi_\Delta]
\leq
e^{-\sum_{x \in \Delta} \big(\frac g2 \phi_x^4 - c_1\big)}.
\end{equation*}
\end{cor}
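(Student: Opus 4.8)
\textbf{Proof plan for Corollary~\ref{app cor: finite vol reg}.}
The plan is to bootstrap Proposition~\ref{appendix prop: prob}: that proposition already gives, for a single site $x \in \Delta$, a pointwise decomposition $\nu_\Delta^{(\Lambda),\eta} = \nu_\Delta^{1,\eta} + \nu_\Delta^{2,\eta}$ with the first term carrying a clean factor $e^{-\frac g2 \phi_x^4}$ and the second term a sum over scales $q \ge q_0$ of terms carrying $e^{-\sum_{z \in F_{q+1}(x)\cap\Delta} \frac g2 \phi_z^4 + c_1 V_{q+1}}$. First I would observe that the geometric hypothesis $\Delta \subset B_m$ with $m = n - n^{\delta/2}$, together with $\Lambda \supset B_n$, is exactly what is needed so that for $x \in \Delta$ the ``escape scale'' $F_{q_0}(x)$ can be taken to sit inside $\Lambda$ with room to spare, so that Proposition~\ref{appendix prop: prob} applies with a constant $q_0$ depending only on the model (not on $n,\Lambda,\Delta$); this is where one uses that $d_G(x,\partial\Lambda) > n^{\delta/2}$ for all $x\in\Delta$. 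I would fix such a $q_0$ once and for all.

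The core step is to convert the one-site estimate into a product estimate over all of $\Delta$. I would enumerate $\Delta = \{x_1,\dots,x_{|\Delta|}\}$ and peel off sites one at a time: applying the bound of Proposition~\ref{appendix prop: prob} at $x_1$ relative to the volume $\Lambda$ bounds $\nu_\Delta^{(\Lambda),\eta}[\phi_\Delta]$ by (roughly) $c_1^{|\Delta|}\big(e^{-\frac g2\phi_{x_1}^4} + \sum_{q\ge q_0} e^{-\sum_{z\in F_{q+1}(x_1)\cap\Delta}\frac g2\phi_z^4 + c_1 V_{q+1}}\big)$ times a lower-order-volume density $\nu_{\Delta\setminus(\cdot)}^{(\Lambda)}[\cdot]$, and then iterating on the remaining sites. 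The key point making this telescoping work is that in the ``bulk'' alternative $\nu^{1,\eta}$ the surviving measure is $\nu_{\Delta\setminus\{x\}}^{(\Lambda)}$, to which the proposition again applies. After $|\Delta|$ steps, every site $x\in\Delta$ has contributed at least the factor $e^{-\frac g2 \phi_x^4}$ — either directly from a bulk step or from being caught in some $F_{q+1}(\cdot)\cap\Delta$ block — and all the accumulated multiplicative constants $c_1^{|\Delta|}$ and, crucially, the $e^{c_1 V_{q+1}}$ factors must be absorbed into $e^{c_1|\Delta|}$.

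The main obstacle, and the step I would spend the most care on, is controlling the combinatorics of the scale sums: when a site $x$ is eliminated via the $\nu^{2,\eta}$ alternative at scale $q$, it kills an entire block $F_{q+1}(x)\cap\Delta$ of sites at a fixed additive cost $c_1 V_{q+1}$, and one needs $V_{q+1} \lesssim |F_{q+1}(x)\cap\Delta|$ on the event where that alternative is active, so that the cost per eliminated site stays $O(1)$. On the event $B_{\Delta,q}(x)$ the definition forces $\sum_{y\in F_q(x)}\phi_y^2 > \psi_q V_q$, which is precisely the kind of lower bound that, combined with the single-site quartic dominating the quadratic, lets one pay for $V_{q+1}$ using the $\phi_z^4$ mass already being extracted in that block (and $V_{q+1} = O(V_q)$, $\psi_q$ slowly varying, from Lemmas~\ref{lem: psi def} and the polynomial growth bounds). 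I would handle this by choosing $q_0$ large enough that $\frac g4 \sum_{z\in F_{q+1}(x)} \phi_z^4 \ge \frac g4 |F_{q+1}(x)|^{-1}\big(\sum_{z\in F_{q+1}(x)}\phi_z^2\big)^2 \ge c\, V_{q+1}$ whenever $\sum_{z\in F_{q+1}(x)}\phi_z^2$ is as large as $B_{\Delta,q}$ demands, using Cauchy--Schwarz; this lets $e^{c_1 V_{q+1}}$ be reabsorbed into half the available quartic decay while leaving the other half to give the claimed $e^{-\frac g2\phi_z^4}$ (up to adjusting constants). Once the per-site cost is $O(1)$ uniformly, summing the geometric-type series over $q\ge q_0$ converges and the whole bound collapses to $e^{-\sum_{x\in\Delta}(\frac g2\phi_x^4 - c_1)}$ with $c_1$ depending only on $(\beta,h,g,a)$ and $q_0$, hence continuous in those parameters and uniform in $n,\Lambda,\Delta,\eta$ as required. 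Finally I would note that, as remarked in the text, the argument is identical — modulo heavier notation — if $\eta$ is replaced by random weakly growing boundary conditions, which is the form actually needed in the body of the paper.
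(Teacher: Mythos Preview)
Your approach is correct and coincides with the paper's, which simply defers to the induction argument of Ruelle \cite[Theorem~2.2]{R70}: that argument is exactly the iterated one-site/one-block peeling via Proposition~\ref{appendix prop: prob} that you describe, with the quartic term on the event $B_{\Delta,q}(x)$ (your Cauchy--Schwarz step) playing the role of Ruelle's superstability to absorb the $e^{c_1 V_{q+1}}$ cost per block. The paper omits all details, so your sketch is in fact more explicit than the original.
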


\begin{proof}
The result is based on an induction argument that, given Proposition \ref{appendix prop: prob}, is identical to the proof of \cite[Theorem 2.2]{R70}. We omit the details.
\end{proof}

\begin{rem} \label{rem: boundary saviour}
The result of Corollary \textup{\ref{app cor: finite vol reg}} can be extended up to the boundary with the cost of setting $c_1 = O(\log d_G(o,\partial\Lambda))$. In particular, by using an exponential Chebyschev argument, one can bound moments; for instance for some $C>0$ we have
\begin{equation*}
\sup_{\Delta \subset \Lambda}\sup_{x \in \Delta}\nu^{(\Lambda),\eta}_{\Delta}[\phi_x^2]
\leq
\sqrt{C\log d_G(o,\partial\Lambda)}.
\end{equation*}
\end{rem}

\subsection{Generalisation to tempered amenable graphs} \label{app: tempered amenable}

The results of this section can be generalised to (supposedly) a larger class of amenable graphs that we now introduce. Recall that a connected, locally finite, and vertex-transitive graph $G=(V,E)$ is called amenable if 
\begin{equation}
\inf_{A \subset V, |A|<\infty} \frac{|\partial A|}{|A|}
=
0.
\end{equation}
Equivalently, $G$ is amenable if and only if it admits a F{\o}lner sequence $(F_n)_{n \in \NN} \subset V$, i.e.\ a sequence $(F_n)_{n \in \NN}$ such that $\frac{|\partial F_n|}{|F_n|} \rightarrow 0$ as $n\rightarrow \infty$. We call $(F_n)_{n \in \NN}$ a F{\o}lner exhaustion if it is a F{\o}lner sequence and, in addition, $F_n \uparrow V$ as $n \rightarrow \infty$.

We say that a connected, locally finite, vertex-transitive graph $G$ is a \textit{tempered amenable graph} if it admits a function $f:\NN\rightarrow (0,\infty)$ such that $f(x+y)\leq f(x)f(y)$ and a F{\o}lner exhaustion $(F_n)_{n \in \NN}$ that satisfy
\begin{align}
\begin{split}
\sum_{x\in V\setminus\{o\}}\frac{1}{f(d_G(o,x))}<\infty     
\end{split}
\\
\begin{split}\label{conv_1}
\limsup_{n\to\infty}\frac{|F_{n+1}|}{|F_n|}<\infty
\end{split}
\\
\begin{split}
F_n\cup \partial F_n \subset F_{n+1}    
\end{split}
\\
\begin{split}
\lim_{n\to\infty}\frac{(|F_{n+1}|-|F_n|)\max_{x\in F_{n+1}\setminus F_n}\{\sqrt[4]{\log f(d_G(o,x))}\} }{|F_{n+1}|}=0.   
\end{split}
\end{align}
We restrict to kernels $\Psi:\NN\rightarrow (0,\infty)$ which are decreasing and satisfy
\begin{equation}
\lim_{q\to\infty}\sum_{k=1}^\infty \left(\Psi_{k,q}-\Psi_{k+1,q}\right) |F_{q+k+1}|
=
0.
\end{equation}
Moreover, we define
\begin{equation}
\Xi(M)
=
\{ \phi : \exists V_\phi \subset V, |\phi_x| \leq \sqrt[4]{M\log f(d_G(o,x))} \, \forall x \in V_\phi^c\}.
\end{equation}
Then, one can check that the proofs of Propositions \ref{appendix prop: tightness}--\ref{appendix prop: regular} extend to this setting.

\begin{rem}
Any connected, vertex-transitive graph of polynomial growth is a tempered amenable graph. It would be interesting to find amenable graphs of intermediate or exponential growth that satisfy our assumptions; we do not know of any such examples as of yet.
\end{rem}

\paragraph{Acknowledgements.}
Above all we thank Hugo Duminil-Copin for proposing the problem and encouraging all of us to collaborate with each other. We thank Ajay Chandra and Aran Raoufi for useful discussion. We thank Lucas D'Alimonte, Dmitrii Krachun, R\'emy Mahfouf, and two referees for useful comments. TSG thanks Sainee Sharma for inspiration.

\paragraph{Funding.}
TSG acknowledges the support of the Simons Foundation, Grant 898948, HDC. CP and RP were supported by the Swiss National Science Foundation and the NCCR SwissMAP. 
FS has received funding from the European Research Council (ERC) under the European Union’s Horizon 2020 research and innovation program (grant agreement No 851565).


\bibliographystyle{alpha}
\bibliography{surveybis.bib}







\end{document}